\titleformat{\section}{\Large\bfseries}{\thesection.}{4pt}{}
\titleformat{\subsection}{\large\bfseries}{\thesection.\arabic{subsection}.}{4pt}{}
\titleformat{\subsubsection}{\bfseries}{\thesection.\arabic{subsection}.\arabic{subsubsection}.}{4pt}{}
\titleformat*{\paragraph}{\bfseries}
\titleformat*{\subparagraph}{\bfseries}
\newcommand{\be}{\begin{equation}}
\newcommand{\ee}{\end{equation}}
\newcommand{\e}{\varepsilon} 
\newcommand{\la}{\langle} 
\newcommand{\ra}{\rangle} 
\newcommand{\pa}{\partial} 
\newcommand{\lab}{\label} 
\newcommand{\ba}{\begin{array}}
\newcommand{\ea}{\end{array}}
\newcommand{\bee}{\begin{eqnarray*}}
\newcommand{\eee}{\end{eqnarray*}}
\newcommand{\bea}{\begin{eqnarray}}
\newcommand{\eea}{\end{eqnarray}}
\newcommand{\non}{\nonumber}
\newcommand{\lb}{\lambda}
\newcommand{\Lb}{\Lambda}
\def\fref#1{{\rm (\ref{#1})}}
\newcommand{\donothing}[1]{{}}
\newcommand{\tL}{\tilde{\mathcal L}}
\newtheorem{theorem}{Theorem}
\newtheorem{corollary}[theorem]{Corollary}
\newtheorem{definition}[theorem]{Definition}
\newtheorem{lemma}[theorem]{Lemma}
\newtheorem{proposition}[theorem]{Proposition}
\newtheorem{remark}[theorem]{Remark}
\numberwithin{equation}{section}
\newcommand{\lY}{\langle Y \rangle}
\newcommand{\lZ}{\langle Z \rangle}
\newcommand{\tX}{\tilde X}
\begin{document}

\title{Singularity formation for Burgers equation with transverse viscosity}

\author[C. Collot]{Charles Collot}
\address{Department of Mathematics, New York University in Abu Dhabi, Saadiyat Island, P.O. Box 129188, Abu Dhabi, United Arab Emirates.}
\email{cc5786@nyu.edu}
\author[T.-E. Ghoul]{Tej-Eddine Ghoul}
\address{Department of Mathematics, New York University in Abu Dhabi, Saadiyat Island, P.O. Box 129188, Abu Dhabi, United Arab Emirates.}
\email{teg6@nyu.edu}
\author[N. Masmoudi]{Nader Masmoudi}
\address{Department of Mathematics, New York University in Abu Dhabi, Saadiyat Island, P.O. Box 129188, Abu Dhabi, United Arab Emirates. Courant Institute of Mathematical Sciences, New York University, 251 Mercer Street, New York, NY 10012, USA,}
\email{nm30@nyu.edu}

\keywords{blow-up, singularity, self-similarity, stability, Burgers equation, Prandtl's equations, nonlinear heat equation}
\subjclass{35M10, 35L67, 35K58, 35Q35, 35A20, 35B35, 35B40, 35B44}

\thispagestyle{empty}

\begin{abstract} 
We consider Burgers equation with  transverse viscosity $$\pa_tu+u\pa_xu-\pa_{yy}u=0, \ \ (x,y)\in \Bbb R^2, \ \ u:[0,T)\times \mathbb R^2\rightarrow \mathbb R.$$ We construct and describe precisely a family of solutions which become singular in finite time by having their gradient becoming unbounded. To leading order, the solution is given by a backward self-similar solution of Burgers equation along the $x$ variable, whose scaling parameters evolve according to parabolic equations along the $y$ variable, one of them being the quadratic semi-linear heat equation. We develop a new framework adapted to this mixed hyperbolic/parabolic blow-up problem, revisit the construction of flat blow-up profiles for the semi-linear heat equation, and the self-similarity in singularities of the inviscid Burgers equation.
\end{abstract}

\maketitle


\section{Introduction}

\subsection{Setting of the problem and motivations}

We consider Burgers equation with transverse viscosity
\be \lab{eq:burgers}
\left\{\begin{array}{l}
\pa_tu+u\pa_xu-\pa_{yy}u=0, \ \ (x,y)\in \mathbb R^2, \ \ \\
u_{t=0}=u_0,
\end{array}\right.
\end{equation}
for $u:[0,T)\times \mathbb R^2\rightarrow \mathbb R$. The present study is motivated by the following. This model reduces to the classical inviscid Burgers equation for solutions that are independent of the  transverse variable $u(t,x,y)=U(t,x)$, which is a classical example of a nonlinear hyperbolic equation for which initially smooth solutions can become singular in finite time, see for example \cite{D,S}. The effects of viscosity in the streamwise direction, namely the equation $\pa_tu+u\pa_xu-\epsilon \pa_{xx}u=0$, have been extensively studied, see \cite{H,I} and references therein. This work aims at understanding precisely the consequence of an additional effect, here the  transverse viscosity, on a blow-up dynamics that it does not prevent. Moreover, this new effect changes the nature of the equation which is of a mixed hyperbolic/parabolic type. Handling these two issues, our result then extends known ones for blow-ups in a new direction, as well as raising new interesting problems, see the comments after the main Theorem \ref{th:main}.\\

\noindent More importantly, the study of \fref{eq:burgers} is motivated by fluid dynamics, from the fact that it is a simplified version of Prandtl's boundary layer equations. Solutions to Prandtl's equations might blow up in finite time \cite{CS,EE,KVW} but a precise description of the singularity formation is still lacking. The present work is a step towards that goal and this issue will be investigated in a forthcoming work. Finally, let us mention that there has been recent progress on other models for singular solutions in fluid dynamics, see \cite{CHKLSY,CINT,W} and references therein. \\

\noindent The existence of smooth enough solutions to \fref{eq:burgers} follows from classical arguments. For example, relying on a fixed point argument and energy estimates, one can show that the equation is locally well-posed in $H^s(\mathbb R^2)$ for $s\geq 3$. There then holds the following blow-up criterion (again from energy estimates because of the identity $|\int uvv_x|\lesssim \| u_x\|_{L^{\infty}}\int v^2$): the solution $u$ blows up at time $T>0$ if and only if
\be \lab{eq:critere explo}
\limsup_{t\uparrow T} \| \pa_x u \|_{L^{\infty}(\mathbb R^2)}=+\infty.
\ee
The existence of global kinetic solutions $u\in L^{\infty}([0,+\infty),L^1(\mathbb R^2))$ has been showed by Chen and Perthame \cite{CP} following the framework of Lions, Perthame and Tadmor \cite{LPT}. We refer to \cite{S} for an introduction on kinetic solutions for scalar conservation laws. We expect singularities for such low regularity solutions to be different than the solutions in the present paper, as regularity plays a key role in the blow-up mechanism we describe. Before stating the main theorem, let us give the structure of the singularities of Burgers equation, and of the ones of the parabolic system encoding the effects of the  transverse viscosity.


\subsection{Self-similarity in shocks for Burgers equation} \lab{subsec:Burgers}

Burgers equation
\be \lab{eq:burgers2}
\left\{\begin{array}{l}
\pa_tU+U\pa_xU=0, \ \ x\in \mathbb R, \\
U_{t=0}=U_0,
\end{array}\right.
\end{equation}
admits solutions becoming singular in finite time in a self-similar way:
$$
U(t,x)=\mu^{-1}(T-t)^{\frac{1}{2i}}\Psi_i \left(\mu \frac{x}{(T-t)^{1+\frac{1}{2i}}} \right)
$$
where $(\Psi_i)_{i\in \mathbb N^*}$ is a family of analytic profiles (see \cite{EF} for example), and where $\mu>0$ is a free parameter. They are at the heart of the shock formation, a fact that is rarely emphasised, which lead us to give a precise and concise study in Section \ref{sec:burgers}. Self-similar and discretely self-similar blow-up profiles for Burgers equation are classified in Proposition \ref{pr:clas}. Different scaling laws are thus possible, depending on the initial condition via its behaviour near the characteristic where the shock will form, which has to do with the fact that the scaling group of \fref{eq:burgers2} is two-dimensional, see Section \fref{sec:burgers}. This possibility of several scaling exponents is referred to as self-similarity of the second kind \cite{B}. For each $i\in \mathbb N^*$, $\Psi_i$, defined in Proposition \ref{prop:smoothselfsim}, is an odd decreasing profile, which is nonnegative and concave on $(-\infty,0]$ and such that $\pa_X \Psi_i$ is minimal at the origin with asymptotic $\Psi_i(X)=-X+X^{2i+1}$ as $X\rightarrow 0$. One has in particular the formula
\be \lab{eq:def Psi1}
\Psi_1(X):= \left(-\frac X2 +\left(\frac{1}{27}+\frac{X^2}{4}\right)^{\frac 12} \right)^{\frac 13}+ \left(-\frac X2 -\left(\frac{1}{27}+\frac{X^2}{4}\right)^{\frac 12} \right)^{\frac 13},
\ee
for the fundamental one \cite{CSW}. As in the above formula, all these profiles are unbounded at infinity but they emerge nonetheless from well localised initial data. A precise description of these profiles is given in Proposition \ref{prop:smoothselfsim}. Any regular enough non-degenerate solution $v$ to \fref{eq:burgers2} that forms a shock at $(T,x_0)$ is equivalent to leading order near the singularity to a self-similar profile $\Psi_i$ up to the symmetries of the equation
\be \lab{eq:def selfsim burgers}
U(t,x)\sim(T-t)^{\frac{1}{2i}} \mu^{-1}\Psi_i \left(\mu\frac{x-(x_0-c(T-t))}{(T-t)^{1+\frac{1}{2i}}} \right)+c \ \ \text{as} \ (t,x)\rightarrow (T,x_0),
\ee
see Proposition \ref{pr:nbassin}. The blow-up dynamics involving the concentration of $\Psi_1$ is a stable one for smooth enough solutions. The scenario corresponding to the concentration of $\Psi_i$ for $i\geq 2$ is an unstable one. For a suitable topological functional space, the set of initial conditions leading to the concentration of $\Psi_i$ for $i\geq 2$ is located at the boundary of the set of initial condition leading to the concentration of $\Psi_1$, and admits $2(i-1)$ instability directions yielding one or several shocks formed by $\Psi_j$ for $j<i$. The linearised dynamics is described in Proposition \ref{pr:H}.


\subsection{Blow-up for the reduced parabolic system}

For a solution $u$ to \fref{eq:burgers} that is odd in $x$, the behaviour on the  transverse axis $\{x=0 \}$ is encoded by a closed system, which is the motivation for this symmetry assumption. It admits solutions blowing up simultaneously with a precise behaviour. Indeed, assume $\pa_x^j u_0(0,y)=0$ for all $y\in \mathbb R$ for $2\leq j \leq 2i$ for some integer $i\in \mathbb N$. This remains true for later times and the trace of the derivatives
\be \lab{def:xi}
\xi(t,y):=-\pa_x u (t,0,y) \ \ \text{and} \ \ \zeta (t,y):=\pa_x^{2i+1} u (t,0,y)
\ee
solve the parabolic system
\be \lab{eq:NLH}
\left\{ \begin{array}{l l} (NLH) \ \ \xi_t-\xi^2-\pa_{yy}\xi=0, \\
(LFH) \ \ \zeta_t-(2i+2)\xi \zeta-\pa_{yy}\zeta=0.
\end{array} \right.
\ee
Solutions to the nonlinear heat equation $(NLH)$ might blow up in finite time, a dynamics that can be detailed precisely, see \cite{QS} for an overview. There exists a stable fundamental blow-up \cite{BK,BK2,HV,MZ}, and a countable number of unstable "flatter" blow-ups \cite{BK2,HV2}, all driven to leading order by the ODE $f'=f^2$. For the present work, we had to show additional weighted estimates than those showed in these articles. In particular, we revisited the proof in \cite{BK2,HV2,MZ} and obtained a true improvement for the "flat" unstable blow-ups, see the comment below. For the solutions $\xi$ to $(NLH)$ below the solution to the linearly forced heat equation $(LFH)$ may also blow-up in finite time with precise asymptotic that we detail later on.

\begin{theorem} \lab{th:NLHinstable}

Let $J\in \mathbb N$. There exists an open set for a suitable topology of even solutions to $(NLH)$ blowing up in finite time $T>0$ with
$$
\xi (t,y)=\frac{1}{T-t} \ \frac{1}{1+\frac{y^2}{8(T-t)|\log (T-t)|}}+\tilde \xi,
$$
where the remainder $\tilde \xi$ satisfies for $0\leq j \leq J$ for some constant $C>0$:
$$
|\pa_y^j \tilde \xi |\leq \frac{C}{(T-t)|\log (T-t)|^{\frac 14}} \ \frac{1}{\left(1+\frac{y^2}{8(T-t)|\log (T-t)|}\right)^{\frac 34}} \frac{1}{\left(\sqrt{(T-t)|\log (T-t)|}+|y| \right)^j}.
$$
For any $k\in \mathbb N$, $k\geq 2$, $a>0$, there exists $T^*>0$, such that for any $0<T<T^*$ there exists an even solution to \fref{eq:NLH} blowing up at time $T$ with
\be \lab{id:decomposition NLH}
\xi (t,y)=\frac{1}{T-t+ay^{2k}} +\tilde \xi,
\ee
where the remainders $\tilde \xi$ satisfies for $j=0,...,J$ for some constant $C>0$:
\be \lab{bd:remainder NLH}
|\pa_y^j \tilde \xi | \leq C \left( (T-t)^{\frac{1}{2k}}+|y|\right)^{\frac 12-(2k+j)}.
\ee

\end{theorem}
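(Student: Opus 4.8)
The plan is to treat the two parts of the theorem separately, since the first (logarithmically corrected stable/unstable blow-up) is essentially a refinement of the classical analysis of Bricmont–Kupiainen and Herrero–Velázquez, while the second (flat profiles of order $2k$) needs a genuinely new argument. For the first part, I would work in the self-similar variables $z = y/\sqrt{T-t}$, $s = -\log(T-t)$, and write $\xi = \frac{1}{T-t} W(s,z)$, so that $W$ solves the rescaled equation $W_s = \pa_{zz}W - \tfrac z2 \pa_z W + W^2 - W$. The explicit leading profile corresponds to $W \approx \varphi(z/\sqrt s)$ with $\varphi(\zeta) = (1+\tfrac{\zeta^2}{8})^{-1}$; I would decompose $W = \varphi(z/\sqrt s) + q$, project $q$ onto the spectral subspaces of the linearized operator $\mathcal L = \pa_{zz} - \tfrac z2 \pa_z + 1$ (whose eigenvalues are $1 - m/2$ with Hermite eigenfunctions), and set up a bootstrap: the unstable modes $m = 0, 1$ are controlled by a topological/Brouwer argument to pin down $T$ and the blow-up point, the null mode $m = 2$ generates the $|\log(T-t)|$ correction via a modulation equation $\dot\lambda \sim -\lambda^2$, and the stable part is absorbed by parabolic smoothing. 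The new ingredient — the weighted derivative bounds on $\tilde\xi$ up to order $J$ — is obtained by differentiating the equation $j$ times, noting that $\pa_y^j$ commutes nicely with $\pa_{yy}$ and produces only lower-order terms, and running the same energy/bootstrap estimates in weighted spaces with the self-similar weight $(\sqrt{(T-t)|\log(T-t)|}+|y|)^{-j}$; the point is that each $y$-derivative costs exactly one factor of the parabolic length scale $\sqrt{(T-t)|\log(T-t)|}$, consistent with the stated bound.

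For the second part — the flat blow-up with approximate profile $(T-t+ay^{2k})^{-1}$ — the approach is different and more hands-on. Here the mechanism is not self-similar concentration at a point but rather a pointwise-in-$y$ blow-up driven by the ODE $f' = f^2$: at each fixed $y$ the solution behaves like $1/(T-t)$ only at $y=0$, and the spatial flatness of order $2k$ of the initial data propagates. I would start from the ansatz $\xi = U + \tilde\xi$ with $U(t,y) = (T-t+ay^{2k})^{-1}$ and compute the error $E := U_t - U^2 - \pa_{yy}U$. A direct computation gives $U_t - U^2 = -a y^{2k} U^2 \cdot (\text{something})$... more precisely $U_t = -U^2(1 + \text{correction})$ — actually $U_t - U^2$ from the time derivative alone vanishes exactly since $\pa_t(T-t+ay^{2k})^{-1} = (T-t+ay^{2k})^{-2} = U^2$ — wait, $\pa_t U = +(T-t+ay^{2k})^{-2} = U^2$, so $U_t - U^2 = 0$ and the only error is $-\pa_{yy}U$. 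Then $\pa_{yy}U = \pa_{yy}(T-t+ay^{2k})^{-1}$, which by scaling is of size $(T-t)^{-1} \cdot ((T-t)^{\frac{1}{2k}}+|y|)^{-2}$ in the relevant region — this is lower order than $U^2 \sim ((T-t)^{\frac 1{2k}}+|y|)^{-4k}$ precisely when $2 < 4k$, i.e. $k \geq 1$, with room to spare for $k \geq 2$. So the strategy is: fix $T$ small, linearize around $U$, obtaining $\tilde\xi_t - 2U\tilde\xi - \pa_{yy}\tilde\xi = \tilde\xi^2 + \pa_{yy}U$, and solve this for $\tilde\xi$ by a fixed-point argument in the weighted space defined by the right-hand side of \fref{bd:remainder NLH}, using the smallness of $T$ (equivalently, the largeness of the time-weight) to close the contraction; the forcing $\pa_{yy}U$ sits strictly inside the ball, and the Duhamel formula for the linear operator $\pa_{yy} + 2U$ must be shown to preserve the weight — this uses that $2U$ is an integrable-in-time, spatially localized potential so its contribution to the propagator is a bounded perturbation of the heat semigroup in these weighted norms.

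The main obstacle, I expect, is the first part: obtaining the sharp weighted derivative estimates on the remainder $\tilde\xi$ in the logarithmically-corrected case. The difficulty is that the correction profile $\varphi(z/\sqrt s)$ is only slowly varying (the relevant length scale is $\sqrt{s}$ in $z$, i.e. $\sqrt{(T-t)|\log(T-t)|}$ in $y$), so the linearized operator is a singular perturbation and the spectral gap degenerates logarithmically; controlling derivatives requires tracking the interplay between the parabolic smoothing (which gains powers of the parabolic scale) and the modulation dynamics (which leaks at rate $1/s$) without losing the $|\log(T-t)|^{1/4}$ improvement over the naive $|\log(T-t)|^{-1}$ in the statement. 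This is exactly the "true improvement" alluded to in the introduction, and I would handle it by a careful choice of weighted Sobolev norms adapted to the variable length scale, combined with a pointwise parabolic-regularity argument (representation formula plus Gaussian bounds) near the self-similar core and a cruder energy estimate in the outer region $|y| \gtrsim \sqrt{(T-t)|\log(T-t)|}$, matched on the overlap. For the second part, the only mild subtlety is verifying that the potential term $2U(t,y)$, although of size $(T-t)^{-1}$ at $y=0$, is genuinely a harmless perturbation of the Laplacian in the weighted norm — which follows since $\int_0^T \|U(t,\cdot)\|_{L^\infty_{\text{loc}}}\,dt$ near any fixed $y\neq 0$ is finite and at $y=0$ the weight itself degenerates at the same rate.
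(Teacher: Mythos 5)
Your plan for the first part (self-similar variables, Hermite decomposition, modulation of the null mode, Brouwer for the unstable ones, matched inner/outer estimates) is broadly the same as the paper's, which just sketches that case in Section~\ref{sec:stable}; the one inaccuracy is that you identify the "true improvement" as the $|\log(T-t)|^{1/4}$ factor, whereas the paper's actual point is the spatial reach of the estimate (obtained via a new exterior Lyapunov functional in place of maximum principle / Feynman--Kac). But the crux of the theorem is the second part, and there your proposal has a genuine gap.

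You propose to write $\xi = U + \tilde\xi$ with $U = (T-t+ay^{2k})^{-1}$, note correctly that $U_t - U^2 = 0$ so the only error is $-\pa_{yy}U$, and then solve the perturbation equation $\tilde\xi_t - 2U\tilde\xi - \pa_{yy}\tilde\xi = \tilde\xi^2 + \pa_{yy}U$ by a contraction in a weighted space. This cannot close as stated, because the linear propagator for $\pa_t - 2U - \pa_{yy}$ is \emph{not} bounded on that weighted space: near $y=0$ the potential is $2/(T-t)$, so a homogeneous perturbation of size $\delta$ at time $t_0$ is amplified to $\delta(T-t_0)^2/(T-t)^2$, which far exceeds the target weight $(T-t)^{-1+1/(4k)}$ at $y=0$. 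In self-similar variables $f(s,Y)=(T-t)\xi$, the linearized operator is $H_\rho = -1 + \frac Y2\pa_Y - \pa_{YY}$ with spectrum $\{(\ell-2)/2\}_{\ell\geq 0}$; the target decay rate $e^{-(k-1/2)s}$ for the interior remainder corresponds exactly to the spectral gap \emph{after} removing $h_0,\dots,h_{2k}$. The coefficients $c_0,\dots,c_{2k-1}$ on those modes evolve according to $c_{\ell,s} = -\frac{\ell-2}{2}c_\ell + O(e^{-(k-1/2+\frac{1}{4k})s})$, whose homogeneous part decays too slowly (or grows) relative to the bootstrap bound; this forces a codimension-$(2k-1)$ choice of initial data, handled in the paper by an outgoing-flux computation and Brouwer's fixed point theorem (Proposition~\ref{pr:bootstrap}, final step). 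No contraction mapping, however small $T$ is, can see this: the instability is a property of the linearization, not of the size of the nonlinearity.

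A second, related gap: your bare ansatz $U$ is too coarse for the sharp interior estimate. The diffusion error $\pa_{YY}F_k(Z)$ has $L^2_\rho$-size $\approx e^{-(k-1)s}$, which is \emph{larger} than the required remainder size $e^{-(k-1/2)s}$; and without that sharper interior bound the boundary terms at $|Y|\sim 1$ in the exterior energy estimate do not close (in Lemma~\ref{NLH:lem:improvedoustide} the boundary contribution is $\sim e^{(k-1)\ell s/k}\|\e\|_{L^2_\rho}^2$, which only decays because $\|\e\|_{L^2_\rho}\lesssim e^{-(k-1/2)s}$). This is why the paper's profile $F[a]$ in Proposition~\ref{NLH:pr:profile} carries the polynomial corrections $\sum_{\ell}\bar c_{2\ell}(ae^{-\frac{k-1}{2k}s})^{2k-2\ell}\phi_{2\ell}(Z)$: the constants $\bar c_{2\ell}$ are tuned so that the error $\Psi$ vanishes to high order at $Y=0$, giving $\|\Psi\|_{L^2_\rho}\lesssim e^{-2(k-1)s}$. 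You would need to build something equivalent into your ansatz, and once you do you are essentially forced back into the paper's self-similar framework where those corrections are natural.

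Finally, one small terminological point: you call $2U$ "an integrable-in-time, spatially localized potential"; at $y=0$ it is $2/(T-t)$, which is not integrable on $(0,T)$, and this non-integrability is exactly what powers the unstable modes above. The weight does not "degenerate at the same rate" — it degenerates as $(T-t)^{-1+1/(4k)}$, slower than the $(T-t)^{-2}$ amplification by the potential.
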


\vspace{0.3cm}

{\it Comments on the result}.

\vspace{0.2cm}

The even assumption is not necessary, it is here to fit the even assumption on \fref{eq:burgers}. The construction that we give here for the second case of the unstable blow-ups is not a copy of the seminal previous ones \cite{BK2,HV2,MZ}, but a bit simpler and more precise. In particular, we extensively use the fact that these profiles are perturbations of the smooth unstable self-similar profiles of the quadratic equation $f_t=f^2$, and that away from the origin in self-similar variables the problem is a perturbation of the renormalised quadratic equation $f_s+f-(Z/2k)f_Z-f^2=0$. We avoid the use of maximum principle as in \cite{HV2} or of Feynman-Kac formula as in \cite{BK2,MZ}, and obtain a sharp estimate. Namely, the convergence of the solution to the blow-up profile holds in a spatial region that is of size one in original $y$ variables which is the estimate \fref{bd:remainder NLH}. For example, this estimate directly implies the existence of a profile at blow-up time $u(t,y)\rightarrow U^*(y)$ as $t\rightarrow T$ for $y\neq 0$, and that it satisfies $U^*(y)\sim (ay^{2k})^{-1}$ as $y\rightarrow 0$ (this fact would not be obtained directly in previous works).

\begin{proof}[Proof of Theorem \ref{th:NLHinstable}]

The second part, concerning the unstable blow-ups, is proved in Section \ref{sec:NLH}. The proof of the first part for the stable blow-up is very similar, and though our method is a bit simpler than \cite{BK2,MZ} it does not yield truly improved estimates, hence we just sketch the proof in Section \ref{sec:stable}.

\end{proof}


\subsection{Statement of the result}

The main result of this paper shows how, in a case with symmetries, the viscosity affects the shock formation of Burgers equation, resulting in a concentration of a self-similar shock $\Psi_i$ along the vertical axis $\{x=0\}$, with scaling parameters that are related to the solution of the parabolic system $(NLH)-(LFH)$. As a consequence, any blow-up solutions to the two one-dimensional equations can be combined to obtain a two-dimensional blow-up. Note that the solutions below can be chosen initially with compact support, and that we are only able to construct them around an initially concentrated blow-up profile. The first theorem involves the stable blow-up of $(NLH)$. The blow-up pattern is stable in a Banach space $\mathcal B$ of $C^4$ regularity with polynomial weight associated to the norm:
\be \label{bd:def mathcal B}
\| u \|_{\mathcal B}:=\sum_{j_1+j_2=0}^4 \left\| \frac{\langle x\rangle^{j_1}\langle y\rangle^{j_2}\pa_x^{j_1}\pa_y^{j_2}u}{\langle x \rangle^4(\langle y\rangle^3+\langle x \rangle)^{-\frac{11}{3}}}\right\|_{L^{\infty}(\mathbb R^2)}.
\ee

\begin{theorem} \lab{th:mainstable}

For any $i\in \mathbb N^*$ and $b>0$, there exists a Schwartz class solution $u$ to \fref{eq:burgers}, blowing up at time $T$ with
$$
u(t,x,y)= b^{-1} \lambda^{\frac{1}{2i+1}}(t,y)\Psi_i\left(b\frac{x}{\lambda (t,y)}\right)+ \tilde u(t,x,y)
$$
where $\Psi_i$ is defined by \fref{id:implicit} and the  transverse scale satisfies
$$
\lambda (t,y)=(T-t)^{1+\frac{1}{2i}} \ \left(1+\frac{y^2}{8(T-t)|\log (T-t)|}\right)^{1+\frac{1}{2i}}
$$
and one has the convergence in self-similar variables $(X,Z)$ 
\be \lab{main:bdtildeu'}
(T-t)^{-\frac{1}{2i}}u \left((T-t)^{1+\frac{1}{2i}}X,\sqrt{(T-t)|\log(T-t)|}Z\right) \rightarrow b^{-1}(1+Z^2/8)^{\frac{1}{2i}}\Psi_i \left(b\frac{X}{(1+Z^{2}/8)^{1+\frac{1}{2i}}}\right)
\ee
in $C^1$ on compacts sets and for some constants $C>0$ the remainder satisfies
\be \lab{main:bdtildeu2'}
\| \pa_x \tilde u \|_{L^{\infty}}\leq C (T-t)^{-1}|\log (T-t)|^{-\frac 14}.
\ee
For $i=1$, there exists a ball in $\mathcal B$ around $u(t=0)$ such that any other solution with initial datum in that set blows up with the same behaviour.

\end{theorem}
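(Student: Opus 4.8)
The plan is to build the solution by a perturbative argument in self-similar variables, treating the stated profile as an approximate solution and controlling the remainder $\tilde u$ by a bootstrap that couples the transport structure of Burgers equation in $x$ with the parabolic smoothing in $y$. First I would fix the transverse scale $\lambda(t,y)$ to be (a modulated version of) the stable blow-up solution $\xi$ of $(NLH)$ given by the first part of Theorem~\ref{th:NLHinstable}, and introduce
\[ s=-\log(T-t),\qquad X=b\,\lambda^{1+\frac1{2i}}(t,y)\,x,\qquad Z=\frac{y}{\sqrt{(T-t)|\log(T-t)|}}, \]
together with the decomposition $u = b^{-1}\lambda^{-\frac1{2i}}\big(\Psi_i(X)+v(s,X,Z)\big)$, allowing if needed a small modulated shift $c(t,y)$ along characteristics and a correction to $b$, to be fixed by orthogonality conditions. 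Plugging this ansatz into \fref{eq:burgers} and using the profile ODE $-\frac1{2i}\Psi_i+(1+\frac1{2i})X\Psi_i'+\Psi_i\Psi_i'=0$ satisfied by $\Psi_i$ (Proposition~\ref{prop:smoothselfsim}), the remainder $v$ obeys an equation of the schematic form
\[ \partial_s v + \mathscr{H}v - \mathscr{D}v = \mathscr{N}(v) + \mathscr{E}, \]
where $\mathscr{H}$ is the linearised Burgers operator at $\Psi_i$ in the variable $X$ (studied in Proposition~\ref{pr:H}), $\mathscr{D}$ is a transverse elliptic operator in $Z$ generated by $-\partial_{yy}$ after renormalization, $\mathscr{N}(v)$ collects the quadratic term $\sim v\,\partial_X v$, and $\mathscr{E}$ is the error created by the ansatz being only approximate: it is built entirely out of $\partial_y\lambda$, $\partial_{yy}\lambda$ and the mismatch between $\partial_t\lambda/\lambda^2$ and the exact $(NLH)$ rate.

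\textbf{Error estimate and choice of data.} The sharpness of the construction rests on estimating $\mathscr{E}$, and this is precisely why we needed the improved weighted derivative bounds in Theorem~\ref{th:NLHinstable}: inserting $\lambda=\xi$ together with the pointwise bounds for $\partial_y\xi$ and $\partial_{yy}\xi$ into $\mathscr{E}$ shows that $\mathscr{E}$ is smaller, by a power of $|\log(T-t)|$, than the target size for $v$, uniformly on the region $|Z|\lesssim1$ (equivalently $|y|\lesssim1$ in original variables) and with good decay in $X$. The initial data is taken concentrated, so that $v(s_0,\cdot)$ is small in the working norm, up to finitely many free parameters along the unstable directions described below.

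\textbf{Bootstrap and closing the argument.} I would then run a bootstrap in a norm adapted to the mixed nature of the problem: a weighted Sobolev norm in $X$ whose weight records the growth of $\Psi_i$ and the outward transport so that mass carried to large $|X|$ is controlled, combined with a Gaussian-type weight in $Z$ matching the parabolic scaling. The inputs are: (i) the spectral and coercivity properties of $\mathscr{H}$ from Proposition~\ref{pr:H} — for $i=1$ the operator is dissipative modulo the two symmetry directions of Burgers equation (scaling and translation), which are removed by the modulation of $\lambda$ and $c$, so no further finite-dimensional obstruction remains; for $i\ge2$ one first projects off the $2(i-1)$ genuinely unstable modes; (ii) the dissipativity of $\mathscr{D}$, which provides the transverse gain; (iii) the smallness of $\mathscr{N}(v)$ (quadratic) and of $\mathscr{E}$; and (iv) away from the origin, the fact that, as in the proof of Theorem~\ref{th:NLHinstable}, the problem becomes a perturbation of the renormalised quadratic ODE. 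Combining an energy identity in this norm with this outer-region analysis closes the bounds on $v$. For $i\ge2$ the $2(i-1)$ unstable directions are handled by a Brouwer/topological shooting argument over the corresponding finite-dimensional parameter space of initial data, the exit through the unstable modes being transverse, so that some choice of parameters yields a solution trapped until time $T$. Finally, undoing the change of variables: the bootstrap bounds give $v\to0$ uniformly on compact sets in $C^1$, which is \fref{main:bdtildeu'}; and since $\partial_x\tilde u=\lambda\,(\partial_X v\text{-contributions})$, together with $\sup_y\lambda(t,y)=(T-t)^{-1}$ and the $|\log(T-t)|^{-1/4}$ gain inherited from the $(NLH)$ remainder $\tilde\xi$, one obtains \fref{main:bdtildeu2'}.

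\textbf{Main obstacle.} The crux is the interaction between the hyperbolic ($x$) and parabolic ($y$) directions through the $y$-dependent scaling $\lambda(t,y)$: commuting $\partial_y$ (or $\partial_Z$) with the renormalization produces terms in which a large factor $X\partial_X v$ meets $\partial_y\lambda/\lambda$, which is not pointwise small, and such terms must be absorbed by the transverse dissipation or by the transport damping rather than estimated naively. Designing the norm and the energy identity so that this cancellation is manifest — while simultaneously keeping $\mathscr{E}$ under control on a region of size $O(1)$ in $y$ — is the heart of the argument.
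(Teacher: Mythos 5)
Your high-level picture is right — profile from the stable $(NLH)$ blow-up, bootstrap in a weighted norm, inner/outer analysis, topological argument for the residual instabilities — but there is a genuine gap at exactly the place you flag as the main obstacle, and the paper's proof works precisely because it avoids the route you take. You absorb the transverse scaling into the streamwise variable, $X=b\,\lambda^{1+\frac1{2i}}(t,y)\,x$, and perturb around $\Psi_i(X)$. With that choice the rescaled viscosity $-\partial_{yy}$ no longer acts only in $Z$: since $\partial_y X=(\partial_y\lambda/\lambda)\big(1+\tfrac1{2i}\big)X$ is of size $X/\sqrt{(T-t)|\log(T-t)|}$, the operator $\partial_{yy}$ produces, after renormalisation, a term of the form $\frac{1}{(T-t)|\log(T-t)|}\,c(Z)\,X^2\partial_X^2$ of the \emph{same order} as the genuine transverse diffusion $\frac{1}{(T-t)|\log(T-t)|}\partial_Z^2$, together with mixed $X\partial_X\partial_Z$ terms. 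These are not lower order, they are not sign-definite, and "designing the norm so the cancellation is manifest" is the unresolved step, not a footnote. The paper resolves this by \emph{not} renormalising $X$: it keeps $X=\sqrt{b/6}\,x/(T-t)^{1+\frac1{2i}}$ independent of $y$, works with the two-variable profile $\Theta(X,Z)=F_k^{-1/2}(Z)\Psi_1\big(F_k^{3/2}(Z)X\big)$, so that $\partial_{YY}$ stays the flat Laplacian in $Y$, and pays instead with a $Z$-dependent transport operator $\mathcal L_Z$. The price is handled by the adapted vector field $A=\big(\tfrac32 X+F_k^{-3/2}(Z)\Psi_1(F_k^{3/2}(Z)X)\big)\partial_X$ (and $\partial_Z$, $Z\partial_Z$), which commutes exactly with the transport part of $\mathcal L_Z$; derivatives are taken along $A,\partial_Z,Z\partial_Z$ rather than $\partial_X,\partial_Y$, and the problematic commutators never appear.

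Two further structural elements of the paper's construction are absent from your outline and are not optional. First, $\Psi_i$ grows like $|X|^{1/(2i+1)}$, so outside the blow-up region ($|y|\gtrsim d$) the ansatz $\Psi_i$ is the wrong profile; the paper glues the interior profile to an exterior one $\Theta_e=(-Xf+X^3 g/6)e^{-\tilde X^4}$ via a cut-off $\chi_d$, and the error generated by the cut-off has to be estimated (terms $R_3,R_4,R_5$ in Lemma \ref{main:lem:R}). Second, the axis behaviour is not closed by generic orthogonality conditions: the interior profile $\tilde\Theta$ is built from the \emph{exact} traces $f=-\partial_x u|_{x=0}$, $g=\partial_x^3 u|_{x=0}$ (solutions of $(NLH)$--$(LFH)$ constructed first, with the sharp weighted bounds of Theorem \ref{th:NLHinstable} and its $(LFH)$ companion), which forces $\partial_X^j\varepsilon(s,0,Y)=0$ for $j\le 4$. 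This vanishing is what makes the energy functional $\int |\varepsilon|^{2q}/\varphi_{4,0}^{2q}\,dXdY/(|X|\langle Y\rangle)$ finite and integrable-by-parts near $\{X=0\}$ — a Gaussian weight in $Z$ plus an unspecified weight in $X$ will not automatically give this. (A minor point: for $i\ge2$ the paper kills the $2(i-1)$ unstable Burgers modes by the degeneracy assumption $\partial_x^j u_0(0,y)=0$, $2\le j\le 2i$, preserved by the flow, rather than by a Brouwer shooting argument; the only topological argument needed in the stable theorem is for the single unstable $L^2_\rho$-mode $c_1 h_2$ of the $(NLH)$ dynamics.)
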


For $i=1$, we did not pursue optimality for the weighted space $\mathcal B$. Other choices for the weight may be possible, but the $C^4$ regularity is essential and could only be lowered to $C^{3+}$ by adapting the proof. Importantly, $\mathcal B$ allows for unbounded perturbations\footnote{The local well-posedness for equation \fref{eq:burgers2} in $\mathcal B$ follows from local well-posedness for localised data, using the finite speed of propagation of the equation along the $x$ variable.}, highlighting the fact that only the control of derivatives is of importance in this problem. The second theorem involves the unstable "flat" blow-ups of $(NLH)$.

\begin{theorem} \lab{th:main}

For any $k,i\in \mathbb N^*$, $k\geq 2$, $a,b>0$, there exists $T^*>0$, such that for any $0<T<T^*$ there exists a Schwartz class solution $u$ to \fref{eq:burgers} odd in $x$ and even in $y$, blowing up at time $T$ with
\be \lab{main:decompositionu}
u(t,x,y)= b^{-1} \lambda^{\frac{1}{2i+1}}(t,y)\Psi_i\left(b\frac{x}{\lambda (t,y)}\right)+ \tilde u(t,x,y)
\ee
where $\lambda (t,y)=(T-t+ay^{2k})^{1+\frac{1}{2i}}$ and one has the convergence in self-similar variables $(X,Z)$
\be \lab{main:bdtildeu}
(T-t)^{-\frac{1}{2i}}u \left((T-t)^{1+\frac{1}{2i}}X,(T-t)^{\frac{1}{2k}}Z\right) \rightarrow b^{-1}(1+aZ^{2k})^{\frac 12}\Psi_i \left(b\frac{X}{(1+aZ^{2k})^{\frac 32}}\right)
\ee
in $C^1$ on compact sets and for some constants $C,\eta>0$ the remainder satisfies
\be \lab{main:bdtildeu2}
\| \pa_x \tilde u \|_{L^{\infty}}\leq C (T-t)^{-1+\eta}.
\ee

\end{theorem}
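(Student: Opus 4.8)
The plan is to set up a modulation-theoretic bootstrap in self-similar variables that couples the hyperbolic rescaling along $x$ (governed by the Burgers profile $\Psi_i$) with the parabolic rescaling along $y$ (governed by the flat blow-up of $(NLH)$ from Theorem \ref{th:NLHinstable}). First I would pass to the renormalised frame by writing
\[
u(t,x,y)= b^{-1}\lambda^{-\frac{1}{2i}}(t,y)\,\Psi_i\!\left(b\lambda^{1+\frac{1}{2i}}(t,y)\,x\right)+\tilde u(t,x,y),
\]
with $\lambda(t,y)=(T-t+ay^{2k})^{-1}$ taken as the \emph{exact} leading parameter dictated by the profile $(T-t+ay^{2k})^{-1}$ of $(NLH)$, and then introduce the self-similar variables
$X = b\lambda^{1+\frac{1}{2i}}(t,y)\,x$, $Z = (T-t)^{-\frac{1}{2k}} y$, and a logarithmic time $s=-\log(T-t)$. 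In these variables the main term satisfies the stationary Burgers equation exactly in $X$ (by the definition of $\Psi_i$ in \fref{id:implicit}), and the error $\tilde u$ (or rather its renormalisation $\tilde v$) solves a forced transport–diffusion equation whose linear part splits, up to lower order, into the linearised Burgers operator $\mathcal{L}_i$ acting in $X$ (whose spectral structure and $2(i-1)$ unstable modes are given by Proposition \ref{pr:H}) and the linearised $(NLH)$ operator acting in $Z$. The forcing comes from the transverse viscosity $-\pa_{yy}$ hitting the main profile and from the discrepancy between the true and approximate scaling laws; by construction of $\lambda$ this forcing is $O(e^{-\eta s})$ in the appropriate weighted norm, which is precisely what produces the remainder bound \fref{main:bdtildeu2}.

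Next I would build an approximate solution of sufficiently high order: starting from the leading profile I add corrections $\tilde u^{(1)},\tilde u^{(2)},\dots$ solving the linearised equation with the successive forcing terms, choosing at each stage to cancel the slowly decaying (resonant) contributions. The key point is that the $y$-dependence enters only through $\lambda(t,y)$ and its $y$-derivatives, which in self-similar $Z$-variables are controlled by the flat $(NLH)$ estimates \fref{bd:remainder NLH} — this is where the improved, region-of-size-one estimate from Theorem \ref{th:NLHinstable} is essential, since we need control of $\tilde\xi$ and its $y$-derivatives up to order $J$ uniformly on $|y|\lesssim 1$, not merely in the parabolic neighbourhood. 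After $N$ steps the residual error is $O(e^{-\eta_N s})$ with $\eta_N\to\infty$, so fixing $N$ large and $\eta$ as in the statement closes the scheme.

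The core of the argument is then a bootstrap/trapping argument. I would fix a shrinking set of the form: the stable part of $\tilde v$ is bounded in a weighted $C^1$ (or higher Sobolev, with $y$-weights $\langle Z\rangle$-adapted) norm by $e^{-\eta s}$, while the finitely many unstable coefficients — the $2(i-1)$ from the Burgers linearisation times the finitely many from the flat $(NLH)$ linearisation — are bounded by a smaller quantity and are to be tuned. A topological (Brouwer/index) argument on these finitely many parameters, exactly as in \cite{BK2,HV2} and in the construction of $\Psi_i$-concentration, produces at least one choice of initial data for which the trajectory stays trapped and hence converges. Inside the trap, the energy estimates require: (i) a Lyapunov/monotonicity estimate for $\mathcal{L}_i$ in $X$ on compact sets plus a repulsivity estimate at the edge $|X|\to\infty$ coming from the outgoing transport of Burgers (so that the unbounded growth $\Psi_i(X)\sim -X$ does not leak information from spatial infinity); and (ii) the parabolic smoothing and weighted decay in $Z$ inherited from $(NLH)$. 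Combining the two requires commuting the $X$- and $Z$-localisations carefully, since the $X$-scaling depends on $y$ through $\lambda(t,y)$; the transverse derivatives $\pa_y$ of the renormalised unknown pick up factors of $\pa_y\lambda/\lambda \sim \pa_Z\log(1+aZ^{2k})\cdot(T-t)^{-1/2k}$, which must be absorbed using the $Z$-weights. I expect \textbf{this coupling of the hyperbolic and parabolic localisations — in particular controlling the commutators between the $y$-dependent $X$-rescaling and the transverse derivatives, and ensuring the outgoing Burgers transport controls the non-compact $X$-region uniformly in $Z$} — to be the main obstacle, and the reason a genuinely new functional framework (adapted norms mixing $\langle X\rangle$-growth with Gaussian-type $Z$-weights) is needed rather than a direct superposition of the two one-dimensional theories. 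The convergence \fref{main:bdtildeu} in $C^1$ on compact sets then follows from the weighted bounds by a standard localisation, and the blow-up time equals $T$ because $\|\pa_x u\|_{L^\infty}\gtrsim \lambda(t,0)=(T-t)^{-1}\to\infty$ while the remainder is $o((T-t)^{-1})$ by \fref{main:bdtildeu2}, matching the criterion \fref{eq:critere explo}.
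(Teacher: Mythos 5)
Your proposal correctly identifies the renormalised frame, the role of the $(NLH)$ flat blow-up estimates (including the crucial size-one improvement), the commutator issue between the $y$-dependent $X$-rescaling and transverse derivatives, and the need for an outgoing-transport estimate at $|X|\to\infty$. However, it misses the central structural device of the paper's 2-d argument, and the Brouwer/shooting argument you propose at the 2-d level would not be straightforward to run. The paper does \emph{not} build an iterated high-order approximate solution, and it does \emph{not} tune finitely many unstable coefficients of the 2-d remainder by a topological argument. Instead, the approximate profile $Q=\chi_d\tilde\Theta+(1-\chi_d)\Theta_e$ is constructed \emph{using the actual trace functions} $f(s,Y)=-\pa_X v(s,0,Y)$ and $g(s,Y)=\pa_X^{2i+1}v(s,0,Y)$ (which are the exact solutions of $(NLH)$ and $(LFH)$ from Section~\ref{sec:NLH}) in the scaling parameters $\lambda,\mu$ of $\tilde\Theta$. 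Combined with the odd/even symmetry this forces the remainder $\e$ to vanish to order $2i+2$ on the axis $\{X=0\}$ for all $s$ (the lemma after \eqref{id:modulation}). This axis vanishing is precisely what lets one measure $\e$ in the weighted $L^{2q}$ norm built from the eigenfunction $\varphi_{2i+2,0}$ (for $i=1$: $\varphi_{4,0}$), which vanishes like $X^{2i+2}$ near the axis and is associated to a \emph{strictly positive} decay rate $(j-2i-1)/(2i)$. All the Burgers instabilities and the neutral mode ($j\le 2i+1$) are eliminated at once, so a plain forward bootstrap closes (Lemmas~\ref{main:lem:energy0}--\ref{main:lem:energy1}), with no Brouwer index argument at the 2-d level.

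The topological ingredient lives entirely in the 1-d $(NLH)$ construction (Proposition~\ref{pr:bootstrap}, end of Section~\ref{sec:NLH}), where the finitely many unstable Hermite coefficients $c_0,\dots,c_{2k-1}$ are tuned. Attempting instead a genuine 2-d shooting argument on "Burgers $\times$ $(NLH)$" unstable directions, as you propose, runs into two concrete difficulties that the paper's device sidesteps: (a) the Burgers unstable eigenfunctions $\varphi_{j,\ell}$ are \emph{unbounded} in $X$ and their $Z$-dependence is not separated in a way that yields a finite-dimensional, well-posed projection of the 2-d remainder; (b) nothing in a crude shooting argument guarantees the high-order vanishing of $\e$ on the axis that the weighted norm requires to be even finite. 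The iterated-correction scheme is likewise unnecessary: the error $R$ of the single-step profile is already $O(se^{-qs})$ in the relevant norm (Lemma~\ref{main:lem:R}), because the dangerous low-order contributions in $X$ are killed by matching $\tilde\Theta$ to the exact traces $f,g$, and the dissipation in $Z$ is a lower-order perturbation in the $L^{2q}$ framework (with $q$ large, approximating the $L^\infty$ transport estimate \eqref{main:bd:estimationtransport}).
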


\begin{proof}

Theorem \ref{th:main} is proved in Section \ref{sec:main}. It is a consequence of Proposition \ref{main:pr:bootstrap} and Lemma \ref{main:lem:pointwise e}. The proof of Theorem \ref{th:mainstable} is very similar, and is just sketched in Section \ref{sec:stable}.
 
\end{proof}

\subsection{Comments on the result and open problems}

\vspace{0.2cm}
\noindent 
{\it 1. Stability/Instability in the symmetric case}. The solutions of Theorem \ref{th:mainstable} with $i\geq 2$, or that of Theorem \ref{th:main} are instable within the class of odd in $x$ and even in $y$ solutions. For $i\geq 2$, these solutions are such that $\pa_x^3 u_{|x=0}=0$, and we expect a generic perturbation of this third order derivative on the axis to lead to the blow-up behaviour described in Proposition \ref{pr:LHinstable} for $k\geq 2$ and \fref{stable:bd:tildeg} for $k=1$. The sign of such perturbation is important as we believe $b<0$ would create shocks outside the vertical axis. For $k\geq 2$, these solutions are such that $\pa_x u_{|x=0}$ is an instable blow-up solution of (NLH) and generic perturbations lead to the stable blow-up $k=1$ \cite{HV3}. We do not believe that solutions of Theorem \ref{th:mainstable} with $i\geq 1$, though being stable, provide a generic blow-up behaviour in this symmetry class: the blow-up might occur at another point than the origin where such symmetry around that point would fail.\\

\noindent 
{\it 2. Symmetry breaking}. We expect all solutions to Theorem \ref{th:mainstable} and \ref{th:main} to be instable by symmetry breaking. Formally, our blow-up profile \fref{eq:def Theta} admits non-symmetrical analogues, but the anisotropic viscosity make them fail to be approximate solutions, so our Ansatz does not adapt. The symmetry class we use allow for a control of the viscosity effects. But we wonder wether outside this symmetry class, viscosity, via some kind of hypo-elliptic effect, might prevent blow-up. This can be seen formally by considering solutions of the form $u=V(t,x-\epsilon y)$. For $\epsilon=0$, $V$ solves Burgers equation \fref{eq:burgersusual} and $u$ might blow-up. By tilting slightly the symmetry axis $\epsilon>0$, $V$ solves the viscous Burgers equation $V_t+VV_x-\epsilon^2 V_{xx}=0$ and is global. Investigating the generic behaviour is thus an interesting open problem.\\

\noindent{\it 3. Anisotropy}. Very few results concerning a precise description of anisotropic singularity formation exist, despite its fundamental relevance in fluid dynamics. We see that here a wide range of different scaling laws in the $x$ and $y$ variables are possible. The formation of shocks for two-dimensional extensions of Burgers equation is studied in \cite{PLGG}. Let us also mention that in \cite{CMR,MRS} anistropic blow-ups were constructed for the energy supercritical semi-linear heat equation.\\

\noindent{\it 4. Connections between self-similar blow-ups}. \fref{eq:burgers} appears to be a good candidate to study connexions between self-similar profiles. As the concentration of $\Psi_i$ for $i\geq 2$ for Burgers is unstable with instabilities yielding to the concentration of $\Psi_j$ for $j<i$, and as the same should hold for the unstable blow-ups of $(NLH)$ (see \cite{HV3} for the genericity result), one interesting result would be to prove rigorously that solutions to \fref{eq:burgers} concentrating the $i$-th profile of Burgers and the $k$-th of $(NLH)$ are unstable with instabilities yielding to the concentration of the $j$-th profile of Burgers and the $\ell$-th of $(NLH)$ for $(j,\ell)<(i,k)$.\\

\noindent{\it 5. Continuation after blow-up}. The inviscid Burgers equation possesses global weak solutions that can be obtained using a viscous approximation and that are unique under a suitable entropy condition. The investigation of the analogous problem for \fref{eq:burgers} is natural. In particular, if the solution can be continued and has jumps, what is the set of points with discontinuities and its dynamics?


\subsection{Ideas of the proof and Organisation of the paper}

The result relies on the extension of a lower-dimensional blow-up along a new spatial direction, as in \cite{CMR,MRS}. Self-similar blow-up in Burgers equation is completely studied via direct computations, without technical difficulties. It is an easy setting to understand properties of blow-ups, for example regularity and stability issues and discretely self-similar singularities. The extension along the transverse direction is studied through modulation equations \fref{eq:NLH}, which for the first time are non-trivial PDEs. To obtain weighted estimates for $(NLH)$ we adapt \cite{MZ} and use a new exterior Lyapunov functional in Lemma \ref{NLH:lem:improvedoustide}, see the comments below Theorem \ref{th:NLHinstable}. The blow-up of the solution to $(NLF)$ can then be studied in the same analytical framework. The core of the paper is the 2-d analysis. The ideas are somewhat similar to those used in other contexts of blow-up through a prescribed profile, but are specific to the problem at hand and we hope that they will have other applications in transport and mixed hyperbolic/parabolic problems. We derive a blow-up profile with well-understood properties and linearisation, and build an approximate blow-up profile using modulation to neutralise growing modes. We then construct a solution in its vicinity via a bootstrap argument. We use solely weighted energy estimates, which are robust and reminiscent of a duality method for the asymptotic linear operator, and derivatives are taken along adapted vector fields to commute well with the equation.\\

\noindent The paper contains two independent sections devoted to Burgers equation and the modulation system, and another one proving the main theorem which can also be read separately as it uses their results as a black box. Section \ref{sec:burgers} concerns the self-similarity in the blow-ups of Burgers equation. Section \ref{sec:main} is devoted to the proof of Theorem \ref{th:main}, assuming some results for the derivatives on the vertical axis, Theorem  \ref{th:NLHinstable} and Proposition \ref{pr:LHinstable}. The blow-up profile and the linearised dynamics are studied in Lemma \ref{lem:Psi1} and Proposition \ref{pr:mathcalL}, and the heart of the proof is a bootstrap argument in Proposition \ref{main:pr:bootstrap}. Section \ref{sec:NLH} deals with the two Propositions \ref{th:NLHinstable} and \ref{pr:LHinstable} admitted in Section \ref{sec:main}, and concerns in particular the flat blow-ups for the semi-linear heat equation. Finally in Section \ref{sec:stable} we sketch how the proof of Theorem \ref{th:main} can be adapted to prove Theorem \ref{th:mainstable}.\\


\subsection{Notations}

\noindent We use the Japanese bracket notation
$$
\lY= (1+Y^2)^{\frac 12}.
$$
For functions having in argument a rescaling of the variable $X$, we use the general notation $\tilde X$ for their variable, as in $(\tilde X +\tilde X^2)(cX)=cX+(cX)^2$ for example. Depending on the context, $\tilde X$ will also refer to the main renormalised variable
\be \lab{eq:deftildeX}
\tilde X=\frac{X}{(1+Z^{2k})^{\frac 32}}
\ee
and there should not be confusions. We write $a\lesssim b$ if there exists a constant $C$ independent of the other constants of the problem such that $a\leq C b$. We write $a\approx b$ if $a\lesssim b$ and $b\lesssim a$. Generally, $C$ will denote a constant that is independent of the parameters used in the proof, whose value can change from one line to another. When its value depends on some parameter $p$, we will specify it by the notation $C(p)$. To perform localisations, the function $\chi$ is a smooth nonnegative cut-off function, $\chi=1$ on $[-1,1]$ and $\chi=0$ outside $[-2,2]$.

\subsection*{Acknowledgements}  C. Collot is supported by the ERC-2014-CoG 646650 SingWave. N. Masmoudi is supported by NSF grant DMS-1716466.



\section{Self similarity in shocks for Burgers equation} \lab{sec:burgers}

This section is devoted to the formation of shocks for Burgers equation
\be \lab{eq:burgersusual}
U_t+UU_x=0
\ee
This simple equation appears as a toy model for blow-up issues involving self-similar behaviours. However, we did not find works in which this was emphasised apart from \cite{EF} (though implicit in some other works) where the existence of smooth self-similar singularities and their linearised dynamics are briefly studied, the usual point of view being geometrical \cite{CEHL}. Everything is explicit, which is convenient as the picture described in Subsection \ref{subsec:Burgers} shares many similarities with other equations. In particular one sees the link between the regularity of the solution and its blow-up behaviours (this issue appearing in other hyperbolic equations as in \cite{KST}).


\subsection{Invariances}

If $U(t,x)$ is a solution to \eqref{eq:burgersusual}, then the following function is again a solution by time and space translation, galilean transformation, space and time scaling invariances:
$$
\frac{\mu}{\lambda}U\left( \frac{t-t_0}{\lambda},\frac{x-x_0-ct}{\mu}\right)+c.
$$
In particular for $\lambda \in \mathbb R^*_+$ and $\alpha \in \mathbb R$, $\lambda^{\alpha-1}U\left(t/\lambda,x/\lambda^{\alpha}\right)$ is also a solution. 
The associated infinitesimal generators of the above transformations are\footnote{Here $\text{Id}$ stands for the identity and $1$ for the function with constant value $1$.}
\be \lab{burgers:def:infinitesimal}
\Lb_\mu :=\text{Id}-x\pa_x, \ \ \tilde \Lb^{(\alpha)}_\lb :=-(1-\alpha)\text{Id}-t\pa_t-\alpha x\pa_x, \ \ \tilde \Lb_c:=-t\pa_x +1, \ \ \Lb_{x_0}:=-\pa_x, \ \ \Lambda_{t_0}:=-\pa_t
\ee
and there holds the commutators relations
\be \lab{id:comutators}
[\Lb _\mu,\tilde \Lb_{\lb}^{(\alpha)}]=0, \ \ [\tilde \Lb _c,\tilde \Lb_{\lb}^{(\alpha)}]=-(\alpha-1) \tilde \Lb_c+1, \ \ [\Lb _{x_0},\tilde \Lb_{\lb}^{(\alpha)}]=-\alpha \Lb_{x_0}, \ \ [\Lb_{t_0},\tilde \Lb_{\lb}^{(\alpha)}]=-\Lb_{t_0}.
\ee
The tilde comes from the fact that we will use their spatial counterparts:
\be \lab{burgers:def:infinitesimalspatial}
\Lb_{\alpha}:= (1-\alpha)\text{Id}+\alpha X\pa_X, \ \ \Lb_c:= \pa_X +1, \ \ \Lb := -1+X\pa_X
\ee


\subsection{Self-similar and discretely self-similar solutions}

Important solutions are those who constantly reproduce themselves to smaller and smaller scales. To measure their regularity, let us define the following H\"older spaces. For $i\in \mathbb N$ one takes $C^i(\mathbb R)$ to be the usual space of real-valued functions $i$ times continuously differentiable on $\mathbb R$. For $i\in \mathbb N$ and $\delta\in (0,1)$, $C^{i+\delta}$ is the set of functions $f\in C^{i}(\Omega)$ such that
$$
\lim_{x\uparrow x_0} \frac{\pa_x^i f(x)-\pa_x^if(x_0)}{|x-x_0|^{\delta}} \ \ \text{and} \ \ \lim_{x\downarrow x_0} \frac{\pa_x^i f(x)-\pa_x^if(x_0)}{|x-x_0|^{\delta}}
$$
are well-defined for all $x_0\in \mathbb R$. We then use the notation $C^{r+}=\cup_{r'>r} C^{r'}$ and $C^{r-}=\cup_{r'<r}C^{r'}$. Assume $U$ is a $C^1$ solution to Burgers equation becoming singular at a singularity point $(t_0,x_0)$. Then one can always use gauge invariance to map it to a solution defined on some domain $(T,0)\times \mathbb R$ with $T<0$, that becomes singular at $(0,0)$ and such that $U(t,0)=0$ for all $t\in (T,0)$. In particular, $U_x(t,\cdot)$ is minimal at the origin with $U_x(t,0)=-1/t$. The subgroup of the invariances $\mathbb R^3\times (\mathbb R^*)^2$ preserving these properties is
$$
g=(\lambda,\mu) \in \mathcal G:=(0,+\infty)^2, \ \ g.U:(t,x)\mapsto \frac{\mu}{\lambda}U\left(\frac{t}{\lambda},\frac{x}{\mu} \right).
$$
Let $\Omega:=(-\infty,0)\times \mathbb R$. The stabiliser of $U\in C^1(\Omega)$ is the subgroup $\mathcal G_s(U):=\{g\in \mathcal G, \ g.U=U\}$. Solutions with invariances can be classified according to their regularity.

\begin{proposition}[Classification of self-similar solutions] \lab{pr:clas}

Let $U\in C^1(\Omega)$ be a solution to \fref{eq:burgersusual} with $U(-1,0)=0$, $\inf_{\mathbb R} U_x(-1,\cdot)=U_x(-1,0)=-1$ and such that $\mathcal G_s$ is nontrivial. Then three scenarios only are possible and exist, the profiles $\Psi \in C^1(\mathbb R)$ below being defined in Propositions \ref{prop:smoothselfsim} and \ref{prop:roughdselfsim} and in \fref{burgers:def:roughselfsim}.

\begin{itemize}
\item[-] \emph{Analytic self-similarity:} $U$ is analytic and there exists $i\in \mathbb N$ and $\mu>0$ such that
$$
U(t,x)=\mu^{-1} (-t)^{\frac{1}{2i}} \Psi_i \left(\mu\frac{x}{(-t)^{1+\frac{1}{2i}}} \right),
$$
or $U(t,x)= \Psi_{\infty}(x/(-t))=x/t$.
\item[-] \emph{Non-smooth self-similarity:} There exists $i,\mu,\mu'>0$ with $i\notin \mathbb N$ and $\mu=\mu'$ (resp. $i>0$ and $\mu\neq \mu'$) such that
$$
U(t,x)=(-t)^{\frac{1}{2i}} \Psi_{(i,\mu,\mu')}\left(\frac{x}{(-t)^{1+\frac{1}{2i}}} \right).
$$
where $\Psi_{(i,\mu,\mu')}$ is defined by \fref{burgers:def:roughselfsim}, and $\Psi_{(i,\mu,\mu')}\in C^{1+2i}(\mathbb R)$, $\Psi_{(i,\mu,\mu')}\notin C^{1+2i+}(\mathbb R)$ (resp. $\Psi_{(i,\mu,\mu')}\in C^{1+2i-}(\mathbb R)$, $\Psi_{(i,\mu,\mu')}\notin C^{1+2i}(\mathbb R)$).
\item[-] \emph{Non-smooth discrete self-similarity:} There exists $i>0$ and $\lambda>1$ such that $U\notin C^{1+2i}(\Omega)$ (there exist such solutions with any regularity bewteen $C^1$ and $C^{1+2i-}$), that for all $k\in \mathbb Z$:
$$
U(t,x)=\lambda^{\frac{k}{2i}} U\left(\frac{t}{\lambda^k},\frac{x}{\lb^{k\left(1+\frac{1}{2i}\right)}}\right),
$$
and that there exists $(t,x)\in \Omega$ such that $U(t,x)\neq (-t)^{1/(2i)}U(-1,x/(-t)^{1+1/(2i)})$.
\end{itemize}

\end{proposition}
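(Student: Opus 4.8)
The plan is to first pin down the possible shapes of the stabiliser $\mathcal G_s(U)$ as an abstract subgroup of $\mathcal G$, and then, in each case, turn the invariance into an equation for the profile that integrates by hand.

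\emph{Step 1: structure of $\mathcal G_s$.} Since $(g,U)\mapsto g.U$ is continuous on $C^1(\Omega)$, the stabiliser $\mathcal G_s(U)$ is a closed subgroup of $\mathcal G$, and under $\mathcal G\simeq(\mathbb{R}^2,+)$, $(\lambda,\mu)\mapsto(\log\lambda,\log\mu)$, it is one of: a line through the origin, $\mathbb{R}v\oplus\mathbb{Z}w$, a rank--two lattice, $\mathbb{Z}v$, or $\mathbb{R}^2$. One first rules out ``vertical'' directions: if $(1,\mu)\in\mathcal G_s$ with $\mu\neq 1$ then $U(t,\cdot)$ is homogeneous of degree $-1$ away from $x=0$, incompatible with $U(t,\cdot)\in C^1$ and $U(-1,0)=0$, $U_x(-1,0)=-1$. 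Hence every nontrivial element of $\mathcal G_s$ has the form $(\lambda,\lambda^{\gamma})$, and $\mathcal G_s$ either contains a one--parameter subgroup $\{(\lambda,\lambda^{\gamma_0})\}_{\lambda>0}$ --- the self-similar case --- or is a nontrivial discrete group --- the discretely self-similar case. A short computation (homogenising the profile under a second, transverse scaling) shows that as soon as $\mathcal G_s$ strictly contains a line, is a rank--two lattice, or equals $\mathbb{R}^2$, the profile must be linear and $U=x/t$, which is one of the listed solutions.

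\emph{Step 2: self-similar case.} Taking $\lambda=-t$ in $g.U=U$ for $g=(\lambda,\lambda^{\gamma_0})$ gives $U(t,x)=(-t)^{\gamma_0-1}\Psi(x(-t)^{-\gamma_0})$ with $\Psi(X)=U(-1,X)$. Substituting into \fref{eq:burgersusual} yields the first order profile equation
\be
-(\gamma_0-1)\Psi+(\gamma_0 X+\Psi)\Psi'=0,
\ee
which, read as a linear ODE for the inverse function $X(\Psi)$, integrates to $X=-\Psi+C_{\pm}\,|\Psi|^{\gamma_0/(\gamma_0-1)}$ on $\{\pm\Psi>0\}$. The normalisation $\Psi(0)=0$, $\Psi'(0)=-1=\inf\Psi'$ then leaves only $C_+=C_-=0$, forcing $\Psi(X)=-X$ and $U=x/t=\Psi_\infty(x/(-t))$, or some $C_\pm\neq0$ with $\gamma_0>1$. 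Writing $\gamma_0=1+\tfrac{1}{2i}$ with $i>0$, so that $\gamma_0/(\gamma_0-1)=2i+1$, the profile is the inverse of $\Psi\mapsto-\Psi+C_\pm|\Psi|^{2i+1}$, and one reads off its regularity from this formula: $i\in\mathbb{N}$ with $C_+=C_-<0$ gives the globally defined analytic profile $\Psi_i$ of Proposition \ref{prop:smoothselfsim}; $i\notin\mathbb{N}$ with $C_+=C_-$ gives $\Psi\in C^{1+2i}\setminus C^{1+2i+}$; and $C_+\neq C_-$ (any $i>0$) gives a one--sided loss, $\Psi\in C^{1+2i-}\setminus C^{1+2i}$, the profiles $\Psi_{(i,\mu,\mu')}$ of \fref{burgers:def:roughselfsim}, the parameters $\mu,\mu'$ being the normalised constants $C_\pm$. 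This is the trichotomy, and existence in each case follows by inverting the explicit relations (Propositions \ref{prop:smoothselfsim} and \ref{prop:roughdselfsim}).

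\emph{Step 3: discrete case.} For $\mathcal G_s=\mathbb{Z}(\lambda_1,\lambda_1^{\gamma_1})$ I would pass to the renormalised variables $s=-\log(-t)$, $Z=x(-t)^{-\gamma_1}$, $U=(-t)^{\gamma_1-1}W(s,Z)$, turning \fref{eq:burgersusual} into the transport equation
\be
W_s-(\gamma_1-1)W+(\gamma_1 Z+W)W_Z=0;
\ee
the invariance is exactly $(\log\lambda_1)$--periodicity of $W$ in $s$, and self-similar solutions are the $s$--independent ones. The boundary data $W(s,0)=0$, $W_Z(s,0)=-1$ (from $U(t,0)=0$ and $U_x(t,0)=1/t$), together with global existence of a $C^1$ solution, force $\gamma_1=1+\tfrac{1}{2i}$, $i>0$, once more. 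A genuinely $s$--dependent periodic $W$ exists realising each prescribed regularity in the range $(C^1,C^{1+2i-})$: this is Proposition \ref{prop:roughdselfsim} / \fref{burgers:def:roughselfsim}, obtained by letting a $(\log\lambda_1)$--periodic integration ``constant'' appear in the relation of Step 2, equivalently by perturbing the stationary profile along the neutral mode of the linearised transport operator; and the last condition in the statement --- $U$ not given by the self-similar formula --- is precisely non-constancy of $W$ in $s$, whence one checks that $U$ lies in $C^1(\Omega)$ but not in $C^{1+2i}(\Omega)$.

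\emph{Main obstacle.} Steps 1--2 amount to the list of closed subgroups of $\mathbb{R}^2$ plus linear ODE algebra, and are routine. The real work is in Step 3: ruling out the rank--two lattice, showing that the scaling exponent of a discretely self-similar solution is forced by the $C^1$ / single--singularity constraints alone, and --- above all --- constructing periodic--in--$s$ solutions $W$ of \emph{every} intermediate regularity and verifying their Hölder class; and, secondarily, the one--sided Hölder bookkeeping for the non-smooth self-similar profiles of Step 2.
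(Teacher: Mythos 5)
Your plan follows essentially the same route as the paper: identify $\mathcal G_s$ with a closed subgroup of $\mathbb R^2$, classify the possible shapes ($\mathbb Z$, $\mathbb R$, and the degenerate/impossible ones), and in each case translate the invariance into a profile ODE (continuous case) or a functional equation in the profile (discrete case). The self-similar step matches exactly --- integrating the linear ODE for $X(\Psi)$ and reading off regularity from the $|\Psi|^{\gamma_0/(\gamma_0-1)}$ correction is precisely \fref{id:implicit} written as a general solution, and the discrete step's transport formulation for $W(s,Z)$ with $(\log\lambda_1)$-periodicity is the renormalised mirror of the paper's characteristics-based derivation of the functional equation \fref{id:eq U dss clas}, deferring the hard construction to Proposition \ref{prop:roughdselfsim}, as you correctly flag.

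A few of the concrete claims in Step 1 are wrong as stated and would need fixing before this becomes a proof. If $(1,\mu)\in\mathcal G_s$ then the invariance reads $U(t,\mu y)=\mu U(t,y)$, i.e.\ degree $+1$ homogeneity, not $-1$; and this is not incompatible with $C^1$ and the normalisation --- together with Burgers it forces $U(t,x)=x/t=\Psi_\infty$, one of the listed solutions, so this branch must be kept, not discarded. Likewise, if $\mathcal G_s$ is a rank-two lattice or of type $\mathbb R\times\mathbb Z$, the outcome cannot be $U=x/t$, since $x/t$ has full stabiliser $\mathbb R^2$; instead the paper shows these cases are simply empty, by extracting from $\mathcal G_s$ a discrete subgroup $\{(\lambda^k,\lambda^{k\alpha})\}_{k\in\mathbb Z}$ with $\alpha<0$ and then invoking the Case 1 analysis, where any discrete self-similar scaling with $\alpha\le1$ is shown incompatible with a globally defined $C^1$ solution having $U_x$ minimal at the origin. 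None of these slips opens a genuine gap, since every scenario does end up accounted for, but the logic tying the stabiliser classification to the profile equations needs to be tightened along these lines.
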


Before proving the above Proposition \ref{pr:clas}, let us present the self-similar and discretely self-similar solutions.

\begin{proposition}[Self-similar solutions \cite{EF}] \lab{prop:smoothselfsim}

There exists a set $ \{ \Psi_i, \ i \in \mathbb N^* \}\cup \{ \Psi_{\infty}\}$ of analytic functions on $\mathbb R$ with the following properties. One has $\Psi_{\infty}(X)=-X$. For $i\in \mathbb N^*$, the function $\Psi_i$ is odd, decreasing, and concave on $(-\infty,0]$, satisfy the implicit equation
\be \lab{id:implicit}
X=-\Psi_i(X)-(\Psi_i(X))^{1+2i}
\ee
and have the following asymptotic expansions:
\be \lab{id:dvptUic}
\Psi_i^{(i)}(X)=-X+X^{2i+1}+\sum_{k=2}^{+\infty}c_{i,k}X^{2ki+1} \ \ \text{as} \ X\rightarrow 0,
\ee
\be \lab{id:asUiinfty}
\Psi_i(X)= -\text{sgn}(X) |X|^{\frac{1}{1+2i}}+\text{sgn}(X)\frac{|X|^{-1+\frac{2}{2i+1}}}{2i+1}+O(|X|^{-2+\frac{3}{2i+1}}) \ \ \text{as} \ |X|\rightarrow +\infty.
\ee
Moreover, it solves the equation
\be \lab{eq:smoothselfsim}
-\frac{1}{2i} \Psi_i +\frac{2i+1}{2i} X\pa_X \Psi_i+\Psi_i\pa_X\Psi_i=0
\ee
and any other globally defined $C^1$ solution is of the form $\Psi=\mu^{-1} \Psi_i(\mu X)$ for some $\mu>0$ or is $-X$ or $0$. The functions $U^{(\infty)}(t,x)=x/t$ and $U^{(i,\mu)}(t,x)=\mu^{-1}(-t)^{1/(2i)}\Psi_i (\mu x/(-t)^{1+1/(2i)})$ are solutions to \fref{eq:burgersusual}.

\end{proposition}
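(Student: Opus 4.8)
\emph{Proof plan.} The idea is to produce $\Psi_i$ as the global inverse of an explicit polynomial and then read off every assertion from the implicit relation \fref{id:implicit}. First I would set $F_i(W):=-W-W^{1+2i}$; since $F_i'(W)=-\bigl(1+(2i+1)W^{2i}\bigr)<0$ for all $W\in\RR$, the map $F_i$ is a strictly decreasing real-analytic diffeomorphism of $\RR$ onto $\RR$, so its inverse $\Psi_i:=F_i^{-1}$ is real-analytic on $\RR$ and by construction satisfies $X=-\Psi_i(X)-\Psi_i(X)^{1+2i}$, which is \fref{id:implicit}. Oddness of $F_i$ forces $\Psi_i$ odd, monotonicity of $F_i$ forces $\Psi_i$ decreasing, and $F_i(0)=0$ gives $\Psi_i(0)=0$. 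Differentiating \fref{id:implicit} once gives $\pa_X\Psi_i=-\bigl(1+(2i+1)\Psi_i^{2i}\bigr)^{-1}<0$, and a second differentiation expresses $\pa_X^2\Psi_i$ as a strictly positive factor times $\Psi_i^{2i-1}\,\pa_X\Psi_i$; as $\pa_X\Psi_i<0$ everywhere and $\Psi_i>0$ on $(-\infty,0)$ this is negative there, i.e.\ $\Psi_i$ is concave on $(-\infty,0]$.

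The two asymptotic expansions I would obtain by iterating inside \fref{id:implicit}. Near $X=0$, feeding $\Psi_i(X)=-X+O(X^3)$ repeatedly into $\Psi_i=-X-\Psi_i^{1+2i}$ and using that $1+2i$ is odd, each step raises the order by $2i$ and only odd powers appear, which gives the series \fref{id:dvptUic} with leading coefficients $-1$ and $1$. Near $|X|=\infty$ the dominant balance $X\sim-\Psi_i^{1+2i}$ gives $\Psi_i\sim-\mathrm{sgn}(X)\,|X|^{1/(1+2i)}$, and one more iteration of $\Psi_i=-\mathrm{sgn}(X)\,|X+\Psi_i|^{1/(1+2i)}$ produces the correction term in \fref{id:asUiinfty}. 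The profile equation \fref{eq:smoothselfsim} is then a one-line check: substituting $X=-\Psi_i-\Psi_i^{1+2i}$ into $\tfrac{2i+1}{2i}X+\Psi_i$ yields $-\tfrac{1}{2i}\Psi_i\bigl(1+(2i+1)\Psi_i^{2i}\bigr)$, and multiplying by $\pa_X\Psi_i=-\bigl(1+(2i+1)\Psi_i^{2i}\bigr)^{-1}$ gives $\tfrac{1}{2i}\Psi_i$, which cancels the first term of \fref{eq:smoothselfsim}. The remaining statements are immediate: $\Psi_\infty(X)=-X$, the function $x/t$ solves $U_t+UU_x=0$ directly, and $U^{(i,\mu)}$ solves \fref{eq:burgersusual} because $\mu^{-1}\Psi_i(\mu\,\cdot)$ again solves \fref{eq:smoothselfsim}.

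The step I expect to be the main obstacle is uniqueness. Writing \fref{eq:smoothselfsim} as $\bigl(\tfrac{2i+1}{2i}X+\Psi\bigr)\pa_X\Psi=\tfrac{1}{2i}\Psi$, one first notes that $\pa_X\Psi(X_0)=0$ forces $\Psi(X_0)=0$, so on each connected component of $\{\Psi\neq0\}$ the solution $\Psi$ is strictly monotone; there its inverse $G$ solves the \emph{linear} equation $W\,G'-(2i+1)G=2iW$, with general solution $G(W)=-W+CW^{2i+1}$, i.e.\ $X=-\Psi+C\Psi^{1+2i}$. Letting $\Psi\to0$ forces $X\to0$, hence $\{\Psi=0\}\subset\{0\}$, and a short boundedness argument excludes $\{\Psi=0\}=\varnothing$ unless $\Psi\equiv0$; so, apart from the zero solution, $\Psi(0)=0$ and $X=-\Psi+C_\pm\Psi^{1+2i}$ on $\{\pm X>0\}$. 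Deciding for which $C$ the map $W\mapsto-W+CW^{2i+1}$ is a bijection of a half-line onto a half-line forces $C_\pm\le0$, with $C_\pm=0$ giving $\Psi=-X$ and $C_\pm<0$ giving $\Psi=\mu_\pm^{-1}\Psi_i(\mu_\pm X)$, $\mu_\pm=|C_\pm|^{1/(2i)}$; imposing the smoothness of the analytic self-similar case then forces $C_+=C_-$ and yields the stated classification, the genuinely non-smooth glued profiles with $C_+\neq C_-$ being the content of Proposition \ref{prop:roughdselfsim}. The delicate points are exactly this analysis at the origin — reducing $\{\Psi=0\}$ to a single point, ruling out $C>0$ via failure of global invertibility, and gluing the two half-line representations with the correct regularity — whereas the asymptotic expansions, though they require careful bookkeeping of exponents, are conceptually routine.
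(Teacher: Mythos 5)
Your construction of $\Psi_i$ as the global inverse of the odd polynomial $W\mapsto -W-W^{1+2i}$, the derivation of oddness, monotonicity and concavity from the implicit relation, the iteration to get the two asymptotic expansions, the one-line verification of the profile ODE, and the passage to solutions of Burgers via scaling are all exactly the route the paper takes; the paper simply abbreviates these to ``from direct computations.''

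Where you genuinely diverge is the uniqueness step, and your version is both cleaner and more careful than the paper's. The paper normalizes by scaling invariance to match $\Psi(1)$ and says it is ``easy to check'' that $\Psi(1)\notin(-1,0)$ gives a solution that is not globally defined. You instead invert the ODE on each component of $\{\Psi\neq 0\}$, obtaining the explicit first-order linear equation $WG'-(2i+1)G=2iW$ with solution $G(W)=-W+CW^{2i+1}$, and then classify by the constant $C$: $C>0$ fails global invertibility, $C=0$ gives $-X$, $C<0$ gives $\mu^{-1}\Psi_i(\mu X)$ with $\mu=|C|^{1/(2i)}$. This makes the ``easy to check'' step precise. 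You also correctly identify the genuine subtlety that the paper glosses over: the ODE is singular at $X=0$ (where the coefficient $\tfrac{2i+1}{2i}X+\Psi$ vanishes), so the constants $C_\pm$ on the two half-lines are a priori independent, and $C_+\neq C_-$ still yields a globally defined $C^{2i}$ (hence $C^1$) solution. Strictly speaking this means the uniqueness claim as stated in the proposition needs an extra regularity hypothesis (e.g.\ analyticity, or $C^{2i+1}$) to force $C_+=C_-$; you flag this, though the glued $C_+\neq C_-$ profiles are the non-smooth continuously self-similar $\Psi_{(i,\mu,\mu')}$ of \fref{burgers:def:roughselfsim} treated in the second case of Proposition \ref{pr:clas}, not the discretely self-similar profiles of Proposition \ref{prop:roughdselfsim} which you cite. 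That mislabeling aside, the argument is correct and more complete than the paper's sketch.
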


\begin{proof}

Consider the function $\phi (\Psi)=-\Psi-\Psi^{2i+1} $ which is an analytic diffeomorphism on $\mathbb R$. Its inverse $\Psi_i:=\phi^{-1}$ satisfies \fref{id:implicit}, \fref{id:dvptUic}, \fref{id:asUiinfty} and the other properties of the proposition from direct computations. Since
\begin{align*}
&\frac{2i}{(\phi^{-1})'(X)}\left(-\frac{1}{2i}\phi^{-1}(X)+\frac{2i+1}{2i}X(\phi^{-1})'(X)+\phi^{-1}(X)(\phi^{-1})'(X)\right)\\
&\qquad \qquad \quad =-\phi^{-1}(X)\phi'(\phi^{-1}(X))+(2i+1)X+2i\phi^{-1}(X)\\
&\qquad \qquad \qquad \qquad \qquad =-\phi^{-1}(X)(-1-(2i+1)(\phi^{-1}(X))^{2i})+(2i+1)X+2i\phi^{-1}(X)\\
&\qquad \qquad \qquad \qquad \qquad \qquad \qquad \qquad =-(2i+1)(-\phi^{-1}(X)-(\phi^{-1}(X))^{2i+1})+(2i+1)X=0,
\end{align*}
it solves the equation \fref{eq:smoothselfsim}. Since it solves this equation, $U(t,x)=(-t)^{1/(2i)}\Psi_i(x/(-t)^{1+1/(2i)})$ solves \fref{eq:burgersusual} since introducing $\alpha_i=1+1/(2i)$:
\bee
U_t+UU_x&=&-(\alpha_i-1)(-t)^{\alpha_i-2}\Psi_i\left(\frac{x}{(-t)^{\alpha_i}}\right)+\alpha_i(-t)^{\alpha_i-2}\frac{x}{(-t)^{\alpha_i}}\pa_X \Psi_i\left(\frac{x}{(-t)^{\alpha_i}}\right)\\
&&+(-t)^{\alpha_i-1}\Psi_i\left(\frac{x}{(-t)^{\alpha_i}}\right)(-t)^{-1}\pa_X \Psi_i\left(\frac{x}{(-t)^{\alpha_i}}\right) \\
&=&(-t)^{\alpha_i-2}\left(-(\alpha_i-1)\Psi_i+\alpha_i y U_y^{(i)}+\Psi_i\pa_X \Psi_i \right)\left(\frac{x}{(-t)^{\alpha_i}}\right)=0.
\eee
The same reasoning applies for $\mu^{-1}\Psi_i(\mu X)$ since \fref{eq:smoothselfsim} is invariant under the transformation $\Psi \mapsto \mu^{-1}\Psi(\mu X)$. If $\Psi$ is another solution to 
\fref{eq:smoothselfsim} with $-1<\Psi(1)<0$ then using this invariance $\Psi=\mu^{-1}\Psi (\mu X)$ for some $\mu>0$. If $\Psi(1)<-1$ or $\Psi(1)>0$ it is easy to check that the solution is not globally defined.

\end{proof}

There also exist solutions reproducing themselves to a smaller scale, but in a somewhat periodic manner, unlike self-similar solutions. They have a fractal behaviour near the origin and are never smooth.

\begin{proposition}[Non-smooth discretely self-similar blow-up] \lab{prop:roughdselfsim}

Let $\alpha >1$, $\lambda>1$, $X_0,X_1\in (-\infty,0)$ with $\lb^{1-\alpha}X_0<X_1<\lb^{-\alpha}X_0$ and consider a function $V\in C^1([X_0,X_1),\mathbb R)$ satisfying\footnote{The set of such functions is non empty and it contains profiles which do not satisfy $(1-\alpha) V +\alpha XV_X+VV_X=0$.}
$$
X_1=\lambda^{-\alpha} X_0+(\lambda^{-\alpha}-\lambda^{1-\alpha})V(X_0),
$$
$$
V(X)\in (0,-X) \ \ \text{and} \ \ V_X(X)\in (-1,0) \ \ \text{on } \ \ [X_0,X_1),
$$
and
\be \lab{id:cond v}
\lim_{X\rightarrow X_1}V(X)=\lambda^{1-\alpha}V(X_0), \ \ \lim_{X\rightarrow X_1}V_X(X)=\frac{\lambda V_X(X_0)}{1-(\lambda-1)V_X(X_0)}.
\ee
Then there exists a unique odd function $W\in \mathcal C^1(\mathbb R)$ such that for all $X\in \mathbb R$,
\be \lab{burgers:eq:dss}
W(X)=\lambda^{1-\alpha}W\left(\lambda^{\alpha}X+(\lambda^{\alpha}-\lambda^{\alpha-1})W(X)\right)
\ee
and $W=V$ on $[X_0,X_1)$. One has $W(X)\in (0,-X)$ and $W_X(X)\in (-1,0)$ for all $X\in (-\infty,0)$, and its derivative is minimal at the origin with value $W_X(0)=-1$. Let $i=1/(2(\alpha-1))$. Then
$$
0<\liminf_{X\uparrow 0} \frac{-W(X)-X}{|X|^{1+2i}}\leq \limsup_{X\uparrow 0} \frac{-W(X)-X}{|X|^{1+2i}}<+\infty 
$$
with equality if and only if $W(X)=\mu^{-1}\Psi_i(\mu X)$ for some $\mu>0$ where $\Psi_i$ is given by \fref{id:implicit}. Therefore, unless $W=\mu^{-1}\Psi_i(\mu X)$ one has $W\notin C^{2i+1}$. There exist such solutions of regularity $C^{2i+1-\epsilon}$ for any $\epsilon>0$. Moreover, the solution $U$ defined on $(-\infty,0)\times \mathbb R$ as the solution to \fref{eq:burgersusual} with $U(-1,x)=W(x)$ satisfies
\be \lab{id:dss}
U(t,X)=\frac{1}{\lambda^{k(1-\alpha)}}U\left(\frac{t}{\lambda^k},\frac{X}{\lambda^{k\alpha}}\right)
\ee
for all $(t,X,k)\in (-\infty,0)\times \mathbb R \times \mathbb Z$.

\end{proposition}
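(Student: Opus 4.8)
\emph{Construction of $W$.} With $\Phi_W(X):=\lambda^\alpha X+(\lambda^\alpha-\lambda^{\alpha-1})W(X)$ the relation \fref{burgers:eq:dss} reads $W\circ\Phi_W=\lambda^{\alpha-1}W$. I would unfold it from the seed interval $[X_0,X_1)$. Setting $W:=V$ there, for $X$ in the next tile one must have $W(X)=\lambda^{1-\alpha}V(Z)$ with $Z:=\Phi_W(X)\in[X_0,X_1)$, hence $Z-(\lambda-1)V(Z)=\lambda^\alpha X$; since $V_X\in(-1,0)$, $Z\mapsto Z-(\lambda-1)V(Z)$ is an increasing $C^1$ diffeomorphism, which inverts this and \emph{uniquely} defines $W$ on a tile $[X_1,X_2)$, and likewise going down from $X_0$. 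Iterating yields an increasing bi-infinite sequence $(X_n)$ with $X_n\uparrow0$, $X_{-n}\downarrow-\infty$ (the inclusion $\Phi_W(X)\in[\lambda^\alpha X,\lambda^{\alpha-1}X)$ for $X<0$, immediate from $W\in(0,-X)$, controls the geometry), a function $W$ on $(-\infty,0)$, and its odd extension to $\mathbb R$. By induction over the tiles: $W\in(0,-X)$ and $W_X\in(-1,0)$ propagate, because on each tile $W_X$ is obtained from $V_X$ by iterating $q\mapsto\lambda q/(1-(\lambda-1)q)$, which maps $(-1,0)$ into itself; and $C^1$-matching of two consecutive tiles at $X_1$ — hence at every $X_n$, by the self-similar shape of the construction — is \emph{exactly} the pair of conditions \fref{id:cond v}, which is their sole role. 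Continuity of $W$ and $W_X$ at $0$ is obtained next.

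\emph{Behaviour near $0$.} Put $E:=-W-X>0$ on $(-\infty,0)$. Feeding $W\circ\Phi_W=\lambda^{\alpha-1}W$ into the definition of $E$ and simplifying gives the exact identities
$$E(\Phi_W(X))=\lambda^\alpha E(X),\qquad \Phi_W(X)=\lambda^{\alpha-1}X-(\lambda^\alpha-\lambda^{\alpha-1})E(X).$$
Fix $Y_0\in[X_0,X_1)$, set $Y_n:=\Phi_W^{-n}(Y_0)\in[X_n,X_{n+1})$ (so $Y_n\uparrow0$) and $u_n:=E(Y_n)/|Y_n|\in(0,1)$. The identities give $|Y_{n-1}|=\lambda^{\alpha-1}|Y_n|(1+(\lambda-1)u_n)$ and $E(Y_{n-1})=\lambda^\alpha E(Y_n)$, whence, using $2i(\alpha-1)=1$, the recursion $u_n=u_{n-1}/(\lambda-(\lambda-1)u_{n-1})$, which linearises under $u\mapsto1/u$ and is solved by $u_n=u_0/(u_0+\lambda^n(1-u_0))$; in particular $u_n\downarrow0$ geometrically. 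This gives $W_X\to-1$ at $0$ (differentiating \fref{burgers:eq:dss} yields $1+W_X(X)=(1+W_X(\Phi_W(X)))/(1-(\lambda-1)W_X(\Phi_W(X)))$, and iterating contracts $1+W_X$ to $0$ since $W_X$ stays in a compact subset of $(-1,0)$), so $W\in C^1(\mathbb R)$ with $W_X$ minimal exactly at $0$, value $-1$. Iterating the identities and telescoping, $E(Y_n)=\lambda^{-n\alpha}E(Y_0)$ and $|Y_n|=\lambda^{-n\alpha}|Y_0|(u_0+\lambda^n(1-u_0))$, so with $\alpha=(2i+1)/(2i)$, after cancellation of the $\lambda$-powers,
$$\frac{E(Y_n)}{|Y_n|^{1+2i}}=\frac{E(Y_0)/|Y_0|^{1+2i}}{\bigl((1-u_0)+u_0\lambda^{-n}\bigr)^{1+2i}}\ \xrightarrow[n\to\infty]{}\ \frac{E(Y_0)}{(|Y_0|-E(Y_0))^{1+2i}}=\frac{-W(Y_0)-Y_0}{W(Y_0)^{1+2i}}=:L(Y_0),$$
uniformly for $Y_0\in[X_0,X_1]$ ($u_0$ stays away from $1$, and $L$ is continuous and positive there). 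The same computation with a general $X$ near $0$ in place of $Y_n$ and $Y:=\Phi_W^n(X)\in[X_0,X_1)$ in place of $Y_0$ then gives $\liminf_{X\uparrow0}\frac{-W-X}{|X|^{1+2i}}=\min_{[X_0,X_1]}L>0$ and $\limsup_{X\uparrow0}\frac{-W-X}{|X|^{1+2i}}=\max_{[X_0,X_1]}L<+\infty$.

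\emph{Rigidity and regularity.} Equality of the two limits forces $L$ constant, say $L\equiv\mu^{2i}$; then $-W(Y)-Y=\mu^{2i}W(Y)^{1+2i}$, i.e. $-(\mu W(Y))-(\mu Y)=(\mu W(Y))^{1+2i}$, which is \fref{id:implicit} for $\mu W$ at $\mu Y$, so $W=\mu^{-1}\Psi_i(\mu\cdot)$; conversely $\mu^{-1}\Psi_i(\mu\cdot)$ gives $L\equiv\mu^{2i}$ by \fref{id:implicit}. If in addition $W\in C^{2i+1}$ near $0$, then $W(X)+X=-E(X)=O(|X|^{2i+1})$ kills all lower-order Taylor coefficients of $W(X)+X$, and the one-sided Hölder/Taylor expansion at $0$ forces $\lim_{X\uparrow0}E(X)/|X|^{2i+1}$ to exist, i.e. $\liminf=\limsup$, hence $W$ self-similar; so a non self-similar $W$ is not $C^{2i+1}$. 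To get non self-similar examples of regularity $C^{2i+1-\e}$, take $V\in C^\infty([X_0,X_1])$ matching at both endpoints the finitely many jet relations (generalising \fref{id:cond v}, obtained by differentiating the gluing relation) up to order $\lceil 2i+1-\e\rceil$ and not solving $(1-\alpha)V+\alpha XV_X+VV_X=0$; then $W$ is smooth inside tiles, $C^{\lceil 2i+1-\e\rceil}$ across them, and at $0$ the bounds $|\pa_X^jE(X)|\lesssim|X|^{1+2i-j}$ — proved by induction from $E(\Phi_W(X))=\lambda^\alpha E(X)$ together with $\Phi_W'(X)=\lambda^{\alpha-1}(1+O(|X|^{2i}))$ near $0$ — give precisely $C^{2i+1-\e}$.

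\emph{Self-similarity of $U$, and the main obstacle.} With $U(-1,\cdot)=W$, the solution $U$ is well defined and $C^1$ on $(-\infty,0)\times\mathbb R$ since the characteristic map $x_0\mapsto x_0+(t+1)W(x_0)$ has positive Jacobian there (the shock being exactly at $(0,0)$, where $W_X=-1$). Now $\widetilde U(t,x):=\lambda^{\alpha-1}U(t/\lambda,x/\lambda^\alpha)$ solves Burgers by the scaling invariance of Section~\ref{sec:burgers}; transporting $U$ along characteristics from $t=-1$ to $t=-1/\lambda$ gives $\widetilde U(-1,\Phi_W(x_0))=\lambda^{\alpha-1}W(x_0)=W(\Phi_W(x_0))$ by \fref{burgers:eq:dss}, and since $\Phi_W$ is an increasing odd bijection of $\mathbb R$ this says $\widetilde U(-1,\cdot)=W=U(-1,\cdot)$, whence $\widetilde U=U$ by uniqueness of $C^1$ Burgers solutions; iterating (and inverting for negative indices) is exactly \fref{id:dss}. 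The main obstacle is none of these steps individually but the single computation at the heart of the second paragraph: distilling from \fref{burgers:eq:dss} the exact scalar recursion for $E(Y_n)/|Y_n|$ along the orbits accumulating at $0$, solving it, and recognising the limiting value as $(-W-Y)/W^{1+2i}$ — this one identity simultaneously yields the sharp $\liminf/\limsup$, the rigidity, and the failure of $C^{2i+1}$; the near-$0$ derivative bounds needed for the $C^{2i+1-\e}$ examples are the one genuinely tedious, though conceptually easy, ingredient.
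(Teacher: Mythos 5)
Your proposal is correct and follows essentially the same route as the paper: construct $W$ by unfolding $V$ tile-by-tile under iteration of the gluing map (you use $\Phi_W$, the paper its inverse $\phi$), then track an orbit $z_k=\Phi_W^{-k}(z_0)$ accumulating at $0$ and use the telescoping identities $-z_k-W(z_k)=\lambda^{-k\alpha}(-z_0-W(z_0))$ and $z_k\sim-\lambda^{k(1-\alpha)}W(z_0)$ to identify $\lim_k(-W(z_k)-z_k)/|z_k|^{1+2i}$ with $(-W(z_0)-z_0)/W(z_0)^{1+2i}$, from which $\liminf/\limsup$, the rigidity statement and the failure of $C^{2i+1}$ all follow exactly as in the paper. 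Your explicit linearisation of the scalar recursion $u_n=u_{n-1}/(\lambda-(\lambda-1)u_{n-1})$ via $u\mapsto1/u$ is a clean way to package the same asymptotics, and your sketch of the $C^{2i+1-\varepsilon}$ examples (matching higher-order jets at the tile endpoints) fills in a step the paper leaves to the reader, but neither changes the structure of the argument.
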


\begin{remark}

If $(1-\alpha) W +\alpha XW_X+WW_X\neq 0$, then $U$ is not of the form $U=(-t)^{\alpha-1}W( x/(-t)^{\alpha})$, implying that the set of all $k\in \mathbb R$ such that \fref{id:dss} hold is isomorphic to $\mathbb Z$ and that the solution is not continuously self-similar.

\end{remark}

\begin{proof}

We proceed in two steps. First we extend $V$ in a periodic manner, and then we show the regularity properties.\\

\noindent \textbf{Step 1} \emph{Construction}. Consider the mapping $\phi:[X_0,X_1)\rightarrow \mathbb R$ defined by
$$
\phi (X)=\lambda^{-\alpha} X+(\lambda^{-\alpha}-\lambda^{1-\alpha})V(X).
$$
One has $\phi(X_0)=X_1$ and since $\lambda,\alpha>1$ and $V_X\in (-1,0)$ one computes
$$
\phi_X (X) = \lambda^{-\alpha}+(\lambda^{-\alpha}-\lambda^{1-\alpha})V_X(X)> \lambda^{-\alpha}>0 
$$
and hence $\phi$ is a $C^1$ diffeomorphism onto its image. Define
$$
X_2=\lim_{X\rightarrow X_1} \phi(X)=\lambda^{-\alpha} X_1+(\lambda^{-\alpha}-\lambda^{1-\alpha})\lb^{1-\alpha}V(X_0).
$$
and for $X\in [X_1,X_2)$ extend $V$ by
$$
W(X)=\lambda^{1-\alpha}V(\phi^{-1}(X)).
$$
\textbf{Claim}: One has $X_1<X_2<0$, that $W$ is $C^1$ on $[X_0,X_2)$ and that restricted to $[X_1,X_2)$ it satisfies the condition of the proposition. Moreover for all $X\in [X_1,X_2)$, $\lambda^{\alpha} X+(\lambda^{\alpha}-\lambda^{\alpha-1})W(X)\in [X_0,X_1)$ and
$$
W(X)=\lambda^{1-\alpha}W\left(\lambda^{\alpha}X+(\lambda^{\alpha}-\lambda^{\alpha-1})W(X)\right).
$$
The proof of this claim involves only basic computations that we  omit here. \\

From the Claim we see that we can repeat the construction a countable number of time. If $(X_k)_{k\in \mathbb N}$ denotes the set of points coming from the construction then by induction
$$
X_k=\lb^{-k\alpha} X_0+(\lambda^{-k\alpha}-\lambda^{k(1-\alpha)})W(X_0)
$$
hence $X_k\rightarrow 0$. The construction then provides a $C^1$ extension $W$ of $V$ on $(X_0,0)$ such that for all $X$ in this set, $0<W(X)<-X$, $-1<W_X<0$, and for all $X_1\leq X<0$, $\lambda^{\alpha} X+(\lambda^{\alpha}-\lambda^{\alpha-1})W(X)\in [X_0,0)$ and
$$
W(X)=\lambda^{1-\alpha}W\left(\lambda^{\alpha}X+(\lambda^{\alpha}-\lambda^{\alpha-1})W(X)\right).
$$

\noindent \textbf{Step 2:} \emph{Properties}. From the definition of the extensions one has
$$
\sup_{X\in [X_{k+1},X_{k+2}]} |W(X)|= \lb^{1-\alpha}\sup_{X\in [X_{k},X_{k+1}]} |W(X)|
$$
and therefore $\lim_{X\rightarrow 0}W(X)=0$. From \fref{burgers:eq:dss} one sees that
$$
\pa_X W \left(\lb^{-\alpha}X+(\lb^{-\alpha}-\lb^{1-\alpha})W(X) \right)=f(\pa_XW(X)), \ \ f(a)=\frac{\lb a}{1-(1-\lb)a}.
$$
$f$ has two fixed points $-1$ and $0$, is increasing on $(-1,0)$ with $-1<f(a)<a$. Therefore,
$$
-1 < \inf_{[X_{k+1},X_{k+2}]} \pa_X W=f(\inf_{[X_{k},X_{k+1}]} \pa_X W) \leq \sup_{[X_{k+1},X_{k+2}]} \pa_X W= f(\sup_{[X_{k},X_{k+1}]} \pa_X W )\rightarrow -1
$$
implying that $\pa_X W(X)\rightarrow -1$ as $X\rightarrow 0$, and in particular $\pa_X W$ is minimal at the origin with $\pa_X W(0)=-1$.
We now prove the absence of regularity at the origin. Take any $z_0\in [X_0,X_1)$ and define the sequence $z_k$ by induction following
$$
z_k=\lb^{-\alpha}z_{k-1}+(\lb^{-\alpha}-\lb^{1-\alpha})W(z_{k-1}).
$$
It follows that $W(z_k)=\lambda^{1-\alpha}W(z_{k-1})$. By induction,
$$
z_k=\lb^{-k\alpha} z_0+(\lambda^{-k\alpha}-\lambda^{k(1-\alpha)})W(z_0)=-\lb^{k(1-\alpha)}W(z_0)(1+O(\lb^{-k}))
$$
as $k\rightarrow +\infty$ since $\lb,\alpha>1$, with the constant in the $O$ uniform in $z_0\in [X_0,X_1)$. By induction,
$$
-z_{k+1}-W(z_{k+1})=\lb^{-k\alpha}(-z_0-W(z_0)).
$$
Therefore,
$$
\frac{-z_k-W(z_k)}{|z_k|^{\frac{\alpha}{\alpha-1}}}\rightarrow \frac{-z_0-W(z_0))}{|W(z_0)|^{\frac{\alpha}{\alpha-1}}}>0
$$
as $k\rightarrow +\infty$. One then deduces that since the convergence is uniform for $z_0$ taken in $[X_0,X_1)$,
$$
0<\liminf_{X\rightarrow 0} \frac{-X-W(X)}{|X|^{\frac{\alpha}{\alpha-1}}}\leq \limsup_{X\rightarrow 0} \frac{-X-W(X)}{|X|^{\frac{\alpha}{\alpha-1}}}<+\infty .
$$
Therefore the solution is not $C^{\frac{\alpha}{\alpha-1}}$ if the equality does not hold. Assume now the equality. This means that there exist a constant $c>0$ such that for any $X\in [X_0,X_1)$ one has
$$
\frac{-X_0-W(X_0))}{|W(X_0)|^{\frac{\alpha}{\alpha-1}}}=c, \ \ \text{i.e.} \ \ X=-W-c|W|^{\frac{\alpha}{\alpha-1}}.
$$
$W$ is then the self-similar profile\footnote{For $\alpha\neq 1+1/(2i)$ for $i\in \mathbb N$ the profiles $\Psi_{1/(2(\alpha-1))}$ defined in Proposition \ref{prop:smoothselfsim} still exist and have all the corresponding properties, they just are no longer smooth.} of Proposition \ref{prop:smoothselfsim}, and is not discretely self-similar. \\

One can apply the same extension technique to define $W$ on the other side $(-\infty,X_0)$. The uniqueness of the extension follows from an induction, using the fact that if $W$ is given on some $[X_{k},X_{k+1})$ then it has to be given on $[X_{k+1},X_{k+2})$ by the construction we provided. We leave to the reader to prove that if $V\in C^{\gamma}$ for some $1<\gamma<\alpha/(\alpha-1)$ then so is $W$.\\

\end{proof}

Self-similar and discretely self-similar solutions having been presented in Propositions \ref{prop:smoothselfsim} and \ref{prop:roughdselfsim}, we can now give the proof of the classification Proposition \ref{pr:clas}.

\begin{proof}[Proof of Proposition \ref{pr:clas}]

We only sketch the proof, since either the computations involved are rather easy or they are very similar to what can be found in the proofs of Proposition \ref{prop:smoothselfsim} and \ref{prop:roughdselfsim}. The stabiliser of $U$ is closed in $\mathcal G$ from the regularity of $U$. One identifies $\mathcal G_s$ to a closed subgroup of $\mathbb R^2$ via $(z_1,z_2)=(\log (\lambda),\log (\mu))$, and recall that closed subgroups of $\mathbb R^2$ are isomorphic to one of the following groups: $\mathbb Z$, $\mathbb R$, $\mathbb Z \times \mathbb Z$, $\mathbb R\times \mathbb Z$ or $\mathbb R^2$.\\

\noindent \textbf{Case 1, $\mathcal G_s \simeq \mathbb Z$:} in that case $\mathcal G_s=\{ (\lambda^k,\mu^k), \ k\in \mathbb Z\}$ for $(\lambda,\mu)\neq (1,1)$ meaning that 
$$
U(t,x)=\frac{\mu^k}{\lambda^{k}}U \left(\frac{t}{\lambda^k},\frac{x}{\mu^{ k}}\right), \ \ \forall k\in \mathbb Z.
$$
One can check that if $\lb=1$ then $u=c(t)x$, and if $\mu=1$ then $U=0$, which are contradictions. Hence $\lambda,\mu\neq 1$ and we define $\alpha \in \mathbb R$, by $\mu=\lambda^{\alpha}$ giving $\mathcal G_s=\{ (\lambda^k,\lambda^{k\alpha}), \ k\in \mathbb Z\}$. For all $k\in \mathbb Z$,
$$
U(t,x)=\frac{1}{\lambda^{(1-\alpha)k}}U \left(\frac{t}{\lambda^k},\frac{x}{\lambda^{\alpha k}}\right)
$$
and since $\mathcal G_s \not\simeq \mathbb R$ there exists $(t,x)$ such that $U(t,x)\neq (-t)^{\alpha-1}U(-1,x/(-t)^{\alpha})$. One can always take $\lambda>1$. We take $t=-1$, $k=1$, to obtain
$$
U\left( \frac{-1}{\lambda},x\right)=\lambda^{1-\alpha}U(-1,\lambda^{\alpha}x).
$$
From the relation on characteristics
$$
U(-1,x)=U\left( -\frac{1}{\lambda},x+\left(1-\frac{1}{\lambda}\right)U(-1,x)\right).
$$
Introducing the profile $W(X):=U(-1,X)$ one deduces that it satisfies
\be \lab{id:eq U dss clas}
W(X)=\lambda^{1-\alpha}W\left(\lambda^{\alpha}X+(\lambda^{\alpha}-\lambda^{\alpha-1})W(X)\right)
\ee
and that $W$ is $C^1$ with $W(0)=0$ and $W_X$ minimal at zero with $W_y(0)=-1$. We claim that if $\alpha>1$ then $W$ is a function as described in Proposition \ref{prop:roughdselfsim} in which the above functional equation was studied. We claim that the case $\alpha=1$ is impossible and that if $\alpha<1$ the function is not $C^1$ by looking at its behaviour at the origin. Therefore Case 1 corresponds to Proposition \ref{prop:roughdselfsim}.\\

\noindent \textbf{Case 2, $\mathcal G_s \simeq \mathbb R$:} in that case $\mathcal G_s=\{ (\lambda^a,\mu^a), \ a\in \mathbb R\}$ for $(\lambda,\mu)\neq (1,1)$ meaning that
$$
U(t,x)=\frac{\mu^a}{\lambda^{a}}U \left(\frac{t}{\lambda^a},\frac{x}{\mu^{ a}}\right), \ \ \forall a \in \mathbb R.
$$
This group of transformation contains the cases $a\in \mathbb Z$, and we have seen in case 1 that one cannot have $\lambda=1$ or $\mu=1$. Hence $\lambda \neq 1$ and $\mu \neq 1$. Define $\alpha$ by $\mu=\lambda^{\alpha}$ giving (up to an abuse of notation) $\mathcal G_s=\{ (\lambda,\lambda^{\alpha}), \ \lambda >0\}$ and that for all $\lambda >0$,
$$
U(t,x)=\frac{1}{\lambda^{(1-\alpha)}}U \left(\frac{t}{\lambda},\frac{x}{\lambda^{\alpha}}\right).
$$
In particular, $u$ is invariant by the transformation
$$
U(t,x)=\frac{1}{\lambda^{k(1-\alpha)}}U \left(\frac{t}{\lambda^k},\frac{x}{\lambda^{k\alpha}}\right).
$$
for any fixed $\lambda>1$ and $k\in \mathbb Z$. We have seen in the study of Case 1 that one cannot have $\alpha<1$ for such an invariance, hence $\alpha>1$. We now write
$$
U(t,x)=\frac{1}{(-t)^{(1-\alpha)}}U \left(-1,\frac{x}{(-t)^{\alpha}}\right).
$$
Hence the profile $W(X)=U(-1,X)$ satisfies the equation
$$
(1-\alpha)W+\alpha XW_X+WW_X=0.
$$
Solutions to this equation with $W(0)=0$, $W_X$ minimal at $0$ with $W_X(0)=-1$ have been classified when $1/(2(\alpha-1))\in \mathbb N^*$ in Proposition \ref{prop:smoothselfsim}. It is straightforward to check that if $1/(2(\alpha-1))\notin \mathbb N^*$ the profiles $\Psi_{1/(2(\alpha-1))}$ defined in Proposition \ref{prop:smoothselfsim} exist, have all the corresponding properties, and are $C^{\alpha/(\alpha-1)}$. Any self-similar shocks can then be written in the form
\be \lab{burgers:def:roughselfsim}
\Psi_{(i,\mu,\mu')}(X)= \left\{ \begin{array}{l l} \mu^{-1}\Psi_i(\mu X) \ \ \text{if} \ X\leq 0, \\ \mu^{'-1}\Psi_i(\mu'X) \ \ \text{if} \ X\geq 0,  \end{array} \right.
\ee
for $i\in \mathbb R$, $i>0$ where $\Psi_i$ is given by \fref{id:implicit}. When $\alpha=1$, the only solution to $XW_X+WW_X=0$ with $W(0)=0$ and $W_X(0)=-1$ is $W(X)=-X$ which is a contradiction.\\

\noindent \textbf{Case 3} If $\mathcal G_s \simeq \mathbb Z^2$ or $\mathcal G_s \simeq \mathbb Z\times \mathbb R$, in this case that there exists a subgroup of $\mathcal G_s$ of the form $\{ (\lambda^k,\lambda^{k\alpha}), \ k\in \mathbb Z \}$ with $\lb>1$ and $\alpha<0$. Indeed, the mapping $(\lambda,\mu)\mapsto (\log\lambda,\log\mu)$ transforms $\mathcal{G}_s$ into a subgroup of $\mathbb{R}^2$. This new subgroup is also isomorphic to $\mathbb{Z}^2$ or $\mathbb{Z}\times\mathbb{R}$. Any such subgroup must contains a point $(z_1,z_2)$ in the bottom right quadrant $z_1>0$ and $z_2<0$, then $\lambda=e^{z_1}$ and $\alpha=\frac{z_2}{z_1}$ gives the desired subgroup. From the study of Case 1, such an invariance is impossible. If $\mathcal G_s \simeq \mathbb R^2$, one can check that $u(t,x)=x/t$.

\end{proof}


\subsection{Stability and convergence at blow-up to self-similar solutions}

The suitable framework for the stability of $\Psi_i$ is that of self-similar variables where the linearised operator is
\be \lab{def:H}
H_X:=\Lambda_{\alpha_i}+\Psi_i\pa_X+\pa_X \Psi_i=(1-\alpha_i)+\pa_X \Psi_i+(\alpha_iX+\Psi_i)\pa_X.
\ee

\begin{proposition}[Spectral properties of $H_X$ \cite{EF}] \lab{pr:H}

The point spectrum of $H_X$ on smooth functions is 
\be \lab{def:nuk}
\Upsilon (H_X)=\left\{\frac{j-2i-1}{2i}, \ \ j \in \mathbb N \right\}.
\ee
The eigenfunctions related to symmetries are
\be \label{burgers:id:eigenfunctions commutators}
H_X \Lambda_{x_0}\Psi_i=-\alpha_i \Lambda_{x_0} \Psi_i, \ \ H_X(\Lb_{\alpha_i}\Psi_i)=-\Lb_{\alpha_i}\Psi_i, \ \ H_X(\Lambda_c \Psi_i)=-(\alpha_i-1)(\Lambda_c \Psi_i), \ \ H_X\Lambda_{\mu} \Psi_i=0.
\ee
More generally, the eigenfunctions are given by the formula:
\be \lab{eq:def phiXj}
H_X(\phi_{X,j})=\frac{j-2i-1}{2i}\phi_{X,j}, \ \ \phi_{X,j}:=\frac{(-1)^k\Psi_i^j }{1+(2i+1)\Psi_i^{2i}}.
\ee
They have the following asymptotic behaviour:
\be \lab{id:as phik0}
\phi_{X,j}(X)=X^{j}-(j+2i+1)X^{j+2i}+O(X^{j+4i}) \ \ \text{as} \ \ X \rightarrow 0,
\ee
\be \lab{id:as phikinfty}
\phi_{X,j}(X)=\frac{1}{2i+1}|X|^{\frac{j-2i}{2i+1}}+O(|X|^{\frac{jf-2i}{2i+1}-\frac{2i}{2i+1}}) \ \ \text{as} \ \ X \rightarrow +\infty.
\ee

\end{proposition}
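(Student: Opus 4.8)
The plan is to conjugate $H_X$ to an elementary first-order operator by an explicit change of variable together with an explicit weight, and then to read off the spectrum and the asymptotics. First I would record two structural identities for the profile, both consequences of Proposition \ref{prop:smoothselfsim}: differentiating the implicit relation \fref{id:implicit} gives $\pa_X\Psi_i=-\bigl(1+(2i+1)\Psi_i^{2i}\bigr)^{-1}$, while the profile equation \fref{eq:smoothselfsim} rewrites, with $\alpha_i=1+\tfrac1{2i}$, as $(\alpha_iX+\Psi_i)\pa_X\Psi_i=(\alpha_i-1)\Psi_i$. Since $\Psi_i=\phi^{-1}$ with $\phi(\Psi)=-\Psi-\Psi^{2i+1}$ a strictly decreasing analytic bijection of $\RR$, the substitution $p:=\Psi_i(X)$ is a global analytic diffeomorphism of $\RR$; under it $\pa_X=-g(p)\pa_p$ with $g(p):=\bigl(1+(2i+1)p^{2i}\bigr)^{-1}$, and \fref{id:implicit} gives $\alpha_iX+\Psi_i=-\tfrac1{2i}\,p\,g(p)^{-1}$. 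Hence the transport term collapses, $(\alpha_iX+\Psi_i)\pa_X=\tfrac1{2i}p\,\pa_p$, and as an operator on functions of $p$ we obtain $H_X=-\tfrac1{2i}-g(p)+\tfrac{p}{2i}\pa_p$.

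The key step — the only slightly delicate point — is to remove the zeroth-order term $g(p)$ by conjugating with multiplication by $g$. Using $(2i+1)p^{2i}=g^{-1}-1$ one checks the identity $\tfrac{p}{2i}g'=-g+g^2$, whence for every smooth $\psi$ one gets $H_X\bigl(g(p)\psi\bigr)=g(p)\bigl(\tfrac{p}{2i}\pa_p\psi-\alpha_i\psi\bigr)$, i.e. $g^{-1}H_X\,g=\tfrac1{2i}p\,\pa_p-\alpha_i$. Because $g>0$ and $g,\Psi_i,\Psi_i^{-1}$ are analytic, the map $\phi\mapsto\bigl(1+(2i+1)\Psi_i^{2i}\bigr)\phi$ followed by $X\mapsto p$ is a bijection of smooth functions intertwining $H_X$ with the pure scaling operator $\tfrac1{2i}p\,\pa_p-\alpha_i$. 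For the latter a nonzero smooth solution of $\tfrac1{2i}p\psi'=(\lambda+\alpha_i)\psi$ exists iff $2i(\lambda+\alpha_i)\in\NN$: on each half-line $\psi$ is a constant multiple of $|p|^{2i(\lambda+\alpha_i)}$, and $C^\infty$ regularity at $p=0$ forces the exponent to be a nonnegative integer $j$, so that $\psi\in\RR\,p^{\,j}$. Undoing the conjugation yields \fref{def:nuk}, with one-dimensional eigenspaces spanned by $\phi_{X,j}=g(\Psi_i)\,\Psi_i^{\,j}=\Psi_i^{\,j}/\bigl(1+(2i+1)\Psi_i^{2i}\bigr)$, which is \fref{eq:def phiXj} up to the normalising sign there.

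It then remains to recognise the symmetry eigenfunctions and to derive the asymptotics, both by direct substitution. Using the two identities of the first step, $\Lambda_{x_0}\Psi_i=-\pa_X\Psi_i=\phi_{X,0}$, $\Lb_{\alpha_i}\Psi_i=(1-\alpha_i)\Psi_i+\alpha_iX\pa_X\Psi_i=-\Psi_i\pa_X\Psi_i\in\RR\,\phi_{X,1}$, $\Lambda_c\Psi_i=1+\pa_X\Psi_i\in\RR\,\phi_{X,2i}$, and $\Lambda_\mu\Psi_i=\Psi_i-X\pa_X\Psi_i=\alpha_i^{-1}\Psi_i\bigl(1+\pa_X\Psi_i\bigr)\in\RR\,\phi_{X,2i+1}$; the corresponding eigenvalues $-\alpha_i,\,-1,\,-(\alpha_i-1),\,0$ are \fref{burgers:id:eigenfunctions commutators}. (These relations, with the right eigenvalues, can alternatively be read off from the commutators \fref{id:comutators} applied to the self-similar solution, time translation producing $\Lb_{\alpha_i}\Psi_i$.) Finally, substituting the expansions \fref{id:dvptUic} ($\Psi_i(X)=-X+X^{2i+1}+O(X^{4i+1})$ near $0$) and \fref{id:asUiinfty} of $\Psi_i$ into $\phi_{X,j}=(-\Psi_i)^{\,j}/\bigl(1+(2i+1)\Psi_i^{2i}\bigr)$ and Taylor-expanding gives \fref{id:as phik0} and \fref{id:as phikinfty}. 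The only real obstacle is spotting that the conjugating weight is $g=-\pa_X\Psi_i$; once that is in place the rest is routine.
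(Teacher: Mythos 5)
Your argument is correct. The core spectral computation is essentially the same as the paper's: both change the variable from $X$ to $p=\Psi_i(X)$ using the implicit relation \fref{id:implicit}, whereupon the eigenvalue problem becomes a first-order scalar ODE, and both identify the admissible eigenvalues by demanding smoothness at $p=0$. What you do differently, and what makes your version a bit cleaner, is to observe that the zeroth-order potential can be removed entirely by conjugating with the weight $g=-\pa_X\Psi_i$, exposing $H_X$ as conjugate to the pure dilation operator $\tfrac{1}{2i}p\,\pa_p-\alpha_i$; the paper instead writes down the full ODE for $f$ in the $\Psi_i$ variable and integrates it directly, arriving at $f\propto \Psi_i^{2i+1+2i\nu}/(1+(2i+1)\Psi_i^{2i})$ — the same answer, reached by quadrature rather than conjugation. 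The more substantive divergence is in how \fref{burgers:id:eigenfunctions commutators} is obtained: the paper derives these four eigenrelations \emph{independently} of the general formula, by differentiating the self-similar solution along the symmetry flows and invoking the commutator identities \fref{id:comutators}, a Lie-algebraic argument that does not require knowing the full spectrum; you instead prove \fref{eq:def phiXj} first and then verify by direct substitution (using \fref{id:implicit} and \fref{eq:smoothselfsim}) that $\Lambda_{x_0}\Psi_i$, $\Lb_{\alpha_i}\Psi_i$, $\Lambda_c\Psi_i$, $\Lambda_\mu\Psi_i$ are proportional to $\phi_{X,0}$, $\phi_{X,1}$, $\phi_{X,2i}$, $\phi_{X,2i+1}$ respectively. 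Both routes are sound; the paper's commutator route is more portable to equations where the explicit spectrum is unavailable, while yours is more elementary once the explicit eigenbasis is in hand.
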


\begin{proof}

\noindent \textbf{Step 1} \emph{Proof of \fref{burgers:id:eigenfunctions commutators}}. Let $U(t,x):=(-t)^{\alpha_i-1}\Psi_i(x/(-t)^{\alpha_i})$ which solves \fref{eq:burgersusual} and by invariance, $(\tau^{(3)}_cU)_t=-(\tau^{(3)}_cU)\pa_x(\tau^{(3)}_cU)$ for any $c\in \mathbb R$. Differentiating with respect to $c$ one obtains $(\tilde \Lb_c U)_t=-\tilde \Lb_cU \pa_x U-U\pa_x(\tilde \Lb_c U)$, which evaluated at $t=-1$ yields:
\be \lab{burgers:id:eigenfunctions commutators 1}
\pa_t \left(\tilde \Lb_c U \right)(-1,\cdot)=-\Psi_i \pa_X (\Lb_c \Psi_i)-\pa_X \Psi_i \Lb_c \Psi_i.
\ee
Self-similarity implies from \fref{burgers:def:infinitesimal} that $\tilde \Lb_\lb^{(\alpha_i)}u=0$ hence $\tilde \Lb_\lb^{(\alpha_i)} \tilde \Lb_c u+[\tilde \Lb_c,\tilde \Lb_\lb^{(\alpha_i)}]u=0$. This identity reads from the commutator relation \fref{id:comutators}:
$$
\tilde \Lb_\lb^{(\alpha_i)} \tilde \Lb_c u =(\alpha_i-1)\tilde \Lb_c u.
$$
At time $t=-1$ the above identity yields from \fref{burgers:def:infinitesimal} and \fref{burgers:def:infinitesimalspatial}:
$$
\pa_t (\tilde \Lb_c u)(-1,\cdot)-(1-\alpha_i)\Lb_c \Psi_i-\alpha_i X\pa_X \Lb_c \Psi_i=(\alpha_i-1) \Lb_c \Psi_i.
$$
From \fref{burgers:id:eigenfunctions commutators 1} the left hand side in this identity is $-H_X\Lambda_c \Psi_i$, ending the proof of \fref{burgers:id:eigenfunctions commutators}. The proof for the eigenfunctions related to the other symmetries \fref{burgers:id:eigenfunctions commutators} is exactly the same.\\

\noindent \textbf{Step 2} \emph{Proof of \fref{def:nuk} and \fref{eq:def phiXj}}. Assume $f$ solves $H_Xf=\nu f$. Then using the implicit equation \fref{id:implicit} one obtains:
$$
\frac{\pa f }{\pa \Psi_i} =  f\left[\frac{\alpha_i +\nu +(\alpha_i-1+\nu)(2i+1)\Psi_i^{2i}}{(\alpha_i-1)\Psi_i+\alpha_i \Psi_i^{2i+1}} \right] =  f\left[\frac{2i+1+2i\nu+(1+2i\nu)(2i+1)\Psi_i^{2i}}{\Psi_i+(2i+1)\Psi_i^{2i+1}} \right] \\
$$
whose solution is of the form
$$
f \in \text{Span} \left(\frac{\Psi_i^{2i+1+2i\nu }}{1+(2i+1)\Psi_i^{2i}} \right)
$$
From \fref{id:dvptUic} the above formula defines a smooth function if and only if $\nu=(j-2i-1)/(2i)$ for some $j \in \mathbb N$.

\end{proof}

The smooth self-similar profiles are the asymptotic attractors of all smooth and non-degenerate shocks in the following sense.

\begin{proposition} \lab{pr:nbassin}

Let $U_0\in C^{\infty}(\mathbb R)$ be such that $\pa_x U_0$ is minimal at $x_0$ with 
\be \lab{n:id:cond init}
U_0(x_0)=c, \ \ \pa_xU_0(x_0)<0, \ \ \pa_x^j U_0 (x_0)=0 \ \ \text{for} \ j=2,...,2i, \ \ \text{and} \ \ \pa_x^{2i+1}U_0(x_0)>0
\ee
for some $i\in \mathbb N^*$. Then $u$ blows up at time $T=-1/U_x(x_0)$ at the point $x_{\infty}=x_0+cT$ with:
$$
U(t,x)=\mu^{-1}(T-t)^{\frac{1}{2i}} \Psi_i \left(\mu\frac{x-x_0-ct}{(T-t)^{1+\frac{1}{2i}}} \right)+c+w(t,x)
$$
where $\Psi_i$ is defined by Proposition \ref{prop:smoothselfsim}, where $ \mu=\left(\frac{\pa_x^{2i+1}U(x_0)}{(2i+1)!(-\pa_x U(x_0))^{2i+2}}\right)^{\frac{1}{2i}}$ and where
\be \lab{burgers:cvseflsim}
\frac{w}{(T-t)^{\frac{1}{2i}} \Psi_i \left(\mu\frac{x-x_0-ct}{(T-t)^{1+\frac{1}{2i}}} \right)}\rightarrow 0 \ \ \text{as} \ (x,t)\rightarrow (x_{\infty},T).
\ee

\end{proposition}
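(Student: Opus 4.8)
The plan is to integrate Burgers' equation by characteristics near the forming shock and to extract the self-similar behaviour from the Taylor expansion of $U_0$ at $x_0$, comparing the resulting algebraic relation with the implicit equation \fref{id:implicit} defining $\Psi_i$.

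\emph{Reduction and characteristics.} By the invariances of Subsection 2.1 (translations and Galilean boost, which do not alter the derivatives of $U_0$ of order $\ge 1$) we may assume $x_0=0$, $c=0$; put $T:=-1/U_0'(0)>0$ and $k:=2i$. For $t\in[0,T)$ the characteristic map $\Phi_t(a):=a+tU_0(a)$ has $\Phi_t'(a)=1+tU_0'(a)\ge 1+tU_0'(0)=(T-t)/T>0$ — here the hypothesis that $U_0'$ attains its minimum at $0$ is used — so $\Phi_t$ is a $C^\infty$-diffeomorphism of $\RR$ and $U(t,\cdot):=U_0\circ\Phi_t^{-1}$ is the smooth solution on $[0,T)$. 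Since $U_0(0)=0$ the characteristic issued from $0$ is frozen, so the singularity localises at $x_\infty=0$; moreover $\pa_xU(t,\Phi_t(a))=U_0'(a)/(1+tU_0'(a))$ is minimal in $x$ at $a=0$ with value $-1/(T-t)$ while staying bounded for $t<T$, so by \fref{eq:critere explo} the blow-up time is exactly $T$. One also checks, using that $\Phi_T$ is strictly increasing near $0$ (because $U_0'(a)>U_0'(0)$ for $0<|a|$ small, as $k$ is even), that $a(t,x):=\Phi_t^{-1}(x)\to 0$, hence $u:=U(t,x)\to 0$, as $(t,x)\to(T,0)$.

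\emph{Self-similar variables.} The assumptions \fref{n:id:cond init} give $U_0(a)=-a/T+ba^{k+1}+O(a^{k+2})$ with $b:=U_0^{(k+1)}(0)/(k+1)!>0$, and inverting near $0$, $U_0^{-1}(u)=-Tu-bT^{k+2}u^{k+1}+O(u^{k+2})$. Set $\mu:=(bT^{k+2})^{1/k}$, which is exactly the constant in the statement. Near $(T,0)$ the identity $x=U_0^{-1}(u)+tu$ becomes
\[
x=-(T-t)u-\mu^k u^{k+1}(1+\rho),\qquad \rho=\rho(t,x)=O(u)\to 0 \ \text{ as } (t,x)\to(T,0).
\]
Introducing $X:=\mu x(T-t)^{-1-1/k}$, $\mathcal V:=\mu(T-t)^{-1/k}u$ and the profile $U^{ss}(t,x):=\mu^{-1}(T-t)^{1/k}\Psi_i(X)$ (so that $\mathcal V/\Psi_i(X)=U/U^{ss}$), the above rescales, by a direct check of exponents, to $\mathcal V+(1+\rho)\mathcal V^{k+1}=-X$, i.e. $\mathcal V+\mathcal V^{k+1}=-(X+\rho\mathcal V^{k+1})$. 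Since $v\mapsto v+v^{k+1}$ is a bijection of $\RR$ inverted by $\Psi_i$ (this is \fref{id:implicit}), we obtain $\mathcal V=\Psi_i(X+\delta)$ with $\delta:=\rho\mathcal V^{k+1}$.

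\emph{Algebraic comparison and conclusion.} Using $X+\delta=-\mathcal V-\mathcal V^{k+1}$ and that $k$ is even, $|X+\delta|=|\mathcal V|(1+\mathcal V^k)\ge|\mathcal V|^{k+1}$, so $|\mathcal V|^{k+1}\le|X|+|\rho|\,|\mathcal V|^{k+1}$, whence $|\delta|\le 2|\rho|\,|X|$ once $|\rho|\le\tfrac12$; write $\delta=\epsilon X$, $|\epsilon|\le 2|\rho|$. Now set $P:=\Psi_i(X)$, $P':=\Psi_i(X(1+\epsilon))=\mathcal V$ and $r:=P'/P$ (for $X\neq 0$); since $\Psi_i$ is odd and decreasing, $P$ and $P'$ have the same sign, so $r>0$. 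From $P+P^{k+1}=-X$ and $P'+P'^{k+1}=-X(1+\epsilon)$, dividing by $P$ and using $r^{k+1}-1=(r-1)(1+r+\cdots+r^k)$ with $1+r+\cdots+r^k\ge 1$ for $r>0$, one gets
\[
(r-1)\bigl[\,1+P^k(1+r+\cdots+r^k)\,\bigr]=\epsilon(1+P^k),\qquad\text{hence}\qquad |r-1|\le|\epsilon|\le 2|\rho|.
\]
Therefore the remainder $w:=U-U^{ss}$ satisfies $w/U^{ss}=\mathcal V/P-1=r-1$, so $|w/U^{ss}|\le 2|\rho|\to 0$ as $(t,x)\to(T,0)$, which is \fref{burgers:cvseflsim}; reverting the reductions gives the proposition.

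\emph{Main obstacle.} The delicate point is the final step: one needs the \emph{ratio} $w/U^{ss}$ — not merely the difference $w$ — to vanish uniformly along every approach to $(T,x_\infty)$, in particular those with $X\to\infty$, where $U^{ss}$ does not decay and a crude Lipschitz bound on $\Psi_i$ fails. The inequality $1+r+\cdots+r^k\ge 1$, valid precisely because $k=2i$ is even (so $r=P'/P>0$), is exactly what upgrades this to the clean estimate $|r-1|\le|\epsilon|$. A minor additional point, if $U_0'$ also attains its global minimum away from $x_0$, is to localise the characteristic analysis near $x_0$, which only uses that $\Phi_T$ is strictly increasing there.
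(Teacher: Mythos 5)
Your proof is correct, and its overall skeleton matches the paper's: reduction by invariances, the method of characteristics, the identification of the blow-up time via minimizing $\pa_xU(t,\cdot)$ along characteristics, and a self-similar renormalization in which the implicit equation \fref{id:implicit} makes $\Psi_i$ appear automatically. The genuine divergence is in the final relative-error estimate. The paper represents the perturbation as a multiplicative correction $(1+h)$ to the argument of $\Psi_i$ and controls $\Psi_i(X(1+h))-\Psi_i(X)$ by integrating $\int_1^{1+h}\frac{d\mu}{\mu}(\tX\pa_{\tX}\Psi_i)(X\mu)$, together with the observation (following from the self-similar ODE \fref{eq:smoothselfsim} and $|\pa_X\Psi_i|\le 1$) that $|\tX\pa_{\tX}\Psi_i|\lesssim|\Psi_i|$ uniformly. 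You instead work directly on the algebraic side of the implicit equation: writing $P=\Psi_i(X)$, $P'=\Psi_i(X(1+\epsilon))$, $r=P'/P$, you subtract the two implicit relations, factor $r^{k+1}-1=(r-1)(1+r+\cdots+r^k)$, and exploit $P^k\ge 0$ (here the evenness of $k=2i$ and the sign considerations you make explicit are essential) to obtain the clean uniform bound $|r-1|\le|\epsilon|$. This avoids any differential estimate on $\Psi_i$ and needs nothing beyond $\Psi_i$ being the inverse of $v\mapsto -v-v^{k+1}$, which is arguably more elementary and makes the uniformity in the $X\to\infty$ regime entirely transparent — a point the paper handles more compactly but less explicitly. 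Both arguments work; yours trades the calculus estimate for a purely algebraic one.
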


\begin{proof}

Without loss of generality, up to the symmetries of the equation we consider the case $x_0=0$, $U(0)=0$, $U_x(0)=-1$ and $\pa_x^{2i+1}U_0(0)=(2i+1)!$, i.e. $T=1=b$, $c=0$. For $0\leq t <1$ and $x\in \mathbb R$ we have the formula using characteristics for $|y|\leq 1$:
\be \lab{burgers:characteristicsh} 
U(x,t)=U_0(\phi_t^{-1}(x)), \ \ \phi_t(y)=y+tU_0(y)=(1-t)y+y^{2i+1}+O(y^{2i+2})+O(|y|^{2i+1}|1-t|),
\ee
$\phi_t$ defining a diffeomorphism on $\mathbb R$ for all $0\leq t<1$. Given $(t,x)$ close to $(1,0)$ we look for an inverse $\phi_t^{-1}(x)$ of the form $-(1-t)^{1/(2i)}\Psi_i(x(1+h)/(1-t)^{1+1/(2i)})$. Since $|\Psi_i(x)|\lesssim |x|^{1/(2i+1)}$ for $x\in \mathbb R$, we compute using \fref{id:implicit}:
\bee
&& \phi_t\left(-(1-t)^{\frac{1}{2i}}\Psi_i\left(\frac{x(1+h)}{(1-t)^{1+1/(2i)}}\right)\right)\\
&=& -(1-t)^{1+\frac{1}{2i}}\left( \Psi_i+\Psi_i^{2i+1}\right)\left(\frac{x(1+h)}{(1-t)^{1+1/(2i)}}\right)+O\left((1-t)^{1+\frac{2}{2i}}\Psi_i^{2i+2}\left(\frac{x(1+h)}{(1-t)^{1+1/(2i)}} \right)\right) \\
&&+O\left((1-t)^{2+\frac{1}{2i}}|\Psi_i^{2i+1}|\left(\frac{x(1+h)}{(1-t)^{1+1/(2i)}} \right)\right)\\
&=& x(1+h)+O\left((1-t)^{1+\frac{2}{2i}}\left(\frac{|x||1+h|}{(1-t)^{1+1/(2i)}} \right)^{1+\frac{1}{2i+1}}\right) +O\left((1-t)^{2+\frac{1}{2i}} \frac{|x||1+h|}{(1-t)^{1+1/(2i)}} \right)\\
&=&x(1+h)+O(|x|^{1+\frac{1}{2i+1}}|1+h|)+O((1-t)|x||1+h|).
\eee
From the intermediate values theorem, there exists $ h=O(|x|^{1/(2i+1)}+(1-t))$ such that there holds the inverse formula
\be \lab{burgers:id:inverseh}
\phi_t\left(-(1-t)^{\frac{1}{2i}}\Psi_i\left(\frac{x(1+h)}{(1-t)^{1+1/(2i)}}\right)\right)=x
\ee
and there holds
\bee
\Psi_i\left(\frac{x(1+h)}{(1-t)^{1+1/(2i)}}\right) & = & \Psi_i\left(\frac{x}{(1-t)^{1+1/(2i)}}\right)+\int_{\mu=1}^{1+h} \frac{d\mu}{\mu} (\tilde X\pa_{\tX} \Psi_i)\left(\frac{x\mu}{(1-t)^{1+1/(2i)}} \right) \\
 & = & \Psi_i\left(\frac{x}{(1-t)^{1+1/(2i)}}\right)+O\left(|h|\left|\Psi_i\left(\frac{x}{(1-t)^{1+1/(2i)}}\right)\right| \right). \\
\eee
Injecting $U_0(y)=-y+O(|y|^{2i+1})$ in \fref{burgers:characteristicsh}, using \fref{burgers:id:inverseh}, the bound on $h$, the above bound and $(1-t)^{1/(2i)}|\Psi_i|(x/(1-t)^{1+1/(2i)})\lesssim |x|^{1/(2i+1)}$, one obtains \fref{burgers:cvseflsim}.

\end{proof}


\section{Proof of the main Theorem \ref{th:main}} \lab{sec:main}

To ease notations we consider the case $i=1$ corresponding to the $\Psi_1$ profile for Burgers, the proof being the same for $i\geq 2$. Recall the notation for the derivatives on the  transverse axis \fref{def:xi} and the corresponding system \fref{eq:NLH} that they solve under the odd in $x$ and even in $y$ symmetry assumption. Solutions to $(NLH)$ in \fref{eq:NLH} might blow up according to a dynamic described in Theorem \ref{th:NLHinstable}. The following proposition then describes how the singularity formation for $\xi$ makes some solutions to the other equation $(LFH)$ in \fref{eq:NLH} blow up in finite time with a precise behaviour. Its proof and that of Theorem \ref{th:NLHinstable} are relegated to Section \ref{sec:NLH} and we prove here Theorem \ref{th:main} admitting them.

\begin{proposition} \lab{pr:LHinstable}

Let $i=1$. For any $k\in \mathbb N$ with $k\geq 2$, $a,b>0$ and $J\in \mathbb N$, there exists $T^*>0$ such that for any $0<T<T^*$, there exists $\xi$ a solution to \fref{eq:NLH} satisfying \fref{id:decomposition NLH} and \fref{bd:remainder NLH}, and $\zeta_0$ such that the corresponding solution to $(LFH)$ blows up at time $T$ with
$$
\zeta =\frac{b}{(T-t+ay^{2k})^4}  +\tilde  \zeta,
$$
where the remainders $\tilde \zeta$ satisfy for $j=0,...,J$ for some constant $C(a,b)>0$:
\be \label{bd:remainder LFH}
| \pa_y^j\tilde \zeta |\leq C \left( (T-t)^{\frac{1}{2k}}+|y|\right)^{\frac 12-(8k+j)}.
\ee

\end{proposition}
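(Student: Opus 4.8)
The strategy is to treat $(LFH)$ as a linearly forced heat equation whose forcing potential $2(i+1)\xi = 4\xi$ (with $i=1$) is already known, to leading order, from Theorem \ref{th:NLHinstable}; in particular $\xi\sim \lambda = (T-t+ay^{2k})^{-1}$. I would first observe that the formal ODE $\dot\zeta = 4\xi\zeta$ with $\xi\approx\lambda$ and $\lambda\approx 1/(T-t)$ near $y=0$ suggests $\zeta\sim (T-t)^{-4}$, and more precisely that $\zeta_{\mathrm{app}}:=b\lambda^4 = b(T-t+ay^{2k})^{-4}$ is an approximate solution: plugging it in, the transport/reaction part $\partial_t\zeta_{\mathrm{app}} - 4\xi\zeta_{\mathrm{app}}$ produces $b\lambda^4\,[4\lambda - 4\xi] = 4b\lambda^4\tilde\xi$, which is of lower order by \fref{bd:remainder NLH}, while the parabolic term $-\partial_{yy}\zeta_{\mathrm{app}}$ is also lower order in the relevant self-similar region (it costs a factor $\lambda^{1/k}$ relative to $\lambda$, hence $\partial_{yy}(\lambda^4)\lesssim \lambda^{4+1/k}$, which is smaller than the size $\lambda^{4}\cdot\lambda$ of the reaction term by a factor $\lambda^{1/k-1}\to 0$... actually here one must be careful: $\lambda^{4+1/k}$ versus the error budget $\lambda^{4-1/2k}\cdot\lambda^{1/k}$-type terms; the point is that $\partial_{yy}$ gains a full power of the spatial scale). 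So the ansatz $\zeta = b\lambda^4 + \tilde\zeta$ reduces the problem to a linear Cauchy problem for $\tilde\zeta$:
\be \lab{pr:LHinstable:eqtilde}
\partial_t\tilde\zeta - 4\xi\tilde\zeta - \partial_{yy}\tilde\zeta = E, \qquad E := 4b\lambda^4\tilde\xi + \partial_{yy}(b\lambda^4),
\ee
where $E$ is a known, explicitly bounded source.

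Next I would set up the analysis exactly in the framework of Section \ref{sec:NLH}, i.e. pass to the self-similar variables adapted to the scale $\lambda$, namely $s$ with $ds/dt = \lambda$-type renormalisation, $Z = y/(T-t)^{1/2k}$ (or the version weighted by $|\log(T-t)|$ as appropriate), and renormalised unknown $w = (T-t)^{4}\zeta$ or $w = \lambda^{-4}\tilde\zeta$. In these variables \eqref{pr:LHinstable:eqtilde} becomes a linear parabolic equation with a confining/transport drift coming from the self-similar change of variables plus the potential $4\xi$, which is now a bounded perturbation of the renormalised linear operator; away from the origin in $Z$ it is, as the authors emphasise for $(NLH)$, a perturbation of the renormalised quadratic flow $f_s + f - (Z/2k)f_Z - \text{(potential)}f = 0$. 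Since $(LFH)$ is \emph{linear} in $\zeta$, there are no fixed-point difficulties: the only task is an a priori estimate. I would run the same weighted energy estimates used for $(NLH)$ in Section \ref{sec:NLH} — an interior energy norm capturing the parabolic region $|Z|\lesssim 1$ plus the exterior Lyapunov functional of Lemma \ref{NLH:lem:improvedoustide} (or rather its linear-forced analogue) controlling $|y|\approx 1$ — closing a bootstrap on the rescaled norm of $\tilde\zeta$ with the source $E$ absorbed. The gain of one half power, i.e. the extra factor $((T-t)^{1/2k}+|y|)^{1/2}$ in the remainder bound, is inherited directly from the corresponding half-power gain already proved for $\tilde\xi$ in \fref{bd:remainder NLH}, since it is exactly the size of $E$ (modulo the parabolic correction, which is better).

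The one genuine subtlety — and the step I expect to be the main obstacle — is the choice of initial data $\zeta_0$ and the handling of the \emph{instability directions} of the linearised operator. The potential $4\xi$ near the blow-up is, to leading order, $4\lambda$, and in self-similar variables this is a positive potential of size $4$; the renormalised operator $-\partial_{ZZ} + (Z/2k)\partial_Z + (\text{const}) - 4$ therefore has finitely many unstable eigenvalues (polynomials in $Z$ up to some degree), precisely the modes that would make a generic $\tilde\zeta$ grow faster than the profile. These must be killed by a finite-dimensional adjustment of $\zeta_0$ (equivalently, of a finite number of Taylor coefficients of $\zeta_0$ at $y=0$), via a topological/Brouwer-type argument identical in spirit to the one used to select the flat blow-ups for $(NLH)$ in Theorem \ref{th:NLHinstable}; one also needs $\zeta_0$ compatible with the evenness in $y$ and with the already-fixed $\xi$-dynamics. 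Since the equation is linear, this is soft once the spectral picture of the potential is pinned down, but one must check that the finitely many unstable modes are the only obstruction and that the exterior Lyapunov functional controls the transition region uniformly in the small parameter $T$. Once the unstable modes are projected out, the bootstrap closes and yields $\zeta = b(T-t+ay^{2k})^{-4} + \tilde\zeta$ with the stated bound on $\partial_y^j\tilde\zeta$, which completes the proof. \qed
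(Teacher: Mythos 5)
Your overall framework is the right one and matches the paper's: pass to the self-similar variables of Section~\ref{sec:NLH}, decompose $\zeta$ around the profile $b\lambda^4=b(T-t+ay^{2k})^{-4}$ (equivalently $g=(T-t)^4\zeta$ around $bF_k^4(Z)$), treat $4(f-F_k)$ as a lower-order potential, and close with interior $L^2_\rho$ estimates plus exterior weighted Lyapunov estimates, inheriting the half-power gain from the already-proved bound~\fref{bd:remainder NLH} on $\tilde\xi$. Where your plan goes off track is in the ``main obstacle'' paragraph: you predict finitely many strictly unstable modes of the renormalised linear operator and a Brouwer/shooting argument to kill them. That is not what happens here. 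In the blow-up region the potential $4\xi$ is $\approx 4\lambda$, and in the $(s,Y)$ variables the $+4$ from the profile cancels exactly against the $+4$ coming from the $(T-t)^4$ renormalisation; the relevant operator near the origin is simply $\mathcal M_\rho=\tfrac Y2\partial_Y-\partial_{YY}$, whose spectrum is $\{\ell/2:\ell\in\mathbb N\}$ (Proposition~\ref{pr:Mrho}) — all eigenvalues are $\geq 0$, and the kernel is one-dimensional (constants). So there are \emph{no} strictly growing modes. The single zero mode is absorbed not by a topological argument but by \emph{modulating the amplitude} $b=b(s)$, which is available because $(LFH)$ is linear (homothety invariance), together with the orthogonality condition $\e\perp h_0$ in $L^2_\rho$ that determines $b$ uniquely; the modulation equation then gives $|b_s|\lesssim e^{-(k-1)s}$, so $b$ converges to a limit $b^*$. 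As a result, Proposition~\ref{pr:bootstrap2} closes by a plain continuity/bootstrap contradiction, with no Brouwer step — a genuine simplification over the $(NLH)$ case that your plan misses. A related omission: because $b$ is time-dependent, the source on the right-hand side of your equation~\eqref{pr:LHinstable:eqtilde} must include the modulation term $b_s\lambda^4$ (in self-similar variables, $b_sF_k^4(Z)$), which is controlled by the modulation estimate and is of the same order as the other error terms; leaving $b$ fixed, as you wrote, would leave a slowly decaying constant mode that your energy estimates would not be able to absorb.
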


\begin{proof}[Proof of Theorem \ref{th:NLHinstable} and of Proposition \ref{pr:LHinstable}]

Section \ref{sec:NLH} is devoted to their proof. Proposition \ref{pr:bootstrap} and the estimates \fref{bd:eLinfty} for $\xi$, and Proposition \ref{pr:bootstrap2} and the estimates \fref{2bd:Linfty} for $\zeta$, indeed imply that Theorem \ref{th:NLHinstable} and Proposition \ref{pr:LHinstable} hold for one particular value of $a>0$ and of $b>0$. One then obtains the general result for any value of $a$ and $b$ by using the symmetries of the equation. Namely, $(NLH)$ and $(LFH)$ are invariant by time translation, \fref{eq:NLH} is invariant by the scaling transformation $\xi \mapsto \lambda^2\xi (\lambda^2t,\lambda y)$ for any $\lambda>0$ and $(LFH)$ is invariant by homothety since it is linear.

\end{proof}

We assume throughout the section that $\xi$ and $\zeta$ satisfy the conclusions of Proposition \ref{pr:LHinstable}.

\subsection{Self-similar variables}
First, the behaviour as $t\rightarrow T$ of $\xi$ and $\zeta$, in Proposition \ref{pr:LHinstable}, suggests that the typical scale along the $y$ variable is $|y|\sim (T-t)^{1/(2k)}$. The typical scale for diffusive effects for a blow-up at the origin at time $T$ is $|y|\sim \sqrt{T-t}$. Formally, since $k\geq 2$ the diffusive effects are negligible. As $|\xi |\sim (T-t)^{-1}$ and $|\zeta|\sim (T-t)^{-4}$, this suggests the scale $|x|\sim (T-t)^{3/2}$. We introduce:
\be \lab{eq:def XYZ}
X:=\sqrt{\frac b6} \frac{x}{(T-t)^{\frac 32}}, \ \ Y:=a^{\frac{1}{2k}}\frac{y}{\sqrt{T-t}}, \ \ s:=-\log (T-t), \ \ Z:=e^{-\frac{k-1}{2k}s}Y=\frac{a^{\frac{1}{2k}}y}{(T-t)^{\frac{1}{2k}}}
\ee
and
$$
u(t,x,y)=\sqrt{\frac 6b} (T-t)^{\frac 12} v\left(s,X,Y \right)
$$
where the renormalisation factors $\sqrt{b/6}$ and $a^{1/2k}$ will simplify notations. To ease the analysis, since the value of $a$ and $b$ will never play a role in this section, we take
\be \lab{eq:def a b}
a=1=b
\ee
without loss of generality for the argument. Then $v$ solves from the choice \fref{eq:def a b}:
\be \lab{main:eqvautosim}
v_s-\frac 12 v +\frac{3}{2}X\pa_X v+\frac 12 Y \pa_Y v +v\pa_X v- \pa_{YY}v=0.
\ee
We define accordingly
\be \lab{main:eq:def fg}
f(s,Y):=-\pa_X v(s,0,Y)=(T-t)\xi (t,y), \ \ g(s,Y):= \pa_X^3 v(s,0,Y)=(T-t)^4\frac{6}{b}\zeta (t,y),
\ee
which from Theorem \ref{th:NLHinstable} satisfy:
\be \lab{eq:def tildeF}
f(s,Y)=F_k(Z)+\tilde f, \ \ F_k(Z):=\frac{1}{1+Z^{2k}}, \ \ |\pa_Z^j \tilde f|\lesssim e^{-\frac{1}{4k}s}(1+|Z|)^{\frac 12 -2k-j}, \ \ j=0,...,J,
\ee
\be \lab{eq:def tildeG}
g(s,Y)=G_k+\tilde g, \ \ G_k:=\frac{6}{(1+Z^{2k})^4}, \ \ |\pa_Z^j \tilde g|\lesssim e^{-\frac{1}{4k}s}(1+|Z|)^{\frac 12 -8k-j}, \ \ j=0,...,J,
\ee
and solve the system from \fref{eq:NLH} and \fref{eq:def a b}:
\be \lab{eq:f}
\left\{ \begin{array}{l l} f_s+f+\frac Y2 \pa_Y f-f^2 -\pa_{YY}f=0, \\
g_s+4g+\frac Y2 \pa_Y g-4fg-\pa_{YY} g=0. \end{array} \right.
\ee
In \fref{eq:def XYZ}, the variable $Y$ is adapted to the viscosity effects whereas the variable that is adapted to the blow-up profile is $Z$. The renormalised function $w(s,X,Z)=v(s,X,Y)$ solves in fact
\be \lab{eq:w}
w_s-\frac 12 w+\frac{3}{2}X\pa_X w+\frac{1}{2k} Z \pa_Z w +w\pa_X w- e^{-\frac{k-1}{k}s} \pa_{ZZ}w=0.
\ee

\subsection{2D Blow-up profile and spectral analysis}

The infinitesimal behaviour near the origin along the  transverse axis being understood by \fref{eq:def tildeF} and \fref{eq:def tildeG}, we need to "extend" it along the $x$ variable. A reasonable guess is that the blow-up of a solution to \fref{eq:burgers} is given by a shock of Burgers equation $\lambda^{1/2}\mu \Psi_1(\lambda^{-3/2}\mu^{-1}x)$ whose two parameters are dictated by \fref{eq:NLH}. Let us first give additional properties of $\Psi_1$ than those contained in Subsection \ref{subsec:Burgers}. From Proposition \ref{prop:smoothselfsim} it solves
\be \lab{main:eq:Psi}
- \frac 12 \Psi_1+\frac 32 X \pa_X \Psi_1+\Psi_1\pa_X \Psi_1=0
\ee
and has the asymptotic behaviour
\be \lab{main:eqPsi1origine}
\Psi_1(X)\underset{X\rightarrow 0}{=}-X+X^3+O(X^5) , \ \ \ \Psi_1(X)\underset{|X|\rightarrow +\infty}{=}-\text{sgn}(X)|X|^{\frac{1}{3}}+O(|X|^{-\frac 13}).
\ee
Since $w$ is a global solution to \fref{eq:w} whose derivatives up to third order on the axis $\{ X=0\}$ converge to some fixed profiles from \fref{eq:def tildeF} and \fref{eq:def tildeG} one can believe that $w$ converges as $s\rightarrow +\infty$ to a profile $w_\infty$ which then has to solve the asymptotic stationary self-similar\footnote{Self-similarity is here with respect to the equation \fref{eq:burgers} without viscosity.} equation
\be \lab{eq:Q}
-\frac 12 w_{\infty}+\frac{3}{2}X\pa_X w_{\infty}+\frac{1}{2k} Z \pa_Z w_{\infty} +w_{\infty}\pa_X w_{\infty}=0.
\ee

\begin{lemma} \lab{lem:Psi1}

For any $a,b>0$, equation \fref{eq:Q} admits the following solution that is odd in $X$ and even in $Z$:
$$
\Theta [a,b] (X,Z):=b^{-1} F_k^{-\frac{1}{2}}(aZ)\Psi_1\left(b F_k^{\frac 32}(aZ)X \right)
$$

\end{lemma}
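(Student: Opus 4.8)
The plan is a direct verification by substitution, which reduces everything to the profile equation \eqref{main:eq:Psi} for $\Psi_1$. To keep the computation transparent I would first abbreviate $\lambda=\lambda(Z):=F_k^{-1}(aZ)=1+(aZ)^{2k}>0$, so that
\[
\Theta[a,b](X,Z)=b^{-1}\lambda^{1/2}\Psi_1(\eta),\qquad \eta:=b\,\lambda^{-3/2}X ,
\]
and record three elementary facts: $bX=\lambda^{3/2}\eta$, $\partial_Z\eta=-\tfrac32\lambda^{-1}\lambda'\eta$, and — the identity that makes the statement work —
\[
\tfrac1{2k}\,Z\lambda'(Z)=\lambda(Z)-1 ,
\]
which holds since $Z\lambda'(Z)=Z\cdot 2k\,a(aZ)^{2k-1}=2k(aZ)^{2k}=2k(\lambda-1)$. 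This last identity is exactly the compatibility between the exponent $2k$ in $F_k$ and the coefficient $\tfrac1{2k}$ of the $Z$-transport term in \eqref{eq:Q} (which itself reflects the choice $\lambda(t,y)=(T-t+ay^{2k})^{-1}$ of transverse scale).

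Next I would compute the four terms of \eqref{eq:Q}. The $X$-derivatives come straight from the chain rule: $\partial_X\Theta=\lambda^{-1}\Psi_1'(\eta)$, whence $\tfrac32 X\partial_X\Theta=\tfrac32 b^{-1}\lambda^{1/2}\eta\Psi_1'(\eta)$, $\Theta\,\partial_X\Theta=b^{-1}\lambda^{-1/2}\Psi_1(\eta)\Psi_1'(\eta)$, and $-\tfrac12\Theta=-\tfrac12 b^{-1}\lambda^{1/2}\Psi_1(\eta)$. For the $Z$-derivative, $\partial_Z\eta=-\tfrac32\lambda^{-1}\lambda'\eta$ gives $\partial_Z\Theta=b^{-1}\lambda^{-1/2}\lambda'\big(\tfrac12\Psi_1(\eta)-\tfrac32\eta\Psi_1'(\eta)\big)$, and multiplying by $\tfrac1{2k}Z$ and invoking the key identity turns this into $(\lambda-1)b^{-1}\lambda^{-1/2}\big(\tfrac12\Psi_1(\eta)-\tfrac32\eta\Psi_1'(\eta)\big)$.

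Summing the four contributions and factoring out $b^{-1}\lambda^{-1/2}$, the terms proportional to $\lambda\Psi_1(\eta)$ cancel against each other, as do those proportional to $\lambda\eta\Psi_1'(\eta)$, and what is left is precisely
\[
b^{-1}\lambda^{-1/2}\Big(-\tfrac12\Psi_1+\tfrac32\eta\,\partial_X\Psi_1+\Psi_1\,\partial_X\Psi_1\Big)(\eta)=0
\]
by \eqref{main:eq:Psi}. The parity assertions are then immediate: $F_k(aZ)>0$ together with $\Psi_1$ odd give $\Theta$ odd in $X$, while $F_k(aZ)$ is even in $Z$, so $\Theta$ is even in $Z$. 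I do not expect a genuine obstacle here: the only care needed is in the bookkeeping of the $Z$-derivative and in recognising that the cancellation of all $\lambda$-dependent terms is forced by the identity $\tfrac1{2k}Z\lambda'=\lambda-1$; everything else is routine chain rule together with the known ODE for $\Psi_1$.
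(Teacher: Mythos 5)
Your proof is correct and follows essentially the same route as the paper: a direct substitution that reduces to the profile ODE \eqref{main:eq:Psi} for $\Psi_1$, using the ODE satisfied by the transverse profile (your identity $\tfrac{1}{2k}Z\lambda'=\lambda-1$ is exactly the reciprocal form of \eqref{NLH:eq:Fk}). The only cosmetic difference is that the paper first invokes the scaling invariance of \eqref{eq:Q} to normalize $a=b=1$, while you carry $a,b$ through the computation via $\lambda$ and $\eta$; this changes nothing of substance.
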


\begin{proof}

This is a direct computation. The equation is invariant by the scaling $z\mapsto aZ$, $x\mapsto bX$ and $w_{\infty}\mapsto b^{-1}w_{\infty}$, so that we take $a=b=1$ without loss of generality. From \fref{eq:def tildeF} and \fref{main:eq:Psi}:
\bee
&&-\frac 12 \Theta[1,1]+\frac{3}{2}\tX\pa_X \Theta[1,1]+\frac{1}{2k} Z \pa_Z \Theta[1,1] +\Theta[1,1]\pa_X \Theta[1,1] \\
&=& F_k^{-\frac 12}(Z)\left(-\frac 12 \Psi_1+\frac 32 \tX \pa_{\tX} \Psi_1 \right) (F_k^{\frac 32}(Z)X) \\
&&+\frac{1}{2k} Z\pa_Z F_k(Z)F_k^{-\frac 32}(Z)\left(-\frac 12 \Psi_1+\frac 32 \tX \pa_{\tX} \Psi_1\right)(F_k^{\frac 32}(Z)X)+F_k^{\frac 12}(Z)(\Psi_1 \pa_{\tX} \Psi_1)(F_k^{\frac 32}(Z)X)\\
&=& F_k^{-\frac 12} \left(-\frac 12 \Psi_1+\frac 32 \tX \pa_{\tX} \Psi_1 \right) (F_k^{\frac 32}X) \\
&&+(F_k^{\frac 12}-F_k^{-\frac 12})\left(-\frac 12 \Psi_1+\frac 32 \tX \pa_{\tX} \Psi_1\right)(F_k^{\frac 32}X)-F_k^{\frac 12}\left(-\frac 12 \Psi_1+\frac 32\tX \pa_{\tX} \Psi_1\right)(F_k^{\frac 32}X) =0.
\eee

\end{proof}

The choice \fref{eq:def a b} implies that the good candidate for \fref{eq:Q} is 
\be \lab{eq:def Theta}
\Theta(X,Z):= \Theta [1,1](X,Z)=F_k^{-\frac 12}(Z)\Psi_1\left(F_k^{\frac 32}(Z)X \right).
\ee
The linearised operator corresponding to \fref{eq:w} near $\Theta$ neglecting the transversal viscosity is
\bee
\mathcal L_Z &:=&-\frac 12+\frac 32 X\pa_X +\frac{1}{2k} Z \pa_Z+\Theta \pa_X+\pa_X \Theta \\
&=& -\frac 12 +\frac 32 X \pa_X +\frac{1}{2k} Z \pa_Z +F_k^{-\frac 12 }\Psi_1 \left( F_k^{\frac 32}(Z)X\right)\pa_X+F_k(Z) \pa_X \Psi_1 \left( F_k^{\frac 32}(Z)X\right).
\eee
We claim that its spectral structure can be understood trough the spectral analysis of two linearised operators, $H_X$ for Burgers equation studied in Proposition \ref{pr:H} and $H_Z$ for the semi-linear heat equation studied in Proposition \ref{pr:Fk}.

\begin{proposition} \lab{pr:mathcalL}

Let $k\in \mathbb N$, $k\geq 2$. For any $(j,\ell)\in \mathbb N^2$, $(j-3)/2+\ell/(2k)$ is an eigenvalue of the operator $\mathcal L_Z:\mathcal C^1(\mathbb R^2)\rightarrow \mathcal C^0(\mathbb R^2)$ associated to the eigenfunction
\be \lab{id:phij0} 
\varphi_{j,\ell}(X,Z)=\phi_{Z,\ell}(Z)  F_k^{-1-\frac j2 }(Z) \phi_{X,j}\left( F_k^{\frac 32}(Z)X\right)  = Z^\ell F_k^{1-\frac j2}(Z) \times \frac{(-1)^j \Psi_1^j \left( F_k^{\frac 32}(Z)X\right)}{1+3\Psi_1^2\left( F_k^{\frac 32}(Z)X\right)},
\ee
where $\phi_{X,j}$ and $\phi_{Z,\ell}$ are defined by \fref{eq:def phiXj} and \fref{eq:def phiXell}.

\end{proposition}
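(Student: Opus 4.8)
The strategy is to \emph{separate the variables} $X$ and $Z$. The point is that the eigenfunction candidate \fref{id:phij0} has the product structure
$$\varphi_{j,\ell}(X,Z)=g_{j,\ell}(Z)\,\phi_{X,j}\bigl(F_k^{\frac32}(Z)X\bigr),\qquad g_{j,\ell}(Z):=\phi_{Z,\ell}(Z)F_k^{-1-\frac j2}(Z)=Z^\ell F_k^{1-\frac j2}(Z),$$
(the last equality reading off $\phi_{Z,\ell}=Z^\ell F_k^2$ from \fref{eq:def phiXell}, \fref{eq:def phiXj}), so that it is natural to pass to the variable $\tilde X=F_k^{\frac32}(Z)X$ in which $\phi_{X,j}$ is evaluated and check that $\mathcal L_Z$ decouples into an $X$--piece and a $Z$--piece.

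\noindent \textbf{Step 1: conjugating $\mathcal L_Z$.} Using $\pa_X=F_k^{\frac32}\pa_{\tilde X}$ and, from $\pa_Z\tilde X|_X=\tfrac32 F_k^{\frac12}F_k'X$ and $F_k'=-2kZ^{2k-1}F_k^2$, the identity $\pa_Z|_X=\pa_Z|_{\tilde X}-3kZ^{2k-1}F_k\,\tilde X\,\pa_{\tilde X}$ together with $Z^{2k}F_k=1-F_k$, a direct computation shows that the change of variables $(X,Z)\mapsto(\tilde X,Z)$ transforms $\mathcal L_Z$ into
$$\widetilde{\mathcal L}:=-\tfrac12+F_k(Z)\Bigl(H_X+\tfrac12\Bigr)+\tfrac1{2k}Z\,\pa_Z,$$
where now $H_X=-\tfrac12+\pa_{\tilde X}\Psi_1+(\tfrac32\tilde X+\Psi_1)\pa_{\tilde X}$ acts in the variable $\tilde X$ and $\pa_Z$ is taken at fixed $\tilde X$. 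Concretely, the first-order coefficient in $\tilde X$ collapses to $F_k\,(\tfrac32\tilde X+\Psi_1(\tilde X))$ precisely because the transport contribution $-\tfrac32(1-F_k)\tilde X\,\pa_{\tilde X}$ produced by $\tfrac1{2k}Z\pa_Z|_X$ removes the non-$F_k$ part of $\tfrac32 X\pa_X+\Theta\pa_X$; this is the infinitesimal version of the cancellation carried out in the proof of Lemma \ref{lem:Psi1}, and it relies exactly on the Riccati relation $\tfrac1{2k}ZF_k'+F_k-F_k^2=0$ satisfied by $F_k$.

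\noindent \textbf{Step 2: reduction to an ODE in $Z$.} Apply $\widetilde{\mathcal L}$ to $\varphi_{j,\ell}=g_{j,\ell}(Z)\phi_{X,j}(\tilde X)$. Since $\phi_{X,j}$ depends only on $\tilde X$, the operator $\pa_Z$ hits only $g_{j,\ell}$, and by Proposition \ref{pr:H} (with $i=1$) one has $H_X\phi_{X,j}=\tfrac{j-3}2\phi_{X,j}$, hence $(H_X+\tfrac12)\phi_{X,j}=\tfrac{j-2}2\phi_{X,j}$. Therefore
$$\widetilde{\mathcal L}\varphi_{j,\ell}=\Bigl(-\tfrac12 g_{j,\ell}+\tfrac{j-2}2F_k\,g_{j,\ell}+\tfrac1{2k}Z\,g_{j,\ell}'\Bigr)\phi_{X,j}(\tilde X),$$
so $\varphi_{j,\ell}$ is an eigenfunction of $\mathcal L_Z$ with eigenvalue $\lambda$ if and only if $g_{j,\ell}$ solves the scalar linear ODE $\tfrac1{2k}Z\,g_{j,\ell}'=\bigl(\lambda+\tfrac12-\tfrac{j-2}2F_k\bigr)g_{j,\ell}$.

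\noindent \textbf{Step 3: identifying $\lambda$.} It remains to compute $\tfrac1{2k}Z\,g_{j,\ell}'/g_{j,\ell}$ for $g_{j,\ell}=\phi_{Z,\ell}F_k^{-1-j/2}$. Writing $g_{j,\ell}'/g_{j,\ell}=\phi_{Z,\ell}'/\phi_{Z,\ell}-(1+\tfrac j2)F_k'/F_k$ and using $F_k'/F_k=-2kZ^{2k-1}F_k$ with $Z^{2k}F_k=1-F_k$, one gets $\tfrac1{2k}Z\,g_{j,\ell}'/g_{j,\ell}=\tfrac1{2k}Z\,\phi_{Z,\ell}'/\phi_{Z,\ell}+(1+\tfrac j2)(1-F_k)$. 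By Proposition \ref{pr:Fk}, $\phi_{Z,\ell}$ is the eigenfunction of $H_Z=1+\tfrac1{2k}Z\pa_Z-2F_k$ with eigenvalue $\tfrac{\ell-2k}{2k}$, i.e. $\tfrac1{2k}Z\,\phi_{Z,\ell}'/\phi_{Z,\ell}=\tfrac{\ell-2k}{2k}-1+2F_k$. Substituting, the $F_k$--dependent terms recombine into $-\tfrac{j-2}2F_k$ and one finds $\tfrac1{2k}Z\,g_{j,\ell}'/g_{j,\ell}=\tfrac{\ell-2k}{2k}+\tfrac j2-\tfrac{j-2}2F_k$. Comparing with the ODE of Step 2 forces $\lambda+\tfrac12=\tfrac{\ell-2k}{2k}+\tfrac j2$, that is $\lambda=\tfrac{j-3}2+\tfrac{\ell-2k}{2k}+1$, as claimed. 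Finally $\varphi_{j,\ell}\in\Cc^1(\mathbb R^2)$ since $\phi_{X,j}$ is smooth on $\mathbb R$ by Proposition \ref{pr:H}, $\phi_{Z,\ell}$ is smooth on $\mathbb R$ by Proposition \ref{pr:Fk}, and $F_k$ is smooth and positive.

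\noindent \textbf{Where the difficulty lies.} The computation is elementary; the only delicate point is the bookkeeping in Step 1, where one must track that \emph{every} $Z$--derivative landing on the weight $F_k^{3/2}(Z)$ inside $\phi_{X,j}(F_k^{3/2}(Z)X)$ reassembles, via the Riccati identity for $F_k$, into an overall prefactor $F_k$ multiplying $H_X+\tfrac12$. This is exactly what makes $\Theta$ stationary in Lemma \ref{lem:Psi1}, and it is responsible for the clean additive form of the eigenvalue. Heuristically, $\varphi_{j,\ell}$ is obtained from the expression for $\Theta$ in \fref{eq:def Theta} by replacing $\Psi_1$ and $F_k$ by their respective eigenfunctions $\phi_{X,j}$ and $\phi_{Z,\ell}$, and the eigenvalue of $\mathcal L_Z$ is then the sum of those of $H_X$ and $H_Z$ plus the constant $1$ coming from the mismatch between the $-\tfrac12$ in $\mathcal L_Z$ and the $+1$ in $H_Z$.
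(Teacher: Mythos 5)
Your proof is correct, and it takes a genuinely different route from the paper's. The paper verifies the eigenvalue identity by a direct brute-force expansion of $\mathcal L_Z\varphi_{j,\ell}$: it applies the operator to the explicit formula for $\varphi_{j,\ell}$, groups the resulting terms, and uses the eigenvalue equations for $\phi_{X,j}$ and $\phi_{Z,\ell}$ plus the Riccati identity $\fref{NLH:eq:Fk}$ to collapse everything. You instead \emph{conjugate first}: passing to the variable $\tilde X=F_k^{3/2}(Z)X$ and exploiting $Z^{2k}F_k=1-F_k$, you rewrite $\mathcal L_Z$ in the separated form $-\tfrac12+F_k(Z)\bigl(H_X+\tfrac12\bigr)+\tfrac1{2k}Z\pa_Z$, which makes the tensor-product structure of the eigenfunctions manifest and reduces the verification to a one-variable ODE in $Z$. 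I checked your Step~1 conjugation and the Step~3 bookkeeping (using $\tfrac1{2k}ZF_k'/F_k=-(1-F_k)$ and $H_Z\phi_{Z,\ell}=\tfrac{\ell-2k}{2k}\phi_{Z,\ell}$), and both are sound. The two proofs are equivalent in content — both hinge on the same three identities (the eigenvalue equations in $X$ and $Z$ and the Riccati identity for $F_k$) — but your version is organized so that the additivity of the eigenvalue $\tfrac{j-3}{2}+\tfrac{\ell-2k}{2k}+1$ is explained rather than merely observed, which makes the statement easier to guess and remember; the paper's one-shot expansion is shorter on the page but opaque about where the cancellations come from. One small remark: your Step~1 asserts the conjugated form of $\mathcal L_Z$ after ``a direct computation''; given that this is the crux of the argument, it would be worth writing out the two-line calculation (the transport term $\tfrac32X\pa_X+\Theta\pa_X$ becomes $\tfrac32\tilde X\pa_{\tilde X}+F_k\Psi_1\pa_{\tilde X}$, while the extra $-\tfrac32(1-F_k)\tilde X\pa_{\tilde X}$ generated by $\tfrac1{2k}Z\pa_Z|_X$ turns the former into $F_k(\tfrac32\tilde X+\Psi_1)\pa_{\tilde X}$) so the cancellation is on display rather than delegated.
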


\begin{proof}

This is a direct computation. From \fref{NLH:eq:Fk}, \fref{eq:def phiXj} and \fref{eq:def phiXell} one has:
\bee
\mathcal L_Z \varphi_{j,k}&=& \phi_{Z,\ell} F_k^{-1-\frac j2 } \left(-\frac 12 \phi_{X,j}+\frac 32 \tX \pa_{\tX} \phi_{X,j}\right)( F_k^{\frac 32}X) +\frac{1}{2k} Z\pa_Z \phi_{Z,\ell} F_k^{-1 -\frac j 2} \phi_{X,j} ( F_k^{\frac 32}X) \\
&&+\frac{1}{2k} Z\pa_Z F_k \phi_{Z,\ell} F_k^{-2 -\frac j2} \left((-1-\frac j2) \phi_{X,j}+\frac 32 \tX \pa_{\tX} \phi_{X,j}\right)( F_k^{\frac 32}X)\\
&&+ \phi_{Z,\ell} F_k^{-\frac j2} (\pa_{\tX} \Psi_1 \phi_{X,j}+\Psi_1\pa_X \phi_{X,j}) ( F_k^{\frac 32}X )\\
&=&\phi_{Z,\ell} F_k^{-1-\frac j2 } \left(-\frac 12 \phi_{X,j}+\frac 32 \tX \pa_{\tX} \phi_{X,j}\right)( F_k^{\frac 32}X)\\
&& +\left((\frac{\ell-2k}{2k}-1)\phi_{Z,\ell}+2F_k \phi_{Z,\ell}\right)F_k^{-1 -\frac j 2} \phi_{X,j} ( F_k^{\frac 32}X)  \\
&&+ \phi_{Z,\ell} (F_k^{-\frac j2}-F_k^{-1-\frac j2})\left((-1-\frac j2) \phi_{X,j}+\frac 32 \tX \pa_{\tX} \phi_{X,j}\right)( F_k^{\frac 32}X)\\
&&+  \phi_{Z,k} F_k^{-\frac j2} \left((\frac{j-3}{2}+\frac 12)\phi_{X,j}-\frac 32 \tX \pa_{\tX} \phi_{X,j} \right) ( F_k^{\frac 32}X)\\
&=& \left(\frac{j-3}{2}+\frac{\ell}{2k} \right) \phi_{Z,\ell} F_k^{-1-\frac j2 } \phi_{X,j}( F_k^{\frac 32}X) .
\eee

\end{proof}

\subsection{Linear estimates}

A maximum principle holds for the linear transport operator $\mathcal L_Z$. Also, we will not use this estimate as is, we believe it is of interest.

\begin{lemma}
Assume $\e_0\in L^{\infty}_{\text{loc}}(\mathbb R^2)$ is such that $| \e_0|\leq C |\varphi_{j,\ell}|$ on $\mathbb R^2\backslash \{X=0 \}$, for some $C>0$ and $(j,\ell)\in \mathbb N^2$. Then the solution to $\pa_s \e+\mathcal L_Z\e=0$ with initial datum $\e_0$ satisfies:
\be \lab{main:bd:estimationtransport}
\left\| \frac{\e }{\varphi_{j,\ell}} \right\|_{L^{\infty}(\mathbb R^2\backslash \{X=0 \})} \leq e^{-\left(\frac{j-3}{2}+\frac{\ell}{2k}\right)s}\left\| \frac{\e_0}{\varphi_{j,\ell}} \right\|_{L^{\infty}(\mathbb R^2\backslash \{X=0 \})}.
\ee

\end{lemma}

\begin{proof}

This is a straightforward computation along the characteristics, that we omit.

\end{proof}

We now investigate the linear dynamics in the presence of dissipation, for the full operator:
$$
\mathcal L:= -\frac 12+\pa_X \Theta +\left(\frac 32 X +\Theta \right)\pa_X +\frac 12 Y\pa_Y-\pa_{YY}=\mathcal L_Z+\frac{k-1}{2k}Z\pa_Z-\pa_{YY},
$$
and find an energy estimate that mimics \fref{main:bd:estimationtransport}. We will use eigenfunctions of the form $\phi_{j,\ell}$ with $j> 3$ and $\ell =0$ to ensure time decay, and because the choice $\ell>0$ would produce a singularity near $Z=0$ that is incompatible with the viscosity. We replace the $L^\infty$ norm by a weighted $L^q$ norm with $q$ large enough, also in order to be compatible with the viscous term.

\begin{lemma} \lab{main:lem:lineaire}
Let $0\leq j<i_0$. For any $\kappa>0$, there exists $q^*\in \mathbb N$ such that for all $q\in \mathbb N$ with $q\geq q^*$ there exists $s^*\geq 0$ such that the following holds. For any $s^*\leq s_0< s_1$, if $\e$ and $\Xi$ are in the Schwartz class and satisfy on $[s_0,s_1]$ that:
\be \lab{eq:evolinee}
\e_s +\mathcal L \e =\Xi,
\ee
that for $i=0,...,i_0$ one has the cancellation on the axis $\{ X=0\}$
\be \lab{eq:condevolinee}
\pa_X^i \e (s,0,Y)=0 \qquad \mbox{and} \qquad \pa_X^i \Xi (s,0,Y)=0,
\ee
then the following energy identity holds:
\begin{align}\lab{id:estimationlineaire} 
&\frac{d}{ds} \left( \frac{1}{2q} \int_{\mathbb R^2} \frac{\e^{2q}}{\varphi_{j,0}^{2q}(X,Z)}\frac{dXdY}{|X| \lY} \right)\\
\non & \leq  -\left(\frac{j-3}{2}-\frac{\kappa}{2}\right) \int_{\mathbb R^2} \frac{\e^{2q}}{\varphi_{j,0}^{2q}(X,Z)}\frac{dXdY}{|X| \lY}-\frac{2q-1}{q^2}\int \frac{|\pa_Y (\e^q)|^2}{\varphi_{j,0}^{2q}(X,Z)}\frac{dXdY}{|X|\lY}+\int \frac{\e^{2q-1}\Xi}{\varphi_{j,0}^{2q}(X,Z)}\frac{dXdY}{|X|\lY}.
\end{align}

\end{lemma}

\begin{proof}

This is a direct computation. One computes from the evolution equation \fref{eq:evolinee}, performing integration by parts,
\bee
&&\frac{d}{ds} \left( \frac{1}{2q} \int_{\mathbb R^2} \frac{\e^{2q}}{\varphi_{j,0}^{2q}(X,Z)}\frac{dXdY}{|X| \lY} \right)  = \int_{\mathbb R^2} \frac{\e^{2q-1}\pa_s \e}{\varphi_{j,0}^{2q}(X,Z)}\frac{dXdY}{|X| \lY} - \int_{\mathbb R^2} \frac{\e^{2q}\pa_s\varphi_{j,0}(X,Z) }{\varphi_{j,0}^{2q+1}(X,Z)}\frac{dXdY}{|X| \lY} \\
&=& \int \frac{\e^{2q-1}}{\varphi_{j,0}^{2q}(X,Z)}\Big[\frac 12 \e -\frac 32 X \pa_X \e -\frac 12 Y\pa_Y \e -F_k^{-\frac 12}\Psi_1\left(F_k^{\frac 32}(Z)X\right)\pa_X \e +\pa_{YY}\e \\
&&-F_k(Z) \pa_X\Psi_1 \left(F_k^{\frac 32}(Z)X \right)\e+\Xi\Big]\frac{dXdY}{|X|\lY }+\frac{k-1}{2k}  \int \frac{\e^{2q}}{\varphi_{j,0}^{2q+1}(X,Z)}Z\pa_Z \varphi_{j,0}(X,Z)\frac{dXdY}{|X| \lY} \\
&=& \int \frac{\e^{2q}}{\varphi_{j,0}^{2q+1}(X,Z)}\Big[\frac 12 \varphi_{j,0} -\frac 32 X \pa_X \varphi_{j,0}-\frac{1}{2k} Z\pa_Z \varphi_{j,0}-F_k^{-\frac 12}(Z)\Psi_1(F_k^{\frac 32}(Z)X)\pa_X \varphi_{j,0}\\
&&-F_k (Z)\pa_X\Psi_1(F_k^{\frac 32}(Z)X) \varphi_{j,0} \Big] \frac{dXdY}{|X|\lY }\\
&& +\frac{1}{2q} \int \frac{\e^{2q}}{\varphi_{j,0}^{2q}}\left( \frac 12 \pa_Y \left(\frac{Y}{\lY }\right)\frac{1}{|X|}+\pa_X \left(\frac{F_k^{-\frac 12}(Z)\Psi_1(F_k^{\frac 32}(Z)X)}{X}\right)\frac{1}{\lY}\right)dXdY \\
&&-\frac{2q-1}{q^2}\int \frac{|\pa_Y (\e^q)|^2}{\varphi_{j,0}^{2q}}\frac{dXdY}{|X|\lY}+\frac{1}{2q} \int \e^{2q}\pa_{YY} \left(\frac{1}{\varphi_{j,0}^{2q}}\frac{1}{\lY}\right)\frac{dXdY}{|X|}+\int \frac{\e^{2q-1}\Xi}{\varphi_{j,0}^{2q}}\frac{dXdY}{|X|\lY}\\
\non &=& -\frac{j-3}{2} \int_{\mathbb R^2} \frac{\e^{2q}}{\varphi_{j,0}^{2q}(X,Z)}\frac{dXdY}{|X| \lY}-\frac{2q-1}{q^2}\int \frac{|\pa_Y (\e^q)|^2}{\varphi_{j,0}^{2q}}\frac{dXdY}{|X|\lY}+\int \frac{\e^{2q-1}\Xi}{\varphi_{j,0}^{2q}}\frac{dXdY}{|X|\lY}  \\
\non && +\frac{1}{2q} \int \frac{\e^{2q}}{\varphi_{j,0}^{2q}}\left( \frac 12 \pa_Y \left(\frac{Y}{\lY }\right)\frac{1}{|X|}+\pa_X \left(\frac{F_k^{-\frac 12}(Z)\Psi_1(F_k^{\frac 32}(Z)X)}{|X|}\right)\frac{1}{\lY}\right)dXdY\\
\non &&+\frac{1}{2q} \int \e^{2q}\pa_{YY} \left(\frac{1}{\varphi_{j,0}^{2q}}\frac{1}{\lY}\right)\frac{dXdY}{|X|}
\eee
where we used Proposition \fref{pr:mathcalL}. The integrations by parts are legitimate near the axis $\{X=0 \}$ because of the cancellation \fref{eq:condevolinee} and since $\Psi_1(X)\sim -X$ and $\varphi_{j,0}\sim X^j$ as $X\rightarrow 0$ from \fref{main:sizephi}. The last terms are lower order ones. Indeed, one has:
$$
\left| \pa_Y \left(\frac{Y}{\lY}\right) \right| = \frac{1}{\lY ^3} 
$$
and
$$
\pa_X \left(\frac{F_k^{-\frac 12}(Z)\Psi_1(F_k^{\frac 32}(Z)X)}{|X|}\right)=\frac{F_k (Z)\pa_X \Psi_1(F_k^{\frac 32}(Z)X)}{|X|}+\frac{F_k^{-\frac 12} (Z) \Psi_1(F_k^{\frac 32}(Z)X)}{X|X|}.
$$
For the first term in the above identity, one has that $|F_k(Z)|=(1+Z^{2k})^{-1}\leq 1$ and that $|\pa_X \Psi_1|=|1/(1+3\Psi_1^2)|\leq 1$. For the second, one has that $|\Psi_1 (X)|\leq |X|$. Therefore,
$$
\left| \pa_X \left(\frac{F_k^{-\frac 12}(Z)\Psi_1(F_k^{\frac 32}(Z)X)}{|X|}\right) \right|\leq \frac{2}{|X|}.
$$
Next, since $|\pa_Z^j \varphi_{4,0}(X,Z)|\lesssim (1+|Z|)^{-j}|\varphi_{4,0}(X,Z)|$ from \fref{id:phij0} and $\pa_Y=e^{-(k-1)s/(2k)}\pa_Z$:
$$
\left| \pa_{YY} \left(\frac{1}{\varphi_{4,0}^{2q}}\frac{1}{\lY}\right)\right| \lesssim \frac{(1+q^2e^{-\frac{k-1}{k}s})}{\varphi_{4,0}^{2q}}\frac{1}{\lY}.
$$
Therefore,
\bee
\non && \Big| \frac{1}{2q} \int \frac{\e^{2q}}{\varphi_{j,0}^{2q}}\left( \frac 12 \pa_Y \left(\frac{Y}{\lY }\right)\frac{1}{|X|}+\pa_X \left(\frac{F_k^{-\frac 12}(Z)\Psi_1(F_k^{\frac 32}(Z)X)}{|X|}\right)\frac{1}{\lY}\right)dXdY\\
\non &&+\frac{1}{2q} \int \e^{2q}\pa_{YY} \left(\frac{1}{\varphi_{j,0}^{2q}}\frac{1}{\lY}\right)\frac{dXdY}{|X|} \Big| \leq \frac{C}{q}(1+q^2e^{-\frac{k-1}{k}s}) \int \frac{\e^{2q}}{\varphi_{j,0}^{2q}}\frac{dXdY}{|X|\lY}.
\eee
which, injected in the previous energy identity yields the desired result upon choosing $q$ large enough and then $s^*$ large enough.

\end{proof}

\subsection{Bootstrap analysis}

We are now ready to prove Theorems \ref{th:mainstable} and \ref{th:main}. Throughout the analysis, the functions $F_k$, $\Psi_1$, $\Theta$ and $\varphi_{j,0}$ will be extensively used. In particular, from \fref{eq:def Theta}, a relevant variable for the stream direction is $\tilde X$ defined by \fref{eq:deftildeX} with:
$$
 |\tilde X|\approx |X|(1+|Z|)^{-3k},
$$
and from \fref{eq:def tildeF}, \fref{eq:def Theta} and \fref{id:phij0}  their size is encoded by the following estimates (which adapt to derivatives)
\be \lab{main:sizeFkPsi1}
F_k (Z)\approx (1+|Z|)^{-2k}, \ \ |\Psi_1 (X)|\approx |X|(1+|X|)^{\frac 13 -1},
\ee
\be \lab{main:sizeTheta}
|\Theta (X,Z)|\approx |X| \left((1+|Z|)^{3k}+|X| \right)^{\frac 13 -1} \approx (1+|Z|)^k |\tilde X|(1+|\tilde X|)^{\frac 13 -1}
\ee
\be \lab{main:sizephi}
|\varphi_{j,0}(X,Z)|\approx |X|^j \left((1+|Z|)^{3k}+|X|\right)^{\frac{j-2}{3}-j} \approx (1+|Z|)^{k(j-2)} |\tilde X|^j(1+|\tilde X|)^{\frac{j-2}{3} -j},  \\
\ee
The strategy is to show that there exists global solutions to \fref{main:eqvautosim} converging to $\Theta$ defined by \fref{eq:def Theta} as $s\rightarrow +\infty$. We will use an approximate blow-up profile, i.e. refine $\Theta$ to show this. To obtain decay in the linear estimate \fref{id:estimationlineaire}, one needs $j>3$ which from \fref{main:sizephi} in turn requires that $\e=O(|X|^j)$ as $X\rightarrow 0$. The linearised operator $\mathcal L$ has then a damping effect on functions vanishing up to order $3$ on the vertical axis. Consequently, we use the profile $\mu^{-1} \lambda^{-1/2}\Psi_1(\mu\lb^{3/2} X)$ at each line $\{ Y=Cte\}$, to match the solution at order $1$ and $3$ near the vertical axis $\{X=0 \}$. Far away, such a decomposition ceases to make sense since we are no more in the blow-up zone, and the appropriate profile is $0$ rather than $\Psi_1$. We set for $d>0$ a cut-off function (note that $|Y|\leq de^{s/2}$ is equivalent to $|y|\leq d$ and $|Z|\leq e^{s/2k}$),
$$
\chi_d(s,Y):=\chi \left(\frac{Y}{de^{\frac s2}} \right)
$$
and then decompose our solution to \fref{main:eqvautosim} according to:
\be \lab{def:e}
v(s,X,Y)= Q+\e, \qquad \qquad Q=\chi_d(s,Y) \tilde \Theta +(1-\chi_d(s,Y))\Theta_{e} \\
\ee
where $\tilde \Theta$ is the approximate blow-up profile in the interior zone
\be \lab{def:tildeTheta}
\tilde \Theta(s,X,Y):= \mu ^{-1}(s,Y)f^{-\frac 12}(s,Y) \Psi_1\left(f^{\frac 32}(s,Y) \mu(s,Y) X \right) = \sqrt 6 g^{-\frac 12}f^{\frac 32} \Psi_1 \left( \frac{g^{\frac 12}f^{-\frac 12}}{\sqrt 6} X \right)
\ee
where $f$ and $g$ are defined in \fref{main:eq:def fg} and
\be \lab{eq:def b}
\mu(s,Y):=\left(\frac{g(s,Y)}{6f^4(s,Y)} \right)^{\frac 12},
\ee
(notice that for $d$ small enough and for $Y$ in the support of $\chi_d(s,\cdot)$, the functions $f$ and $g$ do not vanish from \fref{eq:def tildeF} and \fref{eq:def tildeG}, and hence $\mu$ and $\mu^{-1}$ are well-defined), and where $\Theta_e$ is the profile for the external zone
\be \lab{def:Thetae}
\Theta_e(s,X,Y):=  \left(-X f(s,Y)+X^3\frac{g(s,Y)}{6} \right)e^{-\tilde X^4}.
\ee
The profile $\Theta_e$ matches with $v$ up to third order near the vertical axis, what allows for a unified control of the remainder $\varepsilon$, inside and outside the blow-up zone simultaneously \fref{main:weighteddecaye} and \fref{main:weighteddecaye2}. It is not a very precise approximation, what does not matter since the weights we use penalise the exterior zone. Other choices for $\Theta_e$ are thus possible.

To estimate the remainder $\e$, we will use weighted Sobolev norms, and to control its derivatives we will use vector fields that commute well with $\pa_s+\mathcal L_Z$:
\be \lab{main:def:A}
A:=\left(\frac 32 X +F_k^{-\frac 32}(Z)\Psi_1(F_k^{\frac 32}(Z)X) \right)\pa_X, \ \ \pa_Z \ \ \text{and} \ \ Z\pa_Z
\ee
and that are equivalent to usual vector fields, see Lemma \ref{an:lem:equiv}.

\begin{definition}[Trapped solutions]

Let constants $\kappa,d>0 $, $ q\in\mathbb{N}$, $K_{j_1,j_2}>0$ for nonnegative integers $j_1,j_2$ with $0\leq j_1+j_2\leq 2$, and $\tilde K_{j_1,j_2}>0$ for nonnegative integers $j_1,j_2$ with $0\leq j_1+j_2\leq 2$ and $j_2\geq 1$, and $s_0<s_1$. We say a solution to \fref{main:eqvautosim} is trapped on $[s_0,s_1]$, if, decomposed according to \fref{def:e}, it satisfies on $[s_0,s_1]$:
\be \lab{main:weighteddecaye}
\left( \int_{\mathbb R^2} \frac{((\pa_Z^{j_1}A^{j_2}\e)^{2q}}{\varphi_{4,0}^{2q}(X,Z)}\frac{dXdY}{|X|\lY}\right)^{\frac{1}{2q}}\leq K_{j_1,j_2} e^{-\left(\frac 12 -\kappa \right)s},
\ee
and for $0\leq j_1+j_2\leq 2$ and $j_2\geq 1$:
\be \lab{main:weighteddecaye2}
\left( \int_{\mathbb R^2} \frac{(((Y\pa_Y)^{j_1}A^{j_2}\e)^{2q}}{\varphi_{4,0}^{2q}(X,Z)}\frac{dXdY}{|X|\lY}\right)^{\frac{1}{2q}}\leq \tilde K_{j_1,j_2} e^{-\left(\frac 12 -\kappa \right)s},
\ee
and write $u \in \mathcal T(\kappa,q,s_0,s_1,(K_{j_1,j_2})_{0\leq j_1+j_2\leq 2},(\tilde K_{j_1,j_2})_{0\leq j_1+j_2\leq 2, 1\leq j_1})$ for the set of such solutions.
\end{definition}

We claim that $\e$ decays thanks to the following bootstrap argument, which is the heart of our proof of Theorem \ref{th:main}.

\begin{proposition} \lab{main:pr:bootstrap}
Let $\xi$ and $\zeta$ be given by Proposition \ref{pr:LHinstable}. Let $0<\kappa<1/2$ and $q\in \mathbb N^*$ such that Lemma \ref{main:lem:lineaire} holds true. Then there exist positive constants $d$, $(K_{j_1,j_2})_{0\leq j_1+j_2\leq 2}$ and $(\tilde K_{j_1,j_2})_{0\leq j_1+j_2\leq 2, 1\leq j_1}$, and $s^*\geq 0$, such that if at anytime $s_0\geq s^*$ the solution is given by \fref{def:e} with $\e(s_0)=\e_0$ satisfies
\be \lab{main:weighteddecayeinit}
\sum_{0\leq j_1+j_2\leq 1} \int_{\mathbb R^2} \frac{ (\pa_Z^{j_1}A^{j_2}\e_0)^{2q}+ ((Y\pa_Y)^{j_1}A^{j_2}\e_0)^{2q}}{\varphi_{4,0}^{2q}(X,Z)}\frac{dXdY}{|X|\lY} \leq e^{-2q\left(\frac 12 -\kappa \right)s_0}
\ee
then the solution to \fref{main:eqvautosim} is global and is trapped on $[s_0,+\infty)$:
$$
u \in \mathcal T(\kappa,q,s_0,+\infty,(K_{j_1,j_2})_{0\leq j_1+j_2\leq 2},(\tilde K_{j_1,j_2})_{0\leq j_1+j_2\leq 2, 1\leq j_1}).
$$
\end{proposition}

\begin{remark}[On the constants in the bootstrap analysis] \label{rk:constants}
To track dependencies of the constants in the proof we use the following:
\begin{itemize}
\item The functions $\xi$ and $\zeta$ are chosen in advance as solutions to \fref{eq:NLH} satisfying \fref{bd:remainder NLH} and \fref{bd:remainder LFH}. The constants in these estimates are considered as universal and are independent of the bootstrap constants.
\item The parameter $d>0$ is fixed so that one has for $s$ large enough for $|Z|\leq 2e^{s/2k}$ :
\be \label{id:choiced}
\left| \frac{f(s,Z)}{F_k(Z)}-1 \right|\leq \frac{1}{2} \qquad \mbox{and} \qquad \left| \frac{g(s,Z)}{G_k(Z)}-1 \right|\leq \frac{1}{2} 
\ee
which is always possible from \fref{eq:def tildeF} and \fref{eq:def tildeG}.
\item $\kappa$ and $q$ are fixed, and $s^*$ is large enough, so that Lemma \ref{main:lem:lineaire} holds.
\item Thus, the constants which are not fixed at this stage are $K_{j_1,j_2}$, $\tilde{K}_{j_1,j_2}$ and $s^*$. Their choice is made in the following order. $K_{0,0}$ is chosen bigger than a universal constant see Lemma \ref{main:lem:energy0}, then $K_{0,1}$ is chosen bigger than a constant depending only on $K_{0,0}$ see \eqref{ChoiceK01}, then $K_{1,0}$ is chosen bigger than a constant depending only on $K_{0,0}$ and $K_{0,1}$, then $\tilde K_{1,0}$ is chosen depending on $(K_{0,0},K_{0,1})$, and so on. This is first because a given derivative of $\e$ sees lower order derivatives as forcing terms from Leibniz formula. Second, $A$ commutes with the full transport operator in $\pa_s+\mathcal L_Z$ \fref{id:commutationA}, while $\partial_Z$ and $Y\partial_Y$ commute with the $\partial_Z$ of this transport operator but not with the $\pa_X$ part. Hence, we first control $\e$, then to take derivatives we first control $A\e$, then $\pa_Z \e$ and $Y \partial_Y\e$, and then we move to higher order derivatives and so on.
\item $s^*$ is chosen last.
\item Constants $C$ in forthcoming estimates stand for constants that are independent of the $K$'s and $\tilde K$'s constants, unless explicitly mentioned. We shall write $A \lesssim B$ if $A\leq CB$ for such a constant $C$.
\end{itemize}
\end{remark}

The proof of the above Proposition \ref{main:pr:bootstrap} follows a classical bootstrap reasoning. Namely, throughout the remaining part of this section we assume that $v$ is a solution to \fref{main:eqvautosim} defined on $[s_0,s_1]$ and such that the decomposition \fref{def:e} satisfies \fref{main:weighteddecayeinit}, \fref{main:weighteddecaye} and \fref{main:weighteddecaye2}. All the results below will show that \fref{main:weighteddecaye} and \fref{main:weighteddecaye2} are in fact strict at time $s_1$, what will allow us to conclude the proof of Proposition \ref{main:pr:bootstrap} by a continuity argument at the end of this section.\\

\noindent First, notice that the bounds of Proposition \ref{main:pr:bootstrap} imply pointwise control by weighted Sobolev embedding.

\begin{lemma} \lab{main:lem:pointwise e}

There holds on $[s_0,s_1]$ with constants in the inequalities depending on the bootstrap constants $K_{j_1,j_2}$ and $\tilde K_{j_1,j_2}$:
\be \lab{main:bde}
| \e| \lesssim e^{-\left(\frac 12 -\kappa \right)s} (1+|Z|)^{2k}|\tilde X|^4 (1+|\tilde X|)^{\frac{2}{3}-4}\lesssim e^{-\left(\frac 12 -\kappa \right)s} |X|^4((1+|Z|)^{3k}+|X|)^{\frac 23 -4}\lesssim e^{-\left(\frac 12 -\kappa \right)s}|X|,
\ee
\be \lab{main:bde2}
|\pa_X \e| \lesssim e^{-\left(\frac 12 -\kappa \right)s} (1+|Z|)^{-k}|\tilde X|^3 (1+|\tilde X|)^{\frac{2}{3}-4}\lesssim e^{-\left(\frac 12 -\kappa \right)s} |X|^3((1+|Z|)^{3k}+|X|)^{\frac 23 -4}\lesssim e^{-\left(\frac 12 -\kappa \right)s}
\ee
\bea \lab{main:bde3}
|\pa_Z \e| &\lesssim & e^{-\left(\frac 12 -\kappa \right)s} (1+|Z|)^{2k-1}|\tilde X|^4 (1+|\tilde X|)^{\frac{2}{3}-4}\\
\non &\lesssim &e^{-\left(\frac 12 -\kappa \right)s} (1+|Z|)^{-1}|X|^3((1+|Z|)^{3k}+|X|)^{\frac 23 -4}\\
\non &\lesssim & e^{-\left(\frac 12 -\kappa \right)s} |X|(1+|Z|)^{-1}
\eea

\end{lemma}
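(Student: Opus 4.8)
The plan is to deduce the pointwise bounds from the weighted $L^{2q}$ estimates of Proposition \ref{main:pr:bootstrap} by a weighted Sobolev embedding, the largeness of $q$ serving only to guarantee $W^{2,2q}(\mathbb R^2)\hookrightarrow L^\infty(\mathbb R^2)$.

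First I would pass to the renormalised variables $(\tX,Z)$ of \fref{eq:deftildeX}. By Lemma \ref{an:lem:equiv} the field $A$ of \fref{main:def:A} is $A=c(X,Z)\pa_X$ with $|c(X,Z)|\approx|X|$ --- this last point also following from \fref{main:eq:Psi} and \fref{main:sizeFkPsi1} --- and the iterated fields $A^{j_2}\pa_Z^{j_1}$ and $A^{j_2}(Y\pa_Y)^{j_1}$, where $Y\pa_Y=Z\pa_Z$ at fixed $s$, are equivalent, up to bounded factors and genuinely lower-order commutator terms, to the natural derivations $(\tX\pa_{\tX})^{j_2}\pa_Z^{j_1}$, resp.\ $(\tX\pa_{\tX})^{j_2}(Z\pa_Z)^{j_1}$, in these variables. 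Furthermore the weight $\varphi_{4,0}$ is slowly varying along these fields, in the sense that $A\log\varphi_{4,0}$ and $Z\pa_Z\log\varphi_{4,0}$ are bounded (from \fref{id:phij0} and \fref{main:sizeFkPsi1}), so that by Leibniz the bootstrap bounds \fref{main:weighteddecaye}--\fref{main:weighteddecaye2} control, for all $0\leq j_1+j_2\leq2$, the $L^{2q}$ norm of the corresponding derivatives of $h:=\varphi_{4,0}^{-1}\e$ by $e^{-(\frac12-\kappa)s}$, with respect to the measure $|X|^{-1}\lY^{-1}\,dX\,dY$, which in the variables $(\tX,Z)$ is comparable to the product of the scale-invariant measure $|\tX|^{-1}d\tX$ with a measure of type $\lZ^{-1}dZ$, up to an $s$-dependent but space-independent factor.

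I would then decompose dyadically, $|\tX|\approx 2^{m}$ for all $m\in\mathbb Z$ and $|Z|\approx 2^{n}$ for $n\geq 0$, while treating the slab $\{|Z|\lesssim1\}$ as a single block (where $\varphi_{4,0}$ does not vanish and where the genuine derivative $\pa_Z$ from \fref{main:weighteddecaye} is available): on each such block $\varphi_{4,0}$ is essentially constant and $\tX\pa_{\tX}\approx 2^{m}\pa_{\tX}$, $Z\pa_Z\approx 2^{n}\pa_Z$. Rescaling a block to unit size and applying $W^{2,2q}(\mathbb R^2)\hookrightarrow L^\infty(\mathbb R^2)$ --- legitimate for $q$ large, the first two derivatives being exactly the ones controlled above, and the scale-invariance of $|\tX|^{-1}d\tX$ (resp.\ of $\lZ^{-1}dZ$ for $|Z|\gtrsim1$) guaranteeing no volume loss --- yields $\|h\|_{L^\infty}\lesssim e^{-(\frac12-\kappa)s}$, i.e.\ $|\e|\lesssim e^{-(\frac12-\kappa)s}\varphi_{4,0}$, which is the first inequality of \fref{main:bde} once the size of $\varphi_{4,0}$ from \fref{main:sizephi} is inserted. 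Running the same argument for $\varphi_{4,0}^{-1}A\e$ and $\varphi_{4,0}^{-1}\pa_Z\e$ (whose derivatives are again controlled by \fref{main:weighteddecaye}--\fref{main:weighteddecaye2} modulo the commutators of Lemma \ref{an:lem:equiv}) gives $|A\e|+|\pa_Z\e|\lesssim e^{-(\frac12-\kappa)s}\varphi_{4,0}$; since $\pa_X\e=c^{-1}A\e$ with $|c|\approx|X|$, inserting \fref{main:sizeFkPsi1} and \fref{main:sizephi} produces the first inequalities of \fref{main:bde2} and \fref{main:bde3}. The remaining inequalities in \fref{main:bde}--\fref{main:bde3} are elementary pointwise estimates on the explicit weights, obtained by splitting into $|X|\lesssim(1+|Z|)^{3k}$ and $|X|\gtrsim(1+|Z|)^{3k}$ and using $\tfrac23-4<0$ (together with $|X|\geq1$ in the second region).

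The step I expect to be the main obstacle is the bookkeeping of the second and third paragraphs above: one must check carefully that, once transported to the $(\tX,Z)$ variables, the measure $|X|^{-1}\lY^{-1}\,dX\,dY$ and the weight $\varphi_{4,0}$ are genuinely (up to constants, and up to the harmless $s$-dependent prefactor) of product and scale-invariant type away from $\{Z=0\}$, and that near $\{Z=0\}$ the $\pa_Z$-derivatives suffice, so that the local $L^{2q}\to L^\infty$ embedding costs no positive power of the dyadic parameters and the supremum over all blocks stays bounded with the stated size. Supplying this is precisely what Lemma \ref{an:lem:equiv} and the size estimates \fref{main:sizeFkPsi1}--\fref{main:sizephi} are for, and I would set up the whole argument so as to reduce the statement to them.
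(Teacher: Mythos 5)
Your overall strategy --- weighted $L^{2q}\to L^\infty$ Sobolev embedding obtained by scaling, with the vector-field equivalences of Lemma~\ref{an:lem:equiv} and the slowly-varying character of $\varphi_{4,0}$ feeding in --- is the same as the paper's. The paper simply factors the scaling argument out as the Appendix estimate \fref{eq:weightedSobo} (proved by exactly the block-rescaling you propose, in $(X,Y)$ rather than $(\tX,Z)$) and then applies it directly to $\e/\varphi_{4,0}$, $A\e/\varphi_{4,0}$, etc.\ after converting the controlled derivatives to $X\pa_X$ and $\lY\pa_Y$ via \fref{an:equivalence3}, \fref{main:pointwise intermediaire}; your inline dyadic version in the renormalised variables amounts to re-deriving that lemma.

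There is, however, a gap in your treatment of \fref{main:bde3}. You run the Sobolev argument for $\varphi_{4,0}^{-1}\pa_Z\e$ and conclude $|\pa_Z\e|\lesssim e^{-(\frac12-\kappa)s}\varphi_{4,0}$, then claim that inserting \fref{main:sizephi} gives the first inequality of \fref{main:bde3}. But from \fref{main:sizephi}, $\varphi_{4,0}\approx(1+|Z|)^{2k}|\tX|^4(1+|\tX|)^{\frac23-4}$, whereas \fref{main:bde3} asserts $|\pa_Z\e|\lesssim e^{-(\frac12-\kappa)s}(1+|Z|)^{2k-1}|\tX|^4(1+|\tX|)^{\frac23-4}$, i.e.\ $|\pa_Z\e|\lesssim e^{-(\frac12-\kappa)s}(1+|Z|)^{-1}\varphi_{4,0}$. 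Your bound is therefore weaker by a full factor of $(1+|Z|)$, and this deficit does not disappear when you pass to the further inequalities in \fref{main:bde3}. To recover it you must, as the paper does, run the Sobolev argument a second time on $\varphi_{4,0}^{-1}(Z\pa_Z\e)$ (whose derivatives along $A$ and $Z\pa_Z$ are again controlled by \fref{main:weighteddecaye}--\fref{main:weighteddecaye2} and the commutator identities, e.g.\ $[A,Z\pa_Z]$ being a bounded multiple of $A$), obtain $|Z\pa_Z\e|\lesssim e^{-(\frac12-\kappa)s}\varphi_{4,0}$, and then combine the two bounds via $(1+|Z|)|\pa_Z\e|\lesssim|\pa_Z\e|+|Z\pa_Z\e|$. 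With this additional step, and the bookkeeping you already flag as the main technical burden, the argument closes.
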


\begin{proof}

\noindent Recall that $Y\pa_Y=Z\pa_Z$. \textbf{Step 1} \emph{Proof of \fref{main:bde}}. From the identity
\be \lab{main:pointwise intermediaire}
|\lY \pa_Y \e |\lesssim |\pa_Y \e|+|Y\pa_Y \e|=e^{-\frac{k-1}{2k}s}|\pa_Z \e|+|Z\pa_Z \e|
\ee
and the equivalence between vector fields \fref{an:equivalence3} we infer that
$$
|\e|+|X\pa_X\e|+|\la Y\ra \pa_Y \e|\lesssim |\e|+|A\e|+|\pa_Z \e|+|Z\pa_Z\e|
$$
and therefore the bootstrap bounds \fref{main:weighteddecaye} and \fref{main:weighteddecaye2} imply in particular that:
$$
\int_{\mathbb R^2} \frac{\e^{2q}}{\varphi_{4,0}^{2q}(X,Z)}\frac{dXdY}{|X| \lY}+\int_{\mathbb R^2} \frac{(X \pa_X \e)^{2q}}{\varphi_{4,0}^{2q}(X,Z)}\frac{dXdY}{|X| \lY}+\int_{\mathbb R^2} \frac{(\lY \pa_Y\e)^{2q}}{\varphi_{4,0}^{2q}(X,Z)}\frac{dXdY}{|X| \lY} \lesssim e^{-2q\left(\frac 12 -\kappa \right)s}
$$
and the weighted Sobolev embedding \fref{eq:weightedSobo} implies that $|\e|\lesssim e^{-(1/2-\kappa)s}|\varphi_{4,0}|$ which gives \fref{main:bde} using  \fref{main:sizephi}.\\

\noindent \textbf{Step 2} \emph{Proof of \fref{main:bde2}}. The very same reasoning as in Step 1 show that the bootstrap bounds \fref{main:weighteddecaye} and \fref{main:weighteddecaye2} imply $|A\e|\lesssim e^{-(1/2-\kappa)s}\varphi_{4,0}$. From \fref{an:equivalence1} and \fref{an:equivalence3} we infer that $|A\e|\approx |X\pa_X \e|$, implying then that $|\pa_X\e|\lesssim e^{-(1/2-\kappa)s} |X|^{-1}|\varphi_{4,0}|$, which yields \fref{main:bde2} using \fref{main:sizephi}.\\

\noindent \textbf{Step 3} \emph{Proof of \fref{main:bde3}}. Using \fref{main:pointwise intermediaire} and \fref{an:equivalence5} we obtain that
$$
|\lY \pa_Y\pa_Z \e|\lesssim |\pa_Z^2 \e|+|Z\pa_Z^2 \e|\lesssim |\pa_Z^2 \e|+|Z^2\pa_Z^2 \e| \lesssim |\pa_Z^2 \e|+|Z\pa_Z \e|+|(Z\pa_Z)^2\e|
$$
and from \fref{an:equivalence3} that $|X\pa_X\pa_Z \e|\lesssim |A\e|+|\pa_ZA\e|$. Therefore, we infer from \fref{main:weighteddecaye} and \fref{main:weighteddecaye2} that
$$
\int_{\mathbb R^2} \frac{(\pa_Z\e)^{2q}}{\varphi_{4,0}^{2q}(X,Z)}\frac{dXdY}{|X| \lY}+\int_{\mathbb R^2} \frac{(X \pa_X \pa_Z\e)^{2q}}{\varphi_{4,0}^{2q}(X,Z)}\frac{dXdY}{|X| \lY}+\int_{\mathbb R^2} \frac{(\lY \pa_Y\pa_Z\e)^{2q}}{\varphi_{4,0}^{2q}(X,Z)}\frac{dXdY}{|X| \lY} \lesssim e^{-2q\left(\frac 12 -\kappa \right)s}
$$
implying using \fref{eq:weightedSobo} that $|\pa_Z \e|\lesssim e^{-(1/2-\kappa)s}|\varphi_{4,0}|$. The very same reasoning applies for the function $Z\pa_Z \e$, yielding $|Z\pa_Z \e|\lesssim e^{-(1/2-\kappa)s}|\varphi_{4,0}|$. Therefore, $|\pa_Z \e|\lesssim e^{-(1/2-\kappa)s} (1+|Z|)^{-1}|\varphi_{4,0}|$ which yields \fref{main:bde3} using \fref{main:sizephi}.

\end{proof}

We start by investigating the infinitesimal behavior of $\e$ near the line $\{ X=0 \}$. This corresponds to establishing the so-called modulation equation for the parameters describing the blow-up profile $\Psi_1$ and the external profile $\Theta_e$ on each fixed line $\{ Y=Cte\}$. We claim that $\e$ vanishes on the axis up to the third order.

\begin{lemma}

For all $s\geq 0$ and $Y\in \mathbb R$ one has that
\be \lab{id:modulation}
\pa_X^j \e (s,0,Y)=0, \ \ j=0,1,2,3,4.
\ee

\end{lemma}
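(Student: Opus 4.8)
The plan is to read off \eqref{id:modulation} directly from the decomposition \eqref{def:e}, without invoking the evolution equation at all. Since $\varepsilon = v - Q$ with $Q = \chi_d(s,Y)\tilde\Theta + (1-\chi_d(s,Y))\Theta_e$ and the cut-off $\chi_d$ does not depend on $X$, one has, for $j=0,\dots,4$, the pointwise identity $\partial_X^j\varepsilon(s,0,Y) = \partial_X^j v(s,0,Y) - \chi_d(s,Y)\,\partial_X^j\tilde\Theta(s,0,Y) - (1-\chi_d(s,Y))\,\partial_X^j\Theta_e(s,0,Y)$ (both profiles being smooth near $\{X=0\}$ on the relevant set, since $f$ and $g$ do not vanish on the support of $\chi_d$). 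It therefore suffices to show that $v$, $\tilde\Theta$ and $\Theta_e$ share the same Taylor coefficients at $X=0$ up to order four. I would split this into the even orders $j=0,2,4$ and the odd orders $j=1,3$.

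For the even orders I would use the symmetry. The solution $u$ is odd in $x$, hence $v$ is odd in $X$, and differentiating $v(s,-X,Y)=-v(s,X,Y)$ an even number of times at $X=0$ gives $\partial_X^{2m}v(s,0,Y)=0$. Both profiles are odd in $X$ as well: $\tilde\Theta$ from \eqref{def:tildeTheta} because $\Psi_1$ is odd by Proposition \ref{prop:smoothselfsim}, and $\Theta_e$ from \eqref{def:Thetae} because $-Xf+X^3g/6$ is odd while $e^{-\tilde X^4}$ is even in $X$ (as $\tilde X^4$ is even). Hence the $j=0,2,4$ contributions in the identity above all vanish.

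For the odd orders $j=1,3$ I would match the first two nonzero coefficients. By the definitions \eqref{main:eq:def fg}, $\partial_X v(s,0,Y)=-f(s,Y)$ and $\partial_X^3 v(s,0,Y)=g(s,Y)$. Using the normalisation $\Psi_1(X)=-X+X^3+O(X^5)$ from \eqref{main:eqPsi1origine}, a Taylor expansion of \eqref{def:tildeTheta} at $X=0$ gives $\tilde\Theta = -f X + f^4\mu^2 X^3 + O(X^5)$, so that $\partial_X\tilde\Theta(s,0,Y)=-f$ and $\partial_X^3\tilde\Theta(s,0,Y)=6f^4\mu^2=g$ after substituting the definition \eqref{eq:def b} of $\mu$ (which was chosen precisely to produce this). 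For $\Theta_e$, writing $e^{-\tilde X^4}=1+O(X^4)$ and multiplying by $-Xf+X^3g/6$, which already vanishes to first order at $X=0$, yields $\Theta_e = -Xf+\tfrac g6 X^3 + O(X^5)$; hence $\partial_X\Theta_e(s,0,Y)=-f$, $\partial_X^3\Theta_e(s,0,Y)=g$, and $\partial_X^4\Theta_e(s,0,Y)=0$.

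Combining the two cases, $\partial_X^j\varepsilon(s,0,Y)$ vanishes for $j=0,1,2,3,4$, which is \eqref{id:modulation}. I do not expect a genuine obstacle here: the content of the lemma is just that $Q$ was constructed to osculate $v$ to fourth order along $\{X=0\}$, the matching being forced by the choice of $\mu$ in \eqref{eq:def b} and by the cancellations of the odd profile $\Psi_1$. The only point requiring a line of care is checking that the Gaussian-type truncation $e^{-\tilde X^4}$ in the exterior profile does not corrupt the order-four coefficient — and it does not, precisely because it contributes only from order $X^5$ on.
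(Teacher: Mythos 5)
Your proof is correct and essentially the same as the paper's: both read the cancellation off the decomposition \fref{def:e}, use oddness in $X$ for the even-order derivatives, and observe that $\tilde\Theta$ and $\Theta_e$ were built so that their first and third $X$-derivatives on $\{X=0\}$ are exactly $-f$ and $g$ (the latter via the choice of $\mu$ in \fref{eq:def b}). Your version merely makes explicit the Taylor expansions that the paper states without computation.
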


\begin{proof}

This is a direct computation. First since the profile is odd in $X$ and even in $Y$ one has that $\e$, $\pa_X^2\e$ and $\pa_X^4\e$ vanish on the vertical axis $\{X=0 \}$. Then, one has by definition \fref{main:eq:def fg} of $f$ that $\pa_X v(s,0,Y)=-f(s,Y)$ and from \fref{def:e}, \fref{def:tildeTheta} and \fref{def:Thetae} that
$$
\pa_Xv(s,0,Y)=-\chi_d f-(1-\chi_d)f+\pa_X \e(s,0,Y)=-f+\pa_X \e(s,0,Y).
$$
Therefore $\pa_X \e(s,0,Y)=0$ for all $s\geq s_0$ and $Y\in \mathbb R$. Similarly, by \fref{main:eq:def fg}, $\pa_X^3 v(s,0,Y)=g(s,Y)$ and from \fref{def:e}, \fref{def:tildeTheta} and \fref{def:Thetae} one has
$$
\pa_X^3 v(s,0,Z)=\chi_d 6b^2f^4+(1-\chi_d) g+\pa_X^3 \e(s,0,Z)=g+\pa_X^3\e(s,0,Z).
$$
Therefore $\pa_X^3 \e(s,0,Z)=0$ for all $s\geq s_0$ and $Y\in \mathbb R$ which ends the proof of the lemma.

\end{proof}

The time evolution of $\e$ is given by:
\be \lab{main:evolutione}
\e_s+\mathcal L \e+\tilde{\mathcal L}\e+R+\e \pa_X \e=0
\ee
where
$$
\mathcal L:= -\frac 12+\pa_X \Theta +\left(\frac 32 X +\Theta \right)\pa_X +\frac 12 Y\pa_Y-\pa_{YY}=\mathcal L_Z+\frac{k-1}{2k}Z\pa_Z-\pa_{YY}, \ \ 
$$
\be \lab{main:def:tL}
\tL \e=(Q-\Theta)\pa_X \e +(\pa_X Q-\pa_X \Theta)\e,
\ee
and
\be \lab{eq:def R}
R=Q_s-\frac 12 Q +\frac 32 X \pa_X Q+\frac 12 Y \pa_Y Q +Q\pa_X Q-  \pa_{YY}Q.
\ee

\begin{lemma} \lab{main:lem:R}

Let $q\in \mathbb N$, $q\geq 1$, and $3+1/k\leq j \leq 5-2/k$. Then one has the estimate
\be \lab{main:estimationR} 
\sum_{0\leq j_1+j_2\leq 1} \int_{\mathbb R^2} \frac{ (\pa_Z^{j_1}A^{j_2}R)^{2q}+ ((Y\pa_Y)^{j_1}A^{j_2}R)^{2q}}{\varphi_{4,0}^{2q}(X,Z)}\frac{dXdY}{|X|\lY}  \lesssim se^{-q s}.
\ee

\end{lemma}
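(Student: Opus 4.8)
The plan is to compute $R$ explicitly and show that it is small because the profile $Q$ is \emph{designed} to solve the renormalized equation up to controlled errors. The starting point is the definition \fref{eq:def R} of $R$ together with the decomposition \fref{def:e}: $Q=\chi_d\tilde\Theta+(1-\chi_d)\Theta_e$. The key structural facts are that (i) in the interior zone $\tilde\Theta$ given by \fref{def:tildeTheta} is an \emph{exact} self-similar profile of Burgers at each fixed line $\{Y=\textrm{Cte}\}$ — that is, $-\frac12\tilde\Theta+\frac32 X\pa_X\tilde\Theta+\tilde\Theta\pa_X\tilde\Theta=0$ pointwise in $Y$ by Lemma \ref{lem:Psi1} applied with parameters $\mu,f$ frozen — so that the only contributions to $R$ from $\tilde\Theta$ come from the terms $\pa_s$, $\frac12 Y\pa_Y$ and $-\pa_{YY}$ acting through the $Y$-dependence of $f,g$ (equivalently of $\mu$), (ii) $f$ and $g$ solve the parabolic system \fref{eq:f}, which is precisely the modulation system, so the $s$-derivative terms partially cancel against the $Y$-transport and diffusion terms, leaving an error governed by the remainders $\tilde f,\tilde g$ of \fref{eq:def tildeF}--\fref{eq:def tildeG} and by genuinely higher-order pieces, and (iii) in the exterior zone $\Theta_e$ from \fref{def:Thetae} matches $v$ to orders $1$ and $3$ at $X=0$, and the Gaussian cutoff $e^{-\tilde X^4}$ makes it decay rapidly, so that there the error $R$ is exponentially small in $s$ away from the axis and vanishes to order $4$ at the axis.

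Concretely, first I would split $R=\chi_d R_{\mathrm{int}}+(1-\chi_d)R_{\mathrm{ext}}+R_{\mathrm{cut}}$, where $R_{\mathrm{cut}}$ collects all terms where a derivative falls on $\chi_d$. The term $R_{\mathrm{cut}}$ is supported on the annulus $\{d e^{s/2}\lesssim |Y|\lesssim 2d e^{s/2}\}$, i.e. $|Z|\approx d e^{(k-1)s/(2k)}$, which is exponentially large; there one uses the asymptotics \fref{main:sizeTheta}, \fref{eq:def tildeF} and the fast decay of $\Theta_e$ to see that both $\tilde\Theta$ and $\Theta_e$ (hence their difference entering through $\chi_d$) are super-exponentially small in that region, and that each $\pa_Y\chi_d$ costs only $e^{-s/2}$, a polynomially-irrelevant loss against the $\varphi_{4,0}$ weight. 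For $R_{\mathrm{int}}$, I would write $\tilde\Theta=\sqrt6\, g^{-1/2}f^{3/2}\Psi_1\!\big(\tfrac{g^{1/2}f^{-1/2}}{\sqrt6}X\big)$, use the self-similar identity \fref{main:eq:Psi} to kill the $-\frac12+\frac32 X\pa_X+\tilde\Theta\pa_X$ block, and then expand the remaining $\pa_s\tilde\Theta+\frac12 Y\pa_Y\tilde\Theta-\pa_{YY}\tilde\Theta$ using the chain rule and the equations \fref{eq:f} for $f,g$. After the cancellation dictated by the modulation system, what remains is a combination of $\tilde f,\tilde g$ and their first two $Y$-derivatives (controlled by \fref{eq:def tildeF}--\fref{eq:def tildeG}, which give $e^{-s/(4k)}$ smallness), of quadratic interactions among $\tilde f,\tilde g$, and of the genuinely lower-order diffusion term $-\pa_{YY}$ applied to the profile — this last one carrying a favourable power of $e^{-(k-1)s/k}$ because $\pa_Y=e^{-(k-1)s/(2k)}\pa_Z$ from \fref{eq:def XYZ}, which beats the target $e^{-qs}$-type bound once integrated. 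For $R_{\mathrm{ext}}$, I would check directly from \fref{def:Thetae} that $\Theta_e$ solves \fref{eq:Q} to leading order near $X=0$ — it reproduces the $-Xf+X^3 g/6$ Taylor behaviour and the extra $e^{-\tilde X^4}$ factor contributes only terms weighted by powers of $\tilde X$ times $e^{-\tilde X^4}$, which are bounded — and that the $\pa_s,\,Y\pa_Y,\,\pa_{YY}$ terms again produce only $\tilde f,\tilde g$-errors and $e^{-(k-1)s/k}$-small diffusion terms.

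The commuted estimates in \fref{main:estimationR} for $j_1+j_2\le 1$ are obtained by the same scheme after commuting $\pa_Z$, $Y\pa_Y=Z\pa_Z$ and $A$ through the above computation: the vector field $A$ defined by \fref{main:def:A} is, by construction, the generator of the self-similar scaling adapted to $\Psi_1$, so it commutes with the interior self-similar block up to explicit lower-order terms (this is exactly the content of the equivalence-of-vector-fields lemma referenced as Lemma \ref{an:lem:equiv}), and $\pa_Z$, $Z\pa_Z$ only act on the $Z$-dependence through $f,g$, producing one more $Z$-derivative of $\tilde f,\tilde g$ — still covered by \fref{eq:def tildeF}--\fref{eq:def tildeG} with $J\ge 1$ — and through $F_k$, whose derivatives satisfy $|\pa_Z^j F_k|\lesssim (1+|Z|)^{-j}F_k$. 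The weighted integrals are then estimated pointwise: one bounds $|R|$ and its derivatives by $e^{-s/(4k)}$ (or better) times an explicit profile of the form $(1+|Z|)^{mk}|\tilde X|^p(1+|\tilde X|)^{(p-2)/3-p}$ with $p\ge 4$, compares it to $\varphi_{4,0}$ via \fref{main:sizephi}, and integrates against $dX\,dY/(|X|\lY)$; the constraint $3+1/k\le j\le 5-2/k$ (here with $j=4$, legitimate since $k\ge2$) is exactly what makes the $X$-integral near $0$ and at $\infty$ converge, and the $Y$-integral is finite because the profile decays like $(1+|Z|)^{-\textrm{something positive}}$ after accounting for the Jacobian $dY=e^{-(k-1)s/(2k)}\,\cdots$ is not needed since we integrate in $Y$ directly and $\lZ\sim e^{(k-1)s/(2k)}$ on the relevant scales. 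The factor $s$ on the right-hand side of \fref{main:estimationR} (rather than a clean power of $e^{-s}$) is the mild loss coming from the slowly-varying logarithmic-type corrections hidden in the profile — concretely from regions of $Z$ of polynomial size up to $e^{(k-1)s/(2k)}$ contributing a factor $s$ to the $Y$-integral — and it is harmless for the bootstrap since it is absorbed by any genuine exponential gain. The main obstacle I anticipate is bookkeeping the interior error after the modulation-system cancellation: one must be careful that \emph{every} term not proportional to $(\tilde f,\tilde g)$ or to the diffusion operator is genuinely zero by \fref{main:eq:Psi} and \fref{eq:f}, and in particular that the definition \fref{eq:def b} of $\mu$ is chosen precisely so that the order-$1$ and order-$3$ Taylor coefficients of $\tilde\Theta$ at $X=0$ are $-f$ and $g$ respectively; getting the algebra of this cancellation right, uniformly in $(s,Y)$ on the support of $\chi_d$ where $f,g$ are bounded away from $0$, is the delicate point, while everything else is a routine — if lengthy — weighted-integral estimate.
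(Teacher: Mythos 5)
Your high-level decomposition of $R$ by cutoff region matches the paper's ($R_1,\ldots,R_5$ there), but your treatment of the cutoff term $R_{\mathrm{cut}}$ contains a genuine gap. You argue that on the annulus $d e^{s/2}\lesssim |Y|\lesssim 2d e^{s/2}$ both $\tilde\Theta$ and $\Theta_e$ are ``super-exponentially small'' and that dividing by $\varphi_{4,0}$ is a ``polynomially-irrelevant loss.'' Neither claim is true. From \fref{main:sizeTheta}, $|\tilde\Theta|,|\Theta_e|\approx(1+|Z|)^k|\tilde X|(1+|\tilde X|)^{\frac 13 -1}$, while from \fref{main:sizephi}, $|\varphi_{4,0}|\approx(1+|Z|)^{2k}|\tilde X|^4(1+|\tilde X|)^{\frac 23 -4}$; the naive ratio therefore behaves like $(1+|Z|)^{-k}|\tilde X|^{-3}(1+|\tilde X|)^{\frac 83}$, which blows up as $\tilde X\to 0$, and its $2q$-th power is not integrable against $d\tilde X/|\tilde X|$ near the axis $\{X=0\}$. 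The ingredient that rescues the estimate — and which the paper uses in Step 3 of its proof — is the \emph{cancellation} $\pa_X^j\tilde\Theta(s,0,Y)=\pa_X^j\Theta_e(s,0,Y)$ for $j=0,\ldots,4$, built into $\Theta_e$ from \fref{def:Thetae}; this forces $\tilde\Theta-\Theta_e$ to vanish to fifth order in $X$, i.e.\ $|\tilde\Theta-\Theta_e|\lesssim(1+|Z|)^k|\tilde X|^5(1+|\tilde X|)^{\frac 13 -5}$, after which the ratio to $\varphi_{4,0}$ is bounded in $\tilde X$ and decays like $(1+|Z|)^{-k}$, and integration over the annulus $|Z|\sim e^{s/(2k)}$ produces the $e^{-qs}$ factor. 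You do flag the matching of Taylor coefficients as ``the delicate point,'' but you apply it only to reproducing $\pa_X\e=\pa_X^3\e=0$ on the axis, not to the estimate of $R_{\mathrm{cut}}$, which is precisely where it is indispensable.

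A smaller but related imprecision concerns $R_{\mathrm{int}}$. You say it is ``governed by the remainders $\tilde f,\tilde g$ \ldots and by genuinely higher-order pieces.'' In fact, since $\tilde\Theta$ in \fref{def:tildeTheta} is built from the \emph{exact} $f,g$ (not from the profiles $F_k$, $F_k^4$) and $(f,g)$ solves the system \fref{eq:f} exactly, the cancellation from \fref{main:eq:Psi} and \fref{eq:f} leaves no $\tilde f,\tilde g$-error at all: what remains in $R_1$ is purely the cross-diffusion terms, schematically $(\pa_Y f)^2,\ \pa_Y f\,\pa_Y g,\ (\pa_Y g)^2$, multiplying explicit functions of $\tilde X$ such as $\left[\left(-\tfrac 32+\tfrac 12\tilde X\pa_{\tilde X}\right)\left(-\tfrac 12\Psi_1+\tfrac 12\tilde X\pa_{\tilde X}\Psi_1\right)\right](\tilde X)$ that \emph{themselves} vanish to order $\tilde X^5$. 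The estimates \fref{eq:def tildeF}--\fref{eq:def tildeG} only bound the sizes of the resulting coefficients; the smallness itself comes from $\pa_Y=e^{-(k-1)s/(2k)}\pa_Z$ and from the fifth-order vanishing in $\tilde X$. An argument that misses the latter in the cutoff zone will also have trouble explaining why the interior $X$-integral converges near the axis — the same structural cancellation is carrying the weighted integrability throughout.
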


\begin{proof}

Recall that $Y\pa_Y=Z\pa_Z$. From \fref{eq:def R}, \fref{def:tildeTheta} and \fref{def:Thetae} we compute:
$$
R=R_1+R_2+R_3+R_4+R_5
$$
where
$$
R_1:=\chi_d \left(\tilde \Theta_s-\frac 12 \tilde \Theta+\frac 32 X \pa_X \tilde \Theta+\frac 12 Y \pa_Y \tilde \Theta+\tilde \Theta \pa_X \tilde \Theta-  \pa_{YY}\tilde \Theta \right),
$$
$$
R_2:=(1-\chi_d) \left(\pa_s\Theta_e-\frac 12 \Theta_e+\frac 32 X \pa_X \Theta_e+\frac 12 Y \pa_Y \Theta_e+ \Theta_e \pa_X \Theta_e-  \pa_{YY} \Theta_e \right),
$$
$$
R_3:=(\tilde \Theta-\Theta_e)(\pa_s \chi_d +\frac 12 Y \pa_Y \chi_d-  \pa_{YY}\chi_d), \ \ R_4:=-2  \pa_Y \chi_d \pa_Y (\tilde \Theta-\Theta_e),
$$
$$
R_5:=\chi_d(1-\chi_d)(\tilde \Theta-\Theta_e)\pa_X (\tilde \Theta-\Theta_e).
$$
We now prove the corresponding bounds for all terms $R_i$.\\

\noindent \textbf{Step 1} \emph{Estimate for $R_1$}. All the computations are performed in the domain of $\chi_d$, $|Y|\lesssim 2de^{s/2}$, where $f,g>0$. We compute from \fref{def:tildeTheta}, \fref{eq:def b}, \fref{eq:f} and Lemma \ref{lem:Psi1} that only some viscosity terms remain in $R_1$:
\bee
&&\tilde \Theta_s-\frac 12 \tilde \Theta+\frac 32 X \pa_X \tilde \Theta+\frac 12 Y \pa_Y \tilde \Theta+\tilde \Theta \pa_X \tilde \Theta-  \pa_{YY}\tilde \Theta \\
&=& -  \sqrt 6 (\pa_{Y}g)^2 g^{-\frac 52}f^{\frac 32} \left[(-\frac 32 +\frac 12 \tX \pa_{\tX})(-\frac 12 \Psi_1+\frac 12 \tX\pa_{\tX} \Psi_1)\right]\left( \frac{g^{\frac 12}f^{-\frac 12}}{\sqrt 6} X \right)\\
&&- \sqrt{6} \pa_Y g \pa_Y f g^{-\frac 32}f^{\frac 12} \left[-\frac 32\Psi_1+\frac 32 \tX \pa_{\tX} \Psi_1 -\frac 12 \tX^2 \pa_{\tX}^2 \Psi_1\right]\left( \frac{g^{\frac 12}f^{-\frac 12}}{\sqrt 6} X \right)\\
&&- \sqrt 6 g^{-\frac 12} (\pa_Y f)^2 f^{-\frac 12}\left[(\frac 12 -\frac 12 \tX \pa_{\tX})(\frac 32 -\frac 12 \tX \pa_{\tX})\Psi_1\right]\left( \frac{g^{\frac 12}f^{-\frac 12}}{\sqrt 6} X \right)\\
\eee
We only treat the first term, the proof being the same for the others. First, from \fref{eq:def tildeF} and \fref{eq:def tildeG}:
$$
(1+|Z|)^{-3k} \lesssim \left| g^{\frac 12}(s,Y)f^{-\frac 12}(s,Y)  \right| \lesssim (1+|Z|)^{-3k}, \ \ |(Y\pa_Y)^j \chi_d|\lesssim 1
$$
on the support of $\chi_d$. For any $j\in \mathbb N$, from \fref{main:eqPsi1origine} one has that
$$
\left| \left[(\tX\pa_{\tX})^j (-\frac 32 +\frac 12 \tX \pa_{\tX})(-\frac 12 +\frac 12 \tX \pa_{\tX})\Psi_1\right](\tX) \right|\lesssim |\tX|^5(1+|\tX|)^{\frac 13 -5}
$$
and from \fref{eq:def tildeF} and \fref{eq:def tildeG} and for $j_1+j_1'\leq J$:
$$
\frac{|(Y\pa_Y)^j\pa_Z^{j'}(g^{\frac 12}f^{-\frac 12})|}{g^{\frac 12}f^{-\frac 12}}\lesssim (1+|Z|)^{-j'}, \ \ |(Y\pa_Y)^j\pa_Z^{j'}((\pa_Y g)^2 g^{-\frac 52}f^{\frac 32})|\lesssim e^{-\frac{k-1}{k}s}(1+|Z|)^{k-2-j'}
$$
since $\pa_Y=e^{-(k-1)s/(2k)}\pa_Z$. We therefore infer that:
\bee
&& \left||(Y\pa_Y)^{j_1}\pa_Z^{j'_1}(X\pa_{X})^{j_2}\left(\left[(-\frac 32 +\frac 12 \tX \pa_{\tX})(-\frac 12 +\frac 12 \tX \pa_{\tX})\Psi_1\right]\left( \frac{g^{\frac 12}f^{-\frac 12}}{\sqrt 6} X \right)\right)\right|\\
&\lesssim & \left| \frac{g^{\frac 12}f^{-\frac 12}}{\sqrt 6} X \right|^5\left(1+ \left| \frac{g^{\frac 12}f^{-\frac 12}}{\sqrt 6} X \right| \right)^{\frac 13 -5} \lesssim |\tilde X|^5 (1+|\tilde X|)^{\frac 13 -5} \lesssim \frac{|X|^5((1+|Z|)^{3k}+|X|)^{\frac 13 -5}}{(1+|Z|)^{k}},
\eee
and in turn, using \fref{main:sizephi}:
\bee
&& \frac{\left|(Y\pa_Y)^{j_1}\pa_Z^{j_1'}(X\pa_X)^{j_2}\left(\left[(\pa_Y g)^2 g^{-\frac 52}f^{\frac 32}(-\frac 32 +\frac 12 \tX \pa_{\tX})(-\frac 12 +\frac 12 \tX \pa_{\tX})\Psi_1 \right]\left( \frac{g^{\frac 12}f^{-\frac 12}}{\sqrt 6} X \right)\right)\right|}{|\varphi_{j,0}(X,Z)|} \\
& \lesssim & e^{-\frac{k-1}{k}s}|X|^{5-j}(1+|Z|)^{-2}((1+|Z|)^{3k}+|X|)^{j-4-\frac j3}.
\eee
Therefore, one has the following estimate, performing two changes of variables, the first one being $X=(1+Z^{2k})^{3/2}\tilde X$ and the second one $Z=e^{-(k-1)s/(2k)}Y$, with $dX/|X|=d\tilde X/|\tilde X|$, $dY/|Y|=dZ/|Z|$, since $|Y|/\lY \leq 1$ and $|(Y\pa_Y)^{j}\pa_Z^{j'} \chi_d|\lesssim 1$:
\bee
&& \int_{\mathbb R^2} \left( \frac{(Y\pa_Y)^{j_1}\pa_Z^{j_1'}(X\pa_X)^{j_2}\chi_d(\pa_Y g)^2 g^{-\frac 52}f^{\frac 32}(-\frac 32 X +\frac 12 X \pa_X)(-\frac 12 +\frac 12 X \pa_X)\Psi_1\left( \frac{g^{\frac 12}f^{-\frac 12}}{\sqrt 6} X \right)}{|\varphi_{j,0}(X,Z)|} \right)^{2q} \frac{dXdY}{|X|\lY} \\
&\lesssim & e^{-2q \frac{k-1}{k}s} \int_{\mathbb R^2} \left(|X|^{5-j}(1+|Z|)^{-2}((1+|Z|)^{3k}+|X|)^{j-4-\frac j3}\right)^{2q}\frac{dXdY}{|X|\lY} \\
&\lesssim & e^{-2q \frac{k-1}{k}s} \int_{\mathbb R} \left(\int_{\mathbb R} \left((1+|Z|)^{3k(1-\frac j3)-2}|\tilde X|^{5-j}(1+|\tilde X|)^{j-4-\frac j3}\right)^{2q} \frac{d\tilde X}{|\tilde X|}\right) \frac{d Y}{\lY}\\
 &\lesssim & e^{-2q \frac{k-1}{k}s}  \int_{\mathbb R} \left((1+|Z|)^{3k(1-\frac j3)-2}\right)^{2q}\frac{ d Y}{\lY} \\
 &\lesssim & e^{-2q \frac{k-1}{k}s}  \int_{|Y|\leq e^{\frac{k-1}{2k}s}} \frac{ d Y}{\lY} +\int_{|Y|\geq e^{\frac{k-1}{2k}s}} (|Z|^{3k(1-\frac j3)-2})^{2q}\frac{ d Y}{\lY}\\
 &\lesssim & e^{-2q \frac{k-1}{k}s}  \left(s +\int_{|Z|\geq 1} (|Z|^{3k(1-\frac j3)-2})^{2q}\frac{ d Z}{|Z|}\frac{|Y|}{\lY}\right)\lesssim se^{-2q \frac{k-1}{k}s} 
\eee
provided that $3<j<5$. We claim that the very same estimates for the other terms in the expression of $R_1$ holds, and that they can be proved performing the same computation, thanks to the same fundamental cancellations
$$
\left| \left[-\frac 32\Psi_1+\frac 32 \tX \pa_{\tX} \Psi_1 -\frac 12 \tX^2 \pa_{\tX}^2 \Psi_1\right]( \tilde X)\right|+\left|\left[(\frac 12 -\frac 12 \tX \pa_{\tX})(\frac 32 -\frac 12 \tX \pa_{\tX})\Psi_1\right](\tilde X)\right|\lesssim  |\tilde X|^5(1+|\tilde X|)^{\frac 13 -5},
$$
implying that for $j=4$, since $k\geq 2$:
$$
\sum_{0\leq j_1+j_2\leq 1} \int_{\mathbb R^2} \frac{((Y\pa_Y)^{j_1}(X\pa_X)^{j_2}R_1)^{2q}+(\pa_Z^{j_1}(X\pa_X)^{j_2}R_1)^{2q}}{\varphi_{4,0}^{2q}(X,Z)}\frac{dXdY}{|X|\lY}  \lesssim se^{-2q \frac{k-1}{k}s}  \lesssim s e^{-qs},
$$
which can be rewritten using the equivalence \fref{an:equivalence5}, \fref{an:equivalence1} and \fref{an:equivalence2}:
$$
\sum_{0\leq j_1+j_2\leq 1} \int_{\mathbb R^2} \frac{((Y\pa_Y)^{j_1}A^{j_2}R_1)^{2q}+(\pa_Z^{j_1}A^{j_2}R_1)^{2q}}{\varphi_{4,0}^{2q}(X,Z)}\frac{dXdY}{|X|\lY}  \lesssim s e^{-qs}.
$$

\noindent \textbf{Step 2} \emph{Estimate for $R_2$}: Note that $|Z|\geq de^{s/(2k)}\gg 1$ on the support of $1-\chi_d$ . We first compute using \fref{def:Thetae}, \fref{eq:def tildeF} and \fref{eq:def tildeG}:

\bea
\lab{main:idR2} &&\pa_s\Theta_e-\frac 12 \Theta_e+\frac 32 X \pa_X \Theta_e+\frac 12 Y \pa_Y \Theta_e+ \Theta_e \pa_X \Theta_e-  \pa_{YY} \Theta_e  \\
\non &=&\frac{1}{12}X^5g^2e^{-2 \tilde X ^4}+ (-Xf+X^3\frac g6)(\pa_s+\frac 32 X \pa_X +\frac 12 Y \pa_Y-  \pa_{YY} )\left(e^{-\tilde X^4} \right)\\
\non &&+ (Xf^2-4\frac{X^3g}{6}f)e^{- \tilde X ^4}\left(e^{- \tilde X ^4}-1\right) \\
\non &&+(-Xf+\frac{X^3}{6}g)^2 e^{-\tilde X^4}\pa_X \left(e^{- \tilde X^4}\right)-2 \pa_Y (-Xf+\frac{X^3}{6}g) \pa_Y \left(e^{-\tilde X^4}\right)\\
\eea
In what follows $0<\gamma\ll 1$ denotes a small constant whose value can change from one line to another. For the first term, \fref{eq:def tildeF} and \fref{eq:def tildeG} imply that on the support of $1-\chi_d$ for $j_1,j_1',j_2\in \mathbb N$ with $j_1+j_1'\leq J$:
$$
\left| (Y\pa_Y)^{j_1}\pa_Z^{j_1'} (X\pa_X)^{j_2}\left(X^5g^2 e^{-2 \tilde X^4}\right) \right| \lesssim e^{-\frac{1}{2k}s}(1+|Z|)^{1-k-j_1'}|\tilde X|^5 e^{-\gamma \tilde X^4}.
$$
Therefore, using \fref{main:sizephi}:
$$
\frac{\left| (Y\pa_Y)^{j_1}\pa_Z^{j_1'}(X\pa_X)^{j_2} \left(X^5g^2e^{-2 \tilde X^4}\right) \right|}{|\varphi_{j,0}(X,Z)|}\lesssim e^{-\frac{1}{2k}s}(1+|Z|)^{1-k(j-3)}|\tilde X|^{5-j}e^{-\gamma \tilde X^4}.
$$
Since $dX/|X|=d\tilde X/|\tilde X|$ and $|Z|\gtrsim e^{\frac{1}{2k}s}$ on the support of $1-\chi_d$ and $|(Y\pa_Y)^j\pa_Z^{j'} \chi_d|\lesssim 1$ one then infers that:
\bee
&&\int_{\mathbb R^2} \left(\frac{(Y\pa_Y)^{j_1} \pa_Z^{j_1'}(X\pa_X)^{j_2} \left((1-\chi_d)X^5g^2 e^{-2 \tilde X^4}\right)}{|\varphi_{j,0}(X,Z)|}\right)^{2q}\frac{dXdY}{|X|\lY}\\
&\lesssim & e^{-\frac{q}{k}s} \int_{|Z|\geq e^{\frac{1}{2k}s}} \left((1+|Z|)^{1-k(j-3)}|\tilde X|^{5-j}e^{-\gamma \tilde X^4}\right)^{2q} \frac{d\tilde XdZ}{|\tilde X||Z|} \lesssim e^{-\frac{q}{k}s} \int_{|Z|\geq e^{\frac{1}{2k}s}}  |Z|^{(1-k(j-3))2q} \frac{dZ}{|Z|}\\
&\lesssim & e^{-(j-3)qs}
\eee
provided that $3+1/k<j<5$. We now turn to the second term in the expression of $R_2$. One has that:
$$
\left|(Y\pa_Y)^{j_1} \pa_Z^{j_1'}(X\pa_X)^{j_2} \left((\pa_s+\frac 32 X \pa_X +\frac 12 Y \pa_Y-  \pa_{YY} )\left(e^{-\left(\tilde X\right)^4} \right)\right)\right| \lesssim |\tilde X|^{4}e^{-\gamma \tilde X^4}.
$$
Therefore, from \fref{eq:def tildeF}, \fref{eq:def tildeG} and \fref{main:sizephi} we obtain that:
\bee
&& \left| \frac{(Y\pa_Y)^{j_1} \pa_Z^{j_1'}(X\pa_X)^{j_2} \left((-Xf+X^3\frac g6)(\pa_s+\frac 32 X \pa_X +\frac 12 Y \pa_Y-  \pa_{YY} )\left(e^{-\left(\tilde X\right)^4} \right)\right) }{\varphi_{4,0}(X,Z)}\right| \\
& \lesssim  &e^{-\frac{1}{4k}s}(1+|Z|)^{\frac 12 -k(j-3)}|\tilde X|^{5-j}e^{-\gamma \tilde X^4}
\eee
Since $dX/|X|=d\tilde X/|\tilde X|$, $|Z|\gtrsim e^{\frac{1}{2k}s}$ and $|(Y\pa_Y)^j \chi_d|\lesssim 1$ on the support of $1-\chi_d$ one then infers that:
\bee
&&\int \left|\frac{(Y\pa_Y)^{j_1}\pa_Z^{j_1'} (X\pa_X)^{j_2}\left((1-\chi_d)(-Xf+X^3\frac g6)(\pa_s+\frac 32 X \pa_X +\frac 12 Y \pa_Y-  \pa_{YY} )\left(e^{-\tilde X^4} \right)\right)}{\varphi_{j,0}(X,Z)} \right|^{2q}\frac{dXdY}{|X|\lY}\\
&\lesssim & e^{-\frac{q}{2k}s} \int_{|Z|\geq e^{\frac{1}{2k}s}} \left((1+|Z|)^{\frac 12-k(j-3)}|\tilde X|^{5-j}e^{-\gamma \tilde X^4}\right)^{2q} \frac{d\tilde XdZ}{|\tilde X||Z|}\\
&\lesssim & e^{-\frac{q}{2k}s} \int_{|Z|\geq e^{\frac{1}{2k}s}}  |Z|^{(\frac 12-k(j-3))2q} \frac{dZ}{|Z|}\lesssim e^{-(j-3)qs}
\eee
provided that $3+1/(2k)<j<5$. We claim that all the other remaining terms in \fref{main:idR2} can be treated verbatim the same way, yielding for $j=4$:
$$
\sum_{0\leq j_1+j_2\leq 1} \int_{\mathbb R^2} \frac{((Y\pa_Y)^{j_1}(X\pa_X)^{j_2}R_2)^{2q}+(\pa_Z^{j_1}(X\pa_X)^{j_2}R_2)^{2q}}{\varphi_{4,0}^{2q}(X,Z)}\frac{dXdY}{|X|\lY}  \lesssim se^{-2q \frac{k-1}{k}s}  \lesssim s e^{-qs},
$$
which can be rewritten using the equivalence \fref{an:equivalence5}, \fref{an:equivalence1} and \fref{an:equivalence2}:
$$
\sum_{0\leq j_1+j_2\leq 1} \int_{\mathbb R^2} \frac{((Y\pa_Y)^{j_1}A^{j_2}R_2)^{2q}+(\pa_Z^{j_1}A^{j_2}R_2)^{2q}}{\varphi_{4,0}^{2q}(X,Z)}\frac{dXdY}{|X|\lY}  \lesssim e^{-qs}.
$$

\noindent \textbf{Step 3} \emph{Estimate for $R_3$}: one first notices that on the support of this term $de^{s/2}\leq |Y|\leq 2de^{s/2}$, which will then be assumed throughout this step, and that for $j_1,j_1'\in \mathbb N$:
\be \lab{main:intermediaire step3 R}
|(Y\pa_Y)^{j_1}\pa_Z^{j_1'}(\pa_s \chi_d +\frac 12 Y \pa_Y \chi_d-  \pa_{YY}\chi_d)|\lesssim 1.
\ee
Also, $\pa_X^j \tilde \Theta=\pa_X^j \Theta_e$ for $j=0,...,4$ on the axis $\{X=0\}$. From this, the formulas \fref{def:tildeTheta} and \fref{def:Thetae}, and the estimate \fref{eq:def tildeF} and \fref{eq:def tildeG} one obtains that if $j_1+j_1'\leq J$:
$$
|(Y\pa_Y)^{j_1}\pa_Z^{j_1'}(X\pa_X)^{j_2}(\tilde \Theta-\Theta_e)|\lesssim (1+|Z|)^{k-j_1'} |\tilde X|^5 (1+|\tilde X|)^{\frac 13 -5}
$$
giving using \fref{main:sizephi} the estimate:
$$
\frac{|(Y\pa_Y)^{j_1}\pa_Z^{j_1'}(X\pa_X)^{j_2}(\tilde \Theta-\Theta_e)|}{|\varphi_{j,0}(X,Z)|}\lesssim (1+|Z|)^{-k(j-3)} |\tilde X|^{5-j} (1+|\tilde X|)^{-4+j-\frac j3}
$$
The above estimate and \fref{main:intermediaire step3 R} therefore imply, since $|Z|\sim e^{\frac{1}{2k}s}$ and since $dX/|X|=d\tilde X/|\tilde X|$:
\bee
&&\int \frac{((Y\pa_Y)^{j_1}\pa_Z^{j_1'}(X\pa_X)^{j_2}R_3)^{2q}}{\varphi_{j,0}^{2q}(X,Z)}\frac{dXdY}{|X|\lY} \\
&\lesssim &\int_{e^{\frac{1}{2k}}\leq |Z|\leq 2e^{\frac{1}{2k}}} \left((1+|Z|)^{-k(j-3)} |\tilde X|^{5-j} (1+|\tilde X|)^{-4+j-\frac j3}\right)^{2q}\frac{d\tilde XdZ}{|\tilde X||Z|} \lesssim e^{-(j-3)qs}
\eee
provided that $3<j<5$. Taking $j=4$ and using \fref{an:equivalence5}, \fref{an:equivalence1} and \fref{an:equivalence2} this gives:
$$
\sum_{0\leq j_1+j_2\leq 1} \int_{\mathbb R^2} \frac{((Y\pa_Y)^{j_1}A^{j_2}R_3)^{2q}+(\pa_Z^{j_1}A^{j_2}R_3)^{2q}}{\varphi_{4,0}^{2q}(X,Z)}\frac{dXdY}{|X|\lY}  \lesssim e^{-qs}.
$$

\noindent \textbf{Step 4} \emph{Estimate for $R_4$ and $R_5$}: These estimates can be proved  along the very same lines as we just estimated $R_1$, $R_2$ and $R_3$. We leave the proof to the reader in order to keep the present article short.

\end{proof}

We now estimate the lower order linear term in \fref{main:evolutione}.

\begin{lemma} \lab{main:lem:tildeL}

There holds on $[s_0,s_1]$, for $j_1\leq J$ and $j_2\in \mathbb N^*$:
\be \lab{main:Q-Thetapointwise1}
| \pa_Z^{j_1}(Q-\Theta)| \lesssim e^{-\frac{1}{4k}s} |X|(1+|Z|)^{-j_1} \ \ \ \text{and} \ \ \ | \pa_Z^{j_1}\pa_X^{j_2}(Q-\Theta)| \lesssim e^{-\frac{1}{4k}s} (1+|X|)^{1-j_2}(1+|Z|)^{-j_1}.
\ee

\end{lemma}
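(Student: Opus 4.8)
The plan is to use \fref{def:e} to write
\[
Q-\Theta=\chi_d\,(\tilde\Theta-\Theta)+(1-\chi_d)\,(\Theta_e-\Theta),
\]
expand $\pa_Z^{j_1}\pa_X^{j_2}$ by the Leibniz rule, and note that $\chi_d=\chi(Z/(de^{s/(2k)}))$, so $|\pa_Z^{j_1}\chi_d|\lesssim(1+|Z|)^{-j_1}$ wherever $\pa_Z\chi_d\neq0$, i.e. on $\{|Z|\approx de^{s/(2k)}\}$. It then suffices to bound $\pa_Z^{j_1}\pa_X^{j_2}(\tilde\Theta-\Theta)$ on the support of $\chi_d$, where $|Z|\lesssim de^{s/(2k)}$ and $f,g>0$, and $\pa_Z^{j_1}\pa_X^{j_2}(\Theta_e-\Theta)$ on the support of $1-\chi_d$, where $|Z|\gtrsim de^{s/(2k)}$, by $Ce^{-s/(4k)}(1+|Z|)^{-j_1}|X|$ when $j_2=0$ and by $Ce^{-s/(4k)}(1+|Z|)^{-j_1}(1+|X|)^{1-j_2}$ when $j_2\geq1$; the restriction $j_1\leq J$ is only needed so that \fref{eq:def tildeF} and \fref{eq:def tildeG} supply enough $\pa_Z$-derivatives of $\tilde f,\tilde g$.

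For the interior bound I would set $\mathcal G(\lambda,\mu;X):=\mu^{-1}\lambda^{-1/2}\Psi_1(\mu\lambda^{3/2}X)$, so that $\tilde\Theta=\mathcal G(f,\mu;\cdot)$ by \fref{def:tildeTheta}, \fref{eq:def b}, whereas $\Theta=\mathcal G(F_k,1;\cdot)$ because $\mu=1$ whenever $f=F_k$, $g=6F_k^4$. On the support of $\chi_d$, \fref{eq:def tildeF}, \fref{eq:def tildeG} give, for $d$ small, $|\tilde f|\leq\tfrac12 F_k$ and $|\mu-1|\lesssim e^{-s/(4k)}(1+|Z|)^{1/2}\ll1$, and each $\pa_Z$ gains a factor $(1+|Z|)^{-1}$ when acting on $f,g,\tilde f,\tilde g$, hence on $\lambda_t:=(1-t)F_k+tf$ and $\mu_t:=(1-t)+t\mu$, which remain $\approx F_k$ and $\approx1$. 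Interpolating,
\[
\tilde\Theta-\Theta=\int_0^1\left[\tilde f\,(\pa_\lambda\mathcal G)+(\mu-1)(\pa_\mu\mathcal G)\right](\lambda_t,\mu_t;X)\,dt .
\]
Using \fref{main:eq:Psi} together with $w=-\Psi_1-\Psi_1^3$ one gets the explicit formulas $\lambda\pa_\lambda\mathcal G=-\mu^{-1}\lambda^{-1/2}(\Psi_1\Psi_1')(w)$ and $\mu\pa_\mu\mathcal G=-2\mu^{-1}\lambda^{-1/2}(\Psi_1^3/(1+3\Psi_1^2))(w)$ with $w:=\mu\lambda^{3/2}X$, so that, by \fref{main:eqPsi1origine} and \fref{main:sizeFkPsi1},
\[
|\pa_\lambda\mathcal G|\lesssim|X|(1+|w|)^{-4/3},\qquad |\pa_\mu\mathcal G|\lesssim\lambda^4|X|^3(1+|w|)^{-8/3},
\]
and the analogous estimates, with an extra $(1+|Z|)^{-1}$ per $\pa_Z$ and the usual shift of the $\Psi_1$-asymptotics per $\pa_X$, for $\pa_Z^{j_1}\pa_X^{j_2}$ of these quantities. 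Since $\lambda_t\approx(1+|Z|)^{-2k}$ and $|w|\approx(1+|Z|)^{-3k}|X|$ (so $|X|\approx(1+|Z|)^{3k}|w|$), the two contributions are controlled by $e^{-s/(4k)}(1+|Z|)^{1/2-2k}|X|\left[(1+|w|)^{-4/3}+|w|^2(1+|w|)^{-8/3}\right]$, which is $\lesssim e^{-s/(4k)}|X|$ because $1/2-2k<0$ for $k\geq2$ and $(1+t)^{-4/3}+t^2(1+t)^{-8/3}\lesssim1$ for $t\geq0$; the $\pa_X$-derivatives turn $|X|$ into $(1+|X|)^{1-j_2}$ and the $\pa_Z$-derivatives produce $(1+|Z|)^{-j_1}$.

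For the exterior bound I would estimate $\Theta$ and $\Theta_e$ separately, no cancellation being needed since $|Z|\gtrsim de^{s/(2k)}$ already forces smallness. Writing $P:=1+Z^{2k}\approx(1+|Z|)^{2k}$ and $\tilde X=X/P^{3/2}$ as in \fref{eq:deftildeX}, one has $F_k^{3/2}X=\tilde X$, $\Theta=P^{1/2}\Psi_1(\tilde X)$, whence $|\pa_X^{j_2}\Theta|\lesssim P^{-(3j_2-1)/2}(1+|\tilde X|)^{1/3-j_2}$ for $j_2\geq1$, $|\Theta|\lesssim P^{1/2}|\tilde X|(1+|\tilde X|)^{-2/3}$, by \fref{main:eqPsi1origine}; from \fref{def:Thetae} and \fref{eq:def tildeF}, \fref{eq:def tildeG} one has $|f|\lesssim P^{-1+1/(4k)}$, $|g|\lesssim P^{-4+1/(4k)}$, hence $|\pa_X^{j_2}\Theta_e|\lesssim P^{-1+1/(4k)}P^{3/2-3j_2/2}\,p_{j_2}(\tilde X)\,e^{-c\tilde X^4}$ for a fixed polynomial $p_{j_2}$. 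Distinguishing $|\tilde X|\leq1$ from $|\tilde X|\geq1$ and using $|X|=P^{3/2}|\tilde X|$ with $|\tilde X|^m e^{-c\tilde X^4}\lesssim_m1$, one checks in each case that $|\pa_X^{j_2}\Theta|+|\pa_X^{j_2}\Theta_e|\lesssim P^{-1+1/(4k)}(1+|X|)^{1-j_2}$ for $j_2\geq1$ and $\lesssim P^{-1+1/(4k)}|X|$ for $j_2=0$; since $P\gtrsim e^s$ on the exterior region, $P^{-1+1/(4k)}\lesssim e^{-s(1-1/(4k))}\leq e^{-s/(4k)}$ as $k\geq1$. Each $\pa_Z$ again costs $(1+|Z|)^{-1}$ (acting on $P$, on $\tilde f,\tilde g$, or on $\tilde X$ and $e^{-\tilde X^4}$, the last harmless by the Gaussian bound), which yields the factor $(1+|Z|)^{-j_1}$. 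Collecting the interior and exterior bounds in the Leibniz expansion proves \fref{main:Q-Thetapointwise1}.

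The only point requiring care is the exponent bookkeeping: in the interior one must see that the loss $(1+|Z|)^{1/2}$ in the size of $\mu-1$ is always absorbed by the gain $(1+|Z|)^{-2k}$ coming from $\lambda\approx F_k$ (this is where $k\geq2$ is used), and in the exterior one must convert all surplus powers of $(1+|Z|)^{-1}$ into the decay $e^{-s/(4k)}$ through $|Z|\gtrsim de^{s/(2k)}$; granting these, the remaining computations are routine.
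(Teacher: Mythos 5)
Your proof is correct and follows the same underlying strategy as the paper: exploit the cancellation between $\tilde\Theta$ and $\Theta$ in the interior zone $\{|Z|\lesssim de^{s/(2k)}\}$ (writing the difference as an integral along a path in parameter space and using the ODE \fref{main:eq:Psi} together with \fref{id:implicit} to extract the gain), and estimate $\Theta_e$ and $\Theta$ separately in the exterior zone, converting the surplus decay in $(1+|Z|)^{-1}$ to decay in $e^{-s/(4k)}$ via $|Z|\gtrsim de^{s/(2k)}$.

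The differences from the paper's proof are cosmetic but genuine streamlinings: you interpolate once along a straight path $t\mapsto(\lambda_t,\mu_t)$ in $(\lambda,\mu)$-space, where the paper performs two sequential one-parameter interpolations (first in $\mu$, then in $f/F_k$); you state the clean closed forms $\lambda\pa_\lambda\mathcal G=-\mu^{-1}\lambda^{-1/2}(\Psi_1\Psi_1')(w)$ and $\mu\pa_\mu\mathcal G=-2\mu^{-1}\lambda^{-1/2}\Psi_1^3/(1+3\Psi_1^2)$, which make transparent that the integrands vanish cubically in $w$ near the axis (the paper carries the same information inside integral kernels $(-\frac12\Psi_1+\frac32\tX\pa_{\tX}\Psi_1)$ and $(-\Psi_1+\tX\pa_{\tX}\Psi_1)$ without simplifying them); and you merge the paper's Steps 2 and 3 into a single exterior case by keeping the Leibniz expansion of $\chi_d(\tilde\Theta-\Theta)+(1-\chi_d)(\Theta_e-\Theta)$ and observing that $|\pa_Z^{j_1'}\chi_d|\lesssim(1+|Z|)^{-j_1'}$ on its support. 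Your exponent bookkeeping ($(1+|Z|)^{1/2-2k}\leq1$ for $k\geq2$ in the interior; $P^{-1}\lesssim e^{-s}$ and $|\tilde X|^me^{-c\tilde X^4}\lesssim 1$ in the exterior) checks out.
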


\begin{proof}

\noindent \textbf{Step 1} \emph{Inner estimate}. We first consider the zone $|Y|\leq de^{s/2}$, or equivalently $|Z|\leq de^{s/2k}$. From \fref{def:e} and \fref{eq:def Theta}:
$$
Q-\Theta = \tilde \Theta-\Theta = b^{-1}f^{-\frac 12} \Psi_1 \left(bf^{\frac 32}X \right)-F_k^{-\frac 12}(Z)\Psi_1 \left(F_k^{\frac 32}(Z)X \right).
$$
First, we have using \eqref{eq:def tildeF} and \eqref{eq:def tildeG}:
$$|\frac{f}{F_k}-1|=|\frac{\tilde{f}}{F_k}|\lesssim e^{-\frac{1}{4k}s}(1+|Z|)^{\frac 12}, \qquad |\frac{g}{G_k}-1|=|\frac{\tilde{g}}{G_k}|\lesssim e^{-\frac{1}{4k}s}(1+|Z|)^{\frac 12}.$$
Since the above right hand sides are also $\leq \frac 12$ from the choice of $d$ \fref{id:choiced}, and since $G_k/6F_k=1$, $\mu$ defined by \fref{eq:def b} satisfies:
$$
|\mu-1|\lesssim e^{-\frac{1}{4k}s}(1+|Z|)^{\frac 12}.
$$
Therefore, using \fref{eq:def tildeF}, \fref{eq:def tildeG}, the above inequalities and \fref{main:sizeFkPsi1}:
\bee
|Q-\Theta| &=& \Big| \mu^{-1}f^{-\frac 12}\Psi_1 (\mu f^{\frac 32}X)-f^{-\frac 12}\Psi_1 (f^{\frac 32}X)+f^{-\frac 12} \Psi_1 (f^{\frac 32}X)-F_k^{-\frac12}(Z) \Psi_1 (F_k^{\frac 32}(Z)X)\Big| \\
&=& \left| f^{-\frac 12}\int_{\tilde \mu=1}^\mu \tilde \mu^{-2}(-\Psi_1+\tX\pa_{\tX}\Psi_1)(\tilde \mu f^{\frac 32}X)d\tilde \mu+F_k^{-\frac 12}\int_{\lb=1}^{f/F_k} \lb^{-\frac 32}(-\frac 12 \Psi_1 +\frac 32 \tX \pa_{\tX} \Psi_1)(F_k^{\frac 32}\lb^{\frac 32}X)d\lb \right| \\
&\lesssim & |f^{-\frac 12}| |\mu-1| \sup_{ |\tilde \mu|\in [1,\mu] }\left| (-\Psi_1+\tX\pa_{\tX}\Psi_1)(\tilde \mu f^{\frac 32}X)\right|\\
&&+F_k^{-\frac 12} \left| \frac{f}{F_k}-1\right| \sup_{\lb \in [1, f/F_k]} \left|(-\frac 12 \Psi_1 +\frac 32 \tX \pa_{\tX} \Psi_1)(F_k^{\frac 32}\lb^{\frac 32}X)\right| \\
&\lesssim & \left(|\mu-1|+|\frac{f}{F_k}-1|\right)(1+|Z|)^k|\tilde X|(1+|\tilde X|)^{\frac 13-1} \lesssim e^{-\frac{1}{4k}s}(1+|Z|)^{k+\frac 12 }|\tilde X|(1+|\tilde X|)^{\frac 13-1} \\
&\lesssim & e^{-\frac{1}{4k}s}(1+|Z|)^{-2k+\frac 12 }| X|(1+|\tilde X|)^{\frac 13-1} \lesssim e^{-\frac{1}{4k}s} | X|. \\
\eee
One computes similarly that
\bee
|\pa_X(Q-\Theta)| &=& \Big| f\pa_X\Psi_1 (\mu f^{\frac 32}X)-f\pa_X \Psi_1 (f^{\frac 32}X)+f\pa_X \Psi_1 (f^{\frac 32}X)-F_k(Z)\pa_X \Psi_1 (F_k^{\frac 32}(Z)X)\Big| \\
&=& \left| f \int_{\tilde \mu=1}^\mu \tilde \mu^{-1} (\tX\pa_{\tX\tX}\Psi_1)(\tilde \mu f^{\frac 32}X)d\tilde \mu+F_k \int_{\lb=1}^{f/F_k} \lb^{-1}(\pa_{\tX} \Psi_1 +\frac 32 \tX \pa_{\tX}^2 \Psi_1)(F_k^{\frac 32}\lb^{\frac 32}X)d\lb \right| \\
&\lesssim & \left(|\mu-1|+|\frac{f}{F_k}-1|\right)(1+|Z|)^{-2k}(1+|\tilde X|)^{-\frac 23} \lesssim e^{-\frac{1}{4k}s}(1+|Z|)^{\frac 12 -2k} (1+|\tilde X|)^{-\frac 23} \\
&\lesssim & e^{-\frac{1}{4k}s}.\\
\eee
The proof for higher order derivatives is a direct generalisation of the above computations, that we omit here, giving \fref{main:Q-Thetapointwise1} in this zone.\\

\noindent \textbf{Step 2} \emph{Outer estimate}. Let $0<\gamma \ll 1$ be a small constant whose value can change from one line to another. We now turn to the zone $ de^{s/2}\leq |Y|\leq 2de^{s/2}$ or equivalently $de^{s/(2k)}\leq |Z|\leq 2de^{s/(2k)}$. We perform brute force estimates on the identity \fref{def:e} using \fref{eq:def tildeF} and \fref{eq:def tildeG}:
\bee
|Q-\Theta| &=& |\chi_d \tilde \Theta +(1-\chi_d)\Theta_e-\Theta| \leq |\Theta|+|\tilde \Theta|+|\Theta_e| \\
& \lesssim & (1+|Z|)^{k }|\tilde X|(1+|\tilde X|)^{\frac 13-1}+ (1+|Z|)^{k }|\tilde X|(1+|\tilde X|)^{\frac 13-1}+(1+|Z|)^{k }|\tilde X|e^{-\gamma \tilde X^4} \\
&\lesssim & e^{-\frac{1}{4k}s}(1+|Z|)^{k+\frac 12 }|\tilde X|(1+|\tilde X|)^{\frac 13-1} \lesssim e^{-\frac{1}{4k}s}(1+|Z|)^{-2k+\frac 12 }| X|(1+|\tilde X|)^{\frac 13-1} \\
&\lesssim & e^{-\frac{1}{4k}s} | X|. \\
\eee
and similarly
\bee
|\pa_X(Q-\Theta )| &\leq &|\pa_X \Theta|+|\pa_X\tilde \Theta|+|\pa_X\Theta_e|\\
 & \lesssim & (1+|Z|)^{-2k } (1+|\tilde X|)^{-\frac 23}+ (1+|Z|)^{-2k } (1+|\tilde X|)^{-\frac 23}+(1+|Z|)^{-2k }e^{-\gamma \tilde X^4} \\
&\lesssim & e^{-\frac{1}{4k}s}(1+|Z|)^{-2k+\frac 12 }(1+|\tilde X|)^{-\frac 23}\lesssim e^{-\frac{1}{4k}s}. \\
\eee
Again, the generalisation of this argument for higher order derivatives is direct, yielding \fref{main:Q-Thetapointwise1} in this zone.\\

\noindent \textbf{Step 3} \emph{Outer estimate}. We now turn to the zone $ |Z|\geq 2de^{\frac{1}{2k}s}$ where $Q-\Theta=\Theta_e-\Theta$. We perform the very same computations as in Step 2, estimating $\Theta$ and $\Theta_e$ separately, giving \fref{main:Q-Thetapointwise1} in this zone and ending the proof of the Lemma.

\end{proof}

We can now perform energy estimates in the bootstrap regime of Proposition \ref{main:pr:bootstrap} and improve the bootstrap bounds.

\begin{lemma} \lab{main:lem:energy0}

There exists $K_{0,0}^*>0$ such that for any positive constants $(K_{j_1,j_2})_{0\leq j_1+j_2\leq 2}$ and $(\tilde K_{j_1,j_2})_{0\leq j_1+j_2\leq 2, 1\leq j_1}$ with $K_{0,0}\geq K^*_{0,0}$, there exists $s^*$ such that if $u$ is trapped on $[s_0,s_1]$ with $s_0\geq s^*$, then:
\be \lab{main:weighteddecayes1}
\left( \int_{\mathbb R^2} \frac{\e^{2q}(s_1)}{\varphi_{4,0}^{2q}(X,Z)}\frac{dXdY}{|X|\lY}\right)^{\frac{1}{2q}}\leq \frac{K_{0,0}}{2} e^{-\left(\frac 12 -\kappa \right)s_1}.
\ee

\end{lemma}

\begin{proof}

We compute from \fref{main:evolutione} and \fref{id:estimationlineaire} (note that this estimate is valid for Schwartz functions so that we implicitly use here a density argument):
\bee
&&\frac{d}{ds} \left( \frac{1}{2q} \int_{\mathbb R^2} \frac{\e^{2q}}{\varphi_{4,0}^{2q}(X,Z)}\frac{dXdY}{|X| \lY} \right)\\
\non &\leq & -\left( \frac{1}{2}-\frac{\kappa}{2}\right) \int_{\mathbb R^2} \frac{\e^{2q}}{\varphi_{4,0}^{2q}(X,Z)}\frac{dXdY}{|X| \lY}-\frac{2q-1}{q^2}\int \frac{|\pa_Y (\e^q)|^2}{\varphi_{4,0}^{2q}}\frac{dXdY}{|X|\lY} \\
\non &&+\frac{1}{2q}\int \e^{2q}\pa_X \left(\frac{Q-\Theta}{\varphi_{4,0}^{2q}|X|} \right)\frac{dXdY}{\lY}- \int \frac{\e^{2q-1}}{\varphi_{4,0}^{2q}}\left(\e \e_X+R+\pa_X(Q-\Theta)\e \right)\frac{dXdY}{|X|\lY}.
\eee
We now estimate the last terms. First,
$$
\pa_X \left(\frac{Q-\Theta}{\varphi_{4,0}^{2q}|X|}\right)=\frac{\pa_X(Q-\Theta)}{\varphi_{4,0}^{2q}|X|}-\frac{Q-\Theta }{\varphi_{4,0}^{2q}X|X|}+\frac{Q-\Theta}{|X|}\pa_X\left(\frac{1}{{\varphi_{4,0}^{2q}}} \right).
$$
One has from \fref{id:phij0} that $|\pa_X \varphi_{4,0}(Z,X)|\lesssim |\varphi_{4,0}(X,Z)|/|X|$. From this fact, from \fref{main:Q-Thetapointwise1} and \fref{main:Q-Thetapointwise1} one infers that:
\be \lab{main:bdpaxQTheta}
\left| \pa_X \left(\frac{Q-\Theta}{\varphi_{4,0}^{2q}|X|}\right)\right| \lesssim q e^{-\frac{1}{4k}s}\frac{1}{\varphi_{4,0}^{2q}(X,Z)|X|}
\ee
From the above estimate, \fref{main:bde2} and \fref{main:Q-Thetapointwise1} we infer that:
\bee
&&\left| \frac{1}{2q}\int \e^{2q}\pa_X \left(\frac{Q-\Theta}{\varphi_{4,0}^{2q}|X|} \right)\frac{dXdY}{\lY}- \int \frac{\e^{2q-1}}{\varphi_{4,0}^{2q}}\left(\e \e_X+\pa_X(Q-\Theta)\e \right)\frac{dXdY}{|X|\lY}\right|\\
&\leq & C \left(e^{-\frac{1}{4k}s}+e^{-\left(\frac 12 -\kappa \right)s} \right) \int \frac{\e^{2q}}{\varphi_{4,0}^{2q}}\frac{dXdY}{|X|\lY} \leq \frac{\kappa}{2} \int \frac{\e^{2q}}{\varphi_{4,0}^{2q}}\frac{dXdY}{|X|\lY}, \\
\eee
where above $C$ depends on the boostrap constants except $s^*$, so that the last inequality is obtained upon taking $s^*$ large enough. Applying H\"older, using \fref{main:weighteddecaye} and \fref{main:estimationR}:
\bee
&& \left| \int \frac{\e^{2q-1}}{\varphi_{4,0}^{2q}} R \frac{dXdY}{|X|\lY} \right| \leq \left| \int \frac{\e^{2q}}{\varphi_{4,0}^{2q}} \frac{dXdY}{|X|\lY} \right|^{\frac{2q-1}{2q}}\left| \int \frac{R^{2q}}{\varphi_{4,0}^{2q}} \frac{dXdY}{|X|\lY} \right|^{\frac{1}{2q}} \leq CK_{0,0}^{2q-1} s^{\frac{1}{2q}} e^{-2q(\frac 12 -\kappa)s-\kappa s}.
\eee
We obtain
\bee
&&\frac{d}{ds} \left( \frac{1}{2q} \int_{\mathbb R^2} \frac{\e^{2q}}{\varphi_{4,0}^{2q}(X,Z)}\frac{dXdY}{|X| \lY} \right)\\
\non &\leq & -\left(\frac{1}{2}-\kappa \right) \int_{\mathbb R^2} \frac{\e^{2q}}{\varphi_{4,0}^{2q}(X,Z)}\frac{dXdY}{|X| \lY}-\frac{2q-1}{q^2}\int \frac{|\pa_Y (\e^q)|^2}{\varphi_{4,0}^{2q}}\frac{dXdY}{|X|\lY} +CK_{0,0}^{2q-1}s^{\frac{1}{2q}}e^{-2q(\frac 12 -\kappa)s-\kappa s}.
\eee
We now reintegrate until the time $s_1$ the above estimate, yielding from \fref{main:weighteddecayeinit}:
\bee
\int_{\mathbb R^2} \frac{\e^{2q}}{\varphi_{4,0}^{2q}(X,Z)}\frac{dXdY}{|X| \lY}& \leq &e^{-2q \left(\frac 12 -\kappa \right)(s_1-s_0)} \int_{\mathbb R^2} \frac{\e_0^{2q}}{\varphi_{4,0}^{2q}(X,Z)}\frac{dXdY}{|X| \lY} +CK_{0,0}^{2q-1}e^{-2q \left(\frac 12 -\kappa \right)s_1} \int_{s_0}^s \tilde s^{\frac{1}{2q}} e^{-\kappa \tilde s}d\tilde s \\
&\leq &  \frac{K_{0,0}^{2q}}{2^{2q}} e^{-2q \left(\frac 12 -\kappa \right)s_1}, \\
\eee
as $C$ is independent of $(K_{j_1,j_2})_{0\leq j_1+j_2\leq 2}$, $(\tilde K_{j_1,j_2})_{0\leq j_1+j_2\leq 2, 1\leq j_1}$ and $s^*$, upon choosing $K_{0,0}$ large enough. This ends the proof of the Lemma.

\end{proof}

We now perform similar weighted energy estimates for the derivatives of $\e$.

\begin{lemma} \lab{main:lem:energy1}

There exists a choice of constants $K_{j_1,j_2}\gg 1$ for $0\leq j_1+j_2\leq 2$ and $(j_1,j_2)\neq (0,0)$, and $\tilde K_{j_1,j_2}\gg 1$ for $0\leq j_1+j_2\leq 2$ and $j_1\geq 1$, such that if $u$ is trapped on $[s_0,s_1]$, then at time $s_1$, for $0\leq j_1+j_2\leq 2$ and $(j_1,j_2)\neq (0,0)$:
\be \lab{main:weighteddecayeimproved}
\left( \int_{\mathbb R^2} \frac{((\pa_Z^{j_1}A^{j_2}\e(s_1))^{2q}}{\varphi_{4,0}^{2q}(X,Z)}\frac{dXdY}{|X|\lY}\right)^{\frac{1}{2q}}\leq \frac{K_{j_1,j_2}}{2} e^{-\left(\frac 12 -\kappa \right)s_1},
\ee
and for $0\leq j_1+j_2\leq 2$ and $j_2\geq 1$:
\be \lab{main:weighteddecayeimproved2}
\left( \int_{\mathbb R^2} \frac{(((Y\pa_Y)^{j_1}A^{j_2}\e(s_1))^{2q}}{\varphi_{4,0}^{2q}(X,Z)}\frac{dXdY}{|X|\lY}\right)^{\frac{1}{2q}}\leq \frac{\tilde K_{j_1,j_2}}{2} e^{-\left(\frac 12 -\kappa \right)s_1}.
\ee

\end{lemma}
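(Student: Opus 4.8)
The plan is to propagate the weighted energy estimate of Lemma~\ref{main:lem:energy0} to the derivatives $\pa_Z^{j_1}A^{j_2}\e$ and $(Y\pa_Y)^{j_1}A^{j_2}\e$ by commuting the adapted vector fields of \fref{main:def:A} through the evolution equation \fref{main:evolutione}, and to close by an induction on $j_1+j_2$, the case $(j_1,j_2)=(0,0)$ being Lemma~\ref{main:lem:energy0}. Write $D$ for either $\pa_Z^{j_1}A^{j_2}$ or $(Y\pa_Y)^{j_1}A^{j_2}$ with $j_1+j_2\leq 2$, and recall $Y\pa_Y=Z\pa_Z$. Since $A$, $\pa_Z$, $Z\pa_Z$ are built to commute with $\pa_s+\mathcal L_Z$ up to first order operators with bounded or decaying coefficients which are themselves combinations of $A,\pa_Z,Z\pa_Z$ (cf. the equivalences of Lemma~\ref{an:lem:equiv}), and since $\mathcal L=\mathcal L_Z+\frac{k-1}{2k}Z\pa_Z-\pa_{YY}$, applying $D$ to \fref{main:evolutione} gives
\be
\pa_s(D\e)+\mathcal L(D\e)=\Xi_D, \qquad \Xi_D:=-D\big(\tL\e+R+\e\pa_X\e\big)-[D,\mathcal L]\e ,
\ee
where $[D,\mathcal L]\e$ involves only derivatives of $\e$ of order $\leq j_1+j_2$: the part coming from $\frac{k-1}{2k}Z\pa_Z$ reproduces $D\e$ and lower order $D$'s with bounded coefficients (and is in fact sign-favourable once inserted in the weighted energy, as it only reinforces the damping), while the part coming from $-\pa_{YY}=-e^{-\frac{k-1}{k}s}\pa_{ZZ}$ carries the gain $e^{-\frac{k-1}{k}s}$ in $Z$ variables, exactly as in the treatment of $R$ in Lemma~\ref{main:lem:R}.

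Next I would check that Lemma~\ref{main:lem:lineaire} applies to $u=D\e$ with $j=4$. The cancellation \fref{id:modulation} and the parity in $X$ are inherited by $D\e$: $\pa_Z$ and $Y\pa_Y$ act tangentially to $\{X=0\}$, and near that axis $A=(\tfrac32X+F_k^{-\frac32}\Psi_1(F_k^{\frac32}X))\pa_X$ is a smooth multiple of $X\pa_X$, so $D\e$ still vanishes on $\{X=0\}$ to the required order. Lemma~\ref{main:lem:lineaire} then yields, with the shorthand $\|v\|_q^{2q}:=\int_{\mathbb R^2}v^{2q}\varphi_{4,0}^{-2q}|X|^{-1}\lY^{-1}\,dXdY$,
\be
\frac{d}{ds}\Big(\tfrac{1}{2q}\|D\e\|_q^{2q}\Big)\le -\Big(\tfrac12-\tfrac Cq\Big)\|D\e\|_q^{2q}-\tfrac{2q-1}{q^2}\int\tfrac{|\pa_Y((D\e)^q)|^2}{\varphi_{4,0}^{2q}}\tfrac{dXdY}{|X|\lY}+\int\tfrac{(D\e)^{2q-1}\Xi_D}{\varphi_{4,0}^{2q}}\tfrac{dXdY}{|X|\lY},
\ee
the damping coefficient $\tfrac{j-3}{2}=\tfrac12$ being precisely the eigenvalue $(\mathcal L_Z-\tfrac12)\varphi_{4,0}=0$ of Proposition~\ref{pr:mathcalL}.

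It then remains to absorb $\int(D\e)^{2q-1}\Xi_D\,\varphi_{4,0}^{-2q}|X|^{-1}\lY^{-1}$. For the forcing, Lemma~\ref{main:lem:R} gives $\|DR\|_q^{2q}\lesssim se^{-qs}$, and H\"older–Young yields a bound $\tfrac1q\|D\e\|_q^{2q}+Cq\,se^{-qs}$. For $D(\tL\e)$ and $D(\e\pa_X\e)$ I would expand by Leibniz; any factor not carrying the top number of derivatives of $\e$ is controlled pointwise, using Lemma~\ref{main:lem:tildeL} on the derivatives of $Q-\Theta$ (gain $e^{-\frac1{4k}s}$) and Lemma~\ref{main:lem:pointwise e} on $\e,\pa_X\e,\pa_Z\e$ (gain $e^{-(\frac12-\kappa)s}$), so that after H\"older these pieces contribute $\tfrac Cq\|D\e\|_q^{2q}$ plus quantities that are $o\big(e^{-2q(\frac12-\kappa)s}\big)$. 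The only delicate contributions are the top-order pieces $(Q-\Theta)\pa_X(D\e)$ of $D(\tL\e)$ and $\e\,\pa_X(D\e)$ of $D(\e\pa_X\e)$; exactly as in the proof of Lemma~\ref{main:lem:energy0} these are handled by integration by parts in $X$, $\int(D\e)^{2q-1}h\,\pa_X(D\e)\,w=-\tfrac1{2q}\int(D\e)^{2q}\pa_X(hw)$, and since $\pa_X h$ and $h/X$ are respectively $O(e^{-\frac1{4k}s})$ and $O(e^{-(\frac12-\kappa)s})$ the resulting term is again absorbed by the damping for $s_0$ large. Altogether one gets, for some $\eta>0$,
\be
\frac{d}{ds}\Big(\tfrac{1}{2q}\|D\e\|_q^{2q}\Big)\le -\Big(\tfrac12-\tfrac Cq\Big)\|D\e\|_q^{2q}+Cse^{-qs}+Ce^{-\eta s}e^{-2q(\frac12-\kappa)s},
\ee
the last term absorbing, via the induction hypothesis, all lower-order bootstrap norms produced by the commutator and the Leibniz expansions. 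Taking $q$ large enough that $C/q\leq\kappa$, integrating from $s_0$ to $s_1$ and using \fref{main:weighteddecayeinit} gives $\|D\e(s_1)\|_q^{2q}\leq C e^{-2q(\frac12-\kappa)s_1}$, which is \fref{main:weighteddecayeimproved}–\fref{main:weighteddecayeimproved2} after fixing $K_{j_1,j_2},\tilde K_{j_1,j_2}$ (chosen after $q$ and $s_0$) large enough.

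I expect the main obstacle to be Step~1's bookkeeping: verifying that passing $A$, $\pa_Z$ and $Z\pa_Z$ through the transport part $\mathcal L_Z+\frac{k-1}{2k}Z\pa_Z$ and the degenerating diffusion $-\pa_{YY}$ produces only terms of order $\leq j_1+j_2$ whose weighted norms are, after H\"older, either absorbed into the damping $-(\tfrac12-C/q)\|D\e\|_q^{2q}$ or strictly smaller than $e^{-2q(\frac12-\kappa)s}$. This rests on the equivalences of vector fields and weights of Lemma~\ref{an:lem:equiv}, on the identity $(\mathcal L_Z-\tfrac12)\varphi_{4,0}=0$, and on the fact that the $e^{-\frac{k-1}{k}s}$ factor makes the dissipative commutators harmless — no genuinely new mechanism beyond the ones already used for $R$ and $\tL\e$, but the combination of all three error types at the derivative level is where the care is needed.
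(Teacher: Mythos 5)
Your plan is the same as the paper's: commute the adapted vector fields through the evolution equation, invoke Lemma~\ref{main:lem:lineaire} for each $D\e$ after checking that the vanishing at $\{X=0\}$ and the parity survive the application of $D$, estimate the commutator and error contributions, and close by Gronwall with a hierarchy of bootstrap constants. The paper's proof is just the explicit version of this, carried out case by case for $A\e$, $\pa_Z\e$, $Y\pa_Y\e$ and then (left to the reader) for the second-order combinations.

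One claim in your write-up is not literally correct. You assert that the lower-order pieces of $D(\tL\e)$, $D(\e\pa_X\e)$ and $[D,\mathcal L]\e$ contribute quantities that are $o\big(e^{-2q(\frac12-\kappa)s}\big)$, and you encode this by writing a source term $Ce^{-\eta s}e^{-2q(\frac12-\kappa)s}$ with $\eta>0$. This is false for some of them: for instance the term $(A\pa_X\Theta)\,\e$ arising from $[A,\tL]$ (and appearing in the paper's Step~1 as $\langle w^{2q-1},(A\pa_X\Theta)\e\rangle$) has coefficient $A\pa_X\Theta$ that is merely bounded, not decaying, so after H\"older it contributes $K_{0,1}^{2q-1}K_{0,0}\,e^{-2q(\frac12-\kappa)s}$ with no extra decay. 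The closure therefore does not come from a faster decay rate but from choosing $K_{j_1,j_2}$ large \emph{relative to the lower-order} $K_{j_1',j_2'}$ — precisely the hierarchical choice the paper makes. Since you do write that the constants are chosen via the induction hypothesis and since the Gronwall integral still converges with the positive margin $\kappa-C/q>0$, the argument is repairable simply by dropping the $e^{-\eta s}$ factor and making the constant in front explicitly $C(\text{lower bootstrap constants})$; but as written the differential inequality is not what you would actually obtain. Similarly, $[D,\pa_{YY}]$ produces terms of order $j_1+j_2+1$ (one extra $Y$-derivative), not $\leq j_1+j_2$ as you state; these are indeed absorbed, either by the dissipative term $-\frac{2q-1}{q^2}\int\frac{|\pa_Y((D\e)^q)|^2}{\varphi_{4,0}^{2q}}\frac{dXdY}{|X|\lY}$ or by an extra integration by parts in $Y$ exploiting the coefficient size $e^{-\frac{k-1}{2k}s}$, exactly as in the paper's treatment of $F_2\pa_Y w$ in Step~1, but you should say so rather than claim the order does not increase.
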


\begin{proof}

\noindent \textbf{Step 1} \emph{Proof for $A\e$.} We claim that for any $K_{0,0}>0$, there exists $K_{0,1}^*>0$ such that for any positive constants $(K_{j_1,j_2})_{1\leq j_1+j_2\leq 2}$ and $(\tilde K_{j_1,j_2})_{0\leq j_1+j_2\leq 2, 1\leq j_1}$ with $K_{0,1}\geq K^*_{0,1}$, there exists $s^*$ such that if $s_0\geq s^*$ then \fref{main:weighteddecayeimproved} holds true for $j_1=0$ and $j_2=1$. We now prove this claim. Recall \fref{main:def:A} and let $\e_1:=A\e$. Then $A$ commutes with the transport part of the flow:
\be \label{id:commutationA}
\left[A, \pa_s+\left(\frac 32 X +\Theta \right)\pa_X+\frac 12 Z\pa_Z \right]=0.
\ee
Indeed, we compute the commutator using \fref{NLH:eq:Fk}:
\bee
&&\left[(\frac{3}{2}X+F_k^{-\frac 32}(Z)\Theta )\pa_X, \pa_s+\left(\frac 32 X +\Theta \right)\pa_X+\frac 12 Z\pa_Z \right]\\
&=& \Big(-\pa_s (F_k^{-\frac 32}(Z)\Psi_1(F_k^{\frac 32}X))+\left(\frac 32 X +F_k^{-\frac 32}(Z)\Psi_1(F_k^{\frac 32}X) \right)\pa_X \left(\frac 32 X +F_k^{-\frac 12}(Z)\Psi_1(F_k^{\frac 32}X) \right)\\
&& -\left(\frac 32 X +F_k^{-\frac 12}(Z)\Psi_1(F_k^{\frac 32}X) \right)\pa_X \left(\frac 32 X +F_k^{-\frac 32}(Z)\Psi_1(F_k^{\frac 32}X) \right) -\frac 12 Z \pa_Z \left(F_k^{-\frac 32}(Z)\Psi_1(F_k^{\frac 32}X) \right) \Big)\pa_X \\
&=& \frac 32 \left( -\frac{1}{2k}Z\pa_Z F_kF_k^{-\frac 52}+F_k^{-\frac 12}-F_k^{-\frac 32}\right)(-\Psi_1+\tX\pa_{\tX} \Psi_1)(F_k^{\frac 32}X)\pa_X = 0.
\eee
We compute from \fref{main:def:A}, \fref{main:evolutione} and the above cancellation the evolution of $\e_1$:
$$
(\e_1)_s+\mathcal L \e_1 +\tL \e_1+ (A\pa_X \Theta)\e+AR+[A,\tL ]\e+\e_1\e_X+ \e \pa_X \e_1 +\e[A,\pa_X] \e-  [A,\pa_{YY}]\e=0.
$$
Using the linear energy identity \fref{id:estimationlineaire} we infer that:
\bee
&&\frac{d}{ds} \left( \frac{1}{2q} \int_{\mathbb R^2} \frac{\e_1^{2q}}{\varphi_{4,0}^{2q}(X,Z)}\frac{dXdY}{|X| \lY} \right)\\
\non &\leq & -\left(\frac{1}{2}-\frac{\kappa}{2}\right) \int_{\mathbb R^2} \frac{\e_1^{2q}}{\varphi_{4,0}^{2q}(X,Z)}\frac{dXdY}{|X| \lY}-\frac{2q-1}{q^2}\int \frac{|\pa_Y (w^q)|^2}{\varphi_{4,0}^{2q}}\frac{dXdY}{|X|\lY} \\
\non &&+\frac{1}{2q}\int \e_1^{2q}\pa_X \left(\frac{Q-\Theta}{\varphi_{4,0}^{2q}|X|} \right)\frac{dXdY}{\lY}\\
&&- \int \frac{\e_1^{2q-1}}{\varphi_{4,0}^{2q}}\left((A\pa_X \Theta)\e+[A,\tL]\e+AR+\e_1\e_X+ \e \pa_X\e_1 +\e[A,\pa_X ]\e -  [A,\pa_{YY}]\e+\pa_X(Q-\Theta)\e_1 \right)\frac{dXdY}{|X|\lY}.
\eee
From \fref{main:bdpaxQTheta}, and \fref{main:Q-Thetapointwise1}, one has:
$$
\left| \frac{1}{2q}\int \e_1^{2q}\pa_X \left(\frac{Q-\Theta}{\varphi_{4,0}^{2q}|X|} \right)\frac{dXdY}{\lY}- \int \frac{\e_1^{2q-1}}{\varphi_{4,0}^{2q}}\pa_X(Q-\Theta)\e_1 \frac{dXdY}{|X|\lY}\right| \lesssim e^{-\frac{1}{4k}s} \int \frac{\e_1^{2q}}{\varphi_{4,0}^{2q}} \frac{dXdY}{|X|\lY}.
$$
Moreover, since
$$
A\pa_X\Theta=\frac 32 F_k(Z) (\tX\pa_{\tX}^2\Psi_1)(F_k^{\frac 32}(Z)X)+F_k(Z) (\Psi_1 \pa_{\tX}^2 \Psi_1)(F_k^{\frac 32}(Z)X)
$$
and since both $F_k$ and $X\pa_X^2\Psi_1$ and $\Psi_1 \pa_X^2 \Psi_1$ are bounded, one has that
$$
|A\pa_X\Theta|\lesssim 1
$$
and therefore using the bootstrap bound \fref{main:weighteddecaye} on $\e$ one deduces from H\"older:
$$
\left| \int \frac{\e_1^{2q-1}}{\varphi_{4,0}^{2q}} (A\pa_X \Theta)\e \frac{dXdY}{|X|\lY} \right|\leq C \left| \int \frac{\e_1^{2q}}{\varphi_{4,0}^{2q}} \frac{dXdY}{|X|\lY} \right|^{\frac{2q-1}{2q}}\left| \int \frac{\e^{2q}}{\varphi_{4,0}^{2q}} \frac{dXdY}{|X|\lY} \right|^{\frac{1}{2q}}\leq C K_{0,1}^{2q-1}K_{0,0}e^{-2q\left(\frac 12 -\kappa \right)s}.
$$
We compute from \fref{main:def:A} and \fref{main:def:tL} that
\bee
[A,\tL]\e &=& \left( A (Q-\Theta)-(Q-\Theta)(\frac 32 +\pa_{\tX} \Psi_1 (F_k^{\frac 32}X))\right)\pa_X\e +(A\pa_X(Q-\Theta))\e \\
&=&\frac{A (Q-\Theta)-(Q-\Theta)(\frac 32 +\pa_{\tX} \Psi_1 (F_k^{\frac 32}X))}{\frac 32 X +F_k^{-\frac 32}(Z)\Psi_1(F_k^{\frac 32}(Z)X)}\e_1+(A\pa_X(Q-\Theta))\e .
\eee
From \fref{main:def:A}, \fref{main:eq:estimationA}, \fref{main:sizeFkPsi1} and \fref{main:Q-Thetapointwise1} we then obtain
$$
|[A,\tL]\e |\leq C e^{-\frac{1}{4k}s} (|\e_1|+|\e|)
$$
for $C$ independent of the bootstrap bounds, which implies using H\"older and \fref{main:weighteddecaye}:
\bee
&&\left| \int \frac{\e_1^{2q-1}}{\varphi_{4,0}^{2q}} [A,\tL]\e \frac{dXdY}{|X|\lY} \right| \leq C e^{-\frac{1}{4k}s}  \int \frac{|\e_1|^{2q-1}}{\varphi_{4,0}^{2q}} (|\e_1|+|\e|) \frac{dXdY}{|X|\lY} \\
& \leq & C e^{-\frac{1}{4k}s}  \int \frac{\e_1^{2q}}{\varphi_{4,0}^{2q}} \frac{dXdY}{|X|\lY} +C\left( \int \frac{\e_1^{2q}}{\varphi_{4,0}^{2q}} \frac{dXdY}{|X|\lY}\right)^{\frac{2q-1}{2q}} \left( \int \frac{\e^{2q}}{\varphi_{4,0}^{2q}} \frac{dXdY}{|X|\lY}\right)^{\frac{1}{2q}}\\
&\leq &C e^{-\frac{1}{4k}s}  \int \frac{\e_1^{2q}}{\varphi_{4,0}^{2q}} \frac{dXdY}{|X|\lY}+C K_{0,1}^{2q-1}K_{0,0}e^{-2q\left(\frac 12 -\kappa \right)s}.
\eee
One then deduces from \fref{main:eq:estimationA}, \fref{main:def:A}, \fref{main:estimationR} and H\"older:
\bee
&&\left| \int \frac{\e_1^{2q-1}}{\varphi_{4,0}^{2q}} AR \frac{dXdY}{|X|\lY} \right| \leq \left| \int \frac{\e_1^{2q}}{\varphi_{4,0}^{2q}} \frac{dXdY}{|X|\lY} \right|^{\frac{2q-1}{2q}}\left| \int \frac{(AR)^{2q}}{\varphi_{4,0}^{2q}} \frac{dXdY}{|X|\lY} \right|^{\frac{1}{2q}}\\
&\leq & K_{0,1}^{\frac{2q-1}{2q}}e^{-\frac{2q-1}{2q}(\frac 12-\kappa)s} C \left| \int \frac{(X\pa_XR)^{2q}}{\varphi_{4,0}^{2q}} \frac{dXdY}{|X|\lY} \right|^{\frac{1}{2q}} \leq C K_{0,1}^{2q-1} s^{\frac{1}{2q}} e^{-2q(\frac 12-\kappa)s-\kappa s}.
\eee
Using \fref{main:bde2} we infer that for a constant $C$ depending on $(K_{j_1,j_2})_{1\leq j_1+j_2\leq 2}$ and $(\tilde K_{j_1,j_2})_{0\leq j_1+j_2\leq 2, 1\leq j_1}$:
$$
\left| \int \frac{\e_1^{2q-1}}{\varphi_{4,0}^{2q}} \e_1\e_X\frac{dXdY}{|X|\lY} \right| \leq C e^{-\left(\frac 12-\kappa\right)s} \int \frac{\e_1^{2q}}{\varphi_{4,0}^{2q}}\frac{dXdY}{|X|\lY}\leq \frac{\kappa}{16} \int \frac{\e_1^{2q}}{\varphi_{4,0}^{2q}}\frac{dXdY}{|X|\lY}
$$
upon choosing $s^*$ large enough. Integrating by parts, one has the identity:
$$
\int \frac{\e_1^{2q-1}}{\varphi_{4,0}^{2q}(X,Z)}\e \pa_X \e_1\frac{dXdY}{|X|\lY} = -\frac{1}{2q} \int \frac{\e_1^{2q}}{\varphi_{4,0}^{2q}(X,Z)}\pa_X\e -\frac{1}{2q}\int \e_1^{2q}\e \pa_X \left(\frac{1}{\varphi_{4,0}^{2q}(X,Z)|X|} \right) \frac{dXdY}{\lY}.
$$
From \fref{main:sizephi} one has that $|\pa_X \varphi_{4,0}(X,Z)/\varphi_{4,0}(X,Z)|\lesssim |X|^{-1}$. Therefore, using \fref{main:bde} we obtain that, again for $s^*$ large enough:
\bee
\left| \int \frac{\e_1^{2q-1}}{\varphi_{4,0}^{2q}(X,Z)}\e \pa_X \e_1\frac{dXdY}{|X|\lY} \right| & \leq & C \left( \| \pa_X \e\|_{L^{\infty}}+\| \frac{\e}{|X|} \|_{L^{\infty}}\right) \int \frac{\e_1^{2q}}{\varphi_{4,0}^{2q}(X,Z)}\frac{dXdY}{|X|\lY} \\
&\leq & \frac{\kappa}{16} \int \frac{\e_1^{2q}}{\varphi_{4,0}^{2q}(X,Z)}\frac{dXdY}{|X|\lY} .
\eee
Next, from the identity
$$
[A,\pa_X]=-\left(\frac 32 +\pa_{\tX} \Psi_1(F_k^{\frac 32}(Z)X)\right)\pa_X =- \frac{\frac 32 +\pa_{\tX} \Psi_1(F_k^{\frac 32}(Z)X)}{\frac 32 X +F_k^{-\frac 32}(Z)\Psi_1(F_k^{\frac 32}(Z)X)}A,
$$
since $F_k$ and $\pa_X \Psi_1$ are uniformly bounded, from \fref{main:eq:estimationA} and \fref{main:bde} we obtain that:
\bee
&& \left| \int \frac{\e_1^{2q-1}}{\varphi_{4,0}^{2q}} \e[A,\pa_X ]\e \frac{dXdY}{|X|\lY} \right| = \left| \int \frac{\e_1^{2q}}{\varphi_{4,0}^{2q}} \e \frac{\frac 32 +F_k(Z)\pa_{\tX} \Psi_1(F_k^{\frac 32}(Z)X)}{\frac 32 X +F_k^{-\frac 12}(Z)\Psi_1(F_k^{\frac 32}(Z)X)} \frac{dXdY}{|X|\lY} \right| \\
&\lesssim & \| \frac{\e}{X} \|_{L^{\infty}} \int \frac{\e_1^{2q}}{\varphi_{4,0}^{2q}} \frac{dXdY}{|X|\lY} \lesssim \frac{\kappa}{16} \int \frac{\e_1^{2q}}{\varphi_{4,0}^{2q}} \frac{dXdY}{|X|\lY}.
\eee
Finally, one computes that:
\bee
&&[A,\pa_{YY}] \e = -2\pa_Y\left(F_k^{-\frac 32}(Z)\Psi_1(F_k^{\frac 32}(Z)X)\right)\pa_{YX}\e-\pa_{YY}\left(F_k^{-\frac 32}(Z)\Psi_1(F_k^{\frac 32}(Z)X)\right)\pa_{X}\e \\
 &=& -2\pa_Y\left(F_k^{-\frac 32}(Z)\Psi_1(F_k^{\frac 32}(Z)X)\right)\pa_{Y}\left(\frac{\e_1}{\frac 32 X +F_k^{-\frac 32}(Z)\Psi_1(F_k^{\frac 32}(Z)X)} \right) -\frac{\pa_{YY}(F_k^{-\frac 32}(Z)\Psi_1(F_k^{\frac 32}(Z)X))}{\frac 32 X +F_k^{-\frac 32}(Z)\Psi_1(F_k^{\frac 32}(Z)X)}\e_1\\
&=& F_1 \e_1+F_2\pa_{Y}\e_1 \\
\eee
where
$$
F_1:= \left( 2\frac{\left(\pa_Y(F_k^{-\frac 32}(Z)\Psi_1(F_k^{\frac 32}(Z)X))\right)^2}{\left( \frac 32 X +F_k^{-\frac 32}(Z) \Psi_1(F_k^{\frac 32}(Z)X)\right)^2}-\frac{\pa_{YY}(F_k^{-\frac 32}(Z)\Psi_1(F_k^{\frac 32}(Z)X))}{\frac 32 X +F_k^{-\frac 32}(Z)\Psi_1(F_k^{\frac 32}(Z)X)}\right)
$$
and
$$
F_2:=-2\frac{\pa_Y(F_k^{-\frac 32}(Z)\Psi_1(F_k^{\frac 32}(Z)X))}{\frac 32 X +F_k^{-\frac 32}(Z)\Psi_1(F_k^{\frac 32}(Z)X))}.
$$
One has that $\pa_Z F_k/F_k$ is bounded, and that $|\Psi_1(\tX)|+|\tX\pa_{\tX} \Psi_1 (\tX)|\lesssim |\tX|$. Therefore,
$$
\left| \pa_Y(F_k^{-\frac 32}(Z)\Psi_1(F_k^{\frac 32}(Z)X))\right|=e^{-\frac{k-1}{2k}s}\left|\pa_Z F_k F_k^{-\frac 52}(-\frac 12 \Psi_1+\frac 32 \tX\pa_{\tX} \Psi_1)(F_k^{\frac 32}(Z)X)\right|\lesssim e^{-\frac{k-1}{2k}s}|X|.
$$
The same computation can be performed for the second term in $F_1$, giving from \fref{main:eq:estimationA}:
$$
\left| \frac{(\pa_Y(F_k^{-\frac 32}(Z)\Psi_1(F_k^{\frac 32}(Z)X)))^2}{\left( \frac 32 X +F_k^{-\frac 32}(Z) \Psi_1(F_k^{\frac 32}(Z)X)\right)^2}\right|+\left| \frac{\pa_{YY}(F_k^{-\frac 32}(Z)\Psi_1(F_k^{\frac 32}(Z)X))}{\frac 32 X +F_k^{-\frac 32}(Z)\Psi_1(F_k^{\frac 32}(Z)X)}\right| \lesssim e^{-\frac{k-1}{k}s}
$$
and hence for $s^*$ large enough:
$$
\left| \int \frac{\e_1^{2q-1}}{\varphi_{4,0}^{2q}} F_1 \e_1 \frac{dXdY}{|X|\lY} \right| \lesssim e^{-\frac{k-1}{k}s}\int \frac{\e_1^{2q}}{\varphi_{4,0}^{2q}} \frac{dXdY}{|X|\lY}\leq \frac{\kappa}{32} \int \frac{\e_1^{2q}}{\varphi_{4,0}^{2q}} \frac{dXdY}{|X|\lY} .
$$
From the above estimate one obtains similarly that:
$$
|F_2|\lesssim e^{-\frac{k-1}{2k}s}, \ \ |\pa_Y F_2|\lesssim e^{-\frac{k-1}{k}s}
$$
so that integrating by parts and using \fref{main:sizephi}, for $s^*$ large enough:
$$
\left| \int \frac{\e_1^{2q-1}}{\varphi_{4,0}^{2q}} F_2 \pa_Y w \frac{dXdY}{|X|\lY} \right|=\frac{1}{2q} \left| \int \e_1^{2q} \pa_Y \left(\frac{F_2}{\varphi_{4,0}^{2q}(X,Z)\lY}\right) \frac{dXdY}{|X|} \right| \lesssim \frac{\kappa}{32}\int \frac{\e_1^{2q}}{\varphi_{4,0}^{2q}} \frac{dXdY}{|X|\lY}.
$$
One has then proven that
$$
\left| \int \frac{\e_1^{2q-1}}{\varphi_{4,0}^{2q}} [A,\pa_{YY}] \e \frac{dXdY}{|X|\lY} \right| \lesssim \frac{\kappa}{16}\int \frac{\e_1^{2q}}{\varphi_{4,0}^{2q}} \frac{dXdY}{|X|\lY}.
$$
From the collection of the above estimates, one infers that:
\bee
&&\frac{d}{ds} \left( \frac{1}{2q} \int_{\mathbb R^2} \frac{\e_1^{2q}}{\varphi_{4,0}^{2q}(X,Z)}\frac{dXdY}{|X| \lY} \right)\\
\non &\leq & -\left(\frac{1}{2}-\frac 34\kappa \right) \int_{\mathbb R^2} \frac{\e_1^{2q}}{\varphi_{4,0}^{2q}(X,Z)}\frac{dXdY}{|X| \lY} +CK_{0,1}^{2q-1}K_{0,0}e^{-2q(\frac 12 -\kappa) s}+CK_{0,1}^{2q-1}s^{\frac{1}{2q}}e^{-2q(\frac 12 -\kappa +\frac{\kappa}{2q})s} \\
\non &\leq & -\left(\frac{1}{2}-\frac 34 \kappa \right) \int_{\mathbb R^2} \frac{\e^{2q}}{\varphi_{4,0}^{2q}(X,Z)}\frac{dXdY}{|X| \lY} +CK_{0,1}^{2q-1}K_{0,0}e^{-2q(\frac 12 -\kappa) s}.
\eee
We reintegrate with time the above estimate, yielding from \fref{main:weighteddecayeinit}:
\begin{align}\label{ChoiceK01}
 \int_{\mathbb R^2} \frac{\e_1^{2q}}{\varphi_{4,0}^{2q}(X,Z)}\frac{dXdY}{|X| \lY} \leq& e^{-2q \left(\frac 12 -\frac 34 \kappa \right)(s-s_0)} \int_{\mathbb R^2} \frac{\e_1(s_0)^{2q}}{\varphi_{4,0}^{2q}(X,Z)}\frac{dXdY}{|X| \lY} +CK_{0,1}^{2q-1}K_{0,0}e^{-2q \left(\frac 12 -\frac 34\kappa \right)s} \int_{s_0}^s e^{\frac{q\kappa}{2}\tilde s}d\tilde s \nonumber\\
\leq &  C(1+K_{0,1}^{2q-1}K_{0,0}) e^{-2q \left(\frac 12 -\kappa \right)s} \leq  \frac{K_{0,1}^{2q}}{2^{2q}} e^{-2q \left(\frac 12 -\kappa \right)s}
\end{align}
where the last inequality holds if $K_{0,1}$ has been chosen large enough depending on $K_{0,0}$.\\

\noindent \textbf{Step 2} \emph{Proof for $\pa_Z$}. We claim that for any $K_{0,0},K_{0,1}>0$, there exists $K_{1,0}^*>0$ such that for any choice of the remaining constants $K_{j_1,j_2}$ and $\tilde K_{j_1,j_2}$ with $K_{1,0}\geq K^*_{1,0}$, there exists $s^*$ such that if $s_0\geq s^*$ then \fref{main:weighteddecayeimproved} holds true for $j_1=1$ and $j_2=0$. To prove this claim, define $\tilde{\e}_1=\pa_Z \e=e^{(k-1)s/(2k)}\pa_Y\e$, which from \fref{main:evolutione} solves:
\bee
0&=& (\tilde{\e}_1)_s+\frac{1}{2k} \tilde{\e}_1+\mathcal L \e-  \pa_{YY}\tilde{\e}_1+\tL \tilde{\e}_1+\pa_Z \Theta\pa_X\e+\pa_{ZX}\Theta \e+\pa_Z (Q-\Theta)\pa_X\e+\pa_{XZ} (Q-\Theta)\e\\
&&+\pa_Z R +\tilde{\e}_1\pa_X \e +\e \pa_X \tilde{\e}_1,
\eee
and hence obeys the energy identity from \fref{id:estimationlineaire}:
\bee
&&\frac{d}{ds} \left( \frac{1}{2q} \int_{\mathbb R^2} \frac{\tilde{\e}_1^{2q}}{\varphi_{4,0}^{2q}(X,Z)}\frac{dXdY}{|X| \lY} \right)\\
\non &\leq & -\left(\frac{1}{2}+\frac{1}{2k}-\frac{\kappa}{2}\right) \int_{\mathbb R^2} \frac{\tilde{\e}_1^{2q}}{\varphi_{4,0}^{2q}(X,Z)}\frac{dXdY}{|X| \lY}-\frac{2q-1}{q^2}\int \frac{|\pa_Y (\tilde{\e}_1^q)|^2}{\varphi_{4,0}^{2q}}\frac{dXdY}{|X|\lY} \\
\non &&+\frac{1}{2q}\int \tilde{\e}_1^{2q}\pa_X \left(\frac{Q-\Theta}{\varphi_{4,0}^{2q}|X|} \right)\frac{dXdY}{\lY}\\
&&- \int \frac{\tilde{\e}_1^{2q-1}}{\varphi_{4,0}^{2q}}\Big(\pa_Z \Theta\pa_X\e+\pa_{ZX}\Theta \e+\pa_Z (Q-\Theta)\pa_X\e+\pa_{XZ} (Q-\Theta)\e\\
&&+\pa_Z R +\tilde{\e}_1\pa_X \e +\e \pa_X \tilde{\e}_1+\pa_X(Q-\Theta)\tilde{\e}_1\Big)\frac{dXdY}{|X|\lY}.
\eee
Using \fref{main:bdpaxQTheta}, and \fref{main:Q-Thetapointwise1} we infer that for $s^*$ large enough:
$$
\left| \frac{1}{2q}\int \tilde{\e}_1^{2q}\pa_X \left(\frac{Q-\Theta}{\varphi_{4,0}^{2q}|X|} \right)\frac{dXdY}{\lY}- \int \frac{\tilde{\e}_1^{2q-1}}{\varphi_{4,0}^{2q}}\pa_X(Q-\Theta)\tilde{\e}_1 \frac{dXdY}{|X|\lY}\right| \leq \frac{\kappa}{12} \int \frac{\tilde{\e}_1^{2q}}{\varphi_{4,0}^{2q}} \frac{dXdY}{|X|\lY}.
$$
Next one computes that
$$
\pa_Z \Theta \pa_X\e=\frac{\pa_Z F_k(Z)}{F_k(Z)}\frac{F_k^{-\frac 12}(Z)(-\frac 12 \Psi_1 +\frac 32\tX \pa_{\tX} \Psi_1)(F_k^{\frac 32}(Z)X)}{\frac 32 X +F_k^{-\frac 12}(Z)(\Psi_1(F_k^{\frac 32}(Z)X))}A\e.
$$
In the above formula, $\pa_Z F_k(Z)/F_k(Z)$ is uniformly bounded, and as $F_k$ is bounded and $|(-1/2 \Psi_1 +3/2\tX \pa_{\tX} \Psi_1)(\tX)|\lesssim |\tX|$ we obtain from \fref{main:eq:estimationA} that:
$$
\left| \frac{\pa_Z F_k(Z)}{F_k(Z)}\frac{F_k^{-\frac 12}(Z)(-\frac 12 \Psi_1 +\frac 32\tX \pa_{\tX} \Psi_1)(F_k^{\frac 32}(Z)X)}{\frac 32 X +F_k^{-\frac 12}(Z)(\Psi_1(F_k^{\frac 32}(Z)X))}\right| \lesssim 1.
$$
From H\"older, \fref{main:weighteddecaye} and \fref{main:weighteddecaye2} we then infer that:
\bee
&& \left| \int \frac{\tilde{\e}_1^{2q-1}}{\varphi_{4,0}^{2q}} \pa_Z \Theta \pa_X\e \frac{dXdY}{|X|\lY}\right| \leq  C\int \frac{|\tilde{\e}_1|^{2q-1}}{\varphi_{4,0}^{2q}} |A\e| \frac{dXdY}{|X|\lY}\\
&\leq & C \left| \int \frac{\tilde{\e}_1^{2q}}{\varphi_{4,0}^{2q}} \frac{dXdY}{|X|\lY}\right|^{\frac{2q-1}{2q}} \left| \int \frac{(A\e)^{2q}}{\varphi_{4,0}^{2q}} \frac{dXdY}{|X|\lY}\right|^{\frac{1}{2q}} \leq  C K_{1,0} K_{0,1}^{2q-1}e^{-2q(\frac 12-\kappa)s}.
\eee
Similarly, since $F_k$, $\pa_ZF_k/F_k$, $\pa_{\tX} \Psi_1$ and $\tX\pa_{\tX}^2\Psi_1$ are uniformly bounded,
$$
\left| \pa_{ZX}\Theta \right|= \left|\frac{\pa_Z F_k(Z)}{F_k(Z)}F_k (\pa_{\tX} \Psi_1+\frac 32\tX\pa_{\tX}^2\Psi_1)(F_k^{\frac 32}(Z)X) \right| \lesssim 1,
$$
and from H\"older, \fref{main:weighteddecaye} and \fref{main:weighteddecaye2} we then infer that:
\bee
&& \left| \int \frac{\tilde{\e}_1^{2q-1}}{\varphi_{4,0}^{2q}} \pa_{ZX} \Theta \e \frac{dXdY}{|X|\lY}\right| \leq C \left| \int \frac{|\tilde{\e}_1|^{2q-1}}{\varphi_{4,0}^{2q}} |\e| \frac{dXdY}{|X|\lY}\right| \\
&\leq& C \left| \int \frac{\tilde{\e}_1^{2q}}{\varphi_{4,0}^{2q}} \frac{dXdY}{|X|\lY}\right|^{\frac{2q-1}{2q}} \left| \int \frac{\e^{2q}}{\varphi_{4,0}^{2q}} \frac{dXdY}{|X|\lY}\right|^{\frac{1}{2q}} \leq  C K_{0,0} K_{1,0}^{2q-1}e^{-2q(\frac 12-\kappa)s}.
\eee
Then, from \fref{main:Q-Thetapointwise1} and \fref{main:eq:estimationA} we infer that:
$$
|\pa_Z (Q-\Theta)\pa_X\e|= \left|\frac{\pa_Z (Q-\Theta)}{\frac 32 X +F_k^{-\frac 12}(Z)\Psi_1(F_k^{\frac 32}(Z)X)}\right| |A\e|\lesssim e^{-\frac{1}{4k}s}|A\e|
$$
and therefore from H\"older, \fref{main:weighteddecaye} and \fref{main:weighteddecaye2}:
\bee
\left| \int \frac{\tilde{\e}_1^{2q-1}}{\varphi_{4,0}^{2q}} \pa_Z (Q-\Theta) \pa_X\e \frac{dXdY}{|X|\lY}\right| & \lesssim & e^{-\frac{1}{4k}s} \left| \int \frac{\tilde{\e}_1^{2q}}{\varphi_{4,0}^{2q}} \frac{dXdY}{|X|\lY}\right|^{\frac{2q-1}{2q}} \left| \int \frac{(A\e)^{2q}}{\varphi_{4,0}^{2q}} \frac{dXdY}{|X|\lY}\right|^{\frac{1}{2q}} \\
&\lesssim & C(K_{1,0},K_{0,1})e^{-2q(\frac 12-\kappa)s}e^{-\frac{1}{4k}s}.
\eee
Using \fref{main:Q-Thetapointwise1} one has that
$$
|\pa_{ZX} (Q-\Theta)| \lesssim e^{-\frac{1}{4k}s}.
$$
Therefore, one infers by H\"older,  \fref{main:weighteddecaye} and \fref{main:weighteddecaye2}:
\bee
\left| \int \frac{\tilde{\e}_1^{2q-1}}{\varphi_{4,0}^{2q}} \pa_{ZX} (Q-\Theta) \e \frac{dXdY}{|X|\lY}\right| & \lesssim & e^{- \frac{1}{4k}s} \left| \int \frac{\tilde{\e}_1^{2q}}{\varphi_{4,0}^{2q}} \frac{dXdY}{|X|\lY}\right|^{\frac{2q-1}{2q}} \left| \int \frac{\e^{2q}}{\varphi_{4,0}^{2q}} \frac{dXdY}{|X|\lY}\right|^{\frac{1}{2q}} \\
&\lesssim & C(K_{0,0},K_{1,0})e^{-2q(\frac 12-\kappa)s}e^{-\frac{1}{4k}s}.
\eee
Next, from H\"older, \fref{main:estimationR} and \fref{main:weighteddecaye}:
$$
\left| \int \frac{\tilde{\e}_1^{2q-1}}{\varphi_{4,0}^{2q}} \pa_Z R \frac{dXdY}{|X|\lY}\right| \leq C  \left| \int \frac{\tilde{\e}_1^{2q}}{\varphi_{4,0}^{2q}} \frac{dXdY}{|X|\lY}\right|^{\frac{2q-1}{2q}} \left| \int \frac{(\pa_Z R)^{2q}}{\varphi_{4,0}^{2q}} \frac{dXdY}{|X|\lY}\right|^{\frac{1}{2q}} \leq C K_{1,0}^{2q-1} s^{\frac{1}{2q}}e^{-2q(\frac 12-\kappa+\frac{\kappa}{2q})s}.
$$
From \fref{main:bde2} one has that for $s^*$ large enough:
$$
\left| \int \frac{\tilde{\e}_1^{2q-1}}{\varphi_{4,0}^{2q}} \tilde{\e}_1\pa_X \e \frac{dXdY}{|X|\lY}\right| \leq \|\pa_X \e \|_{L^{\infty}} \int \frac{\tilde{\e}_1^{2q}}{\varphi_{4,0}^{2q}}\frac{dXdY}{|X|\lY}\leq \frac{\kappa}{12} \int \frac{\tilde{\e}_1^{2q}}{\varphi_{4,0}^{2q}}\frac{dXdY}{|X|\lY}.
$$
We finally perform an integration by parts to obtain:
$$
\int \frac{\tilde{\e}_1^{2q-1}}{\varphi_{4,0}^{2q}(X,Z)} \e \pa_X \tilde{\e}_1 \frac{dXdY}{|X|\lY}=\frac{1}{2q}\int \tilde{\e}_1^{2q} \pa_X \left(\frac{\e}{\varphi_{4,0}^{2q}(X,Z)|X|}\right)\frac{dXdY}{\lY}.
$$
From \fref{main:bde}, \fref{main:bde2}, \fref{main:sizephi} one has for $C$ depending on the bootstrap constants except $s^*$:
\be \lab{bd:pointwisefatigue}
\left|  \pa_X \left(\frac{\e}{\varphi_{4,0}^{2q}(X,Z)|X|}\right) \right|\leq \frac{Ce^{-(\frac 12 -\kappa)s}}{\varphi_{4,0}^{2q}(X,Z)|X|}
\ee
so that for $s^*$ large enough:
$$
\left| \int \frac{\tilde{\e}_1^{2q-1}}{\varphi_{4,0}^{2q}(X,Z)} \e \pa_X \tilde{\e}_1 \frac{dXdY}{|X|\lY} \right| \leq \frac{\kappa}{12}\int \frac{\tilde{\e}_1^{2q}}{\varphi_{4,0}^{2q}}\frac{dXdY}{|X|\lY}.
$$
From the collection of the above estimates, the energy identity becomes:
\bee
&&\frac{d}{ds} \left( \frac{1}{2q} \int_{\mathbb R^2} \frac{\tilde{\e}_1^{2q}}{\varphi_{4,0}^{2q}(X,Z)}\frac{dXdY}{|X| \lY} \right)\\
\non &\leq & -\left(\frac{1}{2}+\frac{1}{2k}-\frac 34 \kappa \right) \int_{\mathbb R^2} \frac{\tilde{\e}_1^{2q}}{\varphi_{4,0}^{2q}(X,Z)}\frac{dXdY}{|X| \lY}  \\
&&+C(K_{0,1}+K_{0,0})K_{1,0}^{2q-1}e^{-2q(\frac 12-\kappa)s}+CK_{1,0}^{2q-1}s^{\frac{1}{2q}}e^{-2q\left(\frac 12 -\kappa +\frac{\kappa}{2q}\right)s}+C(K_{0,0},K_{1,0},K_{0,1})e^{-2q(\frac 12-\kappa)s}e^{-\frac{1}{4k}s}\\
\non &\leq & -\left(\frac{1}{2}-\frac 34 \kappa \right) \int_{\mathbb R^2} \frac{\tilde{\e}_1^{2q}}{\varphi_{4,0}^{2q}(X,Z)}\frac{dXdY}{|X| \lY} +C(K_{0,1}+K_{0,0})K_{1,0}^{2q-1}e^{-2q(\frac 12-\kappa)s}
\eee
for $s^*$ large enough. From the initial size \fref{main:weighteddecayeinit} the above differential inequality yields:
\bee
&& \int_{\mathbb R^2} \frac{\tilde{\e}_1^{2q}(s_1)}{\varphi_{4,0}^{2q}(X,Z)}\frac{dXdY}{|X| \lY}\\
  &\leq& e^{-2q(\frac 12-\frac 34 \kappa)(s_1-s_0)} \int_{\mathbb R^2} \frac{\tilde{\e}_1(s=s_0)^{2q}}{\varphi_{4,0}^{2q}(X,Z)}\frac{dXdY}{|X| \lY} +CK_{1,0}^{2q-1} e^{-2q\left(\frac{1}{2}-\frac 34 \kappa\right)s} \int_{s_0}^s (K_{0,0}+K_{0,1})e^{\frac{q\kappa}{2} \tilde s}d\tilde s \\
&\leq &  C(1+K_{1,0}^{2q-1}(K_{0,0}+K_{0,1})) e^{-2q \left(\frac 12 -\kappa \right)s} \leq  \frac{K_{1,0}^{2q}}{2^{2q}} e^{-2q \left(\frac 12 -\kappa \right)s} \\
\eee
if $K_{1,0}$ has been chosen large enough depending on $K_{0,0}$ and $K_{0,1}$.\\

\noindent \textbf{Step 3} \emph{Proof for $Y\pa_Y$}. We here claim that for any $K_{0,0},K_{0,1}>0$, there exists $\tilde{K}_{1,0}^*>0$ such that for any choice of the remaining constants $K_{j_1,j_2}$ and $\tilde K_{j_1,j_2}$ with $\tilde{K}_{1,0}\geq \tilde{K}^*_{1,0}$, there exists $s^*$ such that if $s_0\geq s^*$ then \fref{main:weighteddecayeimproved2} holds true for $j_1=1$ and $j_2=0$. To prove this claim, Let $\hat{\e}_1=Z\pa_Z\e=Y\pa_Y \e$. From \fref{main:evolutione} one obtain the evolution of $w$:
\bee
0&=& (\hat{\e}_1)_s+\mathcal L \e-  \pa_{YY}\hat{\e}_1+\tL \hat{\e}_1+Z\pa_Z \Theta \pa_X\e+Z\pa_{ZX}\Theta \e\\
&&+2 \pa_{YY}\e+Z\pa_Z(Q-\Theta)\pa_X\e+Z\pa_{ZX}(Q-\Theta)+Z\pa_Z R+\hat{\e}_1\pa_X \e+\e\pa_X \hat{\e}_1.
\eee
Using \fref{id:estimationlineaire} yields the energy identity:
\bee
&&\frac{d}{ds} \left( \frac{1}{2q} \int_{\mathbb R^2} \frac{\hat{\e}_1^{2q}}{\varphi_{4,0}^{2q}(X,Z)}\frac{dXdY}{|X| \lY} \right)\\
\non &\leq & -\left(\frac{1}{2}-\frac{\kappa}{2}\right) \int_{\mathbb R^2} \frac{\hat{\e}_1^{2q}}{\varphi_{4,0}^{2q}(X,Z)}\frac{dXdY}{|X| \lY}-\frac{2q-1}{q^2}\int \frac{|\pa_Y (\hat{\e}_1^q)|^2}{\varphi_{4,0}^{2q}}\frac{dXdY}{|X|\lY} \\
\non &&+\frac{1}{2q}\int \hat{\e}_1^{2q}\pa_X \left(\frac{Q-\Theta}{\varphi_{4,0}^{2q}|X|} \right)\frac{dXdY}{\lY}\\
&&- \int \frac{\hat{\e}_1^{2q-1}}{\varphi_{4,0}^{2q}}\Big(Z\pa_Z \Theta \pa_X\e+Z\pa_{ZX}\Theta \e+2\pa_{YY}\e+Z\pa_Z(Q-\Theta)\pa_X\e+\\
&&Z\pa_{ZX}(Q-\Theta)\e+Z\pa_Z R+\hat{\e}_1\pa_X \e+\e\pa_X \hat{\e}_1+\pa_X(Q-\Theta)\hat{\e}_1 \Big)\frac{dXdY}{|X|\lY}.
\eee
Using \fref{main:bdpaxQTheta}, and \fref{main:Q-Thetapointwise1} we infer that for $s^*$ large enough:
$$
\left| \frac{1}{2q}\int \hat{\e}_1^{2q}\pa_X \left(\frac{Q-\Theta}{\varphi_{4,0}^{2q}|X|} \right)\frac{dXdY}{\lY}- \int \frac{\hat{\e}_1^{2q-1}}{\varphi_{4,0}^{2q}}\pa_X(Q-\Theta)\hat{\e}_1 \frac{dXdY}{|X|\lY}\right| \leq \frac{\kappa}{8}\int \frac{\hat{\e}_1^{2q}}{\varphi_{4,0}^{2q}} \frac{dXdY}{|X|\lY}.
$$
Next,
$$
Z\pa_Z \Theta \pa_X\e=\frac{Z\pa_Z F_k(Z)}{F_k(Z)}\frac{F_k^{-\frac 12}(Z)(-\frac 12 \Psi_1 +\frac 32\tX \pa_{\tX} \Psi_1)(F_k^{\frac 32}(Z)X)}{\frac 32 X +F_k^{-\frac 12}(Z)(\Psi_1(F_k^{\frac 32}(Z)X))}A\e.
$$
In the above formula, $Z\pa_Z F_k(Z)/F_k(Z)$ is uniformly bounded, and as $F_k$ is bounded and $|(-1/2 \Psi_1 +3/2\tX \pa_{\tX} \Psi_1)(\tX)|\lesssim |\tX|$ we obtain from \fref{main:eq:estimationA} that:
$$
\left| \frac{Z\pa_Z F_k(Z)}{F_k(Z)}\frac{F_k^{-\frac 12}(Z)(-\frac 12 \Psi_1 +\frac 32\tX \pa_{\tX} \Psi_1)(F_k^{\frac 32}(Z)X)}{\frac 32 X +F_k^{-\frac 12}(Z)(\Psi_1(F_k^{\frac 32}(Z)X))}\right| \lesssim 1.
$$
From H\"older, \fref{main:weighteddecaye} and \fref{main:weighteddecaye2} we then infer that:
\bee
\left| \int \frac{\hat{\e}_1^{2q-1}}{\varphi_{4,0}^{2q}} Z\pa_Z \Theta \pa_X\e \frac{dXdY}{|X|\lY}\right| &\leq &C \left| \int \frac{\hat{\e}_1^{2q-1}}{\varphi_{4,0}^{2q}} A\e \frac{dXdY}{|X|\lY}\right|\leq C\left| \int \frac{\hat{\e}_1^{2q}}{\varphi_{4,0}^{2q}} \frac{dXdY}{|X|\lY}\right|^{\frac{2q-1}{2q}} \left| \int \frac{(A\e)^{2q}}{\varphi_{4,0}^{2q}} \frac{dXdY}{|X|\lY}\right|^{\frac{1}{2q}} \\
&\leq & C \tilde K_{1,0}^{2q-1}K_{0,1} e^{-2q(\frac 12-\kappa)s}.
\eee
Similarly, since $F_k$, $\pa_ZF_k/F_k$, $\pa_{\tX} \Psi_1$ and $\tX\pa_{\tX}^2\Psi_1$ are uniformly bounded,
$$
\left| Z\pa_{ZX}\Theta \right|=\left|\frac{Z\pa_Z F_k(Z)}{F_k(Z)}F_k (\pa_{\tX} \Psi_1+\frac 32\tX\pa_{\tX}^2\Psi_1)(F_k^{\frac 32}(Z)X) \right| \lesssim 1,
$$
and from H\"older, \fref{main:weighteddecaye} and \fref{main:weighteddecaye2} we then infer that:
\bee
\left| \int \frac{\hat{\e}_1^{2q-1}}{\varphi_{4,0}^{2q}} Z\pa_{ZX} \Theta \e \frac{dXdY}{|X|\lY}\right| &\leq &C \left| \int \frac{\hat{\e}_1^{2q-1}}{\varphi_{4,0}^{2q}} \e \frac{dXdY}{|X|\lY}\right|\leq C\left| \int \frac{\hat{\e}_1^{2q}}{\varphi_{4,0}^{2q}} \frac{dXdY}{|X|\lY}\right|^{\frac{2q-1}{2q}} \left| \int \frac{\e^{2q}}{\varphi_{4,0}^{2q}} \frac{dXdY}{|X|\lY}\right|^{\frac{1}{2q}} \\
&\leq & C \tilde K_{1,0}^{2q-1}K_{0,0} e^{-2q(\frac 12-\kappa)s}.
\eee
We then integrate by parts:
$$
\int \frac{\hat{\e}_1^{2q-1}}{\varphi_{4,0}^{2q}(X,Z)}\pa_{YY}\e\frac{dXdY}{|X|\lY}=-\frac{2q-1}{q} \int \frac{\pa_Y (\hat{\e}_1^q) \hat{\e}_1^{q-1}}{\varphi_{4,0}^{2q}(X,Z)}\pa_Y \e\frac{dXdY}{|X|\lY}-\int \hat{\e}_1^{2q-1}\pa_Y \e \pa_Y \left(\frac{1}{\varphi_{4,0}^{2q}(X,Z)\lY} \right)\frac{dXdY}{|X|}.
$$
For the first term we use the generalised H\"older inequality, \fref{main:weighteddecaye} and \fref{main:weighteddecaye2}:
\bee
&&\left| \int \frac{\pa_Y (\hat{\e}_1^q) \hat{\e}_1^{q-1}}{\varphi_{4,0}^{2q}(X,Z)}\pa_Y \e\frac{dXdY}{|X|\lY} \right| \\
& \leq & e^{-\frac{k-1}{2k}s} \left| \int \frac{|\pa_Y (\hat{\e}_1^q)|^2}{\varphi_{4,0}^{2q}(X,Z)}\frac{dXdY}{|X|\lY} \right|^{\frac 12} \left| \int \frac{\hat{\e}_1^{2q}}{\varphi_{4,0}^{2q}(X,Z)}\frac{dXdY}{|X|\lY} \right|^{\frac{q-1}{2q}} \left| \int \frac{(\pa_Z\e)^{2q}}{\varphi_{4,0}^{2q}(X,Z)}\frac{dXdY}{|X|\lY} \right|^{\frac{1}{2q}} \\
& \leq & \nu \int \frac{|\pa_Y (\hat{\e}_1^q)|^2}{\varphi_{4,0}^{2q}(X,Z)}\frac{dXdY}{|X|\lY} +C(\tilde K_{1,0},K_{1,0})e^{-2q(\frac 12-\kappa)s-\frac{k-1}{2k}s}. 
\eee
for any $\nu$ small enough to be chosen later on. For the second term, from \fref{main:sizephi} we infer that
$$
\left| \pa_Y \left(\frac{1}{\varphi_{4,0}^{2q}(X,Z)\lY} \right)\right| \lesssim \frac{1}{\varphi_{4,0}^{2q}(X,Z)\lY},
$$
and therefore, from H\"older,  \fref{main:weighteddecaye} and \fref{main:weighteddecaye2}:
\bee
&&\left| \int \hat{\e}_1^{2q-1}\pa_Y \e \pa_Y \left(\frac{1}{\varphi_{4,0}^{2q}(X,Z)\lY} \right)\frac{dXdY}{|X|}\right| \\
& \lesssim & e^{-\frac{k-1}{2k}s} \left| \int \frac{\hat{\e}_1^{2q}}{\varphi_{4,0}^{2q}(X,Z)}\frac{dXdY}{|X|\lY}\right|^{\frac{2q-1}{2q}}\left| \int \frac{(\pa_Z \e)^{2q}}{\varphi_{4,0}^{2q}(X,Z)}\frac{dXdY}{|X|\lY}\right|^{\frac{1}{2q}} \leq C(\tilde K_{1,0},K_{1,0})e^{-2q(\frac 12-\kappa)s-\frac{k-1}{2k}s}.
\eee
From \fref{main:eq:estimationA} and \fref{main:Q-Thetapointwise1}:
\bee
&& \left| Z\pa_Z(Q-\Theta)\pa_X\e \right| = \left|\frac{Z\pa_Z (Q-\Theta)}{\frac 32 X +F_k^{-\frac 12}(Z)\Psi_1(F_k^{\frac 32}(Z)X)} \right| |A\e|\\
& \lesssim & \frac{e^{-\frac{1}{4k}s}|Z|(1+|Z|)^{-2k-\frac 12}|X|(1+|\tilde X|)^{\frac 13 -1}}{|X|}|A\e| \lesssim e^{-\frac{1}{4k}s}|A\e|
\eee
and therefore from H\"older, \fref{main:weighteddecaye} and \fref{main:weighteddecaye2}:
\bee
\left| \int \frac{\hat{\e}_1^{2q-1}}{\varphi_{4,0}^{2q}} Z\pa_Z(Q-\Theta)\pa_X\e \frac{dXdY}{|X|\lY}\right| & \lesssim & e^{-\frac{1}{4k}s} \left| \int \frac{\hat{\e}_1^{2q}}{\varphi_{4,0}^{2q}(X,Z)}\frac{dXdY}{|X|\lY}\right|^{\frac{2q-1}{2q}} \left| \int \frac{(A \e)^{2q}}{\varphi_{4,0}^{2q}(X,Z)}\frac{dXdY}{|X|\lY}\right|^{\frac{1}{2q}} \\
&\lesssim & C(K_{0,1},\tilde K_{1,0}) e^{-\frac{1}{4k}s} e^{-2q(\frac 12-\kappa)s}.
\eee
Similarly, from \fref{main:Q-Thetapointwise1}:
$$
|Z\pa_{XZ}(Q-\Theta)|\lesssim e^{-\frac{1}{4k}s}|Z|(1+|Z|)^{-2k-\frac 12}(1+|\tilde X|)^{\frac 13 -1}\lesssim e^{-\frac{1}{4k}s} 
$$
and from H\"older, \fref{main:weighteddecaye} and \fref{main:weighteddecaye2}:
\bee
\left| \int \frac{\hat{\e}_1^{2q-1}}{\varphi_{4,0}^{2q}} Z\pa_{ZX}(Q-\Theta) \e \frac{dXdY}{|X|\lY}\right| & \lesssim & e^{-\frac{1}{4k}s} \left| \int \frac{\hat{\e}_1^{2q}}{\varphi_{4,0}^{2q}(X,Z)}\frac{dXdY}{|X|\lY}\right|^{\frac{2q-1}{2q}} \left| \int \frac{(\e)^{2q}}{\varphi_{4,0}^{2q}(X,Z)}\frac{dXdY}{|X|\lY}\right|^{\frac{1}{2q}} \\
&\lesssim & C(K_{0,0},\tilde K_{1,0}) e^{-\frac{1}{4k}s} e^{-2q(\frac 12-\kappa)s}.
\eee
Next, from H\"older, \fref{main:estimationR} and \fref{main:weighteddecaye2}:
\bee
\left| \int \frac{\hat{\e}_1^{2q-1}}{\varphi_{4,0}^{2q}} Z\pa_{Z}R \frac{dXdY}{|X|\lY}\right| & \leq & \left| \int \frac{\hat{\e}_1^{2q}}{\varphi_{4,0}^{2q}(X,Z)}\frac{dXdY}{|X|\lY}\right|^{\frac{2q-1}{2q}} \left| \int \frac{(Z\pa_Z R)^{2q}}{\varphi_{4,0}^{2q}(X,Z)}\frac{dXdY}{|X|\lY}\right|^{\frac{1}{2q}} \\
&\leq & C\tilde K_{1,0}^{2q-1} s^{\frac{1}{2q}}e^{-2q(\frac 12-\kappa+\frac{\kappa}{2q})s}.
\eee
Performing an integration by parts, and then using \fref{main:bde2} and \fref{bd:pointwisefatigue} we finally obtain:
\bee
&& \left| \int \frac{\hat{\e}_1^{2q-1}}{\varphi_{4,0}^{2q}}(\hat{\e}_1\pa_X \e+\e\pa_X \hat{\e}_1)\frac{dXdY}{|X|\lY}\right| =\left| \int \frac{\hat{\e}_1^{2q}}{\varphi_{4,0}^{2q}} \pa_X \e\frac{dXdY}{|X|\lY}-\frac{1}{2q} \int \hat{\e}_1^{2q} \pa_X \left( \frac{\e}{\varphi_{4,0}^{2q}|X|}\right) \frac{dXdY}{\lY}\right| \\
&\lesssim & \left(\|\pa_X \e\|_{L^{\infty}}+\| X\varphi_{4,0}^{2q}(X,Z)\pa_X \left( \frac{\e}{\varphi_{4,0}^{2q}|X|} \right)\|_{L^{\infty}} \right)\int \frac{\hat{\e}_1^{2q}}{\varphi_{4,0}^{2q}}\frac{dXdY}{|X|\lY}\\
&\leq & \frac{\kappa}{8}\int \frac{\hat{\e}_1^{2q}}{\varphi_{4,0}^{2q}}\frac{dXdY}{|X|\lY}
\eee
for $s^*$ large enough. From the collection of the above estimates, as $(k-1)/(2k)\geq 1/(4k)$ one deduces that:
\bee
&&\frac{d}{ds} \left( \frac{1}{2q} \int_{\mathbb R^2} \frac{\hat{\e}_1^{2q}}{\varphi_{4,0}^{2q}(X,Z)}\frac{dXdY}{|X| \lY} \right)\\
\non &\leq & -\left(\frac{1}{2}-\frac 34 \kappa \right) \int_{\mathbb R^2} \frac{\hat{\e}_1^{2q}}{\varphi_{4,0}^{2q}(X,Z)}\frac{dXdY}{|X| \lY} -\left(\frac{2q-1}{q^2}-\nu\right) \int \frac{|\pa_Y (\hat{\e}_1^q)|^2}{\varphi_{4,0}^{2q}}\frac{dXdY}{|X|\lY}\\
&&+C( K_{0,0}, K_{1,0}, K_{0,1},\tilde  K_{1,0})e^{-2q(\frac 12-\kappa)s}e^{-\frac{1}{4k}s} +C\tilde  K_{1,0}^{2q-1}( K_{0,0}+ K_{0,1})e^{-2q\left(\frac 12 -\kappa \right)s}\\
&&+C\tilde  K_{1,0}^{2q-1}s^{\frac{1}{2q}}e^{-2q\left(\frac 12 -\kappa +\frac{\kappa}{2q}\right)s}\\
\non &\leq & -\left(\frac{1}{2}-\frac{3\kappa}{4} \right) \int_{\mathbb R^2} \frac{\e^{2q}}{\varphi_{4,0}^{2q}(X,Z)}\frac{dXdY}{|X| \lY} +C \tilde  K_{1,0}^{2q-1}( K_{0,0}+ K_{0,1})e^{-2q\left(\frac 12 -\kappa \right)s}\\
\eee
if $\nu$ has been chosen small enough depending only on $q$, and then $s^*$ has been chosen large enough. From the initial size \fref{main:weighteddecayeinit} the above differential inequality yields:
\bee
 \int_{\mathbb R^2} \frac{\hat{\e}_1^{2q}(s_1)}{\varphi_{4,0}^{2q}(X,Z)}\frac{dXdY}{|X| \lY} &\leq& e^{-2q \left(\frac 12 -\frac 34\kappa \right)(s-s_0)} \int_{\mathbb R^2} \frac{\hat{\e}_1(s=s_0)^{2q}}{\varphi_{4,0}^{2q}(X,Z)}\frac{dXdY}{|X| \lY}  \\
&&+C \tilde  K_{1,0}^{2q-1}( K_{0,0}+ K_{0,1}) e^{-2q \left(\frac 12 -\frac 34 \kappa  \right)s} \int_{s_0}^s e^{\frac{q\kappa}{2} \tilde s}d\tilde s \\
&\leq &  C (1+\tilde K_{1,0}^{2q-1}( K_{0,0}+ K_{0,1})) e^{-2q \left(\frac 12 -\kappa \right)s} \leq  \frac{ \tilde K_{1,0}^{2q}}{2^{2q}} e^{-2q \left(\frac 12 -\kappa \right)s} \\
\eee
if $ \tilde K_{1,0}$ has been chosen large enough depending on $K_{0,1}$ and $K_{0,0}$.\\

\noindent \textbf{Step 4} \emph{Proof for higher order derivatives}. The proof for higher order derivatives works the very same way and we leave it to the reader.

\end{proof}

We can now end the proof of Proposition \ref{main:pr:bootstrap}.

\begin{proof}[Proof of Proposition \ref{main:pr:bootstrap}]

We reason by contradiction. Let $v$ be a solution to \fref{main:eqvautosim} with initial value $v(s_0)$ at time $s_0$ that satisfies \fref{main:weighteddecayeinit} when decomposed according to \fref{def:e}. Let $s_1$ denote the supremum of times $\tilde s\geq s_0$ such that $v$ is well defined and that the bounds \fref{main:weighteddecaye} and \fref{main:weighteddecaye2} hold on $[s_0,\tilde s]$. From the initial bounds \fref{main:weighteddecayeinit} and a continuity argument, one has that $s_1>s_0$ is well defined. We then prove Proposition \ref{main:pr:bootstrap} by contradiction and assume that $s_1$ is finite. If it is the case, then the bounds \fref{main:weighteddecaye} and \fref{main:weighteddecaye2} are strict at time $s_1$ from \fref{main:weighteddecayes1}, \fref{main:weighteddecayeimproved} and \fref{main:weighteddecayeimproved2}. Therefore, by a continuity argument there exists $\delta>0$ such that $v$ is well defined and satisfies \fref{main:weighteddecaye} and \fref{main:weighteddecaye2} on $[s_1,s_1+\delta]$, contradicting the definition of $s_1$.

\end{proof}


\section{Analysis of the vertical axis} \lab{sec:NLH}

This section is devoted to the proof of Theorem \ref{th:NLHinstable} and Proposition \ref{pr:LHinstable}. The proof of Theorem \ref{th:NLHinstable} follows and refines the works \cite{BK,HV,MZ}, and differs in particular in the way we deal with the problem outside the origin, see Lemma \ref{NLH:lem:improvedoustide}. For more comparisons with these works, see the comments after Theorem \ref{th:NLHinstable}. The proof of Proposition \ref{pr:LHinstable} then uses a very similar analytical framework.


\subsection{Flat blow-up for the semi-linear heat equation}

We prove in this subsection the result in Theorem \ref{th:NLHinstable} concerning the solution $\xi$ to \fref{eq:NLH}. The strategy is the following. We construct an approximate blow-up profile in self-similar variables and show the existence of a true solution staying in its neighbourhood via a bootstrap argument. This existence result relies on the control of the difference of the two functions via a spectral decomposition at the origin and energy estimates far away, showing the existence of a finite number of instabilities only allowing for the use of a topological argument to control them.\\

\noindent The unstable blow-ups are related to unstable analytic backward self-similar solutions of the quadratic equation
\be \lab{eq:quadra}
\xi_t-\xi^2=0.
\ee
Their properties are the following.

\begin{proposition}[unstable self-similar blow-ups for the quadratic equation] \lab{pr:Fk}

For $k\in \mathbb N$, $k\geq 2$, the functions $F_k(Z):=(1+Z^{2k})^{-1}$ are such that
$$
\xi(t,y)=\frac{1}{T-t}F_k \left(\frac{ay}{(T-t)^{\frac{1}{2k}}} \right)
$$
is a solution of \fref{eq:quadra} for any $T\in \mathbb R$ and $a>0$. For any $a>0$, $Z\mapsto F_k(aZ)$ is a solution of the stationary self-similar equation
\be \lab{NLH:eq:Fk}
F_k+\frac{1}{2k}Z\pa_Z F_k-F_k^2=0.
\ee
The linearised transport operator $H_Z := 1+\frac{1}{2k}Z\pa_Z -2F_k(aZ)$ acting on $C^{\infty}(\mathbb R)$ has the point spectrum
$$
\Upsilon (H_Z)= \left\{ \frac{\ell-2k}{2k}, \ \ \ell \in \mathbb N \right\} .
$$
The associated eigenfunctions are
\be \lab{eq:def phiXell}
H_Z\phi_{Z,\ell}=\frac{\ell-2k}{2k} \phi_{Z,\ell}, \ \ \phi_{Z,\ell}=\frac{Z^{\ell}}{(1+(aZ)^{2k})^2} .
\ee
Two of them\footnote{In fact there is a third one due to translation invariance which is absent here due to even symmetry.} are linked to the invariances of the flow:
$$
\phi_{0}=F_k(aZ)+\frac{1}{2k}Z\pa_ZF_k(aZ)=\frac{\pa}{\pa_\lb}\left(\lb F_k(\lb^{\frac{1}{2k}}bZ) \right)_{|\lb=1}, \ \ \phi_{2k}=Z\pa_ZF_k(aZ)=\frac{\pa}{\pa_{\tilde a}}\left( F_k(\tilde a aZ) \right)_{|\tilde b=1}.
$$

\end{proposition}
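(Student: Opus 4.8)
The proposition is a chain of explicit identities, so the plan is simply to carry them out in the natural order; the only step needing a genuine (if short) argument is deciding which separated solutions of the eigenvalue equation are admissible. I would first check the stationary self-similar equation \fref{NLH:eq:Fk}: with $F := F_k(aZ) = (1+(aZ)^{2k})^{-1}$ one has $\frac{1}{2k}Z\partial_Z F = -(aZ)^{2k}(1+(aZ)^{2k})^{-2}$, while $F - F^2 = F(1-F) = (aZ)^{2k}(1+(aZ)^{2k})^{-2}$, so the two cancel. That $\xi(t,y) = (T-t)^{-1}F_k(ay(T-t)^{-1/2k})$ solves \fref{eq:quadra} then follows by a pure chain-rule computation --- \fref{eq:quadra} carries no $y$-derivative, hence it is an ODE identity at each fixed $y$: writing $Z = ay(T-t)^{-1/2k}$ one gets $\xi_t = (T-t)^{-2}(F_k(Z) + \frac{1}{2k}Z\partial_Z F_k(Z))$ and $\xi^2 = (T-t)^{-2}F_k(Z)^2$, and the difference vanishes by \fref{NLH:eq:Fk}.

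Next I would determine the spectrum of $H_Z = 1 + \frac{1}{2k}Z\partial_Z - 2F_k(aZ)$ by separation of variables. The relation $H_Z\phi = \nu\phi$ rearranges to $\frac{\phi'}{\phi} = \frac{2k(\nu-1)}{Z} + \frac{4k}{Z(1+(aZ)^{2k})}$, and the elementary identity $\frac{2k}{Z(1+(aZ)^{2k})} = \frac{2k}{Z} - \frac{d}{dZ}\log(1+(aZ)^{2k})$ turns this into $\frac{\phi'}{\phi} = \frac{2k(\nu+1)}{Z} - 2\frac{d}{dZ}\log(1+(aZ)^{2k})$, so on each half-line $\phi$ is a constant multiple of $Z^{2k(\nu+1)}(1+(aZ)^{2k})^{-2}$. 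Here is the one point that requires thought: since $(1+(aZ)^{2k})^{-2}$ is smooth and strictly positive on $\mathbb R$, such a $\phi$ can be glued (choosing the integration constants on the two half-lines) into an element of $C^\infty(\mathbb R)$ precisely when the exponent $2k(\nu+1)$ is a non-negative integer $\ell$, i.e. $\nu = (\ell-2k)/(2k)$, and then $\phi$ is proportional to $\phi_{Z,\ell} = Z^\ell(1+(aZ)^{2k})^{-2}$. This yields both the claimed point spectrum $\Upsilon(H_Z)$ and the eigenfunction formula \fref{eq:def phiXell}.

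It remains to identify the two symmetry eigenfunctions. Differentiating at $\lambda = 1$ the one-parameter family $\lambda \mapsto \lambda F_k(\lambda^{1/2k}aZ)$ --- the orbit of the profile under translation of the blow-up time $T$ --- gives $\phi_0 = F_k(aZ) + \frac{1}{2k}Z\partial_Z F_k(aZ)$, which by \fref{NLH:eq:Fk} equals $F_k(aZ)^2 = \phi_{Z,0}$ (eigenvalue $-1$, consistent with $\ell = 0$); differentiating at $\tilde a = 1$ the family $\tilde a \mapsto F_k(\tilde a\,aZ)$ of stationary profiles gives $\phi_{2k} = Z\partial_Z F_k(aZ)$, a multiple of $\phi_{Z,2k}$ (eigenvalue $0$, consistent with $\ell = 2k$). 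Both relations can alternatively be verified directly by differentiating the identity \fref{NLH:eq:Fk} once in $Z$. There is no serious obstacle in this proposition; the only part where care is needed is the admissibility discussion in the previous paragraph, since that is what confines the spectrum to the discrete set $\{(\ell-2k)/(2k) : \ell \in \mathbb N\}$ rather than a continuum.
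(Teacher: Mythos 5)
Your proof is correct and is exactly the "direct computation" the paper alludes to and leaves to the reader; the only place where care was genuinely needed — the observation that smoothness at $Z=0$ forces the separation exponent $2k(\nu+1)$ to be a non-negative integer, which pins down the point spectrum — is handled properly. The symmetry identifications at the end (and the fact that $\phi_0 = F_k(aZ)^2 = \phi_{Z,0}$ via \fref{NLH:eq:Fk}) are also verified correctly.
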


\begin{proof}

The proof is made of straightforward computations that we do not write here.

\end{proof}

We now introduce for $\xi$ a solution to \fref{eq:NLH} for $a>0$ and $T>0$ the self-similar variables following \cite{GK} 
\be \lab{NLH:id:def a}
Y:=\frac{y}{\sqrt{T-t}}, \ \ s:=-\log (T-t), \ \ Z:=\frac{aY}{e^{\frac{k-1}{2k}s}}, \ \ f(s,Y)=(T-t)\xi (t,y),
\ee
to zoom at the blow-up location, and $f$ solves the first equation in \fref{eq:f}. The function that we want to construct here, from \fref{id:decomposition NLH} and \fref{bd:remainder NLH}, should converge to $1$ in compact sets of the variable $Y$. Therefore close to the origin the linearised operator is
$$
H_\rho:=-1 +\frac Y2 \pa_Y -\pa_{YY}.
$$
Its spectral structure is well-known on the weighted $L^2$-based Sobolev spaces
$$
H^k_\rho:=\left\{f\in H^k_{\text{loc}}(\mathbb R), \ \ \sum_{k'=0}^k \int_{\mathbb R} |\pa_Y^k f|^2e^{-\frac{Y^2}{4}}dY<+\infty \right\}
$$
with norm and scalar product
\be \lab{eq:def L2rho}
\| f\|_{H^k_\rho}^2:= \sum_{k'=0}^k \int_{\mathbb R} |\pa_Y^k f|^2e^{-\frac{Y^2}{4}}dY, \ \ \langle f,g \rangle_\rho:=\int_{\mathbb R} fge^{-\frac{Y^2}{4}}dY, \ \ \rho(Y):=e^{-\frac{Y^2}{4}}.
\ee

\begin{proposition}[Linear structure at the origin (see e.g. \cite{MZ})] \lab{pr:Lrho}

The operator $H_{\rho}$ is essentially self-adjoint on $C^2_0(\mathbb R)\subset L^2(\rho)$ with compact resolvant. The space $H^2_\rho$ is included in the domain of its unique self-adjoint extension. Its spectrum is
$$
\Upsilon (H_\rho) = \left\{\frac{\ell-2}{2}, \ \ell \in \mathbb N \right\}.
$$
The eigenvalues are all simple and the associated orthonormal basis of eigenfunctions is given by Hermite polynomials:
\be \lab{def:hell}
h_{\ell}(Y):= c_{\ell} \sum_{n=0}^{\left[\frac{\ell}{2}\right]} \frac{\ell!}{n!(\ell-2n)!}(-1)^n Y^{\ell-2n}.
\ee
$h_\ell$ is orthogonal to any polynomial of degree lower than $\ell-1$ for the $L^2_\rho$ scalar product.

\end{proposition}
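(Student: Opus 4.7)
The plan is to conjugate away the Gaussian weight and reduce to a one-dimensional Schr\"odinger operator with confining harmonic potential, for which every statement in the proposition is classical. Define the unitary isomorphism $\mathcal{U}: L^2(\rho) \to L^2(\mathbb{R}, dY)$ by $\mathcal{U} f := f \rho^{1/2} = f e^{-Y^2/8}$. A direct computation using $\partial_Y(e^{-Y^2/8}) = -\frac{Y}{4}e^{-Y^2/8}$ and the identities for $\partial_Y f$, $\partial_{YY} f$ in terms of $u = \mathcal{U} f$ gives
$$
\tilde{H} \;:=\; \mathcal{U}\, H_\rho\, \mathcal{U}^{-1} \;=\; -\partial_{YY} + V(Y), \qquad V(Y) := \frac{Y^2}{16} - \frac{5}{4}.
$$

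I then invoke the classical theory of $\tilde H$ on $L^2(dY)$. Since $V$ is smooth and $V(Y) \to +\infty$ at infinity, $\tilde H$ is essentially self-adjoint on $C^\infty_c(\mathbb{R})$ and its closure has compact resolvent by Rellich's criterion. The shifted harmonic oscillator spectrum is $\mathrm{Spec}(\overline{\tilde H}) = \{(2\ell+1)/4 - 5/4 : \ell \in \mathbb{N}\} = \{(\ell-2)/2 : \ell \in \mathbb{N}\}$, with all eigenvalues simple (since $\omega = 1/4$). Transferring through $\mathcal{U}$, the spectrum and compact resolvent are inherited by $\overline{H_\rho}$. For essential self-adjointness on $C^2_0(\mathbb{R})$ specifically, I observe that $\mathcal{U}(C^2_0) \supset C^\infty_c$: indeed, for any $\phi \in C^\infty_c$, the function $\phi \rho^{-1/2} = \phi\, e^{Y^2/8}$ is smooth of compact support, hence in $C^2_0$. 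Since $C^\infty_c$ is already a core for $\tilde H$, the larger set $\mathcal{U}(C^2_0)$ is also a core, so $H_\rho$ is essentially self-adjoint on $C^2_0$.

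To identify the eigenfunctions, a direct computation (equivalent to the substitution $h_\ell(Y) = c_\ell H_\ell(Y/2)$ with $H_\ell$ the classical physicist's Hermite polynomial satisfying $H_\ell'' - 2xH_\ell' + 2\ell H_\ell = 0$) confirms $H_\rho h_\ell = \frac{\ell-2}{2} h_\ell$. Orthonormality $\langle h_\ell, h_m\rangle_\rho = \delta_{\ell m}$ for the appropriate choice of $c_\ell$ is then the classical orthogonality of Hermite polynomials (alternatively, a consequence of simplicity of eigenvalues plus normalization), and density of polynomials in $L^2(\rho)$ ensures completeness of $\{h_\ell\}$. The orthogonality of $h_\ell$ to every polynomial of degree less than $\ell$ is immediate, since $h_0,\dots,h_{\ell-1}$ span precisely the polynomials of degree at most $\ell-1$. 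Finally, $H^2_\rho \subset \mathrm{Dom}(\overline{H_\rho})$ follows by a standard cutoff-and-mollification argument: any $f \in H^2_\rho$ is approximated in the graph norm by $\chi(Y/R)f$ after smoothing, with error controlled by the super-polynomial decay of $\rho$. The only nontrivial point is the transfer of essential self-adjointness between $C^2_0$ and $C^\infty_c$, which is resolved by the inclusion argument above; everything else amounts to bookkeeping around classical results for the harmonic oscillator.
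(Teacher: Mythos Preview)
Your proof is correct. The paper does not actually give a proof of this proposition: it is stated as a classical fact with the attribution ``(see e.g.~\cite{MZ})'' and is used as a black box throughout Section~\ref{sec:NLH}. Your argument via the unitary conjugation $\mathcal{U}f = f\rho^{1/2}$ reducing $H_\rho$ to the shifted harmonic oscillator $-\partial_{YY} + Y^2/16 - 5/4$ is the standard route and supplies exactly the details the paper omits; the computation of the potential, the spectrum $(2\ell+1)/4 - 5/4 = (\ell-2)/2$, and the identification $h_\ell(Y) = c_\ell H_\ell(Y/2)$ with the physicist's Hermite polynomials are all correct.
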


To construct the blow-up solution of Theorem \ref{th:NLHinstable}, we will use an approximate solution to \fref{eq:f} close to $F_k(bZ)$ that is adapted to the linearised dynamics both at the origin and far away.

\begin{proposition}[Approximate blow-up profile] \lab{NLH:pr:profile}

Let $k\in \mathbb N$ with $k\geq 2$. There exists universal constants $(\bar c_{2\ell})_{0\leq \ell\leq k-1}$ with: 
$$
\bar c _{2k-2}:=-2k(2k-1), \ \ \bar c_{2\ell}:=-\frac{(2\ell+2) (2\ell+1)}{k-\ell}\bar c_{2\ell+2} \ \ \text{for} \ \ell=0,...,k-2,
$$
such that for any $0\leq s_0< s_1$ and $a\in C^1([s_0,s_1],(\frac12,\frac32))$, the profile
\be \lab{def:Fb}
F[a](s,Y):= F_k(Z)+\sum_{\ell=0}^{k-1}\bar c_{2\ell}(ae^{-\frac{k-1}{2k}s})^{2k-2\ell} \phi_{2\ell}(Z)
\ee
satisfies the following identity:
$$
\pa_s F [a]+F [a]+\frac Y2 \pa_Y F [a] -F^2 [a]-\pa_{YY}F[a]= a_s \pa_a F[a]+\Psi,
$$
with the error $\Psi$ satisfying for any $j\geq 0$:
\be \lab{bd:psirho}
\| \pa_Z^j\Psi \|_{L^2_\rho}\lesssim e^{-\left(2(k-1)-j\frac{k-1}{2k}\right)s} \ \ \text{for} \ \ j=0,...,2k, \ \ \| \pa_Z^j\Psi \|_{L^2_\rho}\lesssim e^{-\frac{k-1}{k}s} \ \ \text{for} \ \ j\geq 2k+1,
\ee
and for $|Y|\geq 1$:
\be \lab{bd:Psi2}
|(Z\pa_Z)^{j}\Psi |\lesssim e^{-\frac{k-1}{k}s} |Z|^{4k-2}(1+|Z|)^{-6k}.
\ee
and
\be \lab{bd:Psi3}
| \pa_Z^j \Psi |\lesssim e^{-\frac{k-1}{k}s}|Z|^{4k-2-j}(1+|Z|)^{-6k} \ \ \text{for} \ \ j=0,...,2k, \ \ |\pa_Z^j \Psi|\lesssim e^{-\frac{k-1}{k}s}(1+|Z|)^{-2k-2-j} \ \ \text{for} \ \ j\geq 2k+1,
\ee
The variation with respect to $a$ enjoys the following properties:
\be \lab{id:pabFprojection}
\langle \pa_a F[a],h_{2k} \rangle_\rho =ca^{2k-1}e^{-(k-1)s}\left(1+O(e^{-(k-1)s})\right), \ \ c\neq 0,
\ee
and for $j\in \mathbb N$:
\be \lab{bd:pabFrho}
\| \pa_Z^j\pa_aF[a]\|_{L^2_\rho}\lesssim e^{-\left((k-1)-j\frac{k-1}{2k}\right)s} \ \ \text{for} \ \ j=0,...,2k, \ \ \| \pa_Z^j\pa_a F[a] \|_{L^2_\rho}\lesssim 1 \ \ \text{for} \ \ j\geq 2k+1,
\ee
and for $|Y|\geq 1$:
\be \lab{bd:pabF2}
| (Z\pa_Z)^j\pa_a F[a]|\lesssim |Z|^{2k} (1+|Z|)^{-4k} .
\ee
and
\be \lab{bd:pabF3}
| \pa_Z^j \pa_a F[a] |\lesssim |Z|^{2k-j}(1+|Z|)^{-4k} \ \ \text{for} \ \ j=0,...,2k, \ \ |\pa_Z^j \pa_a F[a]|\lesssim (1+|Z|)^{-2k-j} \ \ \text{for} \ \ j\geq 2k+1,
\ee
In all the above estimates, the implicit constants in the $\lesssim$ notation depend solely on $k$ and $j$.
\end{proposition}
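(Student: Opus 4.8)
The plan is to verify Proposition \ref{NLH:pr:profile} by a direct expansion in powers of the small exponential $\delta(s):=ae^{-\frac{k-1}{2k}s}$, treating $Z$ and $Y$ as related by $Z=\delta Y/a$ and keeping track of the fact that the spatial weight $\rho(Y)=e^{-Y^2/4}$ localises the $L^2_\rho$ norms to the region $|Y|\lesssim 1$, i.e.\ $|Z|\lesssim \delta$. First I would record the elementary algebraic identities for $F_k$ and the $\phi_{2\ell}$ coming from Proposition \ref{pr:Fk}: $F_k$ solves \fref{NLH:eq:Fk}, and $H_Z\phi_{2\ell}=\frac{2\ell-2k}{2k}\phi_{2\ell}$ with $\phi_{2\ell}=Z^{2\ell}(1+(aZ)^{2k})^{-2}$. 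Applying the operator $\pa_s+1+\frac Y2\pa_Y-\pa_{YY}$ to the ansatz \fref{def:Fb} and using $\pa_s(\delta^{2k-2\ell}\phi_{2\ell}(Z))=-\frac{(k-1)(2k-2\ell)}{2k}\delta^{2k-2\ell}\phi_{2\ell}+\delta^{2k-2\ell}\pa_sZ\,\pa_Z\phi_{2\ell}$, together with $\frac Y2\pa_Y-\pa_{YY}$ acting on a function of $Z=\delta Y/a$, one sees that the ``transport plus potential'' part reproduces $H_Z$ acting on each $\phi_{2\ell}$ up to a $-\pa_{YY}$ term which carries an extra factor $\delta^2=a^2e^{-\frac{k-1}{k}s}$; this is the source of the residual exponent $e^{-\frac{k-1}{k}s}$ in \fref{bd:Psi2}--\fref{bd:Psi3} and \fref{bd:psirho} for high derivatives. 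The nonlinearity $F[a]^2$ produces, besides $F_k^2$, cross terms $2F_k\cdot\delta^{2k-2\ell}\phi_{2\ell}$ and quadratic terms $\delta^{2k-2\ell}\delta^{2k-2\ell'}\phi_{2\ell}\phi_{2\ell'}$; the point of the recursion defining $\bar c_{2\ell}$ is precisely that at each order $\delta^{2k-2\ell}$ the contribution of $-2F_k\phi_{2\ell}$, the eigenvalue term, and the $-\pa_{YY}$ of the previous-order profile cancel, leaving only the stated errors. So the first substantive step is to carry out this order-by-order matching and check that the stated $\bar c_{2\ell}$ (with $\bar c_{2k-2}=-2k(2k-1)$ and the downward recursion) are exactly the ones that kill the resonant terms; the modulation term $a_s\pa_aF[a]$ absorbs the projection onto the direction tangent to the one-parameter family, which is why it appears on the right-hand side rather than in $\Psi$.

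Next I would prove the quantitative bounds on $\Psi$. Near the origin, \fref{bd:psirho}: since $\Psi$ is, after the cancellations, a sum of terms of the form $\delta^{m}\times(\text{rational function of }Z\text{ bounded near }0)$ with $m\ge 2(k-1)$ for $j\le 2k$ derivatives (the first uncancelled order being $\delta^{2(k-1)}$, from $\phi_0$ times itself or from the $\pa_{YY}$ correction), and each $\pa_Z$ pulls out a factor $\delta^{-1}\cdot e^{-\frac{k-1}{2k}s}=e^{-\frac{k-1}{2k}s}$ worth of $Y$-growth polynomially controlled against $\rho$, one gets $\|\pa_Z^j\Psi\|_{L^2_\rho}\lesssim e^{-(2(k-1)-j\frac{k-1}{2k})s}$; for $j\ge 2k+1$ the dominant term becomes the $-\pa_{YY}$ correction of size $\delta^2=e^{-\frac{k-1}{k}s}$ times a Schwartz-against-$\rho$ function, giving the uniform bound $e^{-\frac{k-1}{k}s}$. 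Far away, \fref{bd:Psi2}--\fref{bd:Psi3}: here I would use the explicit rational form. Each $\phi_{2\ell}(Z)=Z^{2\ell}(1+(aZ)^{2k})^{-2}\approx|Z|^{2\ell-4k}$ for $|Z|\gg1$, and collecting the surviving terms the worst decay comes from $\delta^{2}\times\phi_{2(k-1)}$-type contributions behaving like $e^{-\frac{k-1}{k}s}|Z|^{2(k-1)-4k}=e^{-\frac{k-1}{k}s}|Z|^{-2k-2}$, which for $|Z|\ge1$ is $\lesssim e^{-\frac{k-1}{k}s}|Z|^{4k-2}(1+|Z|)^{-6k}$; differentiating with $Z\pa_Z$ preserves the rational structure and the homogeneity, giving \fref{bd:Psi2}, while ordinary $\pa_Z$ lowers the power, giving the first half of \fref{bd:Psi3}, and for $j\ge 2k+1$ the smoothing against $(1+|Z|)^{-2k}$-type denominators yields the stated $(1+|Z|)^{-2k-2-j}$ tail. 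The verification here is bookkeeping: track which term dominates in the inner ($|Z|\lesssim\delta$), intermediate ($\delta\lesssim|Z|\lesssim1$) and outer ($|Z|\gtrsim1$) regimes.

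Finally, for the statements on $\pa_aF[a]$: differentiating \fref{def:Fb} in $a$ hits both the explicit powers $\delta^{2k-2\ell}=(ae^{-\frac{k-1}{2k}s})^{2k-2\ell}$ (giving a factor $(2k-2\ell)/a$, harmless) and the dependence of $Z=ae^{-\frac{k-1}{2k}s}Y$ through $F_k(Z)$ and $\phi_{2\ell}(Z)$ (giving $e^{-\frac{k-1}{2k}s}Y\,\pa_Z$). The leading contribution to $\pa_aF[a]$ is $\pa_aF_k(Z)=e^{-\frac{k-1}{2k}s}Y\,\pa_ZF_k(Z)$, of size $\approx e^{-\frac{k-1}{2k}s}|Y|\cdot|Z|^{2k-1}(1+|Z|)^{-4k}$ near the origin, but rewriting in $Z$ and noting $e^{-\frac{k-1}{2k}s}|Y|\approx|Z|/a$ one gets the weight-free bounds \fref{bd:pabF2}--\fref{bd:pabF3}; the $L^2_\rho$ estimate \fref{bd:pabFrho} follows since against $\rho$ the variable $|Y|$ is $O(1)$, so $\|\pa_Z^j\pa_aF_k(Z)\|_{L^2_\rho}\lesssim e^{-\frac{k-1}{2k}s}\cdot e^{-j\frac{k-1}{2k}s}\cdot\|Y\,|Z|^{\cdots}\rho^{1/2}\|$, and the lower-order terms in \fref{def:Fb} contribute $\delta^{2k-2\ell-1}e^{-\frac{k-1}{2k}s}\lesssim e^{-(k-1)s}$, strictly smaller. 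For \fref{id:pabFprojection}, the key point is that the $O(1)$-in-$s$ part of $\pa_aF[a]$ restricted to $|Y|\lesssim1$ is, to leading order, $e^{-\frac{k-1}{2k}s}Y\,\pa_ZF_k(Z)\approx -2k\,e^{-\frac{k-1}{2k}s}Y\cdot(ae^{-\frac{k-1}{2k}s}Y)^{2k-1}=-2k\,a^{2k-1}e^{-(k-1)s}Y^{2k}$, whose $L^2_\rho$-pairing with $h_{2k}$ is a nonzero universal constant times $a^{2k-1}e^{-(k-1)s}$ because $h_{2k}$ is orthogonal to all polynomials of degree $<2k$, so only the top monomial $Y^{2k}$ contributes; the corrections from higher-order terms in $F[a]$ and from the nonlinearity of $F_k$ in $Z$ are $O(e^{-(k-1)s})$ relative, giving the claimed $(1+O(e^{-(k-1)s}))$. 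The main obstacle, as usual with such profile constructions, is not any single estimate but the careful matching in Step~1: making sure the recursion for $\bar c_{2\ell}$ is set up so that \emph{every} resonant power $\delta^{2k-2\ell}$ with $0\le\ell\le k-1$ is cancelled simultaneously, and that the only thing left over is the $-\pa_{YY}$-generated error of order $\delta^2$ plus genuinely higher-order terms — this is where a sign error or an off-by-one in the recursion would silently break \fref{bd:psirho}.

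\smallskip

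\noindent\emph{Remark on method.} I would organise the computation by introducing the formal expansion parameter $\delta=ae^{-\frac{k-1}{2k}s}$ once and for all, writing $F[a]=\sum_{\ell=0}^{k-1}\delta^{2k-2\ell}\Phi_\ell(Z)$ with $\Phi_0=F_k+\bar c_0\phi_0$ and $\Phi_\ell=\bar c_{2\ell}\phi_{2\ell}$ for $\ell\ge1$ (absorbing $F_k$ into the $\ell=k-1$ slot via $F_k=\delta^0\cdot F_k$ — more precisely handling $F_k$ separately as the $\delta^0$ term and the $\bar c_{2\ell}\phi_{2\ell}$ as the $\delta^{2k-2\ell}$ terms), applying the operator, and reading off the coefficient of each power of $\delta$. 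All the displayed bounds then reduce to: (i) $|\pa_Z^j\phi_{2\ell}(Z)|\lesssim |Z|^{2\ell-j}(1+|Z|^{2k})^{-2}$ with the obvious modifications for $j>2\ell$; (ii) the substitution $|Y|\lesssim 1\iff|Z|\lesssim\delta$ under $\rho$; (iii) $\langle Y^m,h_{2k}\rangle_\rho=0$ for $m<2k$ and $\ne0$ for $m=2k$. None of these is delicate in isolation; the proof is complete once Step~1's cancellation is verified and the three regimes $|Z|\le\delta$, $\delta\le|Z|\le1$, $|Z|\ge1$ are checked against \fref{bd:psirho}, \fref{bd:Psi2}, \fref{bd:Psi3} and their $\pa_aF$-counterparts.
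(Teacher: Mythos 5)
Your proposal is correct and follows essentially the same strategy as the paper's proof: a direct order-by-order expansion in the small parameter $\delta=ae^{-\frac{k-1}{2k}s}$ using the stationary self-similar equation for $F_k$ and the eigenvalue identities for $\phi_{2\ell}$, with the recursion for $\bar c_{2\ell}$ arranged precisely to cancel the resonant contributions, and the residual bounds obtained from $|Z|\approx\delta|Y|$ (so each factor of $Z$ costs $\delta$ against $\rho$) together with the orthogonality of $h_{2k}$ to polynomials of degree below $2k$. Two small notational corrections: here $\phi_{2\ell}(Z)=Z^{2\ell}/(1+Z^{2k})^{2}$ since the parameter $a$ is already built into $Z$, and your line ``each $\pa_Z$ pulls out a factor $\delta^{-1}\cdot e^{-\frac{k-1}{2k}s}=e^{-\frac{k-1}{2k}s}$'' is an arithmetic slip (that product equals $a^{-1}$); the intended and correct statement is that each $\pa_Z$ removes one power of $Z$, which is worth $\delta$ under the Gaussian weight, giving exactly the $j$-dependence in \fref{bd:psirho}.
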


\begin{proof}

This is a brute force computation.\\

\noindent \textbf{Step 1:} \emph{Estimates for $\Psi$}. We first decompose from \fref{NLH:eq:Fk} and \fref{eq:def phiXell}:
\bee
&& \Psi =\pa_s F [a]+F[a]+\frac Y2 \pa_Y F[a] -F^2[a]-\pa_{YY}F[a]-a_s\pa_a F[a] \\
&=& F_k(Z)+\frac{1}{2k}Z\pa_Z F_k(Z)-F_k^2(Z) -\sum_{\ell=0}^{k-1} \bar c_{2\ell}\frac{k-1}{2k}(2k-2\ell)(ae^{-\frac{k-1}{2k}s})^{2k-2\ell}\phi_{2\ell}(Z) \\
&&+\sum_{\ell=0}^{k-1} \bar c_{2\ell} (ae^{-\frac{k-1}{2k}s})^{2k-2\ell}(H_Z \phi_{2\ell})(Z)-\sum_{\ell=0}^{k-1} \bar c_{2\ell} (ae^{-\frac{k-1}{2k}s})^{2k-2\ell+2}(\pa_{ZZ} \phi_{2\ell})(Z)-(ae^{-\frac{k-1}{2k}s})^2\pa_{ZZ}F_k(Z) \\
&&-\left(\sum_{\ell=0}^{k-1}\bar c_{2\ell}(ae^{-\frac{k-1}{2k}s})^{2k-2\ell} \phi_{2\ell}(Z)\right)^2\\
&=& -\sum_{\ell=0}^{k-1} \bar c_{2\ell}(k-\ell)(ae^{-\frac{k-1}{2k}s})^{2k-2\ell}\phi_{2\ell}(Z) -\sum_{\ell=0}^{k-1} \bar c_{2\ell} (ae^{-\frac{k-1}{2k}s})^{2k-2\ell+2}(\pa_{ZZ} \phi_{2\ell})(Z)-(ae^{-\frac{k-1}{2k}s})^2\pa_{ZZ}F_k(Z)\\
&&-\left(\sum_{\ell=0}^{k-1}\bar c_{2\ell}(ae^{-\frac{k-1}{2k}s})^{2k-2\ell} \phi_{2\ell}(Z)\right)^2\\
&=&-\sum_{\ell=0}^{k-2} (ae^{-\frac{k-1}{2k}s})^{2k-2\ell}\left( \bar c_{2\ell}(k-\ell)\phi_{2\ell}(Z)+\bar c_{2\ell+2}\pa_{ZZ}\phi_{2\ell+2}(Z)  \right)\\
&&- (ae^{-\frac{k-1}{2k}s})^{2}(\bar c_{2k-2}\phi_{2k-2}-\pa_{ZZ}F_k)-\bar c _0 (ae^{-\frac{k-1}{2k}s})^{2k+2} \pa_{ZZ}\phi_0(Z)-\left(\sum_{\ell=0}^{k-1}\bar c_{2\ell}(ae^{-\frac{k-1}{2k}s})^{2k-2\ell} \phi_{2\ell}(Z)\right)^2\\\\
&=&-\sum_{\ell=0}^{k-2} (ae^{-\frac{k-1}{2k}s})^{2k-2\ell}\bar c_{2\ell+2} \left(\pa_{ZZ}\phi_{2\ell+2}(Z)- (2\ell+2)(2\ell+1) \phi_{2\ell}(Z)\right)\\
&&+ (ae^{-\frac{k-1}{2k}s})^{2}( 2k(2k-1)\phi_{2k-2}+\pa_{ZZ}F_k)-\bar c _0 (ae^{-\frac{k-1}{2k}s})^{2k+2} \pa_{ZZ}\phi_0(Z)-\left(\sum_{\ell=0}^{k-1}\bar c_{2\ell}(ae^{-\frac{k-1}{2k}s})^{2k-2\ell} \phi_{2\ell}(Z)\right)^2\\
\eee
As
$$
\pa_{ZZ}\phi_{2\ell}=\frac{2\ell (2\ell-1)Z^{2\ell-2}}{(1+Z^{2k})^2}-\frac{4k(4\ell+2k-1) Z^{2\ell+2k-2}}{(1+Z^{2k})^3}+\frac{24 k^2 Z^{2\ell+4k-2}}{(1+Z^{2k})^4}
$$
one deduces that for $\ell=0,...,k-2$:
\bee
&& \left| (ae^{-\frac{k-1}{2k}s})^{2k-2\ell} \left(\pa_{ZZ}\phi_{2\ell+2}(Z)- (2\ell+2)(2\ell+1) \phi_{2\ell}(Z)\right)\right| \\
&=&\left|(ae^{-\frac{k-1}{2k}s})^{2k-2\ell} \left(-\frac{4k(4\ell+2k+3) kZ^{2\ell+2k}}{(1+Z^{2k})^3}+\frac{24 k^2 Z^{2\ell+4k}}{(1+Z^{2k})^4}\right)\right|\\
&\lesssim & (e^{-\frac{k-1}{2k}s})^{2k-2\ell}  Z^{2\ell+2k} (1+|Z|)^{-6k}.
\eee
Similarly, as
$$
\pa_{ZZ}F_k=-\frac{2k(2k-1)Z^{2k-2}}{(1+Z^{2k})^2}+\frac{8k^2Z^{4k-2}}{(1+Z^{2k})^3}
$$
one deduces that
\bee
\left| (ae^{-\frac{k-1}{2k}s})^{2}( 2k(2k-1)\phi_{2k-2}+\pa_{ZZ}F_k)\right|&=& \left|(ae^{-\frac{k-1}{2k}s})^{2}\frac{8k^2Z^{4k-2}}{(1+Z^{2k})^3}\right| \\
&\lesssim &  (e^{-\frac{k-1}{2k}s})^{2}  Z^{4k-2} (1+|Z|)^{-6k}.
\eee
Eventually,
\bee
\left| -(ae^{-\frac{k-1}{2k}s})^{2k+2} \pa_{ZZ}\phi_0(Z) \right| &=& \left| (ae^{-\frac{k-1}{2k}s})^{2k+2}\left( \frac{4k(2k-1) Z^{2k-2}}{(1+Z^{2k})^3}+\frac{24 k^2 Z^{4k-2}}{(1+Z^{2k})^4}\right)\right|  \\
&\lesssim &  (e^{-\frac{k-1}{2k}s})^{2k+2} Z^{2k-2}(1+|Z|)^{-6k}
\eee
and
$$
\left(\sum_{\ell=0}^{k-1}\bar c_{2\ell}(ae^{-\frac{k-1}{2k}s})^{2k-2\ell} \phi_{2\ell}(Z)\right)^2\lesssim \sum_{\ell=0}^{k-1}(e^{-\frac{k-1}{2k}s})^{4k-4\ell} Z^{4\ell}(1+|Z|)^{-8k}.
$$
From the above identities one deduces that:
\be \lab{eq:boundpointwisePsi}
\left|\Psi \right| \lesssim \sum_{\ell=1}^{k+1}  (e^{-\frac{k-1}{2k}s})^{2\ell} |Z|^{4k-2\ell}(1+|Z|)^{-6k}+\sum_{\ell=0}^{k-1}(e^{-\frac{k-1}{2k}s})^{4k-4\ell} |Z|^{4\ell}(1+|Z|)^{-8k}.
\ee
For $\ell=1,...,k+1$ one computes that
$$
\int  \left((e^{-\frac{k-1}{2k}s})^{2\ell} Z^{4k-2\ell}(1+|Z|)^{-6k}\right)^2e^{-\frac{Y^2}{4}}dY \lesssim   \int  (e^{-\frac{k-1}{2k}s})^{4\ell} (Ye^{-\frac{k-1}{2k}s})^{8k-4\ell}e^{-\frac{Y^2}{4}}dY  \lesssim e^{-4(k-1)s}
$$
and similarly for $\ell=0,...,k-1$:
$$
\int  \left((e^{-\frac{k-1}{2k}s})^{4k-4\ell} Z^{4\ell}(1+|Z|)^{-8k}\right)^2e^{-\frac{Y^2}{4}}dY \lesssim e^{-4(k-1)s}.
$$
The above two bounds imply \fref{bd:psirho} for $j=0$. For $|Y|\geq 1$ one has that for $\ell=1,...,k+1$:
$$
(e^{-\frac{k-1}{2k}s})^{2\ell} Z^{4k-2\ell}(1+|Z|)^{-6k}\lesssim e^{-\frac{k-1}{k}s} (aYe^{-\frac{k-1}{2k}s})^{2\ell-2} Z^{4k-2\ell}(1+|Z|)^{-6k}=e^{-\frac{k-1}{k}s} Z^{4k-2}(1+|Z|)^{-6k}
$$
and similarly for $\ell=0,...,k-1$:
$$
(e^{-\frac{k-1}{2k}s})^{4k-4\ell} Z^{4\ell}(1+|Z|)^{-8k}\lesssim e^{-\frac{k-1}{k}s} Z^{4k-2}(1+|Z|)^{-8k}.
$$
The above two bounds yield \fref{bd:Psi2} for $j=0$. We claim the the other bounds for $|\Psi|$ can be proved the same way as \fref{eq:boundpointwisePsi} naturally extends to derivatives.\\

\noindent \textbf{Step 2} \emph{Estimate for $\pa_a F$}. We compute two identities:
\bea
\nonumber a\pa_a F[a]&=& -2k \phi_{2k}(Z)+\sum_{\ell=0}^{k-1}\bar c_{2\ell}(ae^{-\frac{k-1}{2k}s})^{2k-2\ell} \left((2k-2\ell)\phi_{2\ell}(Z)+ Z\pa_Z\phi_{2\ell}(Z)\right)\\
\label{id:apaF1}&=& -2k \frac{Z^{2k}}{(1+Z^{2k})^{2}}+\sum_{\ell=0}^{k-1} \bar c_{2\ell} (ae^{-\frac{k-1}{2k}s})^{2k-2\ell}\left(\frac{2kZ^{2\ell}}{(1+Z^{2k})^2}-\frac{4kZ^{2\ell+2k}}{(1+Z^{2k})^3} \right) \\
\label{id:apaF2}&=&-2k Z^{2k}+\sum_{\ell=0}^{k-1} \bar c_{2\ell} (ae^{-\frac{k-1}{2k}s})^{2k-2\ell}2k Z^{2\ell}  \\
\nonumber&&+2k\frac{2Z^{4k}-Z^{6k}}{(1+Z^{2k})^2}-\sum_{\ell=0}^{k-1} \bar c_{2\ell} (ae^{-\frac{k-1}{2k}s})^{2k-2\ell}\left(\frac{2kZ^{2\ell+2k}}{(1+Z^{2k})^2}+\frac{4kZ^{2\ell+4k}}{(1+Z^{2k})^3} \right)
\eea
From the first identity \fref{id:apaF1} we infer that:
\be \lab{bd:pointwisepabF}
\left| \pa_a F[a]\right| \lesssim \sum_{\ell=0}^{k} (e^{-\frac{k-1}{2k}s})^{2k-2\ell}Z^{2\ell}(1+Z^{2k})^{-2}.
\ee
This implies that
$$
\left| \pa_a F[a]\right|\lesssim e^{-(k-1)s}\sum_{\ell=0}^{k} Y^{2\ell}
$$
which yields \fref{bd:pabFrho} for $j=0$. For $|Y|\geq 1$ one therefore estimates:
$$
\left| \pa_a F[a]\right| \lesssim \sum_{\ell=0}^{k} (bYe^{-\frac{k-1}{2k}s})^{2k-2\ell}Z^{2\ell}(1+Z^{2k})^{-2} \lesssim Z^{2k}(1+Z^{2k})^{-2}
$$
which proves \fref{bd:pabF2} for $j=0$. Again, we claim that the other bounds concerning $\pa_a F[a]$ can be proved along the same lines since \fref{bd:pointwisepabF} naturally extends to derivatives. We now use the second identity \fref{id:apaF2} and compute since $h_{2k}(Y)$ is orthogonal to any polynomial of degree less or equal than $2k-1$ and $Z=e^{-(k-1)s/(2k)}Y$:
$$
\langle -2k Z^{2k}+\sum_{\ell=0}^{k-1} \bar c_{2\ell} (ae^{-\frac{k-1}{2k}s})^{2k-2\ell}2k Z^{2\ell},h_{2k} \rangle_{\rho}=\langle -2k Z^{2k},h_{2k} \rangle_{\rho}= b^{2k}ce^{-(k-1)s}, \ \ c\neq 0.
$$
We then get the desired nondegeneracy \fref{id:pabFprojection} since
$$
\| 2k\frac{2Z^{4k}-Z^{6k}}{(1+Z^{2k})^2}-\sum_{\ell=0}^{k-1} \bar c_{2\ell} (ae^{-\frac{k-1}{2k}s})^{2k-2\ell}\left(\frac{2kZ^{2\ell+2k}}{(1+Z^{2k})^2}+\frac{4kZ^{2\ell+4k}}{(1+Z^{2k})^3} \right)\|_{L^2_{\rho}}\lesssim e^{-2(k-1)s}.
$$

\end{proof}

We now fix $k\geq 1 $ for the remaining part of the Subsection, and prove Theorem \ref{th:NLHinstable} by showing that there exists a global solution to \fref{eq:f} converging to $F_k(Z)$ as $s\rightarrow +\infty$. To this end, we perform a bootstrap argument near the approximate profile $F[a]$. We decompose the solution in self-similar variables according to (using \fref{bd:pabF3}):
\be \lab{id:decomposition e}
f=F[a]+\e, \ \ \e=\sum_{\ell=0}^{k-1} c_{2\ell} h_{2\ell}(Y)+\tilde \e, \ \ \tilde \e \perp_\rho h_{2\ell} \ \ \text{for} \ \ \ell=0,...,k,
\ee
where the $\perp_\rho $ is the orthogonality with respect to the $L^2_{\rho}$ scalar product. Such a decomposition in the vicinity of $F[a]$ is a consequence of \fref{id:pabFprojection} and of the implicit function Theorem.

\begin{definition}[Trapped solutions] \label{def:trappedNLH}

Fix $J\in \mathbb N$, and let constants $\tilde K\geq K_J\geq ...\geq K_0>0$. We say a solution to (NLH) is trapped on $[s_0,s_1]$ if it is even, can be decomposed according to\footnote{Note that for $s_0$ large enough, due to the bounds for trapped solutions, this decomposition is unique and the associated parameters are $C^{\infty}$ from a standard application of the implicit function Theorem and of parabolic regularity} \fref{id:decomposition e} and if it satisfies for $j=0,...,J$:
\be \lab{bd:eweightlossy}
 \int_{|Y|\geq 1} \frac{|(Y\pa_Y)^j \e|^2}{\phi_{2k+1}^2(Z)}\frac{dY}{|Y|}\leq K_je^{-\frac{3}{4k}s}, 
\ee
\be \lab{bd:eweightsharp}
 \int_{|Y|\geq 1} \frac{|(Y\pa_Y)^j \e|^2}{\phi_{2k+1/2}^2(Z)}\frac{dY}{|Y|}\leq K_{j}e^{-\frac{1}{2k}s},
\ee
\be \lab{bd:eweightjgeq2k+1}
\int_{|Y|\geq 1} \frac{|\pa_Z^j \e|^2}{\phi_{0}^2(Z)}\frac{dY}{|Y|}\leq K_j  e^{-\frac{1}{2k}s} \ \ \text{for} \ j\geq 2k+1,
\ee
\be \lab{bd:eloc}
\| \pa_Z^j \tilde \e \|_{L^2_{\rho}}\leq \sqrt{K_j}e^{-\left(k-\frac12 +\frac{j}{2k}-\frac j2 \right)s} \ \ \text{for} \ \ j=0,...,2k, \ \ \| \pa_Z^j \tilde \e \|_{L^2_{\rho}}\leq \sqrt{K_j}e^{-\frac{1}{2k} s} \ \ \text{for} \ j\geq 2k+1,
\ee
\be \lab{bd:cell bootstrap}
\left( \sum_{\ell=0}^{k-1} |c_{2\ell} |^2\right)^{\frac 12}\leq \sqrt{\tilde K} e^{-(k-\frac 12+\frac{1}{4k})s}
\ee

\end{definition}

We will show that one solutions remained trapped forever, and our argument necessitates to adjust the initial values of the projection of the error on the modes $(h_{2\ell})_{0\leq \ell\leq k-1}$. These functions are unbounded but the following technical Lemma provides a harmless localisation.

\begin{lemma} \label{lem:implicit}
There exists $M^*,C>0$ such that for all, $M\geq M^*$, $\tilde K>0$, there exists $s^*\geq 0$ such that for $s\geq s^*$ the following holds true introducing $\chi_M(Y):=\chi(Y/M)$. For any $\frac 12<\bar a<\frac 32$, $(\bar{c}_{2\ell})_{0\leq \ell \leq k-1}\in B(\tilde Ke^{-(k-\frac12+\frac{1}{4k})s})$ (the Euclidean ball) there exist unique parameters $(\bar{c}_{2\ell})_{0\leq \ell \leq k-1}\in B(2\tilde Ke^{-(k-\frac12+\frac{1}{4k})s})$ and $a $ with $|a-\bar a|\leq C\tilde Ke^{-(\frac 12+\frac{1}{2k})s}$ such that
$$
F[\bar a]+\chi_M \sum_{\ell=0}^{k-1} \bar c_{2\ell} h_{2\ell}(Y)=F[a]+ \sum_{\ell=0}^{k-1} c_{2\ell} h_{2\ell}(Y) +\tilde \e
$$
where $\tilde \e$ satisfies the orthogonality conditions in \fref{id:decomposition e}. Moreover, with $\bar a$ fixed, the mapping that to $(\bar c_{2\ell})_{0\leq \ell\leq k-1}$ associates $(c_{2\ell})_{0\leq \ell\leq k-1}$ is a smooth diffeomorphism and the preimage of $B(\tilde Ke^{-(k-\frac12+\frac{1}{4k})s})$ is contained in $B(2\tilde Ke^{-(k-\frac12+\frac{1}{4k})s})$.
\end{lemma}

\begin{proof}
We fix $\bar a\in [\frac 12,\frac32]$. We write $a=\bar a +\tilde a e^{(k-1)s}$ and define the mappings $\omega:\mathbb R^{k}\rightarrow \mathbb R^{k+1}$ and $\tilde \omega:\mathbb R^{k+1}\rightarrow \mathbb R^{k+1}$ by:
$$
\omega \left(\bar c_0,...,\bar c_{2k-2}\right)=( \langle \chi_M\sum_{\ell=0}^{k-1} \bar c_{2\ell} h_{2\ell},h_{2n}\rangle_\rho )_{0\leq n\leq k},
$$
$$
\tilde \omega (c_0,...,c_{2k-2},\tilde a)=( \langle F[a]-F[\bar a]+\sum_{\ell=0}^{k-1} \bar c_{2\ell} h_{2\ell},h_{2n}\rangle_\rho )_{0\leq n\leq k}.
$$
One has the identities $\omega \left(\bar c_0,...,\bar c_{2k-2}\right)=\left(\bar c_0,...,\bar c_{2k-2},0\right)+o_{M\rightarrow \infty}(|(\bar c_0,...,\bar c_{2k-2})|)$ as the $h_{\ell}$ form an orthonormal basis of $L^2_\rho$ of polynomials, and since the weight $\rho$ decreases exponentially fast. One similarly has that $\tilde \omega (c_0,...,c_{2k-2},\tilde a)=(c_0,...,c_{2k-2},ca^{2k-1}\tilde a)+O(\tilde a^2)$ where $c>0$ is defined in \fref{id:pabFprojection}. The Lemma is obtained choosing $(c_0,...,c_{2k-2},(a-\bar a)e^{-(k-1)s})=\tilde \omega^{-1}\circ \omega (\bar c_0,...,\bar c_{2k-2})$. The inversion of $\tilde \omega$, and the desired estimates, are consequences of the aforementioned identities and of the implicit function Theorem. 
\end{proof}

\begin{proposition} \lab{pr:bootstrap}

Fix $J\geq 1$ and $M>0$ so that Lemma \ref{lem:implicit} holds true. There exists $\tilde K\geq K_J\geq ...\geq K_0>0$ and $s^*\geq 0$ such that the following holds for any $s_0\geq s^*$. For any $\bar \e_0=\bar \e(s_0)$ that is even, satisfies the orthogonality conditions in \fref{id:decomposition e} and
\be \lab{bd:decayextinit} 
\sum_{j=0}^J  \int_{|Y|\geq 1} \frac{|(Y\pa_Y)^j \bar \e_0|^2}{\phi_{2k+1}^2(Z)}\frac{dY}{|Y|}\leq e^{-\frac{3}{4k}s_0}, \ \ \sum_{j=0}^J \int_{|Y|\geq 1} \frac{|(Y\pa_Y)^j \bar \e_0|^2}{\phi_{2k+1/2}^2(Z)}\frac{dY}{|Y|}\leq e^{-\frac{1}{2k}s_0},
\ee
\be \lab{bd:decayextinit2}
\sum_{j=2k+1}^J \int_{|Y|\geq 1} \frac{|\pa_Z^j \bar \e_0|^2}{\phi_{0}^2(Z)}\frac{dY}{|Y|}\leq e^{-\frac{1}{2k}s_0},
\ee
\be \lab{bd:eL2rhoinit}
\| \pa_Z^j \bar \e_0 \|_{L^2_{\rho}}\leq e^{-\left(k-\frac12 +\frac{j}{2k}-\frac j2 \right)s_0} \ \ \text{for} \ \ j=0,...,2k, \ \ \| \pa_Z^j \bar \e_0 \|_{L^2_{\rho}}\leq e^{-\frac{1}{2k} s_0} \ \ \text{for} \ j\geq 2k+1,
\ee
and $3/4\leq a(s_0)\leq 5/4$, there exist $\bar c_0(s_0),...,\bar c_{2\ell-2}(s_0)$ with
\be \lab{NLH:eq:bootstrap instable init}
\left(\sum_{\ell=0}^{k-1} |\bar c_{2\ell}(s_0)|^2\right)^{\frac 12}\leq 2\sqrt{\tilde K} e^{-(k-\frac 12+\frac{1}{4k})s_0}
\ee
such that the solution $f$ to \fref{eq:f} with initial datum
\be \label{bd:initialdecompoNLH}
f(s_0)=F[a](s_0)+\chi_M(Y)\sum_{\ell=0}^{k-1} \bar c_{2\ell}(s_0) h_{\ell}(Y)+\bar \e_0,
\ee
is a global solution to \fref{eq:f}, which is trapped on $[s_0,+\infty)$. Moreover, there exists an asymptotic limit $a^*\in \mathbb R$, with $1/2\leq a^*\leq 3/2$ such that
\be \lab{NLH:bd:a}
|a-a^*|\lesssim (K_1^2+K_2^2) e^{-(\frac 12+\frac{1}{4k})s}. 
\ee
 
\end{proposition}

All parameters are fixed, except the constants $\tilde K$, $K_j$ and $s^*$ that will be fixed in the forthcoming Lemmas. Hence generic constant $C$ will be used when independent of $\tilde K$, $K_j$ and $s^*$, with associated notation $\lesssim$. We prove Proposition \ref{main:pr:bootstrap} with a classical bootstrap reasoning. The results below will specify the dynamics in the trapped regime and allow to prove Proposition \ref{pr:bootstrap} at the end of the subsection.

\begin{lemma} \lab{NLH:lem:pointwise}

For any constants $\tilde K\geq K_J\geq ...\geq K_0>0$, there exists $s^*$ large enough such that if $f$ is trapped on $[s_0,s_1]$ with $s_0\geq s^*$, there holds for $j=0,...,J-1$, for any $Z\in \mathbb R$:
\be \lab{bd:eLinfty}
|\pa_Z^j\e|\lesssim \sqrt{K_J} e^{-\frac{1}{4k}s} (1+|Z|)^{\frac{1}{2}-2k-j},
\ee
and for any $C>0$, for all $|Y|\leq C$:
\be \lab{bd:eLinftyloc}
|\pa_Z^j\e|\lesssim \sqrt{\tilde K}e^{-\left(k-\frac12 +\frac{j}{2k}-\frac j2 \right)s} \ \ \text{for} \ \ j=0,...,2k, \ \ |\pa_Z^j\e|\lesssim \sqrt{\tilde K}e^{-\frac{1}{2k}s} \ \ \text{for} \ j\geq 2k+1.
\ee

\end{lemma}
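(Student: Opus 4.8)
The statement is a weighted Sobolev embedding, converting the $L^2$-type bootstrap control into pointwise bounds. I would establish the local estimate \fref{bd:eLinftyloc} first and then feed it, as boundary data, into the proof of the global one \fref{bd:eLinfty}. Recall from \fref{NLH:id:def a} that $Z=aYe^{-\frac{k-1}{2k}s}$, so at fixed time $\partial_Y=a e^{-\frac{k-1}{2k}s}\partial_Z$, $Z\partial_Z=Y\partial_Y$, $\frac{dY}{|Y|}=\frac{dZ}{|Z|}$, and $a\approx 1$ throughout.

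\textbf{Inner region.} On $|Y|\le 2$ the Gaussian weight is comparable to $1$, hence $\|\cdot\|_{L^2(|Y|\le 3)}\lesssim\|\cdot\|_{L^2_\rho}$. Split $\e=\sum_{\ell<2k}c_\ell h_\ell+\tilde\e$ as in \fref{id:decomposition e}. For the polynomial part, $\partial_Z^j(c_\ell h_\ell)=a^{-j}e^{\frac{(k-1)j}{2k}s}c_\ell\,(\partial_Y^j h_\ell)$ with $\partial_Y^j h_\ell$ bounded on $|Y|\le 2$, so \fref{bd:cell bootstrap} gives a bound whose $s$-rate beats the one in \fref{bd:eLinftyloc} by $e^{-\frac{1}{4k}s}$. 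For $\tilde\e$, apply the one-dimensional embedding $\|g\|_{L^\infty(|Y|\le 2)}\lesssim\|g\|_{L^2(|Y|\le 3)}+\|\partial_Y g\|_{L^2(|Y|\le 3)}$ with $g=\partial_Z^j\tilde\e$; since $\partial_Y g=a e^{-\frac{k-1}{2k}s}\partial_Z^{j+1}\tilde\e$, the right-hand side is controlled by $\|\partial_Z^j\tilde\e\|_{L^2_\rho}+e^{-\frac{k-1}{2k}s}\|\partial_Z^{j+1}\tilde\e\|_{L^2_\rho}$, and \fref{bd:eloc} gives exactly the claimed rate --- the prefactor $e^{-\frac{k-1}{2k}s}$ on the higher-derivative term absorbs precisely the loss between consecutive bounds in \fref{bd:eloc}. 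Since the $K_j$ increase, this yields \fref{bd:eLinftyloc} for $j\le J-1$.

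\textbf{Outer region.} On $|Y|\le 2$, \fref{bd:eLinfty} follows from \fref{bd:eLinftyloc} because there $1+|Z|\approx 1$ and every exponent in \fref{bd:eLinftyloc} is $\ge\tfrac1{4k}$. On $|Y|\ge 1$, the substitution $Z\mapsto\log|Z|$ turns \fref{bd:eweightsharp} into an $H^1$-bound, relative to the vector field $Z\partial_Z$, for $(Z\partial_Z)^i\e/\phi_{2k+1/2}$ on a half-line; as $|Z\partial_Z\log\phi_{2k+1/2}|\lesssim1$, the half-line embedding gives $|(Z\partial_Z)^i\e|\lesssim\sqrt{K_J}\,e^{-\frac1{4k}s}\,\phi_{2k+1/2}(Z)$ on $|Y|\ge 1$ for $i\le j\le J-1$. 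Using $Z^j\partial_Z^j=\prod_{m=0}^{j-1}(Z\partial_Z-m)$ and $\phi_{2k+1/2}(Z)\approx|Z|^{2k+\frac12}(1+|Z|)^{-4k}$ (recall $a\approx1$), one obtains $|\partial_Z^j\e|\lesssim\sqrt{K_J}\,e^{-\frac1{4k}s}\,|Z|^{2k+\frac12-j}(1+|Z|)^{-4k}$, which is $\lesssim\sqrt{K_J}e^{-\frac1{4k}s}(1+|Z|)^{\frac12-2k-j}$ whenever $|Z|\ge 1$ (any $j$) or $j\le 2k$. The only remaining case is $j\ge 2k+1$ with $|Z|\le1$: there $\phi_0\approx1$ and $\frac{dZ}{|Z|}\ge dZ$, so by \fref{bd:eweightjgeq2k+1} $\int|\partial_Z^j\e|^2 dZ+\int|\partial_Z^{j+1}\e|^2 dZ\lesssim K_J e^{-\frac1{2k}s}$ on $\{|Y|\ge1\}\cap\{|Z|\le1\}$; integrating $\partial_Z(|\partial_Z^j\e|^2)$ outward from the point $|Y|=1$, where $|\partial_Z^j\e|$ is already controlled by \fref{bd:eLinftyloc}, and applying Cauchy--Schwarz gives $|\partial_Z^j\e|\lesssim\sqrt{K_J}e^{-\frac1{4k}s}$, which matches the claim since $(1+|Z|)^{\frac12-2k-j}\approx1$ there.

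\textbf{Main difficulty.} There is no conceptual obstacle; the effort is entirely in checking that the $s$- and $Z$-exponents emerging from \fref{bd:eloc}, \fref{bd:eweightsharp} and \fref{bd:eweightjgeq2k+1} close up to the stated ones. Two points require care, both visible above: the weight $\phi_{2k+1/2}$ is too degenerate at $Z=0$ to reach $\partial_Z^j\e$ pointwise once $j>2k$, which forces the separate use of the $\phi_0$-bound in the small-$|Z|$ regime; and the rescaling $e^{-\frac{k-1}{2k}s}$ between $\partial_Y$ and $\partial_Z$ must be tracked throughout, since it is exactly what compensates the one-step loss in the hierarchy of derivative bounds near the origin.
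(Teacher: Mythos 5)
Your proof is correct and follows essentially the same route as the paper: the inner bound is the same decomposition into Hermite part plus $\tilde\varepsilon$ with a one-dimensional Sobolev embedding and the observation that the $e^{-\frac{k-1}{2k}s}$ factor from $\partial_Y=ae^{-\frac{k-1}{2k}s}\partial_Z$ absorbs the loss in the hierarchy \fref{bd:eloc}; the outer bound is the same weighted embedding (your logarithmic change of variables is just another way to state \fref{bd:Sobolev}) applied first with the $\phi_{2k+1/2}$-weight and then, for $j\geq 2k+1$ near $Z=0$, with the $\phi_0$-weight control \fref{bd:eweightjgeq2k+1}. The only cosmetic difference is that for the last case you recover the pointwise bound by integrating $\partial_Z(|\partial_Z^j\varepsilon|^2)$ from $|Y|=1$ and Cauchy--Schwarz, whereas the paper reapplies \fref{bd:Sobolev} directly to $\partial_Z^j\varepsilon/\phi_0$ over all of $\{|Y|\geq1\}$; both work and give the same rate.
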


\begin{proof}

\noindent \textbf{Step 1:} \emph{Proof of \fref{bd:eLinftyloc}}. From \fref{bd:eloc}, and Sobolev embedding one deduces that for $|Y|\leq C$:
$$
|\pa_Z^j \tilde \e|\lesssim \sqrt{K_J} e^{-\left(k-\frac12 +\frac{j}{2k}-\frac j2 \right)s}.
$$
From \fref{bd:cell bootstrap} and \fref{def:hell} for $j=0,...,2k-1$ and $|Y|\leq C$ one has:
$$
|\pa_Z^j (c_\ell h_\ell(Y))|=|e^{\frac{k-1}{2k}js}\pa_Y^j (c_\ell h_\ell(Y))|\lesssim e^{\frac{k-1}{2k}js} |c_\ell |\lesssim \sqrt{\tilde K}e^{-\left(k-\frac 12 +\frac{1}{4k}-\frac j2 +\frac{j}{2k}\right)}\lesssim \sqrt{\tilde K}e^{-\left(k-\frac 12 -\frac j2 +\frac{j}{2k}\right)}
$$
for $s_0$ large enough. For $j\geq 2k$ and $\ell\leq 2k-1$ one notices that $\pa_Z^j h_\ell=0$. Therefore, one obtains \fref{bd:eLinftyloc} from the decomposition \fref{id:decomposition e} and the two above bounds.\\

\noindent \textbf{Step 2:} \emph{Proof of \fref{bd:eLinfty}}. We apply \fref{bd:Sobolev} and use the facts that $Y\pa_Y=Z\pa_Z$ and $|Z\pa_Z \phi_{2k+1/2}|\lesssim |\phi_{2k+1/2}|\lesssim |Z\pa_Z \phi_{2k+1/2}|$:
\bee
\| \frac{Z^j\pa_Z^j\e}{\phi_{2k+1/2}(Z)} \|_{L^{\infty}(\{|Y|\geq 1 \})}^2 &\lesssim& \| \frac{Z^j\pa_Z^j\e}{\phi_{2k+1/2}(Z)} \|_{L^2\left(\{|Y|\geq 1 \},\frac{dY}{|Y|}\right)}^2 +\| Z\pa_Z(\frac{Z^j\pa_Z^j\e}{\phi_{2k+1/2}(Z)}) \|_{L^2\left(\{|Y|\geq 1 \},\frac{dY}{|Y|}\right)}^2 \\
&\lesssim & \sum_{k=0}^{j+1} \| \frac{(Z\pa_Z)^j\e}{\phi_{2k+1/2}(Z)} \|_{L^2\left(\{|Y|\geq 1 \},\frac{dY}{|Y|}\right)}^2 \lesssim K_Je^{-\frac{1}{2k}s}.
\eee
Since $|\phi_{2k+1/2}|\lesssim |Z|^{2k+1/2}(1+|Z|)^{-4k}$ one obtains that for $|Y|\geq 1$
\be \lab{bd:pointwise1}
|\pa_Z^j \e|\lesssim \sqrt{K_J}e^{-\frac{1}{4k}s}|Z|^{2k+1/2-j}(1+|Z|)^{-4k}.
\ee
For $j\geq 2k+1$, the fact that $|\phi_{2k+1/2}|(Z)=|Z|^{2k+1}|\phi_0(Z)|$ yields the inequality
$$
\frac{|Z|}{|\phi_0(Z)|}\lesssim \frac{1}{|\phi_0(Z)|}+\frac{|Z|^{j+1}}{|\phi_{2k+1/2}|}
$$
from which we infer that
$$
\int_{|Y|\geq 1} \frac{|Z\pa_Z (\pa_Z^j \e)|^2}{\phi_{0}^2(Z)}\frac{dY}{|Y|} \lesssim \int_{|Y|\geq 1} \frac{| \pa_Z^{j+1} \e |^2}{\phi_{0}^2(Z)}\frac{dY}{|Y|}+\int_{|Y|\geq 1} \frac{| Z^{j+1}\pa_Z^{j+1} \e)|^2}{\phi_{0}^2(Z)}\frac{dY}{|Y|} \lesssim K_J e^{-\frac{1}{2k}s}.
$$
The very same reasoning using the above bound and \fref{bd:eweightjgeq2k+1} gives
$$
|\pa_Z^j \e|\lesssim \sqrt{K_J}e^{-\frac{1}{4k}s}(1+|Z|)^{-4k}.
$$
From the two above bounds one infers that for $|Y|\geq 1$:
\be \lab{bd:pointwise2}
|\pa_Z^j \e|\lesssim \sqrt{K_J}e^{-\frac{1}{4k}s}(1+|Z|)^{\frac 12-2k-j}.
\ee
Combining \fref{bd:pointwise1}, \fref{bd:pointwise2} and \fref{bd:eLinftyloc} yields \fref{bd:eLinfty}.

\end{proof}

The evolution of $\e$ is given by:
\be \lab{eq:evolutione}
\e_s+\frac{Y}{2}\pa_Y \e+\e -2F_k(Z) \e+2(F_k(Z)-F[a])\e- \pa_{YY}\e+a_s \pa_a F[a]+\Psi-\e^2=0
\ee
and that of $\tilde \e$ by:
\be \lab{eq:evolutiontildee}
\tilde \e_s+\mathcal L_{\rho}\tilde \e+\sum_{\ell=0}^{k-1}(c_{2\ell,s}+\frac{2\ell-2}{2}c_{2\ell})h_{2\ell}+2(1-F[a])\e+a_s \pa_a F[a]+\Psi-\e^2=0
\ee

\begin{lemma}[Modulation equations] \lab{NLH:lem:modulation}

For any constants $\tilde K\geq K_J\geq ...\geq K_0>0$, there exists $s^*$ large enough such that if $f$ is trapped on $[s_0,s_1]$ with $s_0\geq s^*$, the following identities hold for $\ell=0,1,...,k-1$:
\be \lab{eq:modulation}
|a_s|\lesssim K_J e^{-(\frac 12+\frac{1}{4k}) s} \ \ \text{and} \ \ \left|c_{2\ell,s}+\frac{2\ell-2}{2}c_{\ell}\right|\lesssim K_J e^{-(k-\frac 12 +\frac{1}{4k})s}
\ee

\end{lemma}

\begin{proof}

\noindent \textbf{Step 1} \emph{Law for $a$}. We take the scalar product between \fref{eq:evolutiontildee} and $h_{2k}$, yielding, using \fref{id:pabFprojection} and \fref{id:decomposition e}:
$$
a_s c a^{2k-1}e^{-(k-1)s}(1+O(e^{-(k-1)s}))=\langle -2(1-F[a])\e-\Psi+\e^2,h_{2k}\rangle_{\rho}.
$$
We now estimate the right hand side. First, since $|F[a]-1|\lesssim e^{-(k-1)s}\sum_0^{k}Y^{2\ell}$, using \fref{bd:eloc} and \fref{bd:cell bootstrap} for $s^*$ large enough:
$$
\left| \langle -2(1-F[a])\e,h_{2k}\rangle_{\rho} \right|\lesssim \| \e\|_{L^2_{\rho}} \|(1-F[a])h_{2k} \|_{L^{2}_\rho}\lesssim \sqrt{K_0} e^{-(k-\frac 12)s}e^{-(k-1)s}= \sqrt{K_0} e^{-(2k-\frac 32)s}.
$$
Using the bound on the error \fref{bd:psirho}:
$$
\left| \langle \Psi,h_{2k}\rangle_{\rho} \right|\lesssim e^{-2(k-1)s}.
$$
Finally, using the bounds \fref{bd:eloc}, \fref{bd:cell bootstrap} and \fref{bd:eLinfty} for the nonlinear term:
$$
\left| \langle \e^2,h_{2k}\rangle_{\rho} \right| \lesssim \| \e\|_{L^2_\rho} \| \e\|_{L^{\infty}}\lesssim (\sqrt{K_0} e^{-(k-\frac 12)s}+\sqrt{\tilde K} e^{-(k-\frac 12+\frac{1}{4k})s}) \sqrt{K_J}e^{-\frac{1}{4k}s}\lesssim K_J e^{-(k-\frac 12+\frac{1}{4k})s},
$$
for $s^*$ large enough. Summing the above identities yields \fref{eq:modulation} for $a_s$.\\

\noindent \textbf{Step 2} \emph{Law for $c_{\ell}$}. We take the scalar product between \fref{eq:evolutiontildee} and $h_{2\ell}$ for $\ell=0,1,...,k-1$, yielding, using \fref{id:pabFprojection} and \fref{id:decomposition e}:
$$
(c_{2\ell,s}+\frac{2\ell-2}{2}c_{2\ell}) \| h_{2\ell}\|_{L^2_\rho}=-\langle 2(1-F[a])\e+a_s \frac{\pa}{\pa_a} F[a]+\Psi-\e^2,h_{2\ell}\rangle
$$
Performing the same computations as in Step 1 gives for $s^*$ large enough:
$$
\left| \langle 2(1-F[a])\e+\Psi-\e^2,h_{2\ell}\rangle \right|\lesssim K_J e^{-(k-\frac 12 +\frac{1}{4k})s}.
$$
Using the bound for $a_s$ \fref{eq:modulation} obtained in Step 1 and \fref{bd:pabFrho} one obtains:
$$
\left| \langle a_s \pa_a F[a] ,h_{2\ell}\rangle \right| \lesssim K_J e^{-(\frac 12+\frac{1}{4k})s}e^{-(k-1)s}=K_J e^{-(k-\frac 12 +\frac{1}{4k})s}.
$$
The three previous identities then yield \fref{eq:modulation} for $c_{2\ell}$.

\end{proof}

\begin{lemma} \lab{NLH:lem:energyrho}

There exists a choice of constants $ K_J\geq ...\geq K_0>0$, such that for any $\tilde K\geq K_{J}$, there exists $s^*$ large enough such that if $f$ is trapped on $[s_0,s_1]$ with $s_0\geq s^*$ and satisfies \fref{bd:eL2rhoinit}, at time $s_1$ there holds for $j=0,...,J$:
\be \lab{eq:boundfatigue}
\| \pa_Z^j \tilde \e (s_1) \|_{L^2_{\rho}}\leq \frac{\sqrt{K_j}}{2}e^{-\left(k-\frac12 +\frac{j}{2k}-\frac j2 \right)s_1} \ \ \text{for} \ \ j=0,...,2k, \ \ \| \pa_Z^j \tilde \e(s_1) \|_{L^2_{\rho}}\leq \frac{\sqrt{K_j}}{2}e^{-\frac{1}{2k} s_1} \ \ \text{for} \ j\geq 2k+1.
\ee

\end{lemma}

\begin{proof}

Set $\tilde \e_j=\pa_Z^j \tilde \e$. Then $\tilde \e_j$ solves from \fref{eq:evolutiontildee}:
$$
\pa_s \tilde \e_j+\frac{j}{2k} \tilde \e_j +H_\rho \tilde \e_j+\sum_{\ell=0}^{k-1}(c_{2\ell,s}+\frac{2\ell-2}{2}c_{2\ell})\pa_Z^j(h_{2\ell})+2\pa_Z^j((1-F[a])\e)+a_s\pa_Z^j \pa_a F[a]+\pa_Z^j \Psi-\pa_Z^j \e^2=0
$$
which yields the following expression for the energy identity:
\bee
\frac{d}{ds}\left(\frac{1}{2}\int \tilde \e_j^2e^{-\frac{Y^2}{4}}dY \right) &=& -\langle \tilde \e_j,(H_\rho+\frac{j}{2k}) \tilde \e_j\rangle_\rho-\sum_{\ell=0}^{k-1}(c_{2\ell,s}+\frac{2\ell-2}{2}c_{2\ell}) \langle \tilde \e_j, \pa_Z^j(h_{2\ell})\rangle_\rho \\
&&-2\langle \tilde \e_j,\pa_Z^j((1-F[a])\e)\rangle_\rho+\langle \tilde \e_j, a_s\pa_Z^j \pa_a F[a]+\pa_Z^j \Psi\rangle_\rho-\langle \tilde \e_j,\pa_Z^j \e^2\rangle_\rho.
\eee
For $j=0,...,2k$, by integrating by parts one obtains from \fref{id:decomposition e} that $\tilde \e_j$ is orthogonal to $h_{\ell}$ for $\ell=0,...,2k-j$ for the $L^2_\rho$ scalar product and therefore to any polynomial of degree less or equal to $2k-j$. Therefore, from Proposition \ref{pr:Lrho} there holds:
$$
-\langle \tilde \e_j,(H_\rho +\frac{j}{2k})\tilde \e_j\rangle_\rho \leq \left\{ \begin{array}{l l} -\left(k-\frac j2-\frac 12+\frac{j}{2k}\right)\int \tilde \e_j^2e^{-\frac{Y^2}{4}}dY \ \ \text{if} \ j=0,...,2k, \\
-(\frac{j}{2k}-1)\int \tilde \e_j^2e^{-\frac{Y^2}{4}}dY \ \ \text{if} \ j=2k+1,...,J. \end{array} \right.
$$
Let $0<\nu \ll 1$ be a small constant to be fixed later on. Integrating by parts yields:
$$
-\int \tilde \e_j H_\rho \tilde \e_j e^{-\frac{Y^2}{4}}dY \leq -c\int Y^2\tilde \e_j^2 e^{-\frac{Y^2}{4}}dY +c'\int \tilde \e_j^2 e^{-\frac{Y^2}{4}}dY
$$
for some constants $c,c'>0$. Combining the above estimates one writes:
\bee
-\langle \tilde \e_j,(H_\rho +\frac{j}{2k})\tilde \e_j\rangle_\rho   &= & - (1-e^{-2\nu s})\langle \tilde \e_j,(H_\rho +\frac{j}{2k})\tilde \e_j\rangle_\rho -e^{-2\nu s}\langle \tilde \e_j,(H_\rho +\frac{j}{2k})\tilde \e_j\rangle_\rho  \\
 &\leq & - ce^{-2\nu s}\int Y^2\tilde \e_j^2 e^{-\frac{Y^2}{4}}dY+O(e^{-2\nu s}\int \tilde \e_j^2 e^{-\frac{Y^2}{4}}dY)\\
 &&+\left\{ \begin{array}{l l} -\left(k-\frac j2-\frac 12+\frac{j}{2k}\right)\int \tilde \e_j^2e^{-\frac{Y^2}{4}}dY \ \ \text{if} \ j=0,...,2k, \\
-(\frac{j}{2k}-1)\int \tilde \e_j^2e^{-\frac{Y^2}{4}}dY \ \ \text{if} \ j=2k+1,...,J. \end{array} \right..
\eee
As we said earlier, $\tilde \e_j$ is orthogonal to any polynomial of degree less or equal to $2k-j$ for $j=0,...,2k$. For $j\geq 2k+1$ one notices that $\pa_Z^j h_{2\ell}=0$ for any $\ell=0,...,k-1$. Hence the cancellation for $j=0,...,J$:
$$
\sum_{\ell=0}^{k-1}(c_{2\ell,s}+\frac{2\ell-2}{2}c_{2\ell})\langle \tilde \e_j, \pa_Z^j(h_{2\ell})\rangle_\rho=0
$$
One has from \fref{def:Fb} that the lower order linear potential satisfies (as $Z=Ye^{-\frac{k-1}{2k}s}$):
$$
\left| (1-F[a]) \right| \lesssim |Z|^2(1+|Z|)^{-2-2k}\lesssim \min(1,e^{-\frac{k-1}{k}s}Y^2)
$$
which adapts to derivatives. Therefore, applying Leibnitz rule and Cauchy-Schwarz one gets that for $j=0,...,J$ using \fref{bd:eloc}:
\begin{align*}
& \left|\langle \tilde \e_j,\pa_Z^j((1-F[a])\e)\rangle_\rho \right|  \leq C \left(\int \tilde \e_j^2e^{-\frac{Y^2}{4}}dY \right)^{\frac 12} \sum_{i=0}^{j-1}\left(\int |\pa_Z^i \tilde \e|^2e^{-\frac{Y^2}{4}}dY \right)^{\frac 12}+Ce^{-\frac{k-1}{k}s} \int Y^2 \tilde \e_j^2e^{-\frac{Y^2}{4}}dY\\
& \qquad \leq Ce^{-\frac{k-1}{k}s} \int Y^2 \tilde \e_j^2e^{-\frac{Y^2}{4}}dY+\left\{ \begin{array}{l l} Ce^{-2\left(k-\frac{1}{2}+\frac{j}{2k}+\nu \right)s} K_j \qquad \mbox{for }j=0,...,2k+1,\\
 C \sqrt{K_j}\sqrt{K_{j-1}}e^{-\frac{1}{k}s}\qquad \mbox{for }j=2k+2,...,J,
\end{array} \right.
\end{align*}
where we used that for $i<j$ one has $K_i<K_j$ and $\frac{i}{2k}-\frac i2\geq \frac{j}{2k}-\frac{j}{2}+2\nu$ for $0<2\nu \leq \frac{1}{2}-\frac{1}{2k}$. From \fref{eq:modulation}, \fref{bd:pabFrho}, \fref{bd:psirho}, \fref{bd:eloc} and Cauchy-Schwarz we estimate for $0<\nu<\frac{1}{8k}$:
$$
\left| \int \tilde \e_j \pa_Z^j \left( a_s \pa_a F[a]+\Psi \right)e^{-\frac{Y^2}{4}}dY  \right| \lesssim \sqrt{K_j} \left\{ \begin{array}{l l} e^{-2\left(k-\frac 12 -\frac j2 +\frac{j}{2k}+\nu \right)s} \ \ \text{for} \ j=0,...,2k, \\ e^{-2\left(\frac{1}{2k}+\nu\right)s} \ \ \text{for} \ j=2k+1,...,J. \end{array} \right.
$$
Using Leibnitz rule and Cauchy Schwarz, from \fref{bd:eLinfty}, \fref{id:decomposition e}, \fref{bd:eloc} and \fref{bd:cell bootstrap} we infer for the nonlinear term, as $\tilde K>K_J>...>K_0$:
\begin{align*}
&\left| \int \tilde \e_j \pa_Z^j (\e^2)e^{-\frac{Y^2}{4}}dY  \right|  \lesssim  \left(\sum_{i=0}^{J-1} \| \pa_Z^i\e \|_{L^{\infty}} \right)\left(\sum_{i=0}^{j} \int |\pa_Z^j \e|^2e^{-\frac{Y^2}{4}}dY \right)^{\frac 12}\left( \int \tilde \e_j^2e^{-\frac{Y^2}{4}}dY \right)^{\frac 12}\\
&\qquad \qquad \lesssim  e^{-\frac{1}{4k}s} \left( \sqrt{\tilde K}e^{-(k-\frac 12 +\frac{1}{4k}-j\frac{k-1}{2k})s}+\left(\sum_{i=0}^{j} \int |\pa_Z^j \tilde \e|^2e^{-\frac{Y^2}{4}}dY \right)^{\frac 12}\right)\left( \int \tilde \e_j^2e^{-\frac{Y^2}{4}}dY \right)^{\frac 12} \\
&\qquad \qquad \lesssim C(\tilde K) \left\{ \begin{array}{l l} e^{-2\left(k-\frac 12 -\frac j2 +\frac{j}{2k}+\nu \right)s} \ \ \text{for} \ j=0,...,2k, \\ e^{-2\left(\frac{1}{2k}+\nu\right)s} \ \ \text{for} \ j=2k+1,...,J. \end{array} \right.
\end{align*}
for $0<2\nu <\frac{1}{4k}$. Putting all the previous estimates together one obtains that there exists $\nu>0$ depending only $k$ such that for $s^*$ large enough, after some signs inspection:
$$
\frac{d}{ds} \left(\int \tilde \e_j^2 e^{-\frac{Y^2}{4}} \right)\leq \left\{ \begin{array}{l l} -2\left(k-\frac j2 -\frac 12 +\frac{j}{2k} \right)\int \tilde \e_j^2 e^{-\frac{Y^2}{4}}+C(\tilde K)e^{-2\left(k-\frac j2 -\frac 12 +\frac{j}{2k}+\nu \right)s} \ \ \text{for} \ j=0,...,2k \\
-\frac{j-2k}{k}\int \tilde \e_j^2 e^{-\frac{Y^2}{4}}+C(\tilde K)e^{-2\left(\frac{1}{2k} +\nu\right) s}+C(K_{j-1})e^{-\frac{1}{k}s} \ \ \text{for} \ j=2k+2,...,J \end{array} \right.
$$
where we used that $\tilde K>K_J>...>K_0$. We claim that integrating over time the above differential inequality shows \fref{eq:boundfatigue}. We only show that this is the case for $j=2k+2,...,J$, the proof being the same for $0\leq j \leq  2k+1$. For $j=2k+2,...,J$, one notices that $\frac{j-2k}{k}\geq \frac{2}{k}$, so that from \fref{bd:eL2rhoinit} and the above inequality one deduces that at time $s_1$:
\bee
\int \tilde \e_j^2 e^{-\frac{Y^2}{4}}dY &\leq &e^{-\frac 2k s_1}\left( \int \tilde \e_j(s_0)^2 e^{-\frac{Y^2}{4}}dY+ e^{-\frac{2}{k}s_1}\int_{s_0}^{s_1}\left( C(\tilde K)e^{\left(\frac{1}{k}-2\nu\right)\tilde s}+\sqrt{K_j}\sqrt{K_{j-1}}e^{\frac 1 k \tilde s} \right)d\tilde s\right)\\
&\leq & e^{-\frac 2k s_1}+C(\tilde K)e^{-\frac 1k s}e^{-2\nu s_1}+C\sqrt{K_{j-1}}\sqrt{K_j}e^{-\frac 1k s_1} \leq \frac{K_j}{2}e^{-\frac 1 k s_1}
\eee
where the last inequality holds true provided $K_j\geq 4$ and $K_{j}>CK_{j-1}$ for some universal constant $C$, and that $s^*$ has been chosen large enough depending on $\tilde K$. The same inequality holds true for $j=0,...,2k+1$. Therefore, one can choose inductively the constants $K_j$ one after another to satisfy these conditions, ending the proof of the Lemma.

\end{proof}

\begin{lemma} \lab{NLH:lem:improvedoustide}

There exists a choice of constants $ K_J\geq ...\geq K_0>0$, such that for any $\tilde K\geq K_{J}$, there exists $s^*$ large enough such that if $f$ is trapped on $[s_0,s_1]$ with $s_0\geq s^*$ and satisfies \fref{bd:decayextinit} and \fref{bd:decayextinit2}, at time $s_1$ there holds for $j=0,...,J$:
\be \lab{bd:lossy improved}
 \int_{|Y|\geq 1} \frac{|(Y\pa_Y)^j \e (s_1)|^2}{\phi_{2k+1}^2(Z)}\frac{dY}{|Y|}\leq \frac{K_j}{2}e^{-\frac{3}{4k}s_1}, \ \ \
 \int_{|Y|\geq 1} \frac{|(Y\pa_Y)^j \e (s_1)|^2}{\phi_{2k+1/2}^2(Z)}\frac{dY}{|Y|}\leq \frac{K_{j}}{2}e^{-\frac{1}{2k}s_1},
\ee
\be \lab{bd:lossy improved3}
\int_{|Y|\geq 1} \frac{|\pa_Z^j \e(s_1)|^2}{\phi_{0}^2(Z)}\frac{dY}{|Y|}\leq \frac{K_j}{2}  e^{-\frac{1}{2k}s_1} \ \ \text{for} \ j\geq 2k+1.
\ee

\end{lemma}

\begin{proof}

Note that by even symmetry it suffices to perform the estimates for $Y\geq 1$. We shall then use the identity $Y=|Y|$ in the following.\\

\noindent \textbf{Step 1:} \emph{Proof of \fref{bd:lossy improved}}. Let $\chi $ be a smooth cut-off function, with $\chi=1$ for $Y\geq 2$ and $\chi=0$ for $Y\leq 1$. Let $\ell=2k+1$ or $\ell=2k+1/2$. Set $\hat \e_j=(Y\pa_Y)^j \e$. Then $\hat \e_j$ solves from \fref{eq:evolutione} since $[\pa_{YY},Y\pa_Y]=2\pa_{YY}$:
\bee
0&=&\pa_s\hat \e_j+\frac Y2 \pa_Y \hat \e_j +\hat \e_j -2F_k(Z)\hat \e_j-\pa_{YY}\hat \e_j+2(F_k\hat \e_j-(Y\pa_Y)^j (F_k \e))+2(Y\pa_Y)^j ((F_k(Z)-F[a])\e)\\
&&+2\sum_{n=0}^{j-1}(Y\pa_Y)^{j-1-n}\pa_{YY}(Y\pa_Y)^n \e+a_s (Y\pa_Y)^j \pa_a F[a]+(Y\pa_Y)^j\Psi-(Y\pa_Y)^j \e^2
\eee
From the above identity we compute, performing integrations by parts, that
\bee
&&\frac{d}{ds}\left(\frac 12 \int \chi \frac{ \hat \e_j^2}{\phi_{\ell}^2(Z)}\frac{dY}{Y} \right)\\
&=& \int \chi \frac{\hat \e_j}{\phi_{\ell}(Z)} \frac{1}{Y} \frac{-\hat \e_j+2F_k\hat \e_j-\frac{Y}{2}\pa_Y \hat \e_j+\pa_{YY}\hat \e_j-2(F_k\hat \e_j-(Y\pa_Y)^j(F_k\e))+(Y\pa_Y)^j \e^2}{\phi_{\ell}(Z)}dY\\
&&+ \int \chi \frac{\hat \e_j}{\phi_{\ell}(Z)} \frac{1}{Y} \frac{-2(Y\pa_{Y})^j((F_k(Z)-F[a])\e)-2\sum_{n=0}^{j-1}(Y\pa_Y)^{j-1-n}\pa_{YY}(Y\pa_Y)^n \e-(Y\pa_Y)^j(a_s\pa_aF[a]+\Psi)}{\phi_{\ell}(Z)}dY  \\
&&- \int \chi \frac{\hat \e_j^2}{\phi_{\ell}^2(Z)} \frac{1}{Y}\left(\frac{a_s}{a}\frac{(Z\pa_Z\phi_{\ell})(Z)}{\phi_{\ell}(Z)}-\frac{k-1}{2k}\frac{(Z\pa_Z\phi_{\ell})(Z)}{\phi_{\ell}(Z)} \right)dY \\
&=& \int \chi \frac{\hat \e_j^2}{\phi^2_{2k+1}(Z)}\frac{-\phi_{\ell}(Z)+2F_k(Z)\phi_{\ell}(Z)-\frac{Z}{2k}\pa_Z \phi_{\ell}(Z)}{\phi_{\ell}(Z)}\frac{dY}{Y}-\int \chi \frac{|\pa_Y \hat \e_j|^2}{\phi_{\ell}^2(Z)}\frac{dY}{Y} \\
&&+\frac 14 \int \pa_Y \chi \frac{\hat \e_j^2}{\phi_{\ell}^2(Z)}dY+\frac 12 \int \hat \e_j^2 \pa_{YY} \left( \frac{\chi}{\phi_{\ell}^2(Z)}\frac{1}{Y}\right)dY \\
&&-2\int \chi \frac{\hat \e_j(F_k(Z)\hat \e_j-(Y\pa_Y) ^j(F[a]\e)) }{\phi^2_{\ell}(Z)} \frac{dY}{Y}-2\int \chi \frac{\hat \e_j(Y\pa_Y)^j((F_k(Z)-F[a])\e)}{\phi_{\ell}^2(Z)}\frac{dY}{Y}\\
&&-2\int \chi \frac{\hat \e_j\sum_{n=0}^{j-1}(Y\pa_Y)^{j-1-n}\pa_{YY}(Y\pa_Y)^n\e}{\phi_{\ell}^2(Z)}\frac{dY}{Y}-\frac{a_s}{a}\int \frac{\hat \e_j^2}{\phi_{\ell}^2(Z)}\frac{(Z\pa_Z \phi_{\ell})(Z)}{\phi_{\ell}(Z)}\frac{dY}{Y}\\
&&- \int \chi \frac{\hat \e_j}{\phi_{\ell}(Z)} \frac{(Y\pa_Y)^j(a_s \pa_aF[a]+\Psi)}{\phi_{\ell}(Z)}\frac{dY}{Y}+ \int \chi \frac{ \hat \e_j(Y\pa_Y)^j(\e^2)}{\phi_{\ell}^2(Z)}\frac{dY}{Y}\\
\eee
where in the last equality, on the first line one has the main order linear effects, on the second their associated boundary terms, one the third and fourth the lower order linear effects, and on the last line the influence of the forcing and of the nonlinear effects. We now estimate all terms. For the first term from \fref{eq:def phiXell}:
$$
 \int \chi \frac{\hat \e_j^2}{\phi^2_{\ell}(Z)}\frac{-\phi_{\ell}(Z)+2F_k[b]\phi_{\ell}(Z)-\frac{Z}{2k}\pa_Z \phi_{\ell}(Z)}{\phi_{\ell}(Z)}\frac{dY}{Y}=-\frac{\ell-2k}{2k} \int \chi \frac{\hat \e_j^2}{\phi_{\ell}^2(Z)}\frac{dY}{Y}.
$$
The second term is dissipative and has a negative sign since $\chi$ is positive. For the third term, using \fref{id:decomposition e}, \fref{bd:eloc} and \fref{bd:cell bootstrap}:
\bea
\non \left|\frac 12 \int \pa_Y \chi \frac{\hat \e_j^2}{\phi_{\ell}^2(Z)}dY \right|& \lesssim & \| \frac{1}{\phi_{\ell}^2(Z)}\|_{L^{\infty}(1\leq Y\leq 2)}\| \hat \e_j \|_{L^2(1\leq Y\leq 2)}^2\\
\non & \lesssim & \| |Z|^{-2 \ell } \|_{L^{\infty}(1\leq Y \leq 2)} \left(\sum_{i=0}^j \| Z^i\pa_Z^i \e \|_{L^2(1\leq Y\leq 2)}^2 \right)\\
\non & \lesssim &  \| |e^{-\frac{k-1}{2k}s}Y|^{-2 \ell } \|_{L^{\infty}(1\leq Y \leq 2)} \left(\sum_{i=0}^j \| Z^i\pa_Z^i \tilde \e \|_{L^2(1\leq Y\leq 2)}^2+\sum_{\ell=0}^{k-1}|c_{2\ell} |^2 \right)\\
\lab{eq:linboundary}  & \lesssim & e^{\frac{k-1}{2k} 2 \ell s}\left( K_j e^{-2(k-\frac 12)s}+\tilde K e^{-2(k-\frac 12 +\frac{1}{4k})s}\right) \lesssim K_j e^{-\frac 1 k s}
\eea
for $s^*$ large enough. For the fourth term, we first decompose:
$$
\pa_{YY}\left(\frac{\chi}{\phi_\ell^2(Z)Y} \right)=\pa_{YY}\chi \left(\frac{1}{\phi_\ell^2(Z)Y}\right)+2\pa_Y \chi \pa_Y \left(\frac{1}{\phi_\ell^2(Z)Y}\right)+\chi \pa_{YY} \left(\frac{1}{\phi_\ell^2(Z)Y}\right).
$$
Since one has 
$$
\left| \pa_{YY}\chi \left(\frac{1}{\phi_\ell^2(Z)Y}\right)+2\pa_Y \chi \pa_Y \left(\frac{1}{\phi_\ell^2(Z)Y}\right) \right|\lesssim  |Z|^{-2 \ell } 1_{1\leq Y\leq 2} \lesssim e^{\frac{k-1}{k}\ell s} 1_{1\leq Y\leq 2} 
$$
we claim that one can perform the very same estimate for the first two terms as \fref{eq:linboundary}, giving:
$$
\left| \int \hat \e_j^2 \left(\pa_{YY}\chi \left(\frac{1}{\phi_\ell^2(Z)Y}\right)+2\pa_Y \chi \pa_Y \left(\frac{1}{\phi_\ell^2(Z)Y}\right)\right)dY \right| \lesssim K_je^{-\frac 1 k s}.
$$
For the last term, from a direct computation, for $|Y|\geq 1$, one has that:
$$
\left| \pa_{YY} \left( \frac{1}{\phi_{\ell}^2(Z)}\frac{1}{Y}\right)\right| \lesssim \frac{1}{\phi_{\ell}^2(Z)}\frac{1}{Y^3}.
$$
Therefore, if $\ell=2k+1$, we take some $0<\kappa \ll 1$ small enough and split the integral using some $Y^*\gg 1$ large enough, and use \fref{id:decomposition e}, \fref{bd:eloc} and \fref{bd:cell bootstrap}:
\bee
&& \left| \int \chi \hat \e_j^2\pa_{YY} \left( \frac{1}{\phi_{2k+1}^2(Z)}\frac{1}{Y}\right) dY\right| \lesssim \int_{1}^{Y^*} \frac{\hat \e_j^2}{\phi_{2k+1}^2(Z)}dY+\int_{Y^*}^{+\infty} \hat \e_j^2  \frac{1}{\phi_{2k+1}^2(Z)} \frac{1}{|Y|^2}\frac{1}{Y}dY\\
&\lesssim &  \| \frac{1}{\phi_{2k+1}^2(Z)}\|_{L^{\infty}(1\leq Y\leq Y^*)}\| \hat \e_j \|_{L^2(1\leq Y\leq Y^*)}^2+\kappa  \int \chi \frac{ \hat \e_j^2}{\phi_{2k+1}^2(Z)}\frac{dY}{Y}\\
&\lesssim & K_j e^{\frac{k-1}{k}(2k+1) s}e^{-2(k-\frac 12)s}+\kappa  \int \chi \frac{ \hat \e_j^2}{\phi_{2k+1}^2(Z)}\frac{dY}{Y} \lesssim K_j e^{-\frac 1k s}+\kappa  \int \chi \frac{ \hat \e_j^2}{\phi_{2k+1}^2(Z)}\frac{dY}{Y}.
\eee
If $\ell =2k+1/2$, one uses the fact that $\phi_{2k+1}(Z)=Z^{1/2}\phi_{2k+1/2}$, so that
\be \lab{eq:estimationlowerlin}
 \frac{1}{\phi_{2k+1/2}^2(Z)}\frac{1}{Y^3}=\frac{1}{\phi_{2k+1}^2(Z)Y^2}e^{-\frac{k-1}{2k}s}
\ee
to estimate using \fref{bd:eweightlossy}:
$$
\left| \int \hat \e_j^2\pa_{YY} \left( \frac{1}{\phi_{2k+1/2}^2(Z)}\frac{1}{Y}\right) dY\right| \lesssim e^{-\frac{k-1}{2k}s} \int_{Y\geq 1} \frac{\hat \e_j^2}{\phi_{2k+1}^2(Z)}dY\lesssim K_Je^{-(\frac{1}{2}+\frac{1}{4k})s}\lesssim K_Je^{-\frac 1k s}.
$$
Collecting the above bounds one has proven that:
$$
\left|  \int \hat \e_j^2 \pa_{YY} \left( \frac{\chi}{\phi_{\ell}^2(Z)}\frac{1}{Y}\right)dY \right| \lesssim \left\{ \begin{array}{l l} K_Je^{-\frac 1k s}+\kappa  \int \chi \frac{ \hat \e_j^2}{\phi_{2k+1}^2(Z)}\frac{dY}{Y} \ \ \text{for} \ \ell=2k+1, \\ K_Je^{-\frac 1k s} \ \ \text{for} \ \ell=2k+1/2. \end{array} \right.
$$
We turn to the fifth term. We first estimate using Leibniz rule:
$$
| F_k(Z)\hat \e_j-(Y\pa_Y) ^j(F[a]\e)|\lesssim \sum_{n=0}^{j-1} |(Z\pa_Z)^{j-n}F[a]||(Y\pa_Y)^n \e|\lesssim |Z|^{2k}(1+|Z|)^{-4k}\sum_{n=0}^{j-1} |(Y\pa_Y)^n \e|.
$$
If $\ell=2k+1$ we then use Cauchy-Schwarz and \fref{bd:eweightlossy} to obtain, as $K_J>...>K_{0}$:
\bee
\left| \int \chi \frac{\hat \e_j(F_k(Z)\hat \e_j-(Y\pa_Y) ^j(F[a]\e)) }{\phi^2_{2k+1}(Z)} \frac{dY}{Y} \right| & \leq & C \left(\int_{Y\geq 1} \frac{\hat \e_j^2}{\phi^2_{2k+1}(Z)}\frac{dY}{Y} \right)^{\frac 12}\sum_{n=0}^{j-1} \left(\int_{Y\geq 1} \frac{((Y\pa_Y)^n \e)^2}{\phi^2_{2k+1}(Z)}\frac{dY}{Y} \right)^{\frac 12} \\
&\leq & \sqrt{K_j}\sqrt{K_{j-1}}e^{-\frac{3}{4k}s}.
\eee
If $\ell=2k+1/2$ we use the fact that
$$
\frac{|Z|^{2k}}{\phi_{2k+1/2}^2(Z)(1+|Z|)^{4k}}\leq \frac{1}{\phi_{2k+1}^2(Z)},
$$
which, combined with Cauchy-Schwarz and \fref{bd:eweightlossy}, gives:
\bee
\left| \int \chi \frac{\hat \e_j(F_k(Z)\hat \e_j-(Y\pa_Y) ^j(F[a]\e)) }{\phi^2_{2k+1/2}(Z)} \frac{dY}{Y} \right| & \lesssim & \left(\int_{Y\geq 1} \frac{\hat \e_j^2}{\phi^2_{2k+1}(Z)}\frac{dY}{Y} \right)^{\frac 12}\sum_{n=0}^{j-1} \left(\int_{Y\geq 1} \frac{((Y\pa_Y)^n \e)^2}{\phi^2_{2k+1}(Z)}\frac{dY}{Y} \right)^{\frac 12} \\
&\lesssim & \sqrt{K_j}\sqrt{K_{j-1}}e^{-\frac{3}{4k}s}.
\eee
One has then proven that:
$$
\left| \int \chi \frac{\hat \e_j(F_k(Z)\hat \e_j-(Y\pa_Y) ^j(F[a]\e)) }{\phi^2_{2k+1/2}(Z)} \frac{dY}{Y} \right| \lesssim \left\{ \begin{array}{l l} \sqrt{K_{j-1}}\sqrt{K_j}e^{-\frac{3}{4k}s} \ \ \text{for} \ \ell=2k+1, \\ \sqrt{K_{j-1}}\sqrt{K_j}e^{-\frac{3}{4k} s} \ \ \text{for} \ \ell=2k+1/2. \end{array} \right.
$$
We turn to the sixth term. Since from \fref{def:Fb} for any $j\in \mathbb N$:
$$
|(Z\pa_Z)^j (F_k(Z)-F[a])|\lesssim e^{-\frac{k-1}{k}s},
$$
one has using Cauchy-Schwarz that
$$
\left| \int \chi \frac{\hat \e_j(Y\pa_Y)^j((F_k(Z)-F[a])\e)}{\phi_{\ell}^2(Z)}\frac{dY}{Y} \right| \lesssim e^{-\frac{k-1}{k}s} \sum_{n=0}^j \int_{Y\geq 1} \frac{|(Y\pa_Y)^j \e|^2}{\phi_{\ell}^2(Z)}\frac{dY}{Y}
$$
which using \fref{bd:eweightlossy} implies that for some $0<\nu \ll 1$ be small enough:
$$
\left| \int \chi \frac{\hat \e_j(Y\pa_Y)^j((F_k(Z)-F[a])\e)}{\phi_{\ell}^2(Z)}\frac{dY}{Y} \right| \lesssim  \left\{ \begin{array}{l l} K_Je^{-\left(\frac{3}{4k}+\nu\right)s} \ \ \text{for} \ \ell=2k+1, \\ K_{j}e^{-\left(\frac{1}{2k}+\nu \right) s} \ \ \text{for} \ \ell=2k+1/2. \end{array} \right.
$$
For the seventh term, we use the fact that $\pa_{YY}=((Y\pa_Y)^2-Y\pa_Y)/Y^2$ to decompose:
\bee
\sum_{n=0}^{j-1}(Y\pa_Y)^{j-1-n} \pa_{YY} (Y\pa_Y)^n \e & = & Y^{-1}\pa_Y \hat \e_j  -\sum_{n=0}^{j-1} (Y\pa_Y)^{j-1-n}Y^{-2} (Y\pa_Y)^{n+1}\e\\
&&+\sum_{n=0}^{j-1} (Y\pa_Y)^{j-1-n}Y^{-2} (Y\pa_Y)^{n+2}\e- Y^{-2} (Y\pa_Y)^{j+1}\e .
\eee
For the first term, we integrate by parts and find:
$$
\int \chi \frac{\hat \e_j Y^{-1}\pa_Y \hat \e_j}{\phi_{\ell}^2(Z)}\frac{dY}{Y}= \left|-\frac 12 \int \chi\hat \e_j^2 \pa_Y \left( \frac{1}{\phi_{\ell}^2(Z)Y^2}\right)dY-\frac 12 \int \pa_Y \chi \frac{\hat \e_j^2}{\phi_\ell^2(Z)}\frac{dY}{Y^2}\right|\lesssim \int_{Y\geq 1} \frac{\hat \e_j^2}{\phi_{\ell}^2(Z)Y^3} dY.
$$
If $\ell=2k+1$, we take $0<\kappa \ll 1$ small enough and $Y^*$ large enough so that, using \fref{id:decomposition e}, \fref{bd:eloc} and \fref{bd:cell bootstrap}:
\bee
&& \int_{Y\geq 1} \frac{\hat \e_j^2}{\phi_{\ell}^2(Z)Y^3} dY \lesssim \int_1^{Y^*} \frac{\hat \e_j^2}{\phi_{\ell}^2(Z)Y^3} dY+ \int_{Y^*}^{+\infty} \frac{\hat \e_j^2}{\phi_{\ell}^2(Z)Y^3} dY\\
& \lesssim & \| \frac{1}{\phi_{\ell}^2(Z)}\|_{L^{\infty}(1\leq Y\leq Y^*)} \| \hat \e_j\|_{L^2_\rho}^2+\kappa \int \chi \frac{\hat \e_j^2}{\phi_\ell^2(Z)}\frac{dY}{Y} \lesssim K_Je^{-\frac 1k s}+\kappa \int \chi \frac{\hat \e_j^2}{\phi_\ell^2(Z)}\frac{dY}{Y}.\\
\eee
If $\ell=2k+1/2$, we use \fref{eq:estimationlowerlin}, \fref{bd:eweightlossy} and obtain for $s^*$ large enough:
$$
\int_{Y\geq 1} \frac{\hat \e_j^2}{\phi_{\ell}^2(Z)Y^3} dY \lesssim e^{-\frac{k-1}{2k}s} \int_{Y\geq 1} \frac{\hat \e_j^2}{\phi_{2k+1}^2(Z)}dY\lesssim K_Je^{-(\frac{1}{2}+\frac{1}{4k})s}\leq e^{-\frac 1k s},
$$
If $\ell=2k+1$, one estimates using \fref{bd:eweightlossy} that
\bee
&&\int \chi \frac{\left|\sum_{n=0}^{j-1} (Y\pa_Y)^{j-1-n}Y^{-2} (Y\pa_Y)^{n+1}\e+\sum_{n=0}^{j-1} (Y\pa_Y)^{j-1-n}Y^{-2} (Y\pa_Y)^{n+2}\e- Y^{-2} (Y\pa_Y)^{j+1}\e\right|^2}{\phi_{2k+1}^2}\frac{dY}{Y} \\
&\lesssim & \sum_{n=0}^{j-1} \int_{Y\geq 1} \frac{((Y\pa_Y)^j \e)^2}{Y^2\phi_{2k+1}^2}\frac{dY}{Y} \lesssim K_{j-1} e^{-\frac{3}{4k}s}.
\eee
If $\ell=2k+1/2$, one estimates using \fref{bd:eweightlossy} and \fref{eq:estimationlowerlin} that
\bee
&&\int \chi \frac{\left|\sum_{n=0}^{j-1} (Y\pa_Y)^{j-1-n}Y^{-2} (Y\pa_Y)^{n+1}\e+\sum_{n=0}^{j-1} (Y\pa_Y)^{j-1-n}Y^{-2} (Y\pa_Y)^{n+2}\e- Y^{-2} (Y\pa_Y)^{j+1}\e\right|^2}{\phi_{2k+1/2}^2}\frac{dY}{Y} \\
&\lesssim & e^{-(\frac{1}{2}+\frac{1}{4k})s} \sum_{n=0}^{j-1} \int_{Y\geq 1} \frac{((Y\pa_Y)^j \e)^2}{\phi_{2k+1}^2}\frac{dY}{Y} \lesssim K_{j-1}e^{-\left(\frac 12 +\frac{1}{k}\right)s}.
\eee
One has therefore proven that, taking $s^*$ large enough:
\bee
&& \left| \int \chi \frac{\hat \e_j\sum_{n=0}^{j-1}(Y\pa_Y)^{j-1-n}\pa_{YY}(Y\pa_Y)^n\e}{\phi_{\ell}^2(Z)}\frac{dY}{Y} \right| \\
&\leq &  \left\{ \begin{array}{l l} CK_{j-1} e^{-\left(\frac{3}{4k}\right)s}+ \kappa  \int \chi \frac{ \hat \e_j^2}{\phi_{2k+1}^2(Z)}\frac{dY}{Y}\ \ \text{for} \ \ell=2k+1, \\ e^{-\frac{1}{k} s} \ \ \text{for} \ \ell=2k+1/2. \end{array} \right.
\eee
Since $|Z\pa_Z \phi_{\ell}/\phi_{\ell}|$ is bounded, one infers for the eighth term from \fref{eq:modulation}, \fref{bd:eweightlossy} and \fref{bd:eweightsharp} that for $\nu>0$ small enough:
$$
\left| \frac{a_s}{a}\int \chi \frac{\hat \e_j^2}{\phi_{\ell}^2(Z)}\frac{(Z\pa_Z \phi_{\ell})(Z)}{\phi_{\ell}(Z)}\frac{dY}{Y}\right| \lesssim  \left\{ \begin{array}{l l} K_J e^{-\left(\frac{3}{4k}+\nu \right)s} \ \ \text{for} \ \ell=2k+1, \\ K_{j}e^{-\left(\frac{1}{2k}+\nu \right) s} \ \ \text{for} \ \ell=2k+1/2. \end{array} \right.
$$
For the ninth term, from \fref{bd:Psi2}:
\bee
&& \int_{Y\geq 1} \frac{((Z\pa_Z)^j\Psi)^2}{\phi_{\ell}^2}\frac{dY}{Y} \lesssim e^{-\frac{k-1}{k}2s} \int_{Y\geq 1} \frac{|Z|^{8k-4}(1+|Z|)^{-12k}}{|Z|^{2\ell}(1+|Z|)^{-8k}}\frac{dY}{Y}\\
&\lesssim & e^{-\frac{k-1}{k}2s} \int_{e^{-\frac{k-1}{2k}s}}^{+\infty} |Z|^{8k-4-2\ell}(1+|Z|)^{-4k}\frac{dZ}{Z}  \lesssim  e^{-\frac{k-1}{k}2s} \lesssim e^{-\frac{3}{2k} s}
\eee
and from \fref{eq:modulation} and \fref{bd:pabF2}:
\bee
&&\int_{Y\geq 1} \frac{ |a_s (Z\pa_Z)^j\pa_a F[a](Z)|^2}{\phi_{\ell}^2(Z)}\frac{dY}{Y} \lesssim  |a_s|^2 \int_{Y\geq 1} \chi \frac{|Z|^{4k}(1+|Z|)^{-8k}}{|Z|^{2\ell}(1+|Z|)^{-8k}}\frac{dY}{Y}\\
&\lesssim & e^{-(1+\frac{1}{2k})s}\int_{e^{-\frac{k-1}{2k}s}}^{+\infty} |Z|^{4k-2\ell} \frac{dZ}{Z} \lesssim e^{-\frac{3}{2k}s}.
\eee
Therefore, using Cauchy Schwarz, \fref{bd:eweightlossy} and \fref{bd:eweightsharp}, for $\nu>0$ small enough
$$
\left| \int \chi \frac{\hat \e_j}{\phi_{\ell}(Z)} \frac{(Y\pa_Y)^j(a_s \pa_aF[a]+\Psi)}{\phi_{\ell}(Z)}\frac{dY}{Y} \right| \lesssim  \left\{ \begin{array}{l l} K_J e^{-\left(\frac{3}{4k}+\nu \right)s} \ \ \text{for} \ \ell=2k+1, \\ K_{j}e^{-\left(\frac{1}{2k}+\nu \right) s} \ \ \text{for} \ \ell=2k+1/2. \end{array} \right.
$$
Finally, for the last term, using \fref{bd:eLinfty}, \fref{bd:eweightlossy} and \fref{bd:eweightsharp} for $\nu>0$ small enough.
\bee
\left| \int \chi \frac{ \hat \e_j(Y\pa_Y)^j(\e^2)}{\phi_{\ell}^2(Z)}\frac{dY}{Y} \right| &\lesssim& \left(\int_{Y\geq 1} \frac{\hat \e_j^2}{\phi_{\ell}^2}\frac{dY}{Y} \right)^{\frac 12} \left( \sum_{n=0}^{j-1} \| (Y\pa_Y)^n \e\|_{L^{\infty}}\right)\left( \sum_{n=0}^{j} \left(\int_{Y\geq 1} \frac{((Y\pa_Y)^j \e)^2}{\phi_{\ell}^2}\frac{dY}{Y} \right)^{\frac 12} \right) \\
& \lesssim & \left\{ \begin{array}{l l} K_J e^{-\left(\frac{3}{4k}+\nu \right)s} \ \ \text{for} \ \ell=2k+1, \\ K_{j}e^{-\left(\frac{1}{2k}+\nu \right) s} \ \ \text{for} \ \ell=2k+1/2. \end{array} \right.
\eee
Combining all the above estimates, for any $\kappa>0$, for some $\nu>0$ small enough depending only on $k$, and for $s^*$ large enough, then gives the two identities:
$$
\frac{d}{ds}\left( \int \chi \frac{ ((Y\pa_Y)^j \e)^2}{\phi_{2k+1}^2(Z)}\frac{dY}{Y} \right) \leq -\left(\frac 1k -\kappa \right) \int \chi \frac{ ((Y\pa_Y)^j \e)^2}{\phi_{2k+1}^2(Z)}\frac{dY}{Y}+C\sqrt{K_{j-1}}\sqrt{K_j}e^{-\frac{3}{4k}s}
$$
$$
\frac{d}{ds}\left( \int \chi \frac{ ((Y\pa_Y)^j \e)^2}{\phi_{2k+1/2}^2(Z)}\frac{dY}{Y} \right) \leq - \frac{1}{2k} \int \chi \frac{ ((Y\pa_Y)^j \e)^2}{\phi_{2k+1}^2(Z)}\frac{dY}{Y} +CK_je^{-(\frac{1}{2k}+\nu)s}.
$$
with the convention $K_{-1}=1$. After integration in time using \fref{bd:decayextinit}, and \fref{bd:eloc} for the zone $1\leq Y\leq 2$, the above differential inequality shows that, upon choosing the constants $K_j$ inductively one after another, \fref{bd:lossy improved} holds true.\\

\noindent \textbf{Step 2:} \emph{Proof of \fref{bd:lossy improved3}}. Let $j\geq 2k+1$. We claim that this bound can be proved the very same way as in step 1. The main argument is the following. $\bar \e_j:=\pa_Z^j \e$ solves from \fref{eq:evolutione}:
\bee
&&\pa_s \bar \e_j+\frac{j+2k}{2k}\bar \e_j+\frac{Y}{2}\pa_Y \bar \e_j-2F_k(Z)\bar \e_j-\pa_{YY}\bar \e_j \\
&=& 2(\pa_Z^j(F_k(Z)\e)-F_k(Z)\bar \e_j)-2\pa_Z^j((F_k(Z)-F[a])\e)-\pa_Z^j(a_s\pa_a F[a]+\Psi).
\eee
The function $\phi_0(Z)$ is a stable eigenfunction of the operator without dissipation in the left hand side:
$$
\left(\frac{j+2k}{2k}+\frac{Y}{2}\pa_Y -2F_k(Z)\right)\phi_0(Z)=\frac{j-2k}{2k}\phi_0
$$
and in particular $(j-2k)/2k\geq 1/(2k)$ since $j\geq 2k+1$. As $1/(2k)>1/(4k)$, one can then prove \fref{bd:lossy improved3} as in Step 1, checking that all the terms in the right hand side of the equation for $w$ are lower order, and that the boundary terms at the origin are controlled by \fref{bd:eloc}.

\end{proof}

We can now end the proof of Proposition \ref{pr:bootstrap}.

\begin{proof}[Proof of Proposition \ref{pr:bootstrap}]

We assume that $\e_0$ is fixed satisfying the orthogonality conditions \fref{id:decomposition e} and the initial bounds \fref{bd:eL2rhoinit}, \fref{bd:decayextinit2} and \fref{bd:decayextinit}. We assume $K_J\geq ...\geq K_0>0$ are fixed so that the Lemmas \ref{NLH:lem:energyrho} and \ref{NLH:lem:improvedoustide} hold true. We let $\tilde K\geq K_J$ to be fixed at the end of the proof. We use Lemma \ref{lem:implicit} to relate the initial coefficients $(\bar c_{2\ell}(s_0))_{0\leq \ell \leq k-1}$ and $(c_{2\ell}(s_0))_{0\leq \ell \leq k-1}$. We consider for all possible initial values $(c_{2\ell}(s_0))_{0\leq \ell \leq k-1}\in B(\tilde K e^{-(k-\frac 12+\frac{1}{4k})s_0})$ the corresponding solution to \fref{eq:f} with initial datum \fref{bd:initialdecompoNLH}.

We let the exit time $s_e\in [s_0,+\infty]$ be the supremum of times $s_1\geq s_0$ such that the solution is trapped on $[s_0,s_1]$. From Definition \ref{def:trappedNLH} and Lemmas \ref{NLH:lem:energyrho} and \ref{NLH:lem:improvedoustide} and a continuity argument, if $s_e<+\infty$ then necessarily the inequality \fref{bd:cell bootstrap} is saturated at time $s_e$:
$$
|(c_{2\ell}(s_e))_{0\leq \ell \leq k-1}|=\tilde K e^{-(k-\frac 12+\frac{1}{4k})s_e}.
$$
Hence the exit mapping $\Phi$ which to $(c_{2\ell}(s_0))_{0\leq \ell \leq k-1}\in B(\tilde K e^{-(k-\frac 12+\frac{1}{4k})s_0})$ such that $s_e<+\infty $ associates
$$
\Phi ((c_{2\ell}(s_0))_{0\leq \ell \leq k-1})= e^{-(k-\frac 12+\frac{1}{4k})(s_0-s_e)}(c_{2\ell}(s_e))_{0\leq \ell \leq k-1}
$$
is a mapping whose domain is a subset of the ball $B(\tilde K e^{-(k-\frac 12+\frac{1}{4k})s_0})$ and whose range lies in its boundary the sphere $S(\tilde K e^{-(k-\frac 12+\frac{1}{4k})s_0})$.

We claim that one can choose $\tilde K\geq K_J$ such that for $s^*$ large enough, $\Phi$ is the identity map on the sphere $S(\tilde K e^{-(k-\frac 12+\frac{1}{4k})s_0})$. Indeed, for any $\tilde K\geq K_J$ if initially $|(c_{2\ell}(s_0))_{0\leq \ell \leq k-1}|=\tilde K e^{-(k-\frac 12+\frac{1}{4k})s_0}$ then for $s^*$ large enough one computes from \fref{eq:modulation} the outgoing flux condition
\bee
&&\pa_s \left( \frac{\sum_{\ell=0}^{2k-1}|c_{\ell}(s)|^2}{\tilde K^2 e^{-2(k-\frac 12+\frac{1}{4k})s}} \right)(s_0)=2\left(k-\frac 12 +\frac{1}{4k} \right)\frac{\sum_{\ell=0}^{2k-1}|c_{\ell}(s_0)|^2}{\tilde K^2 e^{-2(k-\frac 12+\frac{1}{4k})s_0}}+2\frac{\sum_{\ell=0}^{2k-1}c_{\ell}(s_0)\pa_s c_{\ell}(s_0)}{\tilde K^2 e^{-2(k-\frac 12+\frac{1}{4k})s_0}}\\
&=&2\left(k-\frac 12 +\frac{1}{4k} \right)\frac{\sum_{\ell=0}^{2k-1}|c_{\ell}(s_0)|^2}{\tilde K^2 e^{-2(k-\frac 12+\frac{1}{4k})s_0}}+2\frac{\sum_{\ell=0}^{2k-1}c_{\ell}(s_0)\left(-\frac{\ell-2}{2}c_{\ell}(s_0)+O(K_Je^{-(k-\frac 12 +\frac{1}{4k})s_0}) \right)}{\tilde K^2 e^{-2(k-\frac 12+\frac{1}{4k})s_0}} \\
&=&\frac{2}{\tilde K^2 e^{-2(k-\frac 12+\frac{1}{4k})s_0}}\sum_{\ell=0}^{2k-1}\left(k-\frac 12 +\frac{1}{4k}-\frac{\ell-2}{2}\right) |c_{\ell}(s_0)|^2+O\left(\frac{K_J}{\tilde K} \right) \\
&\geq & 1+\frac{1}{4k}+O\left(\frac{K_J}{\tilde K} \right)
\eee
One then chooses $\tilde K\geq K_J$ depending on $K_J$ such that the above quantity is $\geq 1$. Then in that case the solution leaves the trapped regime immediately at time $s_0$ since \fref{bd:cell bootstrap} fails right after $s_0$. Hence $s_e=s_0$, and $\Phi((c_{2\ell}(s_0))_{0\leq \ell \leq k-1})=((c_{2\ell}(s_0))_{0\leq \ell \leq k-1})$ is indeed the identity map on the sphere $S(\tilde K e^{-(k-\frac 12+\frac{1}{4k})s_0})$.

From the above outgoing flux condition and a standard continuity argument, $\Phi$ is continuous on its domain. $\Phi$ is thus a continuous mapping from the ball onto its boundary, that is the identity on its boundary. The domain of $\Phi$ cannot be the entire ball, as this would contradict Brouwer's fixed point Theorem. This means that there exists one $(c_{2\ell}(s_0))_{0\leq \ell \leq k-1}\in B(\tilde K e^{-(k-\frac 12+\frac{1}{4k})s_0})$ such that $s_e=+\infty$, which ends the proof of the Proposition.

\end{proof}


\subsection{The coupled linear heat equation} \lab{sec:LFH}

We now turn to the proof of Proposition \ref{pr:LHinstable}. We keep the notations of the previous section. The proof is similar and simpler to the one of the related Theorem \ref{th:NLHinstable} concerning $\xi$. Indeed, there are no nonlinear effects and instabilities. For a solution $\zeta$ to $(LFH)$ we start by going again to self-similar variables
$$
Y=\frac{y}{\sqrt{T-t}}, \ \ s=-\log (T-t), \ \ g(s,y)=(T-t)^4\zeta(x,t), \ \ Z:=\frac{a^*Y}{e^{\frac{k-1}{2k}s}},
$$
Then $g$ solves the second equation in \fref{eq:f}. Throughout this section, we assume that $k$ and $J$ are fixed, and that $f$ is the solution to the first equation in \fref{eq:f} satisfying the properties of Proposition \ref{pr:bootstrap}. In particular, in the current subsection, all the constants appearing in the previous subsection are considered as fixed and universal. Without loss of generality for the argument, since its exact value will never play a role, we fix:
$$
a^*=1.
$$
In particular since $f=F_k(a^*e^{-(k-1)/(2k)}Y)$ at leading order, and since the dissipation is lower order, the main order equation reads in $Z$ variable:
$$
g_s+\mathcal M_Z g=0, \ \ \mathcal M_Z:=4-4F_k(aZ)+\frac{Z}{2k} \pa_Z.
$$

\begin{proposition}[Spectral structure for $\mathcal M_Z$] \lab{pr:mathcalMZ}

The operator $\mathcal M_Z$ acting on $\mathcal C^{\infty}(\mathbb R)$ has point spectrum $\Upsilon (\mathcal M_Z)=\left\{\ell/(2k), \ \ \ell\in \mathbb N \right\}$ and the associated eigenfunctions are
$$
\psi_{\ell}:=\frac{Z^{\ell}}{(1+(aZ)^{2k})^4}, \ \ \mathcal M_Z \psi_\ell=\psi_\ell .
$$

\end{proposition}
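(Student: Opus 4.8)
This is a direct computation, in the same spirit as the spectral analysis of $H_Z$ in Proposition \ref{pr:Fk}. The plan is first to integrate the first order ODE $\mathcal M_Z \psi = \nu \psi$ to see that the only smooth eigenfunctions are, up to scalar, the $\psi_\ell$, and then to confirm by direct differentiation that each $\psi_\ell$ is indeed an eigenfunction with the announced eigenvalue.

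First I would rewrite the zeroth order potential using $F_k(aZ) = (1+(aZ)^{2k})^{-1}$, namely
$$
4 - 4F_k(aZ) = \frac{4(aZ)^{2k}}{1+(aZ)^{2k}},
$$
so that the eigenvalue equation $\mathcal M_Z \psi = \nu \psi$ becomes the separable ODE
$$
\frac{Z}{2k}\pa_Z \psi = \left(\nu - \frac{4(aZ)^{2k}}{1+(aZ)^{2k}}\right)\psi.
$$
Since $\pa_Z \log(1+(aZ)^{2k}) = 2k a^{2k} Z^{2k-1}/(1+(aZ)^{2k})$, dividing by $Z/(2k)$ the right hand side equals $(2k\nu/Z)\psi - 4\,\psi\,\pa_Z\log(1+(aZ)^{2k})$, and integrating gives
$$
\psi(Z) = c\, Z^{2k\nu}(1+(aZ)^{2k})^{-4}
$$
on each of the half lines $\{Z>0\}$ and $\{Z<0\}$. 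For $\psi$ to extend to a $C^\infty(\mathbb R)$ function one needs $2k\nu$ to be a nonnegative integer, i.e. $\nu = \ell/(2k)$ with $\ell \in \mathbb N$, and then $\psi$ is a scalar multiple of $\psi_\ell = Z^\ell(1+(aZ)^{2k})^{-4}$; this yields $\Upsilon(\mathcal M_Z) = \{\ell/(2k),\ \ell \in \mathbb N\}$. Conversely, a one line differentiation gives
$$
\frac{Z}{2k}\pa_Z \psi_\ell = \frac{\ell}{2k}\psi_\ell - \frac{4(aZ)^{2k}}{1+(aZ)^{2k}}\psi_\ell,
$$
so that $\mathcal M_Z \psi_\ell = 4\psi_\ell - 4F_k(aZ)\psi_\ell + \tfrac{Z}{2k}\pa_Z\psi_\ell = (\ell/2k)\psi_\ell$, using once more the identity $4 - 4F_k(aZ) - 4(aZ)^{2k}/(1+(aZ)^{2k}) = 0$.

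There is no genuine obstacle in this argument; the only point requiring a little care is the meaning of ``eigenfunction on $C^\infty(\mathbb R)$'': the ODE has a regular singular point at $Z=0$, and it is exactly the requirement of smoothness there that quantizes the exponent $2k\nu$ and forces the point spectrum to be discrete, in complete analogy with the treatment of $H_Z$ in Proposition \ref{pr:Fk} and of $H_X$ in Proposition \ref{pr:H}.
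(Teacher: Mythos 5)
Your proof is correct and takes the same route the paper intends: Proposition \ref{pr:mathcalMZ} is established in the paper by "direct computation" (as you do), and your ODE integration across the regular singular point at $Z=0$ mirrors the treatment of $H_X$ in Proposition \ref{pr:H}. One remark worth flagging: the formula $\mathcal M_Z \psi_\ell=\psi_\ell$ in the statement is a typographical slip — the correct identity, which you derive, is $\mathcal M_Z\psi_\ell=\tfrac{\ell}{2k}\psi_\ell$, in agreement with the asserted point spectrum $\{\ell/(2k)\}$.
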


\begin{proof}

The result comes from a direct computation.

\end{proof}

$\mathcal M_Z$ having a nontrivial kernel and non-negative spectrum, we expect formally the solution to approach an element of its kernel as $s\rightarrow +\infty$. Near the origin, as $f=F_k(a^*e^{-(k-1)/(2k)}Y)$ at leading order and since $F_k(0)=1$, the main order equation for $g$ in the zone $|Y|\lesssim 1$ is:
$$
g_s+\mathcal M_\rho g=0, \ \ \mathcal M_\rho:=\frac{Y}{2} \pa_Y-\pa_{YY}.
$$

\begin{proposition}[Linear structure at the origin, (see e.g. \cite{MZ}] \lab{pr:Mrho}

The operator $\mathcal M_{\rho}$ is essentially self-adjoint on $C^2_0(\mathbb R)\subset L^2(\rho)$ with compact resolvant. The space $H^2_\rho$ is included in the domain of its unique self-adjoint extension. Its spectrum is $ \Upsilon (\mathcal M_\rho) = \left\{\ell/2, \ \ell \in \mathbb N \right\}$. The eigenvalues are all simple and the associated orthonormal basis of eigenfunctions is given by the family of Hermite polynomials $(h_{\ell})_{\ell \in \mathbb N}$ defined by \fref{def:hell}.

\end{proposition}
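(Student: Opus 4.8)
The plan is to reduce the whole statement to the already-established Proposition~\ref{pr:Lrho} by observing that $\mathcal M_\rho$ differs from $H_\rho$ only by the identity. Indeed, comparing definitions,
\[
\mathcal M_\rho = \frac{Y}{2}\partial_Y - \partial_{YY} = H_\rho + \mathrm{Id}\qquad\text{on }C^2_0(\mathbb R),
\]
since $H_\rho = -1 + \frac Y2\partial_Y - \partial_{YY}$. Adding a bounded self-adjoint operator (here, $\mathrm{Id}$) to a symmetric operator changes neither its domain, nor its closure, nor whether it is essentially self-adjoint, and the domain of the unique self-adjoint extension is the same; by Proposition~\ref{pr:Lrho} this domain contains $H^2_\rho$. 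Likewise, for $z\notin \Upsilon(H_\rho)+1$ one has $(\mathcal M_\rho - z)^{-1} = (H_\rho - (z-1))^{-1}$, so the resolvent of $\mathcal M_\rho$ is compact precisely because that of $H_\rho$ is, which is part of Proposition~\ref{pr:Lrho}. Finally the spectral mapping is trivial: $\Upsilon(\mathcal M_\rho) = \Upsilon(H_\rho) + 1 = \bigl\{\tfrac{\ell-2}{2}:\ell\in\mathbb N\bigr\} + 1 = \bigl\{\tfrac{\ell}{2}:\ell\in\mathbb N\bigr\}$, with the same eigenspaces, all one-dimensional, each spanned by the Hermite polynomial $h_\ell$ of \fref{def:hell}, which form an orthonormal basis of $L^2_\rho$.

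For completeness I would recall the direct verification that the $h_\ell$ diagonalise $\mathcal M_\rho$. Writing $\rho(Y)=e^{-Y^2/4}$, an integration by parts yields the Sturm--Liouville form $\mathcal M_\rho u = -\rho^{-1}\partial_Y(\rho\,\partial_Y u)$, which exhibits $\mathcal M_\rho$ as a symmetric nonnegative operator on $C^2_0(\mathbb R)$ for the scalar product $\langle\cdot,\cdot\rangle_\rho$; and the Hermite polynomials solve the associated eigenvalue ODE $\partial_{YY}h_\ell - \frac Y2\partial_Y h_\ell + \frac\ell2 h_\ell = 0$, which is checked directly from \fref{def:hell} by matching coefficients (or by the standard recursion). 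Their density in $L^2_\rho$ — equivalently, completeness of the eigensystem, which together with $\ell/2\to+\infty$ also re-proves compactness of the resolvent — is the classical density of polynomials in a Gaussian-weighted $L^2$ space.

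There is essentially no obstacle here: the statement is a verbatim analogue of Proposition~\ref{pr:Lrho} up to the shift by $\mathrm{Id}$, and the only computation involved — that the $h_\ell$ are the eigenfunctions of $\mathcal M_\rho$ with eigenvalues $\ell/2$ — is the classical Hermite spectral decomposition, already invoked in Proposition~\ref{pr:Lrho}. Accordingly, I would simply note that the proof is identical to that of Proposition~\ref{pr:Lrho} after adding $1$ to the operator, and point to the standard references (e.g.\ \cite{MZ}) as the paper already does.
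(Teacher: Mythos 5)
Your reduction $\mathcal M_\rho = H_\rho + \mathrm{Id}$ is correct, and the consequences you draw (preservation of essential self-adjointness, domain, compactness of the resolvent, and the unit shift of the spectrum $\{(\ell-2)/2\}\mapsto\{\ell/2\}$ with the same Hermite eigenfunctions) all follow immediately. The paper itself supplies no proof for either Proposition~\ref{pr:Lrho} or Proposition~\ref{pr:Mrho}, deferring both to \cite{MZ}; your observation that the second is the first shifted by the identity is exactly the clean way to dispatch it given the first, and the supplementary verification via the Sturm--Liouville form $\mathcal M_\rho u=-\rho^{-1}\partial_Y(\rho\,\partial_Y u)$ and the Hermite ODE is a valid self-contained alternative.
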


We now perform a bootstrap analysis and decompose the solution according to:
\be \lab{eq:decompo v}
g=b(s)F^4_k\left(Z^*\right)+\e
\ee
where $Z^*=a^*e^{-\frac{k-1}{2k}s}Y$ and without loss of generality since the value of $a$ never plays a role we take $a^*=1$ for simplicity, which fixes $
Z^*=e^{-\frac{k-1}{2k}s} Y=Z$, and where $b$ is fixed through the orthogonality condition for the $\langle \cdot,\cdot \rangle_\rho$ scalar product:
\be \lab{id:ortho2}
\e \perp h_0.
\ee

\begin{proposition} \lab{pr:bootstrap2}

There exist $L_{J} \geq ... \geq L_0>0$ and $s_0\gg 1$ large enough, such that for $\e (s_0)=\e_0$ satisfying the orthogonality condition \fref{id:ortho2} and
\be \lab{bd:bootstrap 2 psi1 init}
\sum_{j=0}^J \int_{|Y|\geq 1} \frac{|(Z\pa_Z)^j \e_0|^2}{\psi_{1}^2(Z)}\frac{dY}{|Y|}\leq e^{-\frac{3}{4k}s_0}, \ \ \ \sum_{j=0}^J  \int_{|Y|\geq 1} \frac{|(Z\pa_Z)^j \e_0|^2}{\psi_{1/2}^2(Z)}\frac{dY}{|Y|}\leq e^{-\frac{1}{2k}s_0},
\ee
\be \lab{bd:bootstrap 2 Z}
\sum_{j=1}^J  \int_{|Y|\geq 1} \frac{| \pa_Z^j \e_0|^2}{\psi_0^2(Z)}\frac{dY}{|Y|}\leq e^{-\frac{1}{2k}s_0},
\ee
\be \lab{bd:bootstrap 2 rho init}
\|  \e_0 \|_{L^2_{\rho}}\leq e^{-\frac 12s_0}, \ \ \|  \pa_Z^j \e_0 \|_{L^2_{\rho}}\leq e^{-\frac{1}{2k}s_0}, \ \ \text{for} \ j=1,...,J,
\ee
and an initial parameter $3/4\leq b \leq 5/4$ the solution $g$ to the second equation in \fref{eq:f} then satisfies for all $s\geq s_0$, for $j=0,...,J$:
\be \lab{bd:bootstrap 1 Z}
\int_{|Y|\geq 1} \frac{|(Z\pa_Z)^j \e|^2}{\psi_{1}^2(Z)}\frac{dY}{|Y|}\leq L_j e^{-\frac{3}{4k}s}, \ \ \  \int_{|Y|\geq 1} \frac{|(Z\pa_Z)^j \e|^2}{\psi_{1/2}^2(Z)}\frac{dY}{|Y|}\leq L_{j} e^{-\frac{1}{2k}s},
\ee
\be \lab{bd:bootstrap 3 Z}
 \int_{|Y|\geq 1} \frac{| \pa_Z^j \e|^2}{\psi_0^2(Z)}\frac{dY}{|Y|}\leq L_j e^{-\frac{1}{2k}s},
\ee
\be \lab{bd:bootstrap 2 rho2}
\|  \e \|_{L^2_{\rho}}\leq \sqrt{L_0} e^{-\frac 12s}, \ \ \|  \pa_Z^j \e \|_{L^2_{\rho}}\leq L_J e^{-\frac{1}{2k}s}, \ \ \text{for} \ j=1,...,J,
\ee
and there exists an asymptotic parameter $1/2\leq b^*\leq 3/2$ such that $ |b-b^*|\lesssim e^{-(k-1)s}$.
 
\end{proposition}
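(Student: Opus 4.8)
The plan is to run the same bootstrap scheme as in the proof of Proposition \ref{pr:bootstrap} for $\xi$, which is strictly simpler here since $(LFH)$ is linear: there are no nonlinear terms and no unstable directions, so the topological (Brouwer) step is replaced by a plain continuity argument.

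First I would derive the evolution equation for the remainder $\e$ in \fref{eq:decompo v}. Using that $\mathcal M_Z(F_k^4)=0$ by Proposition \ref{pr:mathcalMZ}, that $Z\pa_Z=Y\pa_Y$, and plugging $g=b(s)F_k^4(Z)+\e$ into the second equation of \fref{eq:f}, one obtains
$$
\e_s+\mathcal M_Z\e-e^{-\frac{k-1}{k}s}\pa_{ZZ}\e+b_sF_k^4(Z)+4\big(F_k(Z)-f\big)\big(bF_k^4(Z)+\e\big)-be^{-\frac{k-1}{k}s}\pa_{ZZ}F_k^4(Z)=0,
$$
where near the origin $\mathcal M_Z$ degenerates to $\mathcal M_\rho$ since $F_k\equiv 1$ there. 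All forcing terms are admissible perturbations: $f$ is uniformly bounded so $4f$ is a bounded potential, $\|F_k-f\|_{L^2_\rho}$ and $\|F_k-f\|_{L^\infty(|Y|\ge 1)}$ decay exponentially by Proposition \ref{NLH:pr:profile} together with the bounds of Proposition \ref{pr:bootstrap} and Lemma \ref{NLH:lem:pointwise}, and the dissipative source $e^{-\frac{k-1}{k}s}\pa_{ZZ}F_k^4$ is lower order since $k\ge 2$.

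Next I would establish the modulation law for $b$ by taking the $L^2_\rho$ scalar product of this equation with $h_0$. Since $\e\perp h_0$ and $\langle F_k^4,h_0\rangle_\rho=c+O(e^{-(k-1)s})$ with $c\ne 0$ (because $h_0$ is a nonzero constant, $F_k^4\to 1$ near the origin, and $|F_k^4-1|\lesssim e^{-(k-1)s}Y^{2k}$), the source bounds give $|b_s|\lesssim e^{-(k-1)s}$; integrating shows $b(s)$ is Cauchy, hence converges to some $b^*$ with $|b-b^*|\lesssim e^{-(k-1)s}$, and $|b^*-1|\le 2\epsilon$ if $s_0$ is large. Then come the energy estimates, in complete analogy with Section \ref{sec:NLH}. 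In the inner region $|Y|\le 1$, weighted $L^2_\rho$ estimates for $\pa_Z^j\e$, $j=0,\dots,J$, use that $\mathcal M_\rho$ is self-adjoint with spectrum $\{\ell/2\}$ (Proposition \ref{pr:Mrho}) and that the orthogonality $\e\perp h_0$ removes the kernel, yielding the coercivity $\langle\e,\mathcal M_\rho\e\rangle_\rho\ge\tfrac12\|\e\|_{L^2_\rho}^2$, exactly as in Lemma \ref{NLH:lem:energyrho}; this gives \fref{bd:bootstrap 2 rho2}. In the outer region $|Y|\ge 1$, weighted estimates with the weights $\psi_1,\psi_{1/2},\psi_0$ and the commuting field $Z\pa_Z$ reproduce Lemma \ref{NLH:lem:improvedoustide}: the relation $\mathcal M_Z\psi_\ell=\tfrac{\ell}{2k}\psi_\ell$ supplies the damping, $-e^{-\frac{k-1}{k}s}\pa_{ZZ}$ has a favourable sign, and the boundary terms at $|Y|=1$ are absorbed using the inner bounds; this gives \fref{bd:bootstrap 1 Z}--\fref{bd:bootstrap 3 Z}. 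The pointwise estimates needed to handle the potential $4f\e$ follow from Sobolev embedding applied to these norms, as in Lemma \ref{NLH:lem:pointwise}.

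Finally, choosing the constants $K_0\ll K_1\ll\dots\ll K_{J+1}$ hierarchically and $s_0$ large makes every improved bound strict at any endpoint time $s_1$, so a continuity argument closes the bootstrap with no topological input, and integrating the $b_s$ estimate yields the claimed convergence of $b$. I expect the only genuine bookkeeping point --- and the closest thing to an obstacle --- to be checking that the coupling with $\xi$ enters \emph{only} through the bounded, exponentially decaying potential $4(f-F_k)$ and through the listed source terms; once the bounds of Propositions \ref{NLH:pr:profile} and \ref{pr:bootstrap} are inserted there, each step is a lighter copy of the corresponding one in Section \ref{sec:NLH}.
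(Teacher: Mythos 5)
Your proposal matches the paper's proof of Proposition \ref{pr:bootstrap2} essentially step for step: the same decomposition \fref{eq:decompo v} with the orthogonality \fref{id:ortho2}, the same modulation equation for $b$ obtained by projecting onto $h_0$ and using the nondegeneracy of $\langle F_k^4,h_0\rangle_\rho$, the same inner $L^2_\rho$ energy estimate based on the spectral gap of $\mathcal M_\rho$ modulo the kernel $h_0$, the same outer weighted estimates with $\psi_1,\psi_{1/2},\psi_0$ driven by the eigenvalue relation $\mathcal M_Z\psi_\ell=\tfrac{\ell}{2k}\psi_\ell$, and the same plain continuity (no Brouwer) closure. The derived evolution equation, written in $Z$ variables, is equivalent to the paper's \fref{eq:evo e2} after collecting $\Psi$ and $\tilde{\mathcal M}\e$ into the source, and your accounting of how $f-F_k$ feeds in via \fref{bd:tildemathcalMLinfty}--\fref{bd:tildemathcalMrho} is precisely what the paper uses.
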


The rest of the subsection is devoted to the proof of Proposition \ref{pr:bootstrap2}. In what follows we assume that $g$ solves \fref{eq:f} and satisfies the bounds of Proposition \ref{pr:bootstrap2} on some time interval $[s_0,s_1]$, and perform modulation and energy estimates to improve those bounds. Proposition \ref{pr:bootstrap2} is then proved at the end of the subsection.

\begin{lemma}

There holds on $[s_0,s_1]$ for $j=0,...,J-1$:
\be \lab{2bd:Linfty}
|\pa_Z^j \e |\lesssim L_J e^{-\frac{1}{4k}s}(1+|Z|)^{\frac 12 -8k}.
\ee

\end{lemma}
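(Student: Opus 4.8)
The statement is the $g$--analogue of Lemma~\ref{NLH:lem:pointwise}, and I would prove it the same way: a weighted Sobolev embedding applied to the bootstrap bounds of Proposition~\ref{pr:bootstrap2}, split into the region $\{|Y|\leq C\}$ near the origin and the region $\{|Y|\geq 1\}$. The loss of one derivative in the embedding is the reason the statement only covers $j=0,\dots,J-1$. There is no genuine difficulty here; the only point that needs a little care is the weight bookkeeping in the far region, and this is handled exactly as in Lemma~\ref{NLH:lem:pointwise}.

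First, near the origin: on $\{|Y|\leq C\}$ the Gaussian weight $\rho$ is bounded above and below by positive constants, so the one--dimensional Sobolev inequality gives $\|\pa_Z^j\e\|_{L^\infty(\{|Y|\leq C\})}\lesssim\sum_{n=0}^{j+1}\|\pa_Z^n\e\|_{L^2_\rho}$, and \fref{bd:bootstrap 2 rho2} then yields $|\e|\lesssim e^{-s/2}$ for $j=0$ and $|\pa_Z^j\e|\lesssim e^{-s/(2k)}$ for $1\leq j\leq J-1$. Since $(1+|Z|)^{1/2-8k}\approx 1$ on this set, both estimates are much stronger than \fref{2bd:Linfty}.

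Next, on $\{|Y|\geq 1\}$: I would use the weighted Sobolev inequality \fref{bd:Sobolev}, namely $\|u/w\|^2_{L^\infty(\{|Y|\geq1\})}\lesssim\|u/w\|^2_{L^2(\{|Y|\geq1\},\,dY/|Y|)}+\|Z\pa_Z(u/w)\|^2_{L^2(\{|Y|\geq1\},\,dY/|Y|)}$, together with $Y\pa_Y=Z\pa_Z$, the bounds $|Z\pa_Z\psi_\ell|\lesssim|\psi_\ell|$ and $\psi_\ell(Z)\approx|Z|^\ell(1+|Z|)^{-8k}$, and the identity $\psi_1=Z\psi_0$. For $1\leq j\leq J-1$ I would take $u=\pa_Z^j\e$ and $w=\psi_0$; expanding $Z\pa_Z(\pa_Z^j\e/\psi_0)$ produces the harmless term $\pa_Z^j\e\,(Z\pa_Z\psi_0)/\psi_0^2$ and the term $(Z\pa_Z^{j+1}\e)/\psi_0$, for which the elementary inequality $|Z|/\psi_0\lesssim 1/\psi_0+|Z|^{j+1}/\psi_1$ and the relation $Z^{j+1}\pa_Z^{j+1}=\prod_{m=0}^{j}(Z\pa_Z-m)$ reduce the right--hand side of the embedding to the quantities $\sum_{n\leq j+1}\big(\|(Z\pa_Z)^n\e/\psi_1\|^2_{L^2}+\|\pa_Z^n\e/\psi_0\|^2_{L^2}\big)$, controlled by \fref{bd:bootstrap 1 Z} and \fref{bd:bootstrap 3 Z}. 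This gives $|\pa_Z^j\e|\lesssim e^{-s/(4k)}\,\psi_0(Z)\approx e^{-s/(4k)}(1+|Z|)^{-8k}\leq e^{-s/(4k)}(1+|Z|)^{1/2-8k}$ on all of $\{|Y|\geq1\}$. For $j=0$ the weight $\psi_0$ is not good enough where $|Z|$ is small inside $\{|Y|\geq1\}$ (there $|Z|$ can be as small as $e^{-(k-1)s/(2k)}$), so I would instead take $w=\psi_{1/2}$ and use the $(Z\pa_Z)^n\e/\psi_{1/2}$ bounds of \fref{bd:bootstrap 1 Z}, obtaining $|\e|\lesssim e^{-s/(4k)}\,\psi_{1/2}(Z)\approx e^{-s/(4k)}|Z|^{1/2}(1+|Z|)^{-8k}\leq e^{-s/(4k)}(1+|Z|)^{1/2-8k}$. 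Combining the two regions proves \fref{2bd:Linfty}, with implied constants depending only on $K_0,\dots,K_J$; the only mildly delicate step, exactly as in the analogous step of Lemma~\ref{NLH:lem:pointwise}, is absorbing the extra factor of $Z$ generated by $Z\pa_Z$ (or by the passage from $\pa_Z^{j+1}$ to $Z^{j+1}\pa_Z^{j+1}$) hitting $\e$, via the trade of $\psi_0$ for $\psi_1$.
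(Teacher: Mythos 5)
Your proposal is correct and matches the paper's own proof: interior control from the $L^2_\rho$ bootstrap bounds and Sobolev embedding, and far-field control from the one-dimensional weighted embedding \fref{bd:Sobolev} applied with two different weights (one giving the $|Z|^{1/2}$ factor needed at $j=0$, the other the cleaner $(1+|Z|)^{-8k}$ decay for $j\geq 1$), each fed by the bootstrap bounds \fref{bd:bootstrap 1 Z}--\fref{bd:bootstrap 3 Z} and the algebraic trade of weights $|Z|/\psi_0\lesssim 1/\psi_0+|Z|^{j+1}/\psi_{\ell}$. Your choice of $\psi_1$ rather than $\psi_{1/2}$ in that trade for $j\geq 1$, and your slightly over-strong claim $|\e|\lesssim e^{-s/2}$ at the origin for $j=0$ (the Sobolev bound you state in $\pa_Z$-derivatives only yields $e^{-s/(2k)}$, since $\|\pa_Z\e\|_{L^2_\rho}$ dominates $\|\e\|_{L^2_\rho}$, unless one retains the extra factor $e^{-\frac{k-1}{2k}s}$ from $\pa_Y=e^{-\frac{k-1}{2k}s}\pa_Z$), are both harmless here since only $e^{-s/(4k)}$ is claimed.
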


\begin{proof}

First, since $\pa_Z=e^{\frac{k-1}{2k}s}\pa_Y$, one deduces from \fref{bd:bootstrap 2 rho2} that $\| \e \|_{H^j_\rho}\lesssim e^{-s/(2k)}$. Therefore, from Sobolev,
\be \lab{bd:2Linftyorigine}
|\pa_Z^j \e |\lesssim L_Je^{-\frac{1}{2k}s} \ \ \text{for} \ \ |Y|\leq 1.
\ee
Then, applying \fref{bd:Sobolev}, as $|Z\pa_Z \psi_{1/2}|\lesssim |\psi_{1/2}|\lesssim |Z\pa_Z \psi_{1/2}|$:
\bee
\| \frac{Z^j\pa_Z^j\e}{\psi_{1/2}(Z)} \|_{L^{\infty}(\{|Y|\geq 1 \})}^2 &\lesssim& \| \frac{Z^j\pa_Z^j\e}{\psi_{1/2}(Z)} \|_{L^2\left(\{|Y|\geq 1 \},\frac{dY}{|Y|}\right)}^2 +\| Z\pa_Z(\frac{Z^j\pa_Z^j\e}{\psi_{1/2}(Z)}) \|_{L^2\left(\{|Y|\geq 1 \},\frac{dY}{|Y|}\right)}^2 \\
&\lesssim & \sum_{k=0}^{j+1} \| \frac{(Z\pa_Z)^j\e}{\psi_{1/2}(Z)} \|_{L^2\left(\{|Y|\geq 1 \},\frac{dY}{|Y|}\right)}^2 \lesssim L_Je^{-\frac{1}{2k}s}
\eee
from \fref{bd:bootstrap 1 Z}. From the definition of $\psi$ this implies that
\be \lab{bd:2Linftyaway1}
|\pa_Z^j \e |\lesssim L_Je^{-\frac{1}{2k}s}|Z|^{\frac 12 -j}(1+|Z|)^{-8k} \ \ \text{for} \ \ |Y|\geq 1.
\ee
Finally, since $\psi_{1/2}(Z)=|Z|^{1/2}\psi_0(Z)$, for $j\geq 1$ one has the inequality
$$
\frac{|Z|}{\psi_0(Z)}\lesssim \frac{1}{\psi_0(Z)}+\frac{|Z|^{j+1}}{\psi_{1/2}(Z)}.
$$
This implies from \fref{bd:bootstrap 1 Z} and \fref{bd:bootstrap 3 Z} the estimate for $j\geq 1$:
$$
\int_{|Y|\geq 1} \frac{|Z\pa_Z (\pa_Z^j \e)|^2}{\psi_{0}^2(Z)}\frac{dY}{|Y|} \lesssim \int_{|Y|\geq 1} \frac{| \pa_Z^{j+1} \e |^2}{\psi_{0}^2(Z)}\frac{dY}{|Y|}+\int_{|Y|\geq 1} \frac{| Z^{j+1}\pa_Z^{j+1} \e)|^2}{\psi_{0}^2(Z)}\frac{dY}{|Y|} \lesssim L_Je^{-\frac{1}{2k}s}.
$$
Thus, from \fref{bd:Sobolev} one obtains for $j\geq 1$:
\be \lab{bd:2Linftyaway2}
|\pa_Z^j \e|\lesssim L_J e^{-\frac{1}{4k}s}(1+|Z|)^{-8k} \ \ \text{for} \ |Y|\geq 1.
\ee
The bounds \fref{bd:2Linftyorigine}, \fref{bd:2Linftyaway1} and \fref{bd:2Linftyaway2} then imply the desired result \fref{2bd:Linfty}.

\end{proof}

From \fref{eq:f} and \fref{eq:decompo v}, the evolution of $\e$ is given by the following equation:
\be \lab{eq:evo e2}
b_s F^4_k(Z)+\e_s+\mathcal M \e +\tilde{\mathcal M} \e+\Psi=0,
\ee
where
$$
\mathcal M := 4-4F_k(Z)+\frac Y2 \pa_Y -\pa_{YY}, \ \ \tilde{\mathcal M}:=-4(f-F_k(Z)),
$$
and
$$
\Psi:= -4b F_k^4(Z)(f-F_k(Z))-b( e^{-\frac{k-1}{2k}s})^2 \pa_{ZZ}(F_k^4)(Z)
$$
From the various bounds of Proposition \ref{pr:bootstrap} and Lemma \ref{NLH:lem:pointwise} we infer the following estimates for the above objects. We recall that the constants of the previous Subsection are fixed and considered as universal.

\begin{lemma}

One has the following bounds:
\be \lab{bd:tildemathcalMLinfty}
|\pa_Z^j(f-F_k(Z))|\lesssim e^{-\frac{1}{4k}s}(1+|Z|)^{\frac 12-2k-j},
\ee
\be \lab{bd:tildemathcalMrho}
\| f-F_k(Z)\|_{L^2_\rho}\lesssim e^{-(k-1)s},
\ee
$$
|(Z\pa_Z)^j(\Psi)|\lesssim e^{-\frac{1}{4k}s}(1+|Z|)^{\frac 12-10k}, \ \ j=0,1,2
$$
\be \lab{2bd:Psirho}
\| \Psi \|_{L^2_\rho}\lesssim  e^{-(k-1)s}, \ \ \| \pa_Z\Psi \|_{L^2_\rho}\lesssim e^{-\left(k-\frac 32 +\frac{1}{2k}\right)s} \ \ \text{and} \ \ \| \pa_Z^j\Psi \|_{L^2_\rho}\lesssim e^{-\frac{1}{2k}s} \ \ \text{for} \ j\geq 2,
\ee
\be \lab{2bd:PsiZ}
\int_{|Y|\geq 1} \frac{|(Y\pa_Y) ^j\Psi |^2}{\psi_{1}^2(Z)}\frac{dY}{|Y|} +\int_{|Y|\geq 1} \frac{|(Y\pa_Y) ^j \Psi |^2}{\psi_{1/2}^2(Z)}\frac{dY}{|Y|}   \lesssim e^{-\frac{3}{4k}s}, \ \ j=0,...,J,
\ee
\be \lab{2bd:PsiZ2}
\int_{|Y|\geq 1} \frac{|\pa_Z ^j\Psi |^2}{\psi_0^2(Z)}\frac{dY}{|Y|}  \lesssim e^{-\frac{1}{2k}s}, \ \ j=1,...,J.
\ee

\end{lemma}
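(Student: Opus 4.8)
\medskip
\noindent\textbf{Proof plan.} The plan is to reduce everything to quantities already under control: the modulated profile correction $F[a]-F_{k}(Z)$, estimated in Proposition~\ref{NLH:pr:profile}, and the remainder $f-F[a]$ of the solution to the first equation of \fref{eq:f}, estimated in Proposition~\ref{pr:bootstrap} and Lemma~\ref{NLH:lem:pointwise}. This is essentially a bookkeeping computation, and the one quantitative mechanism used repeatedly is the interplay between the two self-similar spatial variables, related by $Z=e^{-\frac{k-1}{2k}s}Y$ (recall $a^{*}=1$), which lets one trade a factor $e^{-\frac{k-1}{2k}s}$ against a factor $|Z|/|Y|$, together with the identity $\frac{dY}{|Y|}=\frac{dZ}{|Z|}$, which trivialises the change of variables in the weighted integrals over $\{|Y|\geq1\}$.

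First I would prove \fref{bd:tildemathcalMLinfty} and \fref{bd:tildemathcalMrho} by writing $f-F_{k}(Z)=\bigl(F[a]-F_{k}(Z)\bigr)+(f-F[a])$. For the remainder $f-F[a]$, Lemma~\ref{NLH:lem:pointwise} (namely \fref{bd:eLinfty} and \fref{bd:eLinftyloc}) already supplies the pointwise bound $e^{-\frac{1}{4k}s}(1+|Z|)^{\frac12-2k-j}$, while \fref{bd:eloc} and \fref{bd:cell bootstrap} give the $L^{2}_{\rho}$ bound $e^{-(k-\frac12)s}$. For $F[a]-F_{k}(Z)$, the formula \fref{def:Fb} exhibits it as $\sum_{\ell=0}^{k-1}\bar c_{2\ell}(ae^{-\frac{k-1}{2k}s})^{2k-2\ell}\phi_{2\ell}(Z)$; since $a$ stays bounded (Proposition~\ref{pr:bootstrap}), each $\phi_{2\ell}$ and its derivatives decay algebraically, and $2k-2\ell\geq2$ for $\ell\leq k-1$, one checks that this term is $O(e^{-\frac{k-1}{k}s})$ with fast decay away from the origin and $O(e^{-(k-1)s})$ for bounded $Y$, hence pointwise dominated by the claimed bound (here $\frac{k-1}{k}\geq\frac{1}{4k}$ for $k\geq2$), and of size $O(e^{-(k-1)s})$ in $L^{2}_{\rho}$, which dominates the remainder contribution and yields \fref{bd:tildemathcalMrho}. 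These are the same estimates already used implicitly in the proofs of Lemmas~\ref{NLH:lem:energyrho} and \ref{NLH:lem:improvedoustide}.

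Finally I would insert these into $\Psi=-4bF_{k}^{4}(Z)(f-F_{k}(Z))-b\bigl(e^{-\frac{k-1}{2k}s}\bigr)^{2}\pa_{ZZ}(F_{k}^{4})(Z)$ and estimate the two summands separately. Since $b$ is bounded (Proposition~\ref{pr:bootstrap}) and $F_{k}^{4}$ together with its derivatives is bounded and decays like $(1+|Z|)^{-8k}$, the first summand inherits from \fref{bd:tildemathcalMLinfty}--\fref{bd:tildemathcalMrho} the pointwise decay $e^{-\frac{1}{4k}s}(1+|Z|)^{\frac12-10k}$ and the $L^{2}_{\rho}$ decay $e^{-(k-1)s}$; the second summand is $e^{-\frac{k-1}{k}s}$ times the fixed smooth function $\pa_{ZZ}(F_{k}^{4})$, which vanishes to order $2k-2$ at the origin and decays rapidly at infinity, so it is $O(e^{-\frac{k-1}{k}s})$ in $L^{\infty}$ with fast spatial decay (in any case negligible compared with the first term), and, using $\pa_{ZZ}(F_{k}^{4})(Z)=O(|Z|^{2k-2})$ near $0$ and $|Z|\leq|Y|$, of size $\lesssim e^{-\frac{k-1}{k}s}\,e^{-\frac{(k-1)(2k-2)}{2k}s}=e^{-(k-1)s}$ in $L^{2}_{\rho}$. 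This gives the pointwise bound on $(Z\pa_{Z})^{j}\Psi$ and \fref{2bd:Psirho}, the progressive loss for $\pa_{Z}\Psi$ and for $\pa_{Z}^{j}\Psi$ with $j\geq2$ being generated, exactly as in Proposition~\ref{NLH:pr:profile}, by each $\pa_{Z}$ landing on a $\phi_{2\ell}(Z)$ or an $h_{\ell}(Y)$ at the cost of a factor $e^{\frac{k-1}{2k}s}$. The weighted estimates \fref{2bd:PsiZ} and \fref{2bd:PsiZ2} follow from the same decomposition: $(Y\pa_{Y})^{j}$ is distributed via Leibniz, the $(f-F[a])$-contribution is controlled by the weighted bootstrap bounds \fref{bd:eweightlossy}--\fref{bd:eweightjgeq2k+1}, the $F[a]$- and $\pa_{ZZ}(F_{k}^{4})$-contributions by their explicit algebraic decay, and $\frac{dY}{|Y|}=\frac{dZ}{|Z|}$ turns the integrals over $\{|Y|\geq1\}$ into convergent integrals in $Z$. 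I expect the only point deserving care to be this bookkeeping of the weights: the second summand of $\Psi$ decays comparatively slowly in $Z$ at infinity, so one must use its extra time decay $e^{-\frac{k-1}{k}s}$ — through the relation $e^{-\frac{k-1}{2k}s}=|Z|/|Y|$ on $\{|Y|\geq1\}$ — to see that it is nonetheless integrable against $\psi_{1},\psi_{1/2},\psi_{0}$. No idea beyond those already deployed in Section~\ref{sec:NLH} is needed.
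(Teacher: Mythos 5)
Your approach matches the paper's, which disposes of this lemma in one sentence as a direct consequence of Propositions \ref{pr:bootstrap}, \ref{pr:Fk} and \ref{pr:mathcalMZ}; you have filled in the bookkeeping correctly. The decomposition $f-F_k=(F[a]-F_k)+(f-F[a])$, with the $F[a]-F_k$ part read off from \fref{def:Fb}, the $f-F[a]$ part controlled by \fref{bd:eloc}, \fref{bd:cell bootstrap}, \fref{bd:eLinfty} and the weighted bootstrap bounds, and then substitution into the explicit formula for $\Psi$, is exactly the intended argument, and your use of $Z=e^{-\frac{k-1}{2k}s}Y$, $\frac{dY}{|Y|}=\frac{dZ}{|Z|}$ to trade time decay against $|Z|$-decay is the right mechanism for \fref{2bd:PsiZ} and \fref{2bd:PsiZ2}.

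One small warning about the phrase that the second summand is ``in any case negligible compared with the first term.'' Pointwise this is not quite true at very large $|Z|$: the dissipative term $b\,e^{-\frac{k-1}{k}s}\pa_{ZZ}(F_k^4)$ decays like $e^{-\frac{k-1}{k}s}|Z|^{-8k-2}$, whereas the claimed envelope $e^{-\frac{1}{4k}s}(1+|Z|)^{\frac12-10k}$ decays faster in $|Z|$ once $k\geq 2$, so the second summand is not dominated by the first pointwise for $|Z|$ large (and strictly speaking the stated pointwise bound on $(Z\pa_Z)^j\Psi$ is also slightly too strong on that term). This causes no harm — you already identify the right repair in your final paragraph: after dividing by $\psi_1^2$, $\psi_{1/2}^2$ or $\psi_0^2$ the integrand is $e^{-\frac{2(k-1)}{k}s}$ times something integrable against $\frac{dZ}{|Z|}$, and $\frac{2(k-1)}{k}\geq 1>\frac{3}{4k}$ for $k\geq 2$, so \fref{2bd:PsiZ}--\fref{2bd:PsiZ2} still hold with room to spare; likewise the Gaussian weight makes $\| e^{-\frac{k-1}{k}s}\pa_{ZZ}(F_k^4)\|_{L^2_\rho}\lesssim e^{-(k-1)s}$ by exactly the $|Z|^{2k-2}\lesssim (e^{-\frac{k-1}{2k}s}|Y|)^{2k-2}$ trade you use. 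So the caveat you flag is real, but your treatment already covers it; I would simply drop the ``negligible compared with the first term'' sentence and keep only the integral argument.
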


\begin{proof}

These are direct bounds implied by the estimates of Proposition \ref{pr:bootstrap}, and the behaviour of the corresponding eigenfunctions given by Propositions \ref{pr:Fk} and \ref{pr:mathcalMZ}.

\end{proof}

We start by computing the evolution of the modulation parameter.

\begin{lemma} \lab{LH:lem:modulation}

There exists $C>0$ independent of $L_1,...,L_J$ such that for $s_0$ large enough on $[s_0,s_1]$:
\be \lab{2bd:mod}
|b_s|\leq Ce^{-(k-1)s}.
\ee

\end{lemma}

\begin{proof}

One takes the scalar product between \fref{eq:evo e2} and $h_0=1$ in $L^2_\rho$, yielding using \fref{id:ortho2}:
$$
b_s \langle F^4(Z^*),h_0 \rangle_\rho = \langle -\mathcal M \e -\tilde{\mathcal M} \e-\Psi,h_0\rangle_\rho .
$$
First, since $|F_k(Z)-1|\lesssim Z^{2k}(1+|Z|)^{-2k}$ one has the projection in the left hand side is non-degenerate:
$$
\langle F^4(Z^*),h_0 \rangle_\rho =1+O(e^{-(k-1)s})
$$
and that, since $\mathcal M =\mathcal M_\rho+4(1-F(Z^*))$:
$$
 \langle \mathcal M \e ,h_0\rangle_\rho =0+  \langle \e ,4(1-F(Z^*))h_0\rangle_\rho =O(\sqrt{L_0}e^{-(k-\frac 12)s}).
$$
using \fref{id:ortho2}, \fref{bd:bootstrap 2 rho2} and the fact that $|Z^{2k}|\lesssim e^{-(k-1)s}|Y|^{2k}$. Then, from \fref{bd:tildemathcalMrho} one computes:
$$
\langle \tilde{\mathcal M} \e,h_0\rangle_\rho =- 4 \langle \e, (f-F(Z^*)) h_0 \rangle_\rho=O(\sqrt{L_0}e^{-(k-\frac 12)s}).
$$
Finally, using \fref{2bd:Psirho}:
$$
\langle \Psi,h_0 \rangle_\rho =O(e^{-(k-1)s}).
$$
From the above identities one gets the desired result \fref{2bd:mod}.

\end{proof}

We then perform an energy estimate in the zone $|Y|\lesssim 1$.

\begin{lemma} \lab{LH:lem:energyrho}

There exist constants $L_J\geq ...\geq L_0>0$ such that for $s_0$ large enough, at time $s_1$:
\be \lab{bd:bootstrap 2 rho}
\|  \e \|_{L^2_{\rho}}\leq \frac{\sqrt{L_0}}{2} e^{-\frac 12s_1}, \ \ \|  \pa_Z^j \e \|_{L^2_{\rho}}\leq \frac{L_J}{2} e^{-\frac{1}{2k}s_1}, \ \ \text{for} \ j=1,...,J.
\ee

\end{lemma}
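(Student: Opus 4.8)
The plan is to establish \fref{bd:bootstrap 2 rho} by a weighted $L^2_\rho$ energy estimate that parallels the proof of Lemma \ref{NLH:lem:energyrho}, but which is considerably simpler here since \fref{eq:evo e2} carries no nonlinear term and the decomposition \fref{eq:decompo v} involves the single orthogonality \fref{id:ortho2}. For $j=0,\dots,J$ I set $w:=\pa_Z^j\e$ and first derive its evolution from \fref{eq:evo e2}. Using $\pa_Z=e^{\frac{k-1}{2k}s}\pa_Y$ (there is no $a$--correction since we have fixed $a^*=1$), together with $\frac Y2\pa_Y=\frac Z2\pa_Z$ and $\pa_{YY}=e^{-\frac{k-1}{k}s}\pa_{ZZ}$, the Leibniz rule produces a damping coefficient $\frac j{2k}$, coming from the mismatch between the $\frac j2$ generated by $\pa_Z^j(\frac Z2\pa_Z\,\cdot)$ and the $\frac{j(k-1)}{2k}$ generated by differentiating the rescaling factor in $w=e^{\frac{j(k-1)}{2k}s}\pa_Y^j\e$. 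Thus $w$ solves
\[
w_s+\tfrac{j}{2k}\,w+\mathcal M_\rho w+4(1-F_k(Z))\,w+\mathcal E_j=0,
\]
where $\mathcal M_\rho=\frac Y2\pa_Y-\pa_{YY}$ and the remainder $\mathcal E_j$ collects the forcing from modulation $\pa_Z^j(b_sF_k^4(Z))$, the Leibniz tail $-4\sum_{i\geq1}\binom ji\pa_Z^iF_k(Z)\,\pa_Z^{j-i}\e$, the coupling term $\pa_Z^j(\tilde{\mathcal M}\e)=-4\pa_Z^j((f-F_k(Z))\e)$, and the profile error $\pa_Z^j\Psi$.

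Next I write the energy identity for $\frac{d}{ds}\big(\frac12\|w\|^2_{L^2_\rho}\big)$. The key coercivity input is that, after an integration by parts, $\langle w,\mathcal M_\rho w\rangle_\rho=\|\pa_Y w\|^2_{L^2_\rho}\geq0$, while the potential contribution $\langle w,4(1-F_k(Z))w\rangle_\rho\geq0$ has a favourable sign (since $1-F_k(Z)=Z^{2k}/(1+Z^{2k})\geq0$) and can be discarded. For $j=0$ the orthogonality \fref{id:ortho2} and the Gaussian Poincar\'e inequality furnished by Proposition \ref{pr:Mrho} (spectral gap $\tfrac12$ on $\{h_0\}^\perp$) upgrade this to $\langle\e,\mathcal M_\rho\e\rangle_\rho\geq\tfrac12\|\e\|^2_{L^2_\rho}$. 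Hence
\[
\tfrac{d}{ds}\|w\|^2_{L^2_\rho}\le-2\big(\tfrac{j}{2k}+\tfrac12\,\delta_{j0}\big)\|w\|^2_{L^2_\rho}+2\big|\langle w,\mathcal E_j\rangle_\rho\big|.
\]

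Then I bound $\|\mathcal E_j\|_{L^2_\rho}$. The modulation forcing obeys $|b_s|\lesssim e^{-(k-1)s}$ by Lemma \ref{LH:lem:modulation}; the Leibniz tail involves the uniformly bounded coefficients $\pa_Z^iF_k$ ($i\ge1$) paired with lower $\e$--derivatives, which are controlled by \fref{bd:bootstrap 2 rho2}; the coupling term is estimated via $|\pa_Z^i(f-F_k(Z))|\lesssim e^{-\frac1{4k}s}(1+|Z|)^{\frac12-2k-i}$ from \fref{bd:tildemathcalMLinfty}, so $\|\pa_Z^j(\tilde{\mathcal M}\e)\|_{L^2_\rho}\lesssim e^{-\frac1{4k}s}\sum_{i\leq j}\|\pa_Z^i\e\|_{L^2_\rho}$; and $\|\pa_Z^j\Psi\|_{L^2_\rho}$ is given by \fref{2bd:Psirho}, which for $k\geq2$ always decays strictly faster than $e^{-\frac1{2k}s}$. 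Also, the $i=0$ potential term near the origin is harmless because $1-F_k(Z)\lesssim Z^{2k}\lesssim e^{-(k-1)s}Y^{2k}$ is Gaussian--controllable on the effective support of $\rho$. Feeding the bootstrap bounds into all of these, and fixing a hierarchy $K_0\ll K_1\ll\cdots\ll K_J$, I obtain $\tfrac{d}{ds}\|w\|^2_{L^2_\rho}\le-\big(2(\tfrac{j}{2k}+\tfrac12\delta_{j0})-Ce^{-\nu s}\big)\|w\|^2_{L^2_\rho}+C(K_{j-1})e^{-\mu_j s}$ with $\mu_0=2$ and $\mu_j=\tfrac1k$ for $j\geq1$, for some $\nu>0$. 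An integrating--factor argument (legitimate because $Ce^{-\nu s}$ is integrable in $s$, so no decay rate is lost), using the data bounds \fref{bd:bootstrap 2 rho init} at $s_0$, then yields $\|\e(s_1)\|^2_{L^2_\rho}\lesssim e^{-s_1}$ and $\|\pa_Z^j\e(s_1)\|^2_{L^2_\rho}\lesssim e^{-s_1/k}$ for $j\ge1$ (for $j=1$ one uses that the $C(K_0)$ cross term is in fact $\lesssim\|\e\|_{L^2_\rho}\lesssim e^{-s_1/2}\ll e^{-s_1/(2k)}$, so the borderline exponent causes no harm), which is \fref{bd:bootstrap 2 rho} once each $K_j$ is chosen large enough in terms of $K_{j-1}$.

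There is no serious conceptual obstacle: the statement is a routine step inside the established framework. The points requiring care are (i) the bookkeeping that isolates the damping coefficient $\tfrac j{2k}$ after $j$ differentiations, (ii) checking that \emph{every} term in $\mathcal E_j$ — in particular the coupling $\tilde{\mathcal M}\e$ through the already--constructed solution $\xi$ of \fref{eq:f} and the modulation forcing $b_sF_k^4(Z)$ — decays strictly faster than the target rate, which relies on $k\geq2$ and on the gain $e^{-\frac1{4k}s}$ in \fref{bd:tildemathcalMLinfty}, and (iii) for $j=0$, not degrading the spectral gap $\tfrac12$, which is handled by the integrating--factor argument rather than a crude Gronwall inequality.
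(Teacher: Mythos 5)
Your proof is correct and follows essentially the same route as the paper: differentiate \fref{eq:evo e2} in $Z$, isolate the damping coefficient $\tfrac{j}{2k}$ in the equation for $w=\pa_Z^j\e$, use the spectral structure of $\mathcal M_\rho$ (together with the orthogonality \fref{id:ortho2} for $j=0$ to extract the gap $\tfrac12$), and close with Cauchy--Schwarz and the bootstrap. One genuine and worthwhile simplification in your write-up: you discard the potential contribution $-4\langle(1-F_k(Z))w,w\rangle_\rho$ outright, since $1-F_k(Z)=Z^{2k}/(1+Z^{2k})\geq0$ makes it nonpositive; the paper instead bounds its absolute value by $Ce^{-\frac{k-1}{k}s}\|Yw\|^2_{L^2_\rho}$ and absorbs it via the time-dependent Poincar\'e split $-\|\pa_Yw\|^2\leq-ce^{-\nu s}\|Yw\|^2+Ce^{-(\frac1k+\nu)s}$. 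Your sign observation avoids that machinery altogether, which is cleaner. Two small imprecisions, neither of which affects the conclusion: for $j=0$ the dominant forcing is $e^{-(1+\frac1{4k})s}$ from the $\tilde{\mathcal M}\e$ term (bounded by $\|\e\|_{L^2_\rho}^2\|f-F_k\|_{L^\infty}$), not $e^{-2s}$ as you state, but only the exponent exceeding the damping rate $1$ matters; and for $j\geq2$ the bound $\|\pa_Z^j\Psi\|_{L^2_\rho}\lesssim e^{-\frac{1}{2k}s}$ in \fref{2bd:Psirho} is at exactly the target rate rather than \emph{strictly} faster — but since the damping exponent $\tfrac jk$ on $\|w\|^2$ exceeds $\tfrac1k$ when $j\geq2$, the borderline forcing is still integrable against the decay, yielding $\|w\|^2\lesssim C(K_{j-1})e^{-\frac1ks}$, which is why the $K_j$-hierarchy is needed (exactly as in the paper).
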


\begin{proof}

\noindent \textbf{Step 1} \emph{Estimate for $\e$}. Let $0<\kappa \ll 1$ be an arbitrarily small constant. Then from \fref{eq:evo e2} we infer that:
$$
\frac{d}{ds} \frac 12 \int \e^2 e^{-\frac{Y^2}{4}}dY= \int \e \left(-b_s F^4(Z^*)-\mathcal M \e -\tilde{\mathcal M} \e-\Psi \right)e^{-\frac{Y^2}{4}}dY.
$$
For the first term, from \fref{id:ortho2} and \fref{2bd:mod}, using Cauchy-Schwarz:
$$
b_s \int \e F^4(Z^*)e^{-\frac{Y^2}{4}}dY=b_s \int \e (1-F^4(Z^*))e^{-\frac{Y^2}{4}}dY =O( \sqrt{L_0}e^{-(2(k-1)+\frac 12 )s}).
$$
For the second, as $\mathcal M =\mathcal M_\rho+4(1-F_k(Z))$, using \fref{2bd:Linfty}:
\bee
&&- \int \e \mathcal M \e e^{-\frac{Y^2}{4}}dY = -\int \e \mathcal M_\rho \e e^{-\frac{Y^2}{4}}dY+4 \int (F_k(Z)-1)\e^2 e^{-\frac{Y^2}{4}}dY \\
&\leq& -\frac 12 \int \e^2 e^{-\frac{Y^2}{4}}dY +O(\| \e \|_{L^2_\rho}\| \e \|_{L^{\infty}}\|F_k(Z)-1 \|_{L^2_\rho})\leq -\frac 12 \int \e^2 e^{-\frac{Y^2}{4}}dY +O( \sqrt{L_0}L_J e^{-(k-\frac 12 +\frac{1}{4k})s}).
\eee
For the third, from \fref{bd:tildemathcalMLinfty} and \fref{bd:bootstrap 2 rho2}:
$$
\left| \int \e \tilde{\mathcal M} \e e^{-\frac{Y^2}{4}}dY\right|\lesssim \| \e \|_{L^2_\rho}^2 \| f-F(Z^*) \|_{L^{\infty}}\lesssim e^{-(1+\frac{1}{4k})s}.
$$
Finally, for the fourth, from \fref{2bd:Psirho}:
$$
\left| \int \e \Psi e^{-\frac{Y^2}{4}}dY  \right|\lesssim \| \e \|_{L^2_\rho}\| \Psi \|_{L^2_\rho}\lesssim \sqrt{L_0}e^{-(k-\frac 12)s}.
$$
Combining the above expressions one obtains as $L_J\geq ...\geq L_0>0$, assuming $L_J\geq 1$ without loss of generality:
$$
\frac{d}{ds}  \left( \int \e^2 e^{-\frac{Y^2}{4}}dY\right)\leq - \int \e^2 e^{-\frac{Y^2}{4}}dY+C L_j e^{-(1+\frac{1}{4k})s}.
$$
When reintegrated in time, using \fref{bd:bootstrap 2 rho init} this gives:
$$
 \int \e^2 e^{-\frac{Y^2}{4}}dY\leq e^{-s}+CL_Je^{-\frac{1}{4k}s_0}e^{-s}\leq \frac{L_0^2}{4}e^{-s}
$$
provided $L_0>2$ and $s_0$ has been taken large enough.\\

\noindent \textbf{Step 2:} \emph{Higher order derivatives.} Let $1\leq j \leq J$ and define $w:=\pa_Z^j \e$. Then from \fref{eq:evo e2} the evolution of $w$ is
$$
w_s+\frac{j}{2k}+\mathcal M_\rho w+4(1-F_k(Z))w+4(F_k(Z)w-\pa_Z^j(F_k(Z)\e))-\pa_Z^j((f-F_k(Z))\e)+\pa_Z^j (\Psi+b_sF_k(Z))=0.
$$
From the above equation, we infer that:
\bee
\frac{d}{ds}\frac 12 \| w\|_{L^2_\rho}^2 & = & -\frac{j}{2k} \| w\|_{L^2_\rho}^2- \| \pa_Y w\|_{L^2_\rho}^2+4\langle (F_k(Z)-1)w,w\rangle_\rho+4\langle \pa_Z^j(F_k(Z)\e)-F_k(Z)w,w\rangle_\rho \\
&&+\langle \pa_Z^j((f-F_k(Z))\e),w\rangle_\rho-\langle \pa_Z^j (\Psi+b_sF_k(Z)),w\rangle_\rho.
\eee
Let $0<\nu\ll1$ be a small constant to be fixed later on. We estimate all terms in the right hand side. First, from \fref{eq:Poincare} and \fref{bd:bootstrap 2 rho2} one has:
$$
- \| \pa_Y w\|_{L^2_\rho}^2 \leq -c e^{-\nu s} \| Y w\|_{L^2_\rho}^2+e^{-\nu s} \| w\|_{L^2_\rho}^2 \leq -c e^{-\nu s} \| Y w\|_{L^2_\rho}^2+CL_j e^{-\left(\frac{1}{k}+\nu \right) s} , \ \ c>0.
$$
Next, since $|1-F_k(Z)|\lesssim |Z^{2k}|(1+|Z|)^{-2k}\lesssim e^{-(k-1)s/k}|Y|^2$ we infer that
$$
\left| \langle (F_k(Z)-1)w,w\rangle_\rho \right| \lesssim e^{-\frac{k-1}{k}s} \| Yw\|_{L^2_\rho}^2.
$$
From \fref{bd:bootstrap 2 rho2}, as $\pa_Z^j F_k$ is bounded, we infer using Cauchy-Schwarz that:
$$
\left| \langle \pa_Z^j(F_k(Z)\e)-F_k(Z)w,w\rangle_\rho \right| \lesssim \left\{ \begin{array}{l l} \sqrt{L_{0}}\sqrt{L_1} e^{-\left(\frac 12 +\frac{1}{2k} \right)s} \ \ \text{for} \ j=1, \\ \sqrt{ L_{j-1}}L_J e^{-\frac{1}{k} s} \ \ \text{for} \ j\geq 2.  \end{array} \right.
$$
Using \fref{bd:bootstrap 2 rho2} and \fref{bd:tildemathcalMLinfty} we infer:
$$
\left| \langle \pa_Z^j((f-F_k(Z))\e),w\rangle_\rho \right| \lesssim \left(\sum_{i=0}^J \| \pa_Z^i \e \|_{L^2_\rho}^2 \right)\left(\sum_{i=0}^J \| \pa_Z^i (f-F_k(Z)) \|_{L^{\infty}}\right)\lesssim L_je^{-\left(\frac{1}{k}+\frac{1}{4k}\right)s}.
$$
Finally, from \fref{bd:bootstrap 2 rho2} and \fref{2bd:mod}, \fref{2bd:Psirho} and Cauchy-Schwarz:
$$
\left| \langle \pa_Z^j (\Psi+b_sF_k(Z)),w\rangle_\rho \right| \lesssim  \left\{ \begin{array}{l l} \sqrt{L_1}e^{-\left(\frac 12 +\frac{1}{k} \right)s} \ \ \text{for} \ j=1, \\ L_J e^{-\frac{1}{k} s} \ \ \text{for} \ j\geq 2.  \end{array} \right.
$$
Collecting the above estimates, one finds finally that there exists $\nu>0$ depending on $k$, such that for  $s_0$ large enough:
$$
\frac{d}{ds} \| w\|_{L^2_\rho} \leq  \left\{ \begin{array}{l l} -\frac{1}{k}\| w\|_{L^2_\rho} +O(L_je^{-\left(\frac{1}{k}+\nu\right)s}) \ \ \text{for} \ j=1, \\ -\frac{j}{k}\| w\|_{L^2_\rho} +\sqrt{L_{j-1}}L_J e^{-\frac{1}{k} s} \ \ \text{for} \ j\geq 2.  \end{array} \right.
$$
Reintegrated in time using \fref{bd:bootstrap 2 rho init}, this yields the desired bound \fref{bd:bootstrap 2 rho2} for $j\geq 1$, upon choosing the constants $L_1$,...,$L_J$ inductively.

\end{proof}

\begin{lemma} \lab{LH:lem:energyoutside}

There exist constants $K_J\geq ...\geq L_0>0$ such that for $s_0$ large enough, at time $s_1$, for $j=0,...,J$:
\be \lab{bd:2eweightimproved1}
\int_{|Y|\geq 1} \frac{|(Z\pa_Z)^j \e|^2}{\psi_{1}^2(Z)}\frac{dY}{|Y|}\leq K_j e^{-\frac{3}{4k}s}, \ \ \  \int_{|Y|\geq 1} \frac{|(Z\pa_Z)^j \e (s_1)|^2}{\psi_{1/2}^2(Z)}\frac{dY}{|Y|}\leq \frac{K_{j+1}}{2} e^{-\frac{1}{2k}s},
\ee
\be \lab{bd:2eweightimproved3}
 \int_{|Y|\geq 1} \frac{| \pa_Z^j \e|^2}{\psi_0^2(Z)}\frac{dY}{|Y|}\leq \frac{K_j}{2} e^{-\frac{1}{2k}s} \ \ \text{for} \ \ j\geq 1.
\ee

\end{lemma}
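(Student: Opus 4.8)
The plan is to run the weighted energy scheme of Lemma~\ref{NLH:lem:improvedoustide}, which is in fact simpler here since the equation \ref{eq:evo e2} for $\e$ is linear: there is no $\e^2$ term and the only forcing is $\Psi+b_sF_k^4(Z)$. By the even symmetry one reduces to $\{Y\ge1\}$ and fixes a smooth cut-off $\chi$ with $\chi=1$ for $Y\ge2$, $\chi=0$ for $Y\le1$. For the first two estimates \ref{bd:2eweightimproved1} one sets $w:=(Z\pa_Z)^j\e=(Y\pa_Y)^j\e$, $j=0,\dots,J$, and $\ell\in\{1,\tfrac12\}$. Using $Y\pa_Y=Z\pa_Z$ and the commutator $[\pa_{YY},Y\pa_Y]=2\pa_{YY}$, equation \ref{eq:evo e2} yields an evolution equation for $w$ of the form
$$
w_s+\tfrac Y2\pa_Y w+4w-4F_k(Z)w-\pa_{YY}w+\tilde{\mathcal M}_jw+\mathcal C_j\e+(Y\pa_Y)^j\big(\Psi+b_sF_k^4\big)=0,
$$
where $\tilde{\mathcal M}_j$ collects the lower-order potential terms built from $f-F_k$ and $1-F_k$, and $\mathcal C_j\e=4\big(F_k(Z)w-(Y\pa_Y)^j(F_k\e)\big)+2\sum_{n<j}(Y\pa_Y)^{j-1-n}\pa_{YY}(Y\pa_Y)^n\e$ collects the commutator contributions.

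Next one differentiates $\tfrac12\int\chi\,w^2\psi_\ell^{-2}(Z)\,\tfrac{dY}{Y}$ in time and integrates by parts. The $\pa_s$-derivative of the $Z$-dependent weight combines with the transport term $\tfrac Y2\pa_Y$ to reconstruct the full action of $\mathcal M_Z$ on $\psi_\ell^{-2}$, so by Proposition~\ref{pr:mathcalMZ} ($\psi_\ell$ being an eigenfunction of $\mathcal M_Z$ with eigenvalue $\ell/(2k)$) the leading linear term equals $-\tfrac\ell k\int\chi\,w^2\psi_\ell^{-2}\,\tfrac{dY}{Y}$, i.e. $-\tfrac1k$ for $\ell=1$ and $-\tfrac1{2k}$ for $\ell=\tfrac12$, while $\pa_{YY}w$ produces $-\int\chi|\pa_Yw|^2\psi_\ell^{-2}\tfrac{dY}{Y}\le0$. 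The remaining terms are estimated exactly as in Lemma~\ref{NLH:lem:improvedoustide}: the boundary contributions from $\pa_Y\chi$ and $\pa_{YY}(\chi\psi_\ell^{-2}Y^{-1})$ are supported in $\{1\le Y\le2\}$, where $\psi_\ell^{-2}(Z)\lesssim e^{(k-1)\ell s/k}$ and $w$ is controlled by the interior bounds \ref{bd:bootstrap 2 rho2} together with \ref{eq:decompo v}, giving an $O(e^{-s/k})$ contribution; the non-coercive part of $\tilde{\mathcal M}_j$, for the $\psi_1$-weight, is absorbed into the dissipation via a Hardy-type splitting at a large radius $Y^\ast$ as in that lemma, using \ref{bd:tildemathcalMLinfty}, \ref{bd:bootstrap 1 Z} and Cauchy--Schwarz; the commutator $\mathcal C_j\e$ is treated by Leibniz' rule, the smallness of $1-F_k$, the identity $\pa_{YY}=((Y\pa_Y)^2-Y\pa_Y)/Y^2$ (so each commutator integral carries an extra $Y^{-2}$), and \ref{bd:bootstrap 1 Z} together with the relation $\psi_1=Z^{1/2}\psi_{1/2}$; and the forcing is controlled by \ref{2bd:PsiZ}, \ref{2bd:PsiZ2} for $\Psi$ and by the modulation bound \ref{2bd:mod} together with the decay of $F_k^4$ for the term $b_sF_k^4$.

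Collecting these estimates one obtains, for $\kappa,\nu$ small and $s_0$ large, the differential inequalities
$$
\frac{d}{ds}\!\left(\int\chi\frac{w^2}{\psi_1^2(Z)}\frac{dY}{Y}\right)\le-\Big(\tfrac1k-\kappa\Big)\int\chi\frac{w^2}{\psi_1^2(Z)}\frac{dY}{Y}+C(K_{j-1})\sqrt{K_j}\,e^{-\frac{3}{4k}s},
$$
$$
\frac{d}{ds}\!\left(\int\chi\frac{w^2}{\psi_{1/2}^2(Z)}\frac{dY}{Y}\right)\le-\tfrac1{2k}\int\chi\frac{w^2}{\psi_{1/2}^2(Z)}\frac{dY}{Y}+C(K_J)\,e^{-(\frac1{2k}+\nu)s},
$$
with the convention $K_{-1}=1$; integrating from $s_0$ to $s_1$ with the initial data \ref{bd:bootstrap 2 psi1 init} (and \ref{bd:bootstrap 2 rho2} on $\{1\le Y\le2\}$) and choosing $K_0\ll K_1\ll\cdots$ inductively yields \ref{bd:2eweightimproved1}. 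The last bound \ref{bd:2eweightimproved3} is obtained the same way with $w:=\pa_Z^j\e$, $j\ge1$, and the weight $\psi_0=F_k^4$; since $\psi_0$ lies in the kernel of $\mathcal M_Z$ and applying $\pa_Z^j$ shifts the exponent of the transport part by $\tfrac{j}{2k}\ge\tfrac1{2k}$, the linear term produces a decay rate $-\tfrac jk\le-\tfrac1k<-\tfrac1{2k}$, so the rate $\tfrac1{2k}$ is dictated by the forcing \ref{2bd:PsiZ2}, the boundary terms at $|Y|=1$ being again controlled by \ref{bd:bootstrap 2 rho2}.

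I expect the only genuinely delicate points, as in Lemma~\ref{NLH:lem:improvedoustide}, to be the bookkeeping of the boundary contributions at $|Y|=1$ — where $\psi_\ell^{-2}$ is exponentially large and must be balanced against the exponential decay of the interior $L^2_\rho$ norms with the correct rate — and the absorption of the non-coercive part of the lower-order linear term into the dissipation through the Hardy splitting for the $\psi_1$-weight; everything else is a routine transcription of the $(NLH)$ argument, with the simplification that there is no nonlinearity.
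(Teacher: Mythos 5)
Your proposal is correct and follows essentially the same route as the paper's proof: the paper also transcribes the weighted $L^2$ argument of Lemma~\ref{NLH:lem:improvedoustide} to the linear $(LFH)$ setting, using the eigenfunction property of $\psi_\ell$ to extract the leading decay, balancing the $\psi_\ell^{-2}$ weight on $\{1\leq|Y|\leq2\}$ against the $L^2_\rho$ decay, and absorbing the $Y^{-2}$-weighted sub-leading terms by a Hardy-type splitting at a large radius $Y^*$ for $\ell=1$ (and by the extra factor $\psi_{1/2}^{-2}Y^{-1}=e^{-\frac{k-1}{2k}s}\psi_1^{-2}$ for $\ell=1/2$). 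The only presentational difference is that the paper works out $j=0$ (Step 1) and $j=1$ (Step 2) in detail and leaves the higher derivatives and the $\psi_0$-weighted bound to the reader, whereas you package all $j$ at once with the shorthands $\tilde{\mathcal M}_j$ and $\mathcal C_j$; in the paper the Hardy splitting is applied to the $\pa_{YY}(\chi\psi_\ell^{-2}Y^{-1})$ and commutator pieces rather than to the lower-order potential $f-F_k$ (which is simply bounded in $L^\infty$ using \fref{bd:tildemathcalMLinfty}), but this is a minor rewording of the same mechanism.
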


\begin{proof}

We only perform the analysis for $Y\geq 1$ since it is exactly the same for $Y\leq -1$, thus writing $|Y|=Y$.\\

\noindent \textbf{Step 1:} \emph{Bounds for $\e$.} Let $0<\kappa \ll 1$ be an arbitrarily small constant. Let $\chi$ be a smooth and positive cut-off function, $\chi=1$ for $ Y \geq 2$ and $\chi=0$ for $Y\leq 1$. Let $\ell=1$ or $\ell=1/2$. We compute first the identity by integrating by parts and using Proposition \ref{pr:mathcalMZ}:
\bee
\frac{d}{ds} \frac 12 \left( \int \chi \frac{\e^2}{\psi_\ell^2(Z)}\frac{dY}{|Y|} \right)&=& -\frac{\ell}{2k}\int \chi \frac{\e^2}{\psi_\ell^2(Z)}\frac{dY}{|Y|}-\int \chi \frac{|\pa_Y \e|^2}{\psi_\ell^2(Z)}\frac{dY}{|Y|} +\frac 14 \int \pa_Y \chi \frac{\e^2}{\psi_{\ell}^2(Z)}dY \\
&&+\frac 12 \int \e^2\left(\frac{\pa_{YY}\chi}{\psi_\ell^2(Z)Y}+2\pa_Y \chi \pa_Y \left(\frac{1}{\psi_\ell^2(Z)Y}\right)+\chi\pa_{YY}\left(\frac{1}{\psi_\ell^2(Z)Y}\right)  \right)dY\\
&&+\int \frac{\e^2}{\psi_\ell^2(Z)}4(u-F_k(Z))\frac{dY}{|Y|}-\int \chi \frac{\e}{\psi_\ell^2(Z)}\left(b_s F^4_k(Z)+\Psi \right)\frac{dY}{|Y|}.
\eee
We treat the boundary terms using \fref{bd:bootstrap 2 rho2}:
\bee
&&\left|\frac 14 \int \pa_Y \chi \frac{\e^2}{\psi_{\ell}^2(Z)}dY+\frac 12 \int \e^2\left(\frac{\pa_{YY}\chi}{\psi_\ell^2(Z)Y}+2\pa_Y \chi \pa_Y \left(\frac{1}{\psi_\ell^2(Z)Y}\right) \right)dY \right| \\
&\lesssim & \| \e \|_{L^2_\rho}^2 \left(\| \frac{1}{\psi_\ell^2(Z)}\|_{L^{\infty}(1\leq Y\leq 2)}+\| \pa_Y\left(\frac{1}{\psi_\ell^2(Z)}\right)\|_{L^{\infty}(1\leq Y\leq 2)}\right) \lesssim e^{-s+\frac{k-1}{k}\ell s}\\
&\lesssim & \left\{ \begin{array}{l l} L_0 e^{-\left(\frac{3}{4k}+\frac{1}{4k} \right)s} \ \ \text{if} \ \ell=1, \\ L_0e^{-\left(\frac{1}{2k}+\frac 12 \right)s} \ \ \text{if} \ \ell=\frac 12. \end{array} \right.
\eee
Next, notice that for $Y\geq 1$,
$$
\left| \pa_{YY}\left(\frac{1}{\psi_\ell^2(Z)Y}\right)\right|\lesssim \frac{1}{Y^3\psi_\ell^2(Z)}.
$$
If $\ell=1$, we then decompose for $Y^*$ large enough depending on $\kappa$:
\bee
\left| \int \e^2 \chi\pa_{YY}\left(\frac{1}{\psi_1^2(Z)Y}\right) dY\right| & \leq & C \int \chi \frac{\e^2}{\psi_1^2(Z)Y^2}\frac{dY}{|Y|} \\
&\leq &  C \int_{Y\leq Y^*} \chi \frac{\e^2}{\psi_1^2(Z)Y^2}\frac{dY}{|Y|} +C \int_{Y\geq Y^*} \chi \frac{\e^2}{\psi_1^2(Z)Y^2}\frac{dY}{|Y|} \\
&\leq &  C \| \e \|_{L^2_\rho}^2 \| \frac{1}{\psi_1^2(Z)}\|_{L^{\infty}(1\leq Y \leq Y^*)}+\kappa \int_{Y\geq Y^*} \chi \frac{\e^2}{\psi_1^2(Z)}\frac{dY}{|Y|}\\
&\leq & C L_0 e^{-\left(\frac{3}{4k}+\frac{1}{4k}\right)s}+\kappa \int \chi \frac{\e^2}{\psi_1^2(Z)Y^2}\frac{dY}{|Y|}.
\eee
If $\ell=1/2$, we use the fact that $1/(Y\psi_{1/2}^2(Z))=e^{-(k-1)/(2k)s}/\psi_1^2(Z)$ to obtain from \fref{bd:bootstrap 1 Z}:
$$
\left| \int \e^2 \chi\pa_{YY}\left(\frac{1}{\psi_{1/2}^2(Z)Y}\right) dY\right| \lesssim \int \chi \frac{\e^2}{\psi_{1/2}^2(Z)Y^2}\frac{dY}{|Y|} \lesssim e^{-\frac{k-1}{2k}s}\int \chi \frac{\e^2}{|Y|\psi_1^2(Z)}\frac{dY}{|Y|} \lesssim e^{-\left(\frac{1}{2k}+\frac{2k-1}{4k}\right)s}.
$$
The lower order linear term is estimated via \fref{bd:tildemathcalMLinfty} and \fref{bd:bootstrap 1 Z}:
$$
\left| \int \frac{\e^2}{\psi_\ell^2(Z)}4(f-F_k(Z))\frac{dY}{|Y|} \right|\lesssim \| f-F_k(Z)\|_{L^{\infty}} \int \frac{\e^2}{\psi_\ell^2(Z)}\frac{dY}{|Y|}\lesssim  \left\{ \begin{array}{l l} L_0 e^{-\left(\frac{3}{4k}+\frac{1}{4k} \right)s} \ \ \text{if} \ \ell=1, \\ L_0e^{-\left(\frac{1}{2k}+\frac{1}{4k} \right)s} \ \ \text{if} \ \ell=\frac 12. \end{array} \right.
$$
The error terms are estimated via \fref{2bd:mod}, \fref{2bd:PsiZ} and \fref{bd:bootstrap 1 Z}:
\bee
&& \left| \int \chi \frac{\e}{\psi_\ell^2(Z)} b_s F^4_k(Z) \frac{dY}{|Y|} \right| \lesssim |b_s | \left| \int \chi \frac{\e^2}{\psi_\ell^2(Z)}\frac{dY}{|Y|} \right|^{\frac 12}\left| \int \chi \frac{F^8_k(Z)}{\psi_\ell^2(Z)}\frac{dY}{|Y|} \right|^{\frac 12} \\
&\lesssim & e^{-(k-1)s}  \left| \int \chi \frac{\e^2}{\psi_\ell^2(Z)}\frac{dY}{|Y|} \right|^{\frac 12} \int_{e^{-\frac{k-1}{2k}s}}^{+\infty} \frac{1}{|Z|^{2\ell}}\frac{dZ}{Z} \lesssim  \left\{ \begin{array}{l l} \sqrt{L_0} e^{-\left(\frac{3}{4k}+k-2\frac{5}{8k} \right)s} \ \ \text{if} \ \ell=1, \\ \sqrt{L_0} e^{-\left(\frac{1}{2k}+k-\frac 32 +\frac{1}{4k} \right)s} \ \ \text{if} \ \ell=\frac 12, \end{array} \right.
\eee
and via Cauchy-Schwarz using \fref{2bd:PsiZ}:
$$
\left| \int \chi \frac{\e}{\psi_\ell^2(Z)}\Psi \frac{dY}{|Y|} \right| \leq  \left| \int \chi \frac{\e^2}{\psi_\ell^2(Z)}\frac{dY}{|Y|} \right|^{\frac 12}\left| \int \chi \frac{|\Psi|^2}{\psi_\ell^2(Z)}\frac{dY}{|Y|} \right|^{\frac 12}\leq \left\{ \begin{array}{l l} C\sqrt{L_0} e^{-\left(\frac{3}{4k} \right)s} \ \ \text{if} \ \ell=1, \\ C\sqrt{L_0}e^{-\left(\frac{1}{2k}+\frac{1}{8k} \right)s} \ \ \text{if} \ \ell=\frac 12. \end{array} \right.
$$
We now collect all the previous estimates and obtain:
\bee
\frac{d}{ds} \left( \int \chi \frac{\e^2}{\psi_\ell^2(Z)}\frac{dY}{|Y|} \right)& \leq & \left\{ \begin{array}{l l} -\left(\frac{1}{k}-\kappa\right) \int \chi \frac{\e^2}{\psi_1^2(Z)}\frac{dY}{|Y|}  +C\sqrt{L_0} e^{-\left(\frac{3}{4k} \right)s} \ \ \text{if} \ \ell=1, \\ -\frac{1}{2k} \int \chi \frac{\e^2}{\psi_1^2(Z)}\frac{dY}{|Y|}  +C L_0 e^{-\left(\frac{1}{2k}+\frac{1}{8k} \right)s} \ \ \text{if} \ \ell=\frac 12 \end{array} \right.
\eee
if $\kappa$ has been chosen small enough, and $s_0$ large enough. The two above differential inequalities yield the desired results \fref{bd:2eweightimproved1} when reintegrated in time using \fref{bd:bootstrap 2 psi1 init} and \fref{bd:bootstrap 2 rho2}, if $L_0$ has been chosen large enough independently of the other constants in the bootstrap.\\

\noindent \textbf{Step 2:} \emph{Proof of \fref{bd:2eweightimproved1} for $Z\pa_Z \e$.} Let $\ell=1$ or $\ell=1/2$ and define $w:=Z\pa_Z \e$. It solves from \fref{eq:evo e2}:
$$
w_s+\mathcal M_Z w -\pa_{YY}w-4Z\pa_Z(F_k(Z))\e+2\pa_{YY}\e-Z\pa_Z((f-F_k(Z))\e)+Z\pa_Z (b_sF_k^4(Z)+\Psi)=0.
$$
Therefore, one infers that
\bee
\frac{d}{ds} \frac 12 \left( \int \chi \frac{w^2}{\psi_\ell^2(Z)}\frac{dY}{|Y|} \right)&=& -\frac{\ell}{2k}\int \chi \frac{w^2}{\psi_\ell^2(Z)}\frac{dY}{|Y|}-\int \chi \frac{|\pa_Y w|^2}{\psi_\ell^2(Z)}\frac{dY}{|Y|} +\frac 14 \int \pa_Y \chi \frac{w^2}{\psi_{\ell}^2(Z)}dY \\
&&+\frac 12 \int w^2\left(\frac{\pa_{YY}\chi}{\psi_\ell^2(Z)Y}+2\pa_Y \chi \pa_Y \left(\frac{1}{\psi_\ell^2(Z)Y}\right)+\chi\pa_{YY}\left(\frac{1}{\psi_\ell^2(Z)Y}\right)  \right)dY\\
&&+4\int \chi \frac{wZ\pa_Z(F_k(Z))\e}{\psi_\ell^2(Z)}\frac{dY}{|Y|}-\int w^2\frac{1}{Y}\left( \frac{\pa_Y \chi}{\psi_\ell^2(Z)} +\chi \pa_Y\left(\frac{1}{\psi_\ell^2(Z)}\right)\right)\frac{dY}{|Y|} \\
&&+4\int \frac{w}{\psi_\ell^2(Z)}Z\pa_Z((f-F_k(Z))\e)\frac{dY}{|Y|}-\int \chi \frac{w}{\psi_\ell^2(Z)}Z\pa_Z\left(b_s F^4_k(Z)+\Psi \right)\frac{dY}{|Y|}.
\eee
We treat the boundary terms using \fref{bd:bootstrap 2 rho2} and the fact that $|w|=|Z\pa_Z \e|\leq e^{-(k-1)s/(2k)}|\pa_Z\e|$ for $1\leq Y\leq 2$:
\bee
&&\left|\frac 14 \int \pa_Y \chi \frac{w^2}{\psi_{\ell}^2(Z)}dY+\frac 12 \int w^2\left(\frac{\pa_{YY}\chi}{\psi_\ell^2(Z)Y}+2\pa_Y \chi \pa_Y \left(\frac{1}{\psi_\ell^2(Z)Y}\right) \right)dY-\int w^2 \frac{\pa_Y \chi}{\psi_\ell^2(Z)}\frac{dY}{Y^2} \right| \\
&\lesssim & e^{-\frac{k-1}{k}s}\| \pa_Z \e \|_{L^2_\rho}^2 \left(\| \frac{1}{\psi_\ell^2(Z)}\|_{L^{\infty}(1\leq Y \leq 2)}+\| \pa_Y\left(\frac{1}{\psi_\ell^2(Z)}\right)\|_{L^{\infty}(1\leq Y\leq 2)}\right)\\
&\lesssim & L_1 e^{-\frac 1k s +\frac{k-1}{k}(\ell-1)s}\lesssim \left\{ \begin{array}{l l} L_1 e^{-\left(\frac{3}{4k}+\frac{1}{4k} \right)s} \ \ \text{if} \ \ell=1, \\ L_1 e^{-\left(\frac{1}{2k}+\frac 12 \right)s} \ \ \text{if} \ \ell=\frac 12. \end{array} \right.
\eee
As in Step 1, since for $Y\geq 1$, $| \pa_Y^i ( 1/(\psi_\ell^{2}(Z)))\lesssim 1/(\psi_\ell^{2}(Z)|Y|^\ell))$ one deduces that if $\ell=1$, for some $Y^*\gg 1$ large enough:
\bee
&&\left| \frac 12 \int w^2 \chi\pa_{YY}\left(\frac{1}{\psi_1^2(Z)Y}\right) dY-\int w^2\frac{1}{Y}\chi \pa_Y\left(\frac{1}{\psi_1^2(Z)}\right)\frac{dY}{|Y|} \right| \leq C \int \chi \frac{w^2}{\psi_1^2(Z)Y^2}\frac{dY}{|Y|} \\
&\leq &  C \int_{Y\leq Y^*} \chi \frac{w^2}{\psi_1^2(Z)Y^2}\frac{dY}{|Y|} +C \int_{Y\geq Y^*} \chi \frac{w^2}{\psi_1^2(Z)Y^2}\frac{dY}{|Y|} \\
&\leq &  C \| w \|_{L^2_\rho}^2 \| \frac{1}{\psi_1^2(Z)}\|_{L^{\infty}(1\leq Y \leq Y^*)}+\kappa \int_{Y\geq Y^*} \chi \frac{w^2}{\psi_1^2(Z)}\frac{dY}{|Y|}\\
&\leq & C L_1 e^{-\left(\frac{3}{4k}+\frac{1}{4k}\right)s}+\kappa \int \chi \frac{\e^2}{\psi_1^2(Z)Y^2}\frac{dY}{|Y|}
\eee
using \fref{bd:bootstrap 2 rho2}. If $\ell=1/2$, we use the fact that $1/(Y\psi_{1/2}^2(Z))=e^{-(k-1)/(2k)s}/\psi_1^2(Z)$ to obtain from \fref{bd:bootstrap 1 Z}:
\bee
&& \left| \frac 12 \int w^2 \chi\pa_{YY}\left(\frac{1}{\psi_{1/2}^2(Z)Y}\right) dY-\int w^2\frac{1}{Y}\chi \pa_Y\left(\frac{1}{\psi_{1/2}^2(Z)}\right)\frac{dY}{|Y|} \right| \\
& \leq & C \int \chi \frac{w^2}{\psi_{1/2}^2(Z)Y^2}\frac{dY}{|Y|} \lesssim L_1 e^{-\frac{k-1}{2k}s}\int \chi \frac{w^2}{|Y|\psi_1^2(Z)}\frac{dY}{|Y|} \lesssim e^{-\left(\frac{1}{2k}+\frac{2k-1}{4k}\right)s}.
\eee
The linear term coming from the commutator between $Z\pa_Z$ and $\mathcal M_Z$ is estimated by Cauchy-Schwarz and \fref{bd:bootstrap 1 Z} for $\ell=1$:
$$
\left| \int \chi \frac{wZ\pa_Z(F_k(Z))\e}{\psi_1^2(Z)}\frac{dY}{|Y|} \right| \leq C \left| \int \chi \frac{w^2}{\psi_1^2(Z)}\frac{dY}{|Y|} \right|^{\frac 12}  \left| \int \chi \frac{\e^2}{\psi_1^2(Z)}\frac{dY}{|Y|} \right|^{\frac 12} \leq C\sqrt{K_1}\sqrt{L_0} e^{-\frac{3}{4k}s}
$$
as $|Z\pa_Z F_k(Z)|\lesssim |Z|^{2k}(1+|Z|)^{-4k}\lesssim 1$. For $\ell=1/2$, since
$$
\frac{|Z|^{2k}(1+|Z|)^{-4k}}{\psi_{1/2}^2(Z)}\lesssim \frac{1}{\psi_{1}^2(Z)}
$$
as $|\psi_{1}(Z)|=|Z|^{1/2}|\psi_{1/2}(Z)|$, one uses Cauchy-Schwarz, \fref{bd:bootstrap 1 Z}:
$$
\left| \int \chi \frac{wZ\pa_Z(F_k(Z))\e}{\psi_{1/2}^2(Z)}\frac{dY}{|Y|} \right| \lesssim \left| \int \chi \frac{w^2}{\psi_{1}^2(Z)}\frac{dY}{|Y|} \right|^{\frac 12}  \left| \int \chi \frac{\e^2}{\psi_1^2(Z)}\frac{dY}{|Y|} \right|^{\frac 12} \lesssim \sqrt{L_0K_1}e^{-\frac{3}{4k}s}.
$$
The lower order linear term is estimated via \fref{bd:tildemathcalMLinfty}, \fref{bd:bootstrap 1 Z}:
\bee
&&\left| \int \frac{w}{\psi_\ell^2(Z)}Z\pa_Z((f-F_k(Z))\e)\frac{dY}{|Y|} \right|\\
&\lesssim &\left(\| f-F_k(Z)\|_{L^{\infty}}+\| Z\pa_Z(f-F_k(Z))\|_{L^{\infty}}\right) \left(\int \chi \frac{\e^2}{\psi_\ell^2(Z)}\frac{dY}{|Y|}+\int \chi \frac{w^2}{\psi_\ell^2(Z)}\frac{dY}{|Y|}\right)\\
&\lesssim & \left\{ \begin{array}{l l} L_1 e^{-\left(\frac{3}{4k}+\frac{1}{4k} \right)s} \ \ \text{if} \ \ell=1, \\ L_1 e^{-\left(\frac{1}{2k}+\frac{1}{4k} \right)s} \ \ \text{if} \ \ell=\frac 12. \end{array} \right.
\eee
The error terms are estimated via \fref{2bd:mod}, \fref{2bd:PsiZ}, \fref{bd:bootstrap 1 Z}:
\bee
&& \left| \int \chi \frac{w}{\psi_\ell^2(Z)} b_s Z\pa_Z(F^4_k(Z)) \frac{dY}{|Y|} \right| \lesssim |b_s | \left| \int \chi \frac{w^2}{\psi_\ell^2(Z)}\frac{dY}{|Y|} \right|^{\frac 12}\left| \int \chi \frac{|Z|^{4k}(1+|Z|)^{-20 k}}{\psi_\ell^2(Z)}\frac{dY}{|Y|} \right|^{\frac 12} \\
&\lesssim & e^{-(k-1)s}  \left| \int \chi \frac{w^2}{\psi_\ell^2(Z)}\frac{dY}{|Y|} \right|^{\frac 12} \int_{e^{-\frac{k-1}{2k}s}}^{+\infty} |Z|^{4k-\ell}(1+|Z|)^{-4k}\frac{dZ}{Z} \\
&\lesssim &  \left\{ \begin{array}{l l} \sqrt{L_1}e^{-\left(\frac{3}{4k}+k-1-\frac{3}{8k} \right)s} \ \ \text{if} \ \ell=1, \\ \sqrt{L_1}e^{-\left(\frac{1}{2k}+k-1-\frac{1}{4k}\right)s} \ \ \text{if} \ \ell=\frac 12, \end{array} \right.
\eee
and via Cauchy-Schwarz using \fref{2bd:PsiZ}:
\bee
\left| \int \chi \frac{w}{\psi_\ell^2(Z)} Z\pa_Z\Psi \frac{dY}{|Y|} \right| &\leq &  \left| \int \chi \frac{w^2}{\psi_\ell^2(Z)}\frac{dY}{|Y|} \right|^{\frac 12}\left| \int \chi \frac{|Z\pa_Z\Psi|^2}{\psi_\ell^2(Z)}\frac{dY}{|Y|} \right|^{\frac 12}\leq \left\{ \begin{array}{l l} C\sqrt{L_1} e^{-\left(\frac{3}{4k} \right)s} \ \ \text{if} \ \ell=1, \\ \sqrt{L_1} e^{-\left(\frac{1}{2k}+\frac{1}{8k} \right)s} \ \ \text{if} \ \ell=\frac 12. \end{array} \right.
\eee
We now collect all the previous estimates and obtain that for any $\kappa>0$, for $s_0$ large enough:
\bee
\frac{d}{ds} \left( \int \chi \frac{w^2}{\psi_\ell^2(Z)}\frac{dY}{|Y|} \right)& \leq & \left\{ \begin{array}{l l} -\left(\frac{1}{k}-\kappa\right) \int \chi \frac{w^2}{\psi_1^2(Z)}\frac{dY}{|Y|}  +C(\sqrt{L_0}\sqrt{L_1}+\sqrt{L_1}) e^{-\left(\frac{3}{4k} \right)s} \ \ \text{if} \ \ell=1, \\ -\frac{1}{2k} \int \chi \frac{\e^2}{\psi_1^2(Z)}\frac{dY}{|Y|}  +C L_1e^{-\left(\frac{1}{2k}+\frac{1}{8k} \right)s} \ \ \text{if} \ \ell=\frac 12, \end{array} \right.
\eee
The two above differential inequalities yield the desired results \fref{bd:2eweightimproved1} when reintegrated in time using \fref{bd:bootstrap 2 psi1 init} and \fref{bd:bootstrap 2 rho2}, if $L_1$ has been chosen large enough depending on $L_0$, and $s_0$ has been chosen large enough.\\

\noindent \textbf{Step 3:} \emph{End of the proof}. We claim that the bounds \fref{bd:2eweightimproved1} for higher order derivatives, as well as the bound \fref{bd:2eweightimproved3}, can be proved with verbatim the same argument that were used in Step 1 and Step 2. We leave the details to the reader.

\end{proof}

\begin{proof}[Proof of Proposition \ref{pr:bootstrap2}]

We use a bootstrap argument. Let $s_1\geq s_0$ be the supremum of times $\tilde s\geq s_0$ such that all the bounds of Proposition \ref{pr:bootstrap2} hold on some time interval $[s_0,\tilde s]$. Then \fref{bd:bootstrap 2 psi1 init}, \fref{bd:bootstrap 1 Z} and \fref{bd:bootstrap 2 rho init} imply $s_1>s_0$ by a continuity argument. Assume by contradiction that $s_1<+\infty$. Then the bounds \fref{bd:bootstrap 1 Z}, \fref{bd:bootstrap 3 Z} and \fref{bd:bootstrap 2 rho2} are strict at time $s_1$ from \fref{bd:bootstrap 2 rho2}, \fref{bd:2eweightimproved1} and \fref{bd:2eweightimproved3}. From a continuity argument there exists $\delta>0$ such that \fref{bd:bootstrap 1 Z}, \fref{bd:bootstrap 3 Z} and \fref{bd:bootstrap 2 rho2} hold on $[s_1,s_1+\delta]$, contradicting the definition of $s_1$. Thus $s_1=+\infty$ and Proposition \ref{pr:bootstrap2} is proved.

\end{proof}


\section{Proof of Theorem \ref{th:mainstable}} \lab{sec:stable}

In this section we prove Theorem \ref{th:mainstable}. The proof is the same as the one of Theorem \ref{th:main}, only few details change. Namely, the analysis is now based on the stable blow-up of the self-similar heat equation $\xi_t-\xi^2-\xi_{yy}=0$ whose properties are classical \cite{BK,BK2,HV,MZ}. We therefore just sketch the proof, with an emphasise on the differences between this proof and that of Theorem \ref{th:main}. We consider only the case $i=1$ with the profile $\Psi_1$ for Burgers equation, the proof being the same for $i\geq 2$. We define the self-similar variables
\be \label{def:selfsimvarstable}
X:=\sqrt{\frac b6} \frac{x}{(T-t)^{\frac 32}}, \ \ Y:=\frac{y}{\sqrt{T-t}}, \ \ s:=-\log (T-t), \ \ Z:=\frac{Y}{8\sqrt s},
\ee
and
$$
u(t,x,y)=\sqrt{\frac 6b} (T-t)^{\frac 12} v\left(s,X,Y \right),
$$
The first step is to obtain precise information for the behaviour of the derivatives of the solution on the  transverse axis.

\subsection{Analysis on the  transverse axis $\{x=0 \}$} \label{sec:1dstable} We start by showing the first part of Theorem \ref{th:NLHinstable} and the analogue of Proposition \ref{pr:LHinstable}. Define for a solution $u$ to \fref{eq:burgers}:
$$
\xi (t,y)=-u_x (t,0,y), \ \ \xi(t,y)=(T-t)^{-1}f(s,Y), \ \ \zeta (t,y)=\pa_x^3u (t,0,y), \ \ \zeta(t,y)=(T-t)^{-4}g(s,Y).
$$
Then $(f,g)$ solve the system \fref{eq:f}.\\

\noindent \textbf{Claim}: For $0<T\ll 1$ small enough, for any $b>0$ and $J\in \mathbb N$, there exists a solution to \fref{eq:f} such that for $0\leq j \leq J$:
\be \lab{stable:bd:tildef}
f(s,Y)=\frac{1}{1+Z^2}+\tilde f , \ \ |\pa_Z^j \tilde f|\lesssim s^{-\frac 14} (1+|Z|)^{-\frac 32 -j},
\ee
\be \lab{stable:bd:tildeg}
g(s,Y)=\frac{b}{(1+Z^2)^4}+\tilde g , \ \ |\pa_Z^j \tilde g|\lesssim s^{-\frac 14} (1+|Z|)^{-8+\frac 12 -j}.
\ee
These estimates are very similar to \fref{eq:def tildeF} and \fref{eq:def tildeG}, but the smallness of the error is in powers of $s^{-1}$ and not of $e^{-s}$ anymore. This loss will however not be a problem for the sequel.\\

\noindent \textbf{Sketch of proof of the claim}: We adapt the strategy of Section \ref{sec:NLH}. We first construct a solution $f$ to $(NLH)$ satisfying \fref{stable:bd:tildef}. We take as an approximate solution to $(NLH)$ the profile
$$
F[a](s,Y):= \frac{1}{1+\left(\frac{1}{8s}+a\right)Y^2}+\left(\frac{1}{4s}+2a\right) \frac{1}{\left(1+\left(\frac{1}{8s}+a\right)Y^2\right)^2}.
$$
Note that here the corrective parameter $a$ will satisfy $|a|\lesssim |\log (s)s^{-2}|$. The main difference between the stable blow-up and the flat blow-ups for $(NLH)$ is then the following. The scaling parameter in the flat case \fref{NLH:id:def a} corresponds to leading order to that of the inviscid case and is not affected by the dissipation \fref{NLH:bd:a}, whereas in the stable blow-up case the dissipation has a modulation effect on this parameter, and forces it to tend to $0$ through a logarithmic correction.\\

\noindent Following the proof of Proposition \ref{NLH:pr:profile}, the approximate profile satisfies the following identity:
\be \lab{stable:Fa}
\pa_s F [a]+F [a]+\frac Y2 \pa_Y F [a] -F^2 [a]-\pa_{YY}F[a] = -\left(\frac{1}{4s^2}+\frac{4a}{s}\right)+\left(-a_s-\frac{2}{s}a\right)h_2+\Psi =R\\
\ee
where $h_2$ is defined by \fref{def:hell}, and where for a corrective modulation parameter $a$ satisfying the a priori bound $|a|\lesssim |\log (s)|s^{-2}$ and $|a_s|\lesssim |\log (s)|s^{-3}$ the errors $\Psi$ and $R$ satisfy:
$$
\| \Psi \|_{L^2_\rho}\lesssim s^{-3}, \ \ \| \pa_Z^j R \|_{L^2_\rho}\lesssim s^{-3+\frac j2} \ \ \text{for} \ j=0,1,2 \ \ \text{and} \ \ \| \pa_Z^j R \|_{L^2_\rho}\lesssim s^{-1} \  \ \text{for} \ j\geq 3,
$$
$$
\int_{|Y|\geq 1} \frac{|(Z\pa_Z)^jR|^2}{|\phi_{\frac 52}(Z)|^2} \frac{dY}{|Y|}\lesssim s^{-1} \ \ \text{and} \ \ \int_{|Y|\geq 1} \frac{|\pa_Z^jR|^2}{|\phi_0(Z)|^2} \frac{dY}{|Y|}\lesssim s^{-1} \ \ j\geq 3,
$$
where $\phi_j$ denotes the eigenfunction
$$
\phi_j(Z)=\frac{Z^j}{(1+Z^2)^2}, \ \ H_Z\phi_j=\frac{j-2}{2}\phi_j, \ \ H_Z=1-\frac{2}{1+Z^2}+\frac Z2 \pa_Z.
$$
We then show the existence of a global solution to $(NLH)$ close to $F[a]$ by a bootstrap argument following Proposition \ref{pr:bootstrap}. We decompose $f$ as
$$
f=F[a]+\e=F[a]+c_1+\tilde \e, \ \ \langle \e,h_2 \rangle_\rho=0, \ \ \langle \tilde \e, 1 \rangle_\rho=\langle \tilde \e, h_2 \rangle_\rho=0
$$
where the orthogonality conditions fix the value of $a$ and $c_1$ in a unique way. We claim that there exists a global solution to the first equation in \fref{eq:f} satisfying for $j=0,...,J+1$:
$$
|a(s)|\lesssim s^{-2}, \ \ |c_1(s)|\lesssim s^{-2}, \ \ \| \tilde \e \|_{L^2_\rho}\lesssim s^{-3}, \ \ \| \pa_Z^j \e \|_{L^2_\rho}\lesssim s^{-3+\frac{j}{2}} \ j=1,2, \ \ \| \pa_Z^j \e \|_{L^2_\rho}\lesssim s^{-1},  \ j\geq 3,
$$
$$
\int_{|Y|\geq 1} \frac{|(Z\pa_Z)^j\e|^2}{|\phi_{\frac 52}(Z)|^2} \frac{dY}{|Y|}\lesssim s^{-\frac 12} \ \  \text{and} \ \ \int_{|Y|\geq 1} \frac{|\pa_Z^j\e|^2}{|\phi_0(Z)|^2} \frac{dY}{|Y|}\lesssim s^{-\frac 12} \ \ \text{for} \ j\geq 3.
$$
To prove this fact, one first performs modulation estimates, then energy estimates at the origin with the $\rho$ weight, and then energy estimates outside the origin as in Lemmas \ref{NLH:lem:modulation}, \ref{NLH:lem:energyrho} and \ref{NLH:lem:improvedoustide}. The evolution equation near the origin reads from \fref{stable:Fa}
$$
c_{1,s}-c_1-\left(\frac{1}{4s^2}+\frac{4a}{s}\right)-\left(a_s+\frac{2}{s}a\right)h_2+\tilde \e_s+\mathcal L \tilde \e-2(F[a]-1)\e-\e^2+\Psi=0.
$$
The modulation estimates are therefore a consequence of the spectral structure of $\mathcal L$ in Proposition \ref{pr:Lrho}, giving in the bootstrap regime when projecting the above equation on $1$ and $h_2$: 
$$
\left| a_s+\frac{2}{s} a \right|\lesssim s^{-3}, \ \ |c_{1,s}-c_1|\lesssim s^{-2}.
$$
The first inequality, when reintegrated in time, gives $|a|\lesssim |\log (s)|s^{-2}$. The second inequality shows an instability, and the use of Brouwer's fixed point theorem then implies the existence of a trajectory such that $|c_1(s)|\lesssim s^{-2}$. The orthogonality conditions for $\tilde \e$ imply the spectral damping $\langle \tilde \e,\mathcal L \tilde \e \rangle \geq \| \tilde \e \|_{L^2_\rho}^2$ since $\tilde \e$ is even, implying the energy identity
$$
\frac{d}{ds}\left(\frac 12 \| \tilde \e \|_{L^2_\rho}^2 \right)\leq -(1-Cs^{-1}-C\| \e \|_{L^{\infty}}) \| \tilde \e \|_{L^2_\rho}^2+Cs^{-6}.
$$
This yields the desired estimates for $\tilde \e$ when reintegrated with time, and the same technique applies to control its derivatives. In the far field, the analysis is the same as in Lemma \ref{NLH:lem:improvedoustide}, the equation for $\e$ reads
$$
\e_s+H_Z \e -\pa_{YY}\e-2\left( F[a]-\frac{1}{1+Z^2} \right)\e-\e^2+R=0, \ \ H_Z=1-\frac{2}{1+Z^2}+\frac Y2 \pa_Y.
$$
Let $\chi$ be a non-negative smooth cut-off function with $\chi=0$ for $|Y|\leq 1$ and $\chi=1$ for $|Y|\geq 2$. One obtains from this equation the following energy estimate:
\bee
\frac{d}{ds} \left(\frac{1}{2}\int \chi \frac{\e^2}{|\phi_{\frac 52 }(Z)|^2}\frac{dY}{|Y|} \right) & \leq & -\left(\frac 14 -\kappa-\| \e\|_{L^{\infty}} \right)\int \chi \frac{\e^2}{|\phi_{\frac 52 }(Z)|^2}\frac{dY}{|Y|}-\int \chi \frac{(\pa_Y\e)^2}{|\phi_{\frac 52 }(Z)|^2}\frac{dY}{|Y|}\\
&&+O\left(s^{\frac 52}\| \e \|_{L^2_\rho}^2 \right)+O(s^{-1})
\eee
where $0<\kappa \ll 1$ is an arbitrary small positive number, since $|\phi_{5/2}|\sim |Z|^{5/2}\sim |Y|^{5/2}s^{-\frac 54}$ on compact sets. Thanks to the damping, this estimate is reintegrated in time and shows the weighted decay outside the origin for $\e$. The analogous estimates for the derivatives are showed similarly. The strategy that we just explained allows one to close the bootstrap estimates. Using the Sobolev embedding \fref{bd:Sobolev}, this concludes the proof of the existence of a solution $f$ to $(NLH)$ satisfying \fref{stable:bd:tildef}. \\

Once the properties of $f$ are known, the analysis of $(LFH)$ follows very closely the one performed in Subsection \ref{sec:LFH}. We decompose $g$ solution to the second equation in \fref{eq:f} according to
$$
g(s,Y)=b(s) f^4(s,Y)+\bar \e, \ \ \langle \bar \e,1\rangle_\rho=0
$$
where $f$ is the solution $(NLH)$ we just constructed. The evolution equation then reads
$$
b_sf^4+\bar \e_s+4\bar \e -4f\bar \e+\frac Y2 \pa_Y \bar \e-\pa_{YY}\bar \e+\bar R=0, \ \ \bar R=-12b (\pa_Y f)^2f^2.
$$
For $|b|\lesssim 1$ the error satisfies from the properties of $f$ already showed:
$$
\| \bar R \|_{L^2_\rho}\lesssim s^{-2}, \ \ \| \pa_Z^j\bar R \|_{L^2_\rho}\lesssim s^{-1} \ \text{for} \ j\geq 1,
$$
$$
\int_{|Y|\geq 1} \frac{|(Z\pa_Z)^j \bar R|}{|\psi_{\frac 12}(Z)|^2}\frac{dY}{|Y|} \lesssim s^{-1}, \ \ \int_{|Y|\geq 1} \frac{|(Z\pa_Z)^j \bar R|}{|\psi_0(Z)|^2}\frac{dY}{|Y|} \lesssim s^{-1}
$$
where $\psi_j$ denotes the eigenfunction
$$
\psi_j(Z)=\frac{Z^j}{(1+Z^2)^4}, \ \ \mathcal M_Z\psi_j=\frac{j}{2}\psi_j, \ \ \mathcal M_Z=4-\frac{4}{1+Z^2}+\frac Z2 \pa_Z.
$$
We claim that there exists a solution satisfying the estimates
$$
|b_s|\lesssim s^{-2}, \ \ \| \tilde \e \|_{L^2_\rho}\lesssim s^{-2}, \ \ \| \pa_Z^j \e \|_{L^2_\rho}\lesssim s^{-1} \ \text{for}\ j\geq 1,
$$
$$
\int_{|Y|\geq 1} \frac{|(Z\pa_Z)^j\e|^2}{|\psi_{\frac 12}(Z)|^2} \frac{dY}{|Y|}\lesssim s^{-\frac 12} \ \  \text{and} \ \ \int_{|Y|\geq 1} \frac{|\pa_Z^j\e|^2}{|\psi_0(Z)|^2} \frac{dY}{|Y|}\lesssim s^{-\frac 12} \ \ \text{for} \ j\geq 1.
$$
Similarly, we prove this property by a bootstrap analysis, following closely the analysis of Lemmas \ref{LH:lem:modulation}, \ref{LH:lem:energyrho} and \ref{LH:lem:energyoutside}. The equation close to the origin reads
$$
b_sf^4+\bar \e_s+\frac Y2 \pa_Y \bar \e-\pa_{YY}\bar \e+4(1-f)\bar \e+\bar R=0.
$$
Taking the $L^2_\rho$ scalar product against the constant $1$ then yields indeed the modulation equation
$$
|b_s|\lesssim s^{-2}.
$$
Similarly, from the spectral gap $\| \pa_Y \bar \e\|_{L^2_\rho}^2\geq \| \bar \e\|_{L^2_\rho}^2 $ one deduces the energy identity
$$
\frac{d}{ds}\left(\frac 12 \|  \bar \e \|_{L^2_\rho}^2 \right)\leq -(1-Cs^{-\frac 14}) \| \tilde \e \|_{L^2_\rho}^2+Cs^{-4}
$$
which yields the corresponding estimate $\| \e \|_{L^2_\rho}\lesssim s^{-2}$ when reintegrated with time. The corresponding estimates for higher order derivatives are showed the same way. In the far field the evolution equation reads
$$
b_sf^4+\bar \e_s+4\bar \e -\frac{4}{1+Z^2}\bar \e+\frac Y2 \pa_Y \bar \e-\pa_{YY}\bar \e+4\tilde f \bar \e+\bar R=0.
$$
This equation enjoys the following energy estimate for an arbitrary constant $0<\kappa \ll 1$:
\bee
\frac{d}{ds} \left(\frac{1}{2}\int \chi \frac{\bar \e^2}{|\psi_{\frac 12 }(Z)|^2}\frac{dY}{|Y|} \right) & \leq & -\left(\frac 14 -\kappa \right)\int \chi \frac{\bar \e^2}{|\psi_{\frac 12 }(Z)|^2}\frac{dY}{|Y|}-\int \chi \frac{(\pa_Y\bar \e)^2}{|\psi_{\frac 12 }(Z)|^2}\frac{dY}{|Y|}\\
&&+O\left(s^{\frac 12}\| \bar \e \|_{L^2_\rho}^2 \right)+O(s^{-1})
\eee
This estimate is reintegrated in time and shows the expected weighted decay outside the origin for $\bar \e$. The estimates for the derivatives are showed the same way. Using the Sobolev embedding \fref{bd:Sobolev}, one then obtained the existence of a solution $g$ to $(LFH)$ satisfying \fref{stable:bd:tildeg}.

\subsection{Analysis of the full $2$-d problem}

We now follow the analysis of Section \ref{sec:main}. Let $f$ and $g$ be the solutions to $(NLH)$ and $(LFH)$ satisfying \fref{stable:bd:tildef} and \fref{stable:bd:tildeg}. For simplicity we fix $b=6$, so that $X=x/(T-t)^{3/2}$. We take the same blow-up profile as in the proof of Theorem \ref{th:main}, adjusting the cut-off between the inner and outer zones. We set for $0<d\ll 1$ a cut-off function $ \chi_d(s,Y):=\chi \left(Y/(ds)\right)$ where $\chi$ is a smooth nonnegative function with $\chi(Y)=1$ for $|Y|\leq 1$ and $\chi(Y)=0$ for $|Y|\geq 2$. We decompose our solution to \fref{main:eqvautosim} according to:
\be \lab{stable:decomposition v}
v(s,X,Y)= Q+\e, \ \ Q=\chi_d(s,Y) \tilde \Theta +(1-\chi_d(s,Y))\Theta_{e} \\
\ee
where (for $d$ small enough $f$ and $g$ do not vanish from \fref{stable:bd:tildef} and \fref{stable:bd:tildeg})
$$
\tilde \Theta(s,X,Y)= \sqrt 6 g^{-\frac 12}f^{\frac 32} \Psi_1 \left( \frac{g^{\frac 12}f^{-\frac 12}}{\sqrt 6} X \right)
$$
and where $\Theta_e$ is the exterior profile (recall that $\tilde X=X/(1+Z^2)^{3/2}$):
$$
\Theta_e(s,X,Y)=  \left(-X f(s,Y)+X^3\frac{g(s,Y)}{6} \right)e^{- \tilde X^4}.
$$
We adjust our initial datum $v(s_0)$ such that $-\pa_X v(s_0,0,Y)=f(s_0,Y)$ and $\pa_X^3 v(s_0,0,Y)=g(s_0,Y)$. This way, since $v$ odd in $x$ and even $y$, one has that $\pa_X^j \e (s,0,Y)=0$ on the  transverse axis for $j=0,1,2,3,4$ for all times $s\geq s_0$. The time evolution of the remainder $\e$ is:
$$
\e_s+\mathcal L \e+\tilde{\mathcal L}\e+R+\e \pa_X \e=0
$$
where
$$
\mathcal L= -\frac 12+\pa_X \Theta +\left(\frac 32 X +\Theta \right)\pa_X +\frac 12 Y\pa_Y-\pa_{YY},
$$
$$
\Theta (s,X,Y)=(1+Z^2)^{\frac 12}\Psi_1 \left(\frac{X}{(1+Z^2)^{\frac 32}} \right), \ \ \ \tL \e=(Q-\Theta)\pa_X \e +(\pa_X Q-\pa_X \Theta)\e,
$$
and
$$
R=Q_s-\frac 12 Q +\frac 32 X \pa_X Q+\frac 12 Y \pa_Y Q +Q\pa_X Q-  \pa_{YY}Q.
$$
From Proposition \ref{pr:mathcalL} the inviscid linearised operator has eigenvalues of the form $(j+\ell-3)/2$ for $(j,\ell)\in \mathbb N$ with associated eigenfunction
$$
\varphi_{j,\ell}(X,Z) = Z^\ell F_k^{1-\frac j2}(Z) \times \frac{(-1)^j \Psi_1^j \left( F_k^{\frac 32}(Z)X\right)}{1+3\Psi_1^2\left( F_k^{\frac 32}(Z)X\right)}.
$$
The sizes of the important objects are 
$$
|\Theta (X,Z)|\approx |X| \left((1+|Z|)^{3}+|X| \right)^{\frac 13 -1} \approx (1+|Z|) |\tilde X|(1+|\tilde X|)^{\frac 13 -1}
$$
$$
|\varphi_{\frac 72,0}(X,Z)|\approx |X|^{\frac 72} \left((1+|Z|)^{3}+|X|\right)^{\frac 12-\frac 72} \approx (1+|Z|)^{\frac 32} |\tilde X|^{\frac 72}(1+|\tilde X|)^{\frac 12 -\frac 72}.
$$
In the previous Section, the weight $\varphi_{4,0}$ was used. Any weight $\varphi_{\alpha,0}$ with $\alpha\in (3,4]$ is suitable since it provides linear decay and suitable weighted Sobolev estimates for the error term $R$ as well. We take $\alpha=\frac 72$ here to make the weight slightly less singular at the origin, so that the integrals below are well defined for $C^4$ function. We claim that thanks to \fref{stable:bd:tildef} and \fref{stable:bd:tildeg} one has the following estimates for the error, which can be proved as in the proof of Lemma \ref{main:lem:R}:
\be \label{bd:stableR}
\sum_{0\leq j_1+j_2\leq 2} \int_{\mathbb R^2} \frac{ (\pa_Z^{j_1}A^{j_2}R)^{2q}+ ((Y\pa_Y)^{j_1}A^{j_2}R)^{2q}}{\varphi_{\frac 72,0}^{2q}(X,Z)}\frac{dXdY}{|X|\lY}  \lesssim s^{-q}
\ee
where $A$ is given by \fref{main:def:A}. From \fref{stable:bd:tildef} and \fref{stable:bd:tildeg} one also deduces the following estimates for the lower order linear term
$$
| \pa_Z^{j_1}(Q-\Theta)| \lesssim s^{-\frac 14} |X|(1+|Z|)^{-j_1},
$$
$$
| \pa_Z^{j_1}\pa_X^{j_2}(Q-\Theta)| \lesssim s^{-\frac 14} (1+|X|)^{1-j_2}(1+|Z|)^{-j_1}.
$$
which can be proved as in the proof of Lemma \ref{main:lem:tildeL}. We can therefore perform the same energy estimates as in Lemmas \ref{main:lem:energy0} and \ref{main:lem:energy1}. Indeed, the leading order linear estimate \fref{id:estimationlineaire} holds also true in that case, and with the above control on the lower order linear term and of the error $R$ one obtains that for any $\kappa >0$ for $q\geq 2$ large enough:
\bee
&&\frac{d}{ds} \left( \frac{1}{2q} \int_{\mathbb R^2} \frac{\e^{2q}}{\varphi_{\frac 72,0}^{2q}(X,Z)}\frac{dXdY}{|X| \lY} \right)\\
 &\leq & -\left(\frac 14-\kappa-Cs^{-\frac 14}-C\| \pa_X \e \|_{L^{\infty}}\right) \int_{\mathbb R^2} \frac{\e^{2q}}{\varphi_{\frac 72,0}^{2q}}\frac{dXdY}{|X| \lY}-\frac{2q-1}{q^2}\int \frac{|\pa_Y (\e^q)|^2}{\varphi_{\frac 72,0}^{2q}}\frac{dXdY}{|X|\lY}+Cs^{-q}.
\eee
The same type of energy estimates hold when applying $A$, $Z$ or $Z\pa_Z$, up to terms involving lower order derivatives. For exemple, one can derive the following estimate:
\bee
&&\frac{d}{ds} \left( \frac{1}{2q} \int_{\mathbb R^2} \frac{(A\e)^{2q}}{\varphi_{\frac 72,0}^{2q}(X,Z)}\frac{dXdY}{|X| \lY} \right)\\
 &\leq & -\left(\frac 14 -\kappa-Cs^{-\frac 14}-C\| \pa_X \e \|_{L^{\infty}}-C\| \frac{\e}{|X|} \|_{L^{\infty}}\right) \int_{\mathbb R^2} \frac{(A\e)^{2q}}{\varphi_{\frac 72,0}^{2q}}\frac{dXdY}{|X| \lY}-\frac{2q-1}{q^2}\int \frac{|\pa_Y (\e^q)|^2}{\varphi_{\frac 72,0}^{2q}}\frac{dXdY}{|X|\lY}\\
 &&+Cs^{-q}+C \int_{\mathbb R^2} \frac{\e^{2q}}{\varphi_{\frac 72,0}^{2q}(X,Z)}\frac{dXdY}{|X| \lY}.
\eee
This implies that the analogue of Proposition \ref{main:pr:bootstrap} holds, i.e. that we can bootstrap the following estimates for the remainder $\e$:
\be \label{bd:varepsilonstable}
\sum_{0\leq j_1+j_2\leq 2}\left( \int_{\mathbb R^2} \frac{((\pa_Z^{j_1}A^{j_2}\e)^{2q}}{\varphi_{\frac 72,0}^{2q}}\frac{dXdY}{|X|\lY}\right)^{\frac{1}{2q}}+\left( \int_{\mathbb R^2} \frac{(((Y\pa_Y)^{j_1}A^{j_2}\e)^{2q}}{\varphi_{\frac 72,0}^{2q}}\frac{dXdY}{|X|\lY}\right)^{\frac{1}{2q}}\lesssim \frac{1}{\sqrt s}.
\ee
This estimate, together with the weighted Sobolev embedding \fref{eq:weightedSobo}, gives the following pointwise estimates on $\e$ as in Lemma \ref{main:lem:pointwise e}:
\bee
|\e|\lesssim s^{-\frac 12} (1+|Z|)^{\frac 32} |\tilde X|^{\frac 72} (1+|\tilde X|)^{\frac 12 -\frac 72}\lesssim s^{-\frac 12} |X|, \\
|\pa_X \e|\lesssim s^{-\frac 12} (1+|Z|)^{-\frac 32} |\tilde X|^{\frac 52} (1+|\tilde X|)^{\frac 12 -\frac 72}\lesssim s^{-\frac 12},\\
|\pa_Z\e|\lesssim s^{-\frac 12} (1+|Z|)^{\frac 12} |\tilde X|^{\frac 72} (1+|\tilde X|)^{\frac 12 -\frac 72}.
\eee
By using the above estimate in the decomposition \ref{stable:decomposition v}, combined with \fref{stable:bd:tildef} and \fref{stable:bd:tildeg}, we get that on compact sets in the variables $X$ and $Z$ there holds the estimate:
$$
v=\Theta +O_{C^1}(s^{-\frac 14}).
$$
This then ends the existence part of the proof of Theorem \ref{th:mainstable}.\\

\noindent The stability of this blow-up pattern in $\mathcal B$ defined by \fref{bd:def mathcal B} is a direct consequence of the stability of the underlying blow-up for (NLH) proved by Merle and Zaag \cite{MZ}. Indeed, assume $u$ with initial datum $u_0$ is a solution that is constructed in this section belonging in addition to the Schwartz class, and $u'_0$ is such that $\| u'_0-u_0\|_{\mathcal B}\leq \delta$ for some $\delta >0$. It is proved in \cite{MZ} that there exists $T'$ with $T'\rightarrow T$ as $\delta\rightarrow 0$ such that $\xi'=-\pa_x u'_{|x=0}$ blows up at time $T'$, and that in the self-similar variables \fref{def:selfsimvarstable} with $s'=-\log ( T'-t)$ the renormalised solution $f'$ is global and satisfies all the bounds of Subsection \fref{sec:1dstable} on $[s_0',+\infty)$.

Let now $t_0>0$. For $t_0$ small enough, and then for $\delta$ small enough, via standard parabolic regularising effects, $f'$ satisfies all the bounds of Subsection \fref{sec:1dstable} on $[s'(t_0),+\infty)$, in particular \fref{stable:bd:tildef} for derivatives up to order $J=3$. The same argument applies for $\zeta'=\pa_x^3u'_{|x=0}$, as we proved that the regime described in the corresponding part of Subsection \fref{sec:1dstable} is stable: $g'$ satisfies all the bounds of this Subsection on $[s'(t_0),+\infty)$, in particular \fref{stable:bd:tildef} for $J=3$.\\

On $[s'(t_0),+\infty)$, the bounds \fref{stable:bd:tildef} and \fref{stable:bd:tildeg} with $J=3$ for $f'$ and $g'$ implies that the bound \fref{bd:stableR} on $R$ holds true. Moreover, the bound \fref{bd:varepsilonstable} holds true at time $s'(t_0)$ for $t_0$ small enough, because of the continuity of the flow of the equation in $\mathcal B$. Thus, from the analysis of the current Subsection, the solution $v$ remains in the bootstrap regime on $[s'(t_0),+\infty)$. The solution $u'$ in original variables thus blows up with the same behaviour than $u$ at time $T'$.


\appendix

\section{One-dimensional functional analysis results}

\begin{lemma}[Poincar\'e inequality in $L^2_\rho$]

For any $f\in H^1_\rho$ defined by \fref{eq:def L2rho} one has that $Yf\in L^2_\rho$ with
\be \lab{eq:Poincare}
\| Yf \|_{L^2_\rho} \lesssim \| f \|_{H^1_\rho}.
\ee

\end{lemma}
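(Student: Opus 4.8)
The statement is the standard weighted Poincaré inequality for the Gaussian weight $\rho(Y)=e^{-Y^2/4}$, asserting that control of $\|f\|_{H^1_\rho}$ (i.e.\ of $\|f\|_{L^2_\rho}$ and $\|\pa_Y f\|_{L^2_\rho}$) dominates $\|Yf\|_{L^2_\rho}$. The cleanest route is an integration by parts exploiting the identity $\pa_Y\rho=-\tfrac{Y}{2}\rho$, which turns a power of $Y$ against the weight into a derivative falling on $f$. First I would reduce to $f\in C^\infty_0(\mathbb R)$ by density, since $C^\infty_0$ is dense in $H^1_\rho$ and both sides are continuous for this topology; this justifies all the manipulations below and removes any boundary-term worry at $\pm\infty$.

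The core computation: write
\[
\int_{\mathbb R} Y^2 f^2 \rho\, dY = -2\int_{\mathbb R} f^2 Y\,\pa_Y\rho\, dY = -2\int_{\mathbb R} f^2\,\pa_Y\bigl(Y\rho\bigr)\,dY + 2\int_{\mathbb R} f^2\rho\,dY,
\]
where in the first equality I used $Y\rho = -2\pa_Y\rho$ and in the second I expanded $\pa_Y(Y\rho)=\rho+Y\pa_Y\rho$. Now integrate by parts in the remaining term $-2\int f^2\,\pa_Y(Y\rho)\,dY = 2\int 2f(\pa_Y f)\,Y\rho\,dY = 4\int Y f\,(\pa_Y f)\,\rho\,dY$. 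Hence
\[
\| Yf\|_{L^2_\rho}^2 = 4\int_{\mathbb R} Yf\,(\pa_Y f)\,\rho\,dY + 2\|f\|_{L^2_\rho}^2 \le 4\,\|Yf\|_{L^2_\rho}\,\|\pa_Y f\|_{L^2_\rho} + 2\|f\|_{L^2_\rho}^2
\]
by Cauchy--Schwarz. Setting $a=\|Yf\|_{L^2_\rho}$, $b=\|\pa_Y f\|_{L^2_\rho}$, $c=\|f\|_{L^2_\rho}$, this reads $a^2\le 4ab+2c^2$; solving the quadratic (or using Young's inequality $4ab\le \tfrac12 a^2 + 8b^2$) gives $a^2\le 16b^2+4c^2$, i.e.\ $\|Yf\|_{L^2_\rho}\lesssim \|\pa_Y f\|_{L^2_\rho}+\|f\|_{L^2_\rho}\lesssim\|f\|_{H^1_\rho}$, which is the claim.

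There is no real obstacle here; the only point requiring a word of care is the legitimacy of the two integrations by parts for general $f\in H^1_\rho$ — namely that $Y^2 f^2\rho$, $Yf(\pa_Y f)\rho$ are integrable and the boundary contributions vanish. This is handled by the density reduction to $C^\infty_0$ stated at the outset, after which the a priori bound $a^2\le 16b^2+4c^2$ extends to all of $H^1_\rho$ by Fatou/continuity. I would simply remark that the computation is performed first for $f\in C^\infty_0(\mathbb R)$ and then passed to the limit, since the right-hand side is finite by hypothesis and the left-hand side is lower semicontinuous along an approximating sequence.
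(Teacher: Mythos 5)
Your proof is correct and takes essentially the same route as the paper: integration by parts exploiting $\pa_Y\rho=-\tfrac{Y}{2}\rho$ to reach the identity $\|Yf\|_{L^2_\rho}^2 = 4\int Yf(\pa_Y f)\rho\,dY + 2\|f\|_{L^2_\rho}^2$, followed by Cauchy--Schwarz and Young's inequality, with the density reduction handling the justification of the integrations by parts.
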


\begin{proof}

We prove \fref{eq:Poincare} for smooth and compactly supported functions, and its extension to $H^1_\rho$ follows by a density argument. Performing an integration by parts one first finds
$$
\int Y \e \pa_Y \e e^{-\frac{Y^2}{4}}dY=\frac 14 \int Y^2 \e^2 e^{-\frac{Y^2}{4}}dY-\frac 12 \int \e^2 e^{-\frac{Y^2}{4}}dY.
$$
Therefore, using Cauchy-Schwarz and Young's inequalities one obtains:
\bee
\int Y^2 \e^2 e^{-\frac{Y^2}{4}}dY & = & 4 \int Y \e \pa_Y \e e^{-\frac{Y^2}{4}}dY+2 \int \e^2 e^{-\frac{Y^2}{4}}dY\\
& \leq & 2\epsilon \int Y^2\e^2 e^{-\frac{Y^2}{4}}ddY+\frac{2}{\epsilon} \int |\pa_Y \e|^2e^{-\frac{Y^2}{4}}dY+2 \int \e^2 e^{-\frac{Y^2}{4}}dY.
\eee
Taking $0<\epsilon <1/2$ yields the desired result.

\end{proof}

\begin{lemma}

If $\e\in H^1_{\text{loc}}\{|Y|\geq 1 \}$ is such that $ \int_{|Y|\geq 1} (\e^2+(Y\pa_Y\e)^2)|Y|^{-1}dY$ is finite, then $\e$ is bounded with:
\be \lab{bd:Sobolev}
\| \e \|_{L^{\infty}(\{|Y|\geq 1 \})} \lesssim \| \e \|_{L^2\left(\{|Y|\geq 1 \},\frac{dY}{|Y|}\right)} +\| Y\pa_Y\e \|_{L^2\left(\{|Y|\geq 1 \},\frac{dY}{|Y|}\right)}
\ee

\end{lemma}

\begin{proof}

Assume that the right hand side of \fref{bd:Sobolev} is finite. Let $A\geq 1$ and $v(Z)=\e (AZ)$. Then, changing variables and using Sobolev embedding gives for some $C$ independent on $A$:
\bee
&&\| \e\|_{L^{\infty}([A,2A])}^2 = \| v \|_{L^{\infty}([1,2])}^2 \leq C \left( \int_1^2 v^2(Z)dZ+ \int_1^2 |\pa_Zv|^2(Z)dZ\right) \\
&\leq & C \left(\int_A^{2A} v^2(Y)\frac{dY}{A}+ \int_1^2 A|\pa_Yv|^2(Y)dY\right) \leq C\left( \int_A^{2A} v^2(Y)\frac{dY}{|Y|}+ \int_A^{2A} |Y\pa_Yv|^2(Y)\frac{dY}{|Y|}\right) \\
&\leq & C\left(\int_{|Y|\geq 1} v^2(Y)\frac{dY}{|Y|}+ \int_{|Y|\geq 1} |Y\pa_Yv|^2(Y)\frac{dY}{|Y|}\right) \\
\eee
Taking the supremum with respect to $A$ in the above estimate yields \fref{bd:Sobolev}.

\end{proof}

\section{Two-dimensional functional analysis results}

\begin{lemma}

Let $q\in \mathbb N^*$. Then for any $u\in W^{1,2q}_{\text{loc}}(\mathbb R^2)$ one has:
\be \lab{eq:Sobo}
\| u \|_{L^{\infty}(\mathbb R^2)}^{2q} \leq C(q)\left( \int_{\mathbb R^2} u^{2q}\frac{dXdY}{|X| \lY}+\int_{\mathbb R^2} (X \pa_X u)^{2q}\frac{dXdY}{|X| \lY}+\int_{\mathbb R^2} (\lY \pa_Yu)^{2q}\frac{dXdY}{|X| \lY} \right)
\ee

\end{lemma}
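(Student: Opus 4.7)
The plan is to reduce the weighted inequality to the standard two-dimensional Sobolev embedding by a change of variables that simultaneously absorbs both weights and diagonalises the two vector fields on the right-hand side. I split $\mathbb R^2$ into the two half-planes $\{X>0\}$ and $\{X<0\}$ and, on each of them, perform the substitution
\begin{equation*}
X = \pm e^\xi, \qquad Y = \sinh \eta, \qquad (\xi,\eta)\in \mathbb R^2.
\end{equation*}
The Jacobian is exactly $|X|\langle Y\rangle = e^\xi \cosh\eta$, so the singular measure turns into Lebesgue measure:
\begin{equation*}
\frac{dX\, dY}{|X|\langle Y\rangle} = d\xi\, d\eta.
\end{equation*}
Because $\langle Y\rangle = \cosh \eta$ exactly cancels the factor $\partial\eta/\partial Y = 1/\cosh\eta$ supplied by the chain rule, and the analogous cancellation holds for $X$, one also obtains the clean identities
\begin{equation*}
X\partial_X = \partial_\xi, \qquad \langle Y\rangle \partial_Y = \partial_\eta.
\end{equation*}

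Setting $v(\xi,\eta) := u(\pm e^\xi, \sinh\eta)$ on each half-plane, the right-hand side of the claimed inequality becomes the unweighted norm $\|v\|_{W^{1,2q}(\mathbb R^2)}^{2q}$, so the problem reduces to the classical continuous embedding $W^{1,2q}(\mathbb R^2)\hookrightarrow L^\infty(\mathbb R^2)$, which holds by Morrey's inequality whenever $2q>2$. This yields $\|v\|_{L^\infty(\mathbb R^2)}^{2q}\leq C(q)\|v\|_{W^{1,2q}(\mathbb R^2)}^{2q}$; since the map $(\xi,\eta)\mapsto(\pm e^\xi,\sinh\eta)$ is a diffeomorphism onto the corresponding open half-plane, $\|v\|_{L^\infty(\mathbb R^2)}$ coincides with $\|u\|_{L^\infty(\{\pm X>0\})}$, and adding the two contributions controls $\|u\|_{L^\infty(\{X\neq 0\})}$.

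The only conceivable obstruction is the behaviour on the axis $\{X=0\}$, where the change of variables degenerates; this is the main point to address. It is however not a real difficulty: since $u\in W^{1,2q}_{\mathrm{loc}}(\mathbb R^2)$ with $2q>2$, the standard unweighted local Morrey embedding guarantees that $u$ is continuous on $\mathbb R^2$, so the uniform bound obtained on the open set $\{X\neq 0\}$ extends by continuity to the entire plane. Thus the logarithmic-hyperbolic substitution simultaneously removes the singular weight $1/|X|$, turns the anisotropic weight $1/\langle Y\rangle$ into the flat measure, and replaces the weighted vector fields $X\partial_X$, $\langle Y\rangle\partial_Y$ by ordinary partial derivatives, after which Morrey's inequality finishes the argument. (For the borderline case $q=1$ the embedding $W^{1,2}\hookrightarrow L^\infty$ fails, but this is never used in the paper, where $q$ is taken large in every application.)
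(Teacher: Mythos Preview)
Your proof is correct and takes a genuinely different route from the paper. The paper's argument is a dyadic localisation: it covers $\mathbb R^2\setminus\{X=0\}$ by rectangles $\{A\leq|X|\leq 2A\}\times\{|Y|\leq 1\}$ and $\{A\leq|X|\leq 2A\}\times\{B\leq|Y|\leq 2B\}$, rescales each to a fixed reference box, applies the standard Sobolev embedding there, and checks that the resulting constant is uniform in $A,B$. Your global change of variables $X=\pm e^\xi$, $Y=\sinh\eta$ achieves the same reduction in one stroke, since it is precisely the substitution that simultaneously diagonalises $X\partial_X$ and $\langle Y\rangle\partial_Y$ while flattening the measure $dXdY/(|X|\langle Y\rangle)$. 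Your approach is shorter and more conceptual; the paper's is perhaps more elementary in that it never leaves Cartesian coordinates and makes the scaling structure explicit. Your remark on $q=1$ is also apt: the paper's statement allows $q\in\mathbb N^*$, but its own proof, like yours, invokes the embedding $W^{1,2q}\hookrightarrow L^\infty$ on a two-dimensional domain, which requires $2q>2$; the lemma is only ever applied in the paper with $q$ large.
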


\begin{proof}

The result follows from the classical Sobolev embedding and a scaling argument. Assume that the right hand side of \fref{eq:Sobo} is finite. Let $A\in \mathbb R$ and change variables $\tilde X = X/A$ and $u(X,Y)=v(\tilde X,Y)$. From Sobolev embedding one has that
\bee
&& \| u \|_{L^{\infty}(\{A\leq |X|\leq 2A, \ |Y|\leq 1 \})}^{2q}  = \| v \|_{L^{\infty}(\{1\leq |\tilde X|\leq 2, \ |Y|\leq 1 \})}^{2q} \\
&\leq & C(q) \int_{1\leq |\tilde X|\leq 2, \ |Y|\leq 1} \left( v^{2q}+(\pa_{\tilde X}v)^{2q}+ (\pa_{Y}v)^{2q}\right)d\tilde XdY\\
&\leq & C(q) \int_{A\leq |X|\leq 2A, \ |Y|\leq 1} \left( u^{2q}+(A\pa_{X}u)^{2q} +(\pa_{Y}u)^{2q}\right)\frac{dXdY}{A}\\
&\leq &C(q) \left( \int_{\mathbb R^2} u^{2q}\frac{dXdY}{|X| \lY}+\int_{\mathbb R^2} (X \pa_X u)^{2q}\frac{dXdY}{|X| \lY}+\int_{\mathbb R^2} (\lY \pa_Yu)^{2q}\frac{dXdY}{|X| \lY}\right).
\eee
Now let $A>0$ and $B\geq 1$ and change variables $\tilde X = X/A$, $\tilde Y = Y/B$ and $u(X,Y)=v(\tilde X,\tilde Y)$. Then again from Sobolev:
\bee
&&\| u \|_{L^{\infty}(\{A\leq |X|\leq 2A, \ B\leq |Y|\leq 2B \})}^{2q} = \| v \|_{L^{\infty}(\{1\leq |\tilde X|\leq 2, \ 1\leq |\tilde Y|\leq 2 \})}^{2q} \\
&\leq & C(q) \int_{1\leq |\tilde X|\leq 2, \ 1\leq |\tilde Y|\leq 2} \left(v^{2q}+(\pa_{\tilde X}v)^{2q}+(\pa_{\tilde Y}v)^{2q} \right)d\tilde Xd\tilde Y\\
&\leq & C(q) \Big( \int_{A\leq |X|\leq 2A, \ B\leq |Y|\leq 2B} \left( u^{2q}+ (A\pa_{X}u)^{2q} + (B\pa_{Y}u)^{2q} \right)\frac{dXdY}{AB}\\
&\leq &C(q) \left( \int_{\mathbb R^2} u^{2q}\frac{dXdY}{|X| \lY}+\int_{\mathbb R^2} (X \pa_X u)^{2q}\frac{dXdY}{|X| \lY}+\int_{\mathbb R^2} (\lY \pa_Yu)^{2q}\frac{dXdY}{|X| \lY}\right).
\eee
Combing the two above inequalities, as the constant in the second one does not depend on $A$ and $B$, yields \fref{eq:Sobo} but for the quantity $\| u \|_{L^{\infty}(\mathbb R^2 \backslash \{X=0 \})}^{2q}$. This in turn yields \fref{eq:Sobo} by continuity.

\end{proof}

\begin{corollary}

Let $q\in \mathbb N^*$. Then for any $u\in W^{1,2q}_{\text{loc}}(\mathbb R^2)$ one has:
\be \lab{eq:weightedSobo}
\left\| \frac{u}{\phi_{j,0}(X,Z)} \right\|_{L^{\infty}(\mathbb R^2)}^{2q} \leq C(q)\left( \int_{\mathbb R^2} \frac{u^{2q}}{\phi_{j,0}^{2q}(X,Z)}\frac{dXdY}{|X| \lY}+\int_{\mathbb R^2} \frac{(X \pa_X u)^{2q}}{\phi_{j,0}^{2q}(X,Z)}\frac{dXdY}{|X| \lY}+\int_{\mathbb R^2} \frac{(\lY \pa_Yu)^{2q}}{\phi_{j,0}^{2q}(X,Z)}\frac{dXdY}{|X| \lY} \right)
\ee

\end{corollary}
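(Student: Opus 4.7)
My plan is to deduce \fref{eq:weightedSobo} from \fref{eq:Sobo} by applying the latter to $v := u/\phi_{j,0}$. The algebraic input is the product-rule identity
$$
X\pa_X v = \frac{X\pa_X u}{\phi_{j,0}} - v \cdot \frac{X\pa_X \phi_{j,0}}{\phi_{j,0}}, \qquad \lY \pa_Y v = \frac{\lY \pa_Y u}{\phi_{j,0}} - v \cdot \frac{\lY \pa_Y \phi_{j,0}}{\phi_{j,0}}.
$$
The proof thus reduces to establishing the universal pointwise bound
$$
\left| \frac{X\pa_X \phi_{j,0}}{\phi_{j,0}} \right| + \left| \frac{\lY \pa_Y \phi_{j,0}}{\phi_{j,0}} \right| \lesssim 1,
$$
since then $|X\pa_X v|^{2q} \lesssim |X\pa_X u|^{2q}/\phi_{j,0}^{2q} + v^{2q}$ and similarly for $\lY \pa_Y v$, so \fref{eq:Sobo} immediately yields \fref{eq:weightedSobo} after absorbing an extra multiple of $\int v^{2q}\tfrac{dXdY}{|X|\lY}$ on the right.

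I verify the logarithmic derivative bounds from the explicit form \fref{id:phij0} with $\ell=0$, namely $\phi_{j,0}(X,Z) = F_k^{1-j/2}(Z)\, \phi_{X,j}(\tilde X)$ with $\tilde X = F_k^{3/2}(Z)X$. The $X$-derivative is immediate,
$$
\frac{X\pa_X \phi_{j,0}}{\phi_{j,0}} = \frac{\tilde X \phi_{X,j}'(\tilde X)}{\phi_{X,j}(\tilde X)},
$$
which is a smooth bounded function on $\mathbb R$ by \fref{id:as phik0} (tending to $j$ at $\tilde X=0$) and \fref{id:as phikinfty} (tending to $(j-2)/3$ at infinity). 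For the $Y$-derivative, since $Z = aYe^{-(k-1)s/(2k)}$ is linear in $Y$ one has $Y\pa_Y = Z\pa_Z$, and the chain rule combined with $|Z\pa_Z F_k/F_k| = 2kZ^{2k}/(1+Z^{2k}) \leq 2k$ yields $|Z\pa_Z \phi_{j,0}/\phi_{j,0}| \lesssim 1$. On $\{|Y|\geq 1\}$ then $|\lY \pa_Y \phi_{j,0}/\phi_{j,0}| \leq (\lY/|Y|)|Z\pa_Z \phi_{j,0}/\phi_{j,0}| \lesssim 1$; on $\{|Y|\leq 1\}$ the local expansion $|\pa_Z \phi_{j,0}/\phi_{j,0}| \lesssim |Z|^{2k-1}$ near $Z=0$ gives $|\lY \pa_Y \phi_{j,0}/\phi_{j,0}| \lesssim a^{2k}e^{-(k-1)s} \lesssim 1$ uniformly in $s$.

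The main obstacle is that $\phi_{j,0}(X,Z) \sim X^j$ vanishes on the axis $\{X=0\}$ by \fref{id:as phik0}, so $v = u/\phi_{j,0}$ is a priori not in $W^{1,2q}_{\mathrm{loc}}(\mathbb R^2)$ and \fref{eq:Sobo} cannot be invoked as a black box. To bypass this I will run the scaling argument used to prove \fref{eq:Sobo} directly for $v$: cover $\mathbb R^2 \setminus \{X=0\}$ by dyadic rectangles $[A,2A]\times[B,2B]$ (with $A>0$, $B\geq 1$) and $[A,2A]\times[-1,1]$, on each of which $v$ is smooth; rescale to the unit square, apply the classical Sobolev embedding there, then unrescale. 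The universal bounds on the logarithmic derivatives of $\phi_{j,0}$ just established ensure that the variation of $\phi_{j,0}$ across each rectangle contributes only $O(1)$ factors independent of $A$, $B$, and $s$, so taking the supremum over the dyadic cover yields \fref{eq:weightedSobo}.
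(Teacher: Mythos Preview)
Your proposal is correct and follows essentially the same route as the paper: establish the logarithmic-derivative bounds $|X\pa_X\phi_{j,0}|\lesssim|\phi_{j,0}|$ and $|\lY\pa_Y\phi_{j,0}|\lesssim|\phi_{j,0}|$ (via the same case split $|Y|\leq 1$ versus $|Y|\geq 1$), then apply \fref{eq:Sobo} to $u/\phi_{j,0}$. Your extra care about the vanishing of $\phi_{j,0}$ at $\{X=0\}$ is a point the paper does not spell out, but it is already absorbed in the paper's proof of \fref{eq:Sobo}, which is itself a dyadic cover of $\mathbb R^2\setminus\{X=0\}$ followed by continuity; so re-running that argument, as you propose, is exactly what is implicitly happening.
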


\begin{proof}

Assume that the right hand side of \fref{eq:weightedSobo} is finite. First, notice from \fref{main:sizephi} that
$$
|X\pa_X \phi_{X,j}|\sim |\phi_{X,j}|
$$
From this and \fref{id:phij0} one deduces that:
\bee
&& |X\pa_X \phi_{j,0}(X,Z)| = \left| X\pa_X \left(\left(1+Z^{2k} \right)^{\frac j2-1}  \phi_{X,j}\left( F_k^{\frac 32}(Z)X\right)  \right)\right| \\
&= &\left| \left(1+Z^{2k} \right)^{\frac j2-1}  (X\pa_X \phi_{X,j})\left( F_k^{\frac 32}(Z)X\right) \right| \sim  \left(1+Z^{2k} \right)^{\frac j2-1}  \left| \phi_{X,j})\left( F_k^{\frac 32}(Z)X\right) \right| = |\phi_{j,0}(X,Z)|.
\eee
This implies that
$$
\left|X\pa_X \left(\frac{1}{\phi_{j,0}(X,Z)}\right) \right|=\left| \frac{X\pa_X\phi_{j,0}(X,Z)  }{\phi_{j,0}^2(X,Z)} \right|  \sim \left| \frac{1}{\phi_{j,0}(X,Z)} \right|.
$$
Assume now that $|Y|\leq 1$. Since $\pa_Y=e^{-(k-1)/(2k)s}\pa_Z$ then
\bee
&& |\pa_Y \phi_{j,0}(X,Z)| = \left| e^{-\frac{k-1}{2k}s}\pa_Z\left(\left(1+Z^{2k} \right)^{\frac j2-1}  \phi_{X,j}\left( F_k^{\frac 32}(Z)X\right)\right) \right| \\
&\leq & \left| \pa_Z\left(\left(1+Z^{2k} \right)^{\frac j2-1}\right)  \phi_{X,j}\left( F_k^{\frac 32}(Z)X\right) \right| + \frac 32 \left|  \left(1+Z^{2k} \right)^{\frac j2-1} \frac{\pa_Z F_k(Z)}{F_k(Z)} (X\phi_{X,j}) \left( F_k^{\frac 32}(Z)X\right) \right| \\
&\leq & C\left(1+Z^{2k} \right)^{\frac j2-1}  \phi_{X,j}\left( F_k^{\frac 32}(Z)X\right) =  C|\phi_{j,0}(X,Z)|
\eee
since $|\pa_Z F_k|\leq F_k$. If $|Y|\geq 1$. Since $Y\pa_Y=Z\pa_Z$ then similarly
\bee
&& |Y \pa_Y \phi_{j,0}(X,Z)|= \left|Z\pa_Z\left(\left(1+Z^{2k} \right)^{\frac j2-1}  \phi_{X,j}\left( F_k^{\frac 32}(Z)X\right)\right) \right| \\
&\lesssim & \left| Z\pa_Z\left(\left(1+Z^{2k} \right)^{\frac j2-1}\right)  \phi_{X,j}\left( F_k^{\frac 32}(Z)X\right) \right| +  \left|  \left(1+Z^{2k} \right)^{\frac j2-1} Z\frac{\pa_Z F_k(Z)}{F_k(Z)} (X\phi_{X,j}) \left( F_k^{\frac 32}(Z)X\right) \right| \\
&\leq & C\left(1+Z^{2k} \right)^{\frac j2-1}  \phi_{X,j}\left( F_k^{\frac 32}(Z)X\right) =  C|\phi_{j,0}(X,Z)|
\eee
since $|Z\pa_Z F_k|\leq F_k$. From the two above inequalities one deduces that for any $Y\in \mathbb R$:
$$
|\lY \pa_Y  \phi_{j,0} (X,Z)|\leq C |\phi_{j,0}(X,Z)|.
$$
This implies again that
$$
\left| \lY \pa_Y \left(\frac{1}{\phi_{j,0}(X,Z)}\right) \right|\leq C \left| \frac{1}{\phi_{j,0}(X,Z)} \right|.
$$
From this one deduces that
\bee
&&\int_{\mathbb R^2} \left( \frac{u^{2q}}{\phi_{j,0}^{2q}(X,Z)}+\left(X\pa_X \left(\frac{u}{\phi_{j,0}(X,Z)}\right)\right)^{2q}  + \left(\lY\pa_Y \left(\frac{u}{\phi_{j,0}(X,Z)}\right)\right)^{2q}\right) \frac{dXdY}{|X| \lY} \\
&\leq &C(q)\left( \int_{\mathbb R^2} \frac{u^{2q}}{\phi_{j,0}^{2q}(X,Z)}\frac{dXdY}{|X| \lY}+\int_{\mathbb R^2} \frac{(X \pa_X u)^{2q}}{\phi_{j,0}^{2q}(X,Z)}\frac{dXdY}{|X| \lY}+\int_{\mathbb R^2} \frac{(\lY \pa_Yu)^{2q}}{\phi_{j,0}^{2q}(X,Z)}\frac{dXdY}{|X| \lY} \right).
\eee
We apply \fref{eq:Sobo} to $u/\phi_{j,0}(X,Z)$ and use the above inequality to get the desired result \fref{eq:weightedSobo}.

\end{proof}

\begin{lemma} \lab{an:lem:equiv}

Let $j_1,j_2\in \mathbb N$. Then there exists a constant $C>0$ such that for any function $u\in C^{j_1+j_2}(\mathbb R^2)$, for $A$ defined by \fref{main:def:A} there holds:
\be \lab{an:equivalence5}
\frac 1C\sum_{j_1'=0}^{j_1}\sum_{j_2'=0}^{j_2}|Z|^{j_1'}|X|^{j_2'}|\pa_Z^{j_1'}\pa_X^{j_2'}u|\leq \sum_{j_1'=0}^{j_1}\sum_{j_2'=0}^{j_2}|(Z\pa_Z)^{j_1}(X\pa_X)^{j_2}u|\leq C\sum_{j_1'=0}^{j_1}\sum_{j_2'=0}^{j_2}|Z|^{j_1'}|X|^{j_2'}|\pa_Z^{j_1'}\pa_X^{j_2'}u|,
\ee
and for $j_2\geq 1$:
\be \lab{an:equivalence1}
|\pa_Z^{j_1}A^{j_2}u|\leq C \sum_{j_1'=0}^{j_1}\sum_{j_2'=1}^{j_2} (1+|Z|)^{-(j_1-j_1')} |X|^{j_2'}|\pa_Z^{j_1'}\pa_X^{j_2'}u|,
\ee
\be \lab{an:equivalence2}
|(Z\pa_Z)^{j_1}A^{j_2}u|\leq C \sum_{j_1'=0}^{j_1}\sum_{j_2'=1}^{j_2} |Z|^{j_1'} |X|^{j_2'}|\pa_Z^{j_1'}\pa_X^{j_2'}u|,
\ee
\be \lab{an:equivalence3}
|X|^{j_2}|\pa_Z^{j_1}\pa_X^{j_2}u|\leq C  \sum_{j_1'=0}^{j_1}\sum_{j_2'=1}^{j_2} (1+|Z|)^{-(j_1-j_1')} |\pa_Z^{j_1'}A^{j_2'}u|,
\ee
\be \lab{an:equivalence4}
|Z|^{j_1}|X|^{j_2}|\pa_Z^{j_1}\pa_X^{j_2}u|\leq C  \sum_{j_1'=0}^{j_1}\sum_{j_2'=1}^{j_2}  |(Z\pa_Z)^{j_1'}A^{j_2'}u|.
\ee

\end{lemma}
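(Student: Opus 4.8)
The strategy is to prove the four families of inequalities by establishing, in a precise form, the algebraic relationship between the vector field $A$ and the pair $(X\pa_X, \pa_X)$, then combining this with the elementary equivalence \fref{an:equivalence5} between the "logarithmic" derivatives $(Z\pa_Z)^{j_1}(X\pa_X)^{j_2}$ and the weighted plain derivatives $|Z|^{j_1}|X|^{j_2}\pa_Z^{j_1}\pa_X^{j_2}$. First I would dispose of \fref{an:equivalence5} itself: by induction on $j_1+j_2$, using the binomial-type identity $(Z\pa_Z)^m = \sum_{n=1}^m S(m,n) Z^n \pa_Z^n$ with $S(m,n)$ the Stirling numbers (and similarly in $X$), one gets one inclusion; for the reverse one inverts this triangular relation, i.e. $Z^m\pa_Z^m = \sum_{n=1}^m s(m,n)(Z\pa_Z)^n$ with signed Stirling numbers. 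This is purely combinatorial and the constants are universal.

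The key structural observation is that, writing $a(X,Z):=\tfrac 32 X + F_k^{-3/2}(Z)\Psi_1(F_k^{3/2}(Z)X)$ so that $A = a\,\pa_X$, the coefficient $a$ satisfies $a \approx X$ uniformly: indeed from \fref{main:sizeFkPsi1} and the asymptotics \fref{main:eqPsi1origine} of $\Psi_1$ one has $F_k^{-3/2}\Psi_1(F_k^{3/2}X) = F_k^{-3/2}\cdot(-F_k^{3/2}X)(1+O(|\tilde X|^2/(1+|\tilde X|)^2)) = -X + O(\text{lower order})$ near $\tilde X=0$ and $|F_k^{-3/2}\Psi_1(F_k^{3/2}X)|\lesssim F_k^{-3/2}|F_k^{3/2}X|^{1/3} \ll |X|$ for $|\tilde X|$ large, so $a/X = \tfrac 32 + o(1)$ stays bounded above and below by positive constants — this is essentially \fref{main:eq:estimationA}, which I would cite. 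Moreover all $Z$-derivatives of $a/X$ are controlled: $|\pa_Z^{m}(a/X)|\lesssim (1+|Z|)^{-m}$ and $|(Z\pa_Z)^m(a/X)|\lesssim 1$, since $\pa_Z$ acting on $a/X$ produces factors of $\pa_Z F_k/F_k$ which are $O((1+|Z|)^{-1})$, together with bounded functions of $\tilde X$. With this in hand, $A^{j_2}$ expands as $A^{j_2} = \sum_{j_2'=1}^{j_2} b_{j_2'}(X,Z)\,\pa_X^{j_2'}$ where each $b_{j_2'} \approx X^{j_2'}$ with the same kind of $Z$-derivative bounds (again using $a\approx X$ and that $\pa_X$ hitting a function $\approx X^p$ lowers the power by one); then applying $\pa_Z^{j_1}$ (or $(Z\pa_Z)^{j_1}$) and using the Leibniz rule distributes derivatives between the $b_{j_2'}$'s and the $\pa_X^{j_2'}u$, which yields \fref{an:equivalence1} and \fref{an:equivalence2} directly. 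For \fref{an:equivalence3} and \fref{an:equivalence4} one inverts: since $a\approx X$ is nonvanishing away from $\{X=0\}$, one writes $\pa_X = a^{-1}A$ and iterates, $\pa_X^{j_2} = \sum_{j_2'=1}^{j_2}\tilde b_{j_2'}(X,Z)A^{j_2'}$ with $\tilde b_{j_2'}\approx X^{-j_2'}$ enjoying the analogous $Z$-derivative estimates (here one uses that $1/(a/X)$ is bounded and smooth in $Z$ with the same weights), and again distributes $\pa_Z^{j_1}$ or $(Z\pa_Z)^{j_1}$ via Leibniz.

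The main obstacle is bookkeeping rather than conceptual: one must verify uniformly (in both the region $|\tilde X|\lesssim 1$, where $\Psi_1(\tilde X)\sim -\tilde X$ and everything is smooth, and the region $|\tilde X|\gg 1$, where $\Psi_1(\tilde X)\sim -\operatorname{sgn}(\tilde X)|\tilde X|^{1/3}$ so that the $\Psi_1$-contribution to $a$ is negligible compared to $X$) that $a/X$ and $X/a$ together with all their $Z$-derivatives obey the stated weight bounds — this is where the precise asymptotics \fref{main:eqPsi1origine} of $\Psi_1$ and the size estimates \fref{main:sizeFkPsi1} are used, and it is the only place where genuine analysis (as opposed to the Leibniz rule) enters. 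Once that is in place the four inequalities follow by the same mechanical induction on $j_2$ followed by an application of \fref{an:equivalence5} to convert between the two derivative conventions in the $Z$ variable. I would organize the write-up as: Step 1, prove \fref{an:equivalence5}; Step 2, establish the expansions $A^{j_2}=\sum b_{j_2'}\pa_X^{j_2'}$ and $\pa_X^{j_2}=\sum\tilde b_{j_2'}A^{j_2'}$ with the weight and $Z$-regularity bounds on the coefficients; Step 3, apply $\pa_Z^{j_1}$ / $(Z\pa_Z)^{j_1}$ and Leibniz to conclude \fref{an:equivalence1}–\fref{an:equivalence4}, leaving the routine verifications in Step 2 "to the reader" in the style of the rest of the section.
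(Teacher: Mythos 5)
Your proposal takes essentially the same route as the paper: prove \fref{an:equivalence5} by elementary induction, expand $A^{j_2}$ in $\pa_X^{j_2'}$ with coefficients controlled through the two-sided bound $a:=\tfrac32 X+F_k^{-3/2}(Z)\Psi_1(F_k^{3/2}(Z)X)\approx X$ together with the $Z$-derivative decay of $a/X$, and then invert for \fref{an:equivalence3}--\fref{an:equivalence4}, distributing $\pa_Z^{j_1}$ or $(Z\pa_Z)^{j_1}$ by Leibniz. One small slip: in the inversion step you assert $\pa_X^{j_2}=\sum_{j_2'}\tilde b_{j_2'}A^{j_2'}$ with $\tilde b_{j_2'}\approx X^{-j_2'}$, but iterating $\pa_X=a^{-1}A$ produces $j_2$ factors each $\approx X^{-1}$ (including from $A$ falling on $a^{-1}$), so the correct bound is $|\tilde b_{j_2'}|\lesssim |X|^{-j_2}$; with the exponent as you wrote it the factor $|X|^{j_2}\tilde b_{j_2'}$ would be unbounded for $j_2'<j_2$ and the argument for \fref{an:equivalence3} would not close. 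The paper sidesteps this bookkeeping by expanding $(X\pa_X)^{j_2}=\sum_{j_2'}g_{j_2,j_2'}A^{j_2'}$ with already-bounded coefficients $g_{1,1}=X/a$, which is cleaner; either version works once the exponent is corrected.
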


\begin{proof}

\fref{an:equivalence5} follows from an easy induction argument that we leave to the reader.\\

\noindent \textbf{Step 1} \emph{Proof of \fref{an:equivalence1}}. We first claim that there exists a family of profiles $(f_{j_2,j_2'})_{j_2'\leq j_2}$ such that
\be \lab{an:idAj2}
A^{j_2}u=\sum_{j_2'=1}^{j_2} f_{j_2,j_2'}\pa_X^{j_2'}u,
\ee
and satisfying for any $k_1,k_2\in \mathbb N$:
\be \lab{an:bdfj2j2'}
|\pa_Z^{k_1}\pa_X^{k_2}f_{j_2,j_2'}|\lesssim (1+|Z|)^{-k_1}(1+|X|)^{\min(-(k_2-j_2'),0)}|X|^{\max(j_2'-k_2,0)}.
\ee
We prove this fact by induction on $j_2\in \mathbb N^*$. From \fref{main:def:A}, \fref{an:idAj2} holds for $j_2=1$ with $f_{1,1}=3X/2+F_k^{-3/2}(Z)\Psi_1(F_k^{3/2}(Z)X)$. Since from Proposition \ref{prop:smoothselfsim}, $\pa_{X}\Psi_1\leq 0$ and is minimal at the origin we infer that
\be \lab{main:eq:estimationA}
\frac 12 |X| \leq \left| \frac 32 X +F_k^{-\frac 32}(Z)\Psi_1(F_k^{\frac 32}(Z)X) \right| \leq \frac 32 |X|
\ee
and from \fref{main:sizeFkPsi1} we infer that for $k_1,k_2\in \mathbb N$:
$$
\left|\pa_Z^{k_1}\pa_X^{k_2}\left(F_k^{-3/2}(Z)\Psi_1(F_k^{3/2}(Z)X)\right)\right|\lesssim \left\{ \begin{array}{l l} (1+|Z|)^{-k_1}|X| \ \ \text{if} \ k_2=0,\\
(1+|Z|)^{-k_1}(1+|X|)^{-(k_2-1)} \ \ \text{if} \ k_2\geq 1.
\end{array} \right.
$$
which proves \fref{an:bdfj2j2'} for $j_2=1$, and thus the claim is true for $j_2=1$. Assume now that the claim is true for some $j_2\in \mathbb N^*$. Then, using \fref{an:idAj2} for the integers $j_2$ and $1$:
\bee
A^{j_2+1}u&=&f_{1,1}\pa_X \left(\sum_{j_2'=1}^{j_2}f_{j_2,j_2'}\pa_X^{j_2'}u \right)=\sum_{j_2'=1}^{j_2}f_{1,1}\pa_X (f_{j_2,j_2'})\pa_X^{j_2'}u+f_{1,1}f_{j_2,j_2'}\pa_X^{j_2'+1}u \\
&=& \sum_{j_2'=1}^{j_2}f_{1,1}\pa_X (f_{j_2,j_2'})\pa_X^{j_2'}u+ \sum_{j_2'=2}^{j_2+1} f_{1,1}f_{j_2,j_2'-1}\pa_X^{j_2'}u
\eee
and the claim is true from the bounds \fref{an:bdfj2j2'} for the integers $j_2$ and $1$. Hence it is true for all $j_2\in \mathbb N^*$. We now apply Leibniz formula and obtain from \fref{an:idAj2}:
$$
\pa_Z^{j_1}A^{j_2}u=\sum_{j_2'=1}^{j_2}\sum_{j_1'=0}^{j_1} C_{j_1'}^{j_1} (\pa_Z^{j_1-j_1'}f_{j_2,j_2'})\pa_Z^{j_1'}\pa_X^{j_2'}u, \ \ |\pa_Z^{j_1-j_1'}f_{j_2,j_2'}|\lesssim (1+|Z|)^{-(j_1-j_1')}|X|^{j_2'}
$$
where the second estimate comes from \fref{an:bdfj2j2'}, and \fref{an:equivalence1} is proven.\\

\noindent \textbf{Step 2} \emph{Proof of \fref{an:equivalence2}}. This is a direct consequence of \fref{an:equivalence5} and \fref{an:equivalence1}:
\bee
|(Z\pa_Z)^{j_1}A^{j_2}u| & \lesssim & \sum_{j_1'=0}^{j_1} |Z|^{j_1'} |\pa_Z^{j_1'}A^{j_2}u| \lesssim \sum_{j_1'=0}^{j_1} \sum_{\tilde j_1'=0}^{j_1'}\sum_{j_2'=1}^{j_2} |Z|^{j_1'}(1+|Z|)^{-(j_1'-\tilde j_1')} |X|^{j_2'}|\pa_Z^{\tilde j_1'}(X\pa_X)^{j_2'}u|,\\
&\lesssim &\sum_{j_1'=0}^{j_1} \sum_{\tilde j_1'=0}^{j_1'}\sum_{j_2'=1}^{j_2} |Z|^{\tilde j_1'} |X|^{j_2'}|\pa_Z^{\tilde j_1'}(X\pa_X)^{j_2'}u|\lesssim \sum_{j_1'=0}^{j_1} \sum_{j_2'=1}^{j_2} |Z|^{ j_1'} |X|^{j_2'}|\pa_Z^{ j_1'}(X\pa_X)^{j_2'}u|.
\eee

\noindent \textbf{Step 3} \emph{Proof of \fref{an:equivalence3}}. First, from \fref{an:equivalence5} one has:
\be \lab{an:interequiv}
|X|^{j_2}|\pa_Z^{j_1}\pa_X^{j_2}|\lesssim \sum_{j'_2=1}^{j_2} |\pa_Z^{j_1}(X\pa_X)^{j_2'}u|.
\ee
We then claim that there exists a family of profiles $(g_{j_2,j_2'})_{j_2'\leq j_2}$ such that
\be \lab{an:idXpaXj2}
(X\pa_X)^{j_2}u=\sum_{j_2'=1}^{j_2} g_{j_2,j_2'}A^{j_2'}u,
\ee
and satisfying for any $k_1,k_2\in \mathbb N$:
\be \lab{an:bdgj2j2'}
|\pa_Z^{k_1}\pa_X^{k_2}g_{j_2,j_2'}|\lesssim (1+|Z|)^{-k_1}(1+|X|)^{-k_2}.
\ee
From \fref{main:def:A}, \fref{an:idXpaXj2} holds for $j_2=1$ with 
$$
g_{1,1}=\frac{X}{\frac 32 X+F_k^{-\frac 32}(Z)\Psi_1\left(F_k^{\frac 32}(Z)X\right)}=\left(\frac{\tilde X}{\frac 32 \tilde X +\Psi_1}\right)(F_k^{\frac 32}(Z)X).
$$
From \fref{main:sizeFkPsi1} and \fref{main:eq:estimationA} one has that for any $k_2\in \mathbb N$,
$$
\left| \pa_{\tilde X} \left(\frac{\tilde X}{\frac 32 \tilde X +\Psi_1(\tilde X)} \right)\right| \lesssim (1+|\tilde X|)^{-k_2}
$$
and hence from \fref{main:sizeFkPsi1} the estimate \fref{an:bdgj2j2'} holds for $g_{1,1}$. The claim can then be proven by induction on $j_2\in \mathbb N^*$ by the same techniques as in Step 1, and we do not give the details here. Using \fref{an:idXpaXj2}, \fref{an:bdgj2j2'} and Leibniz formula then yields that
$$
|\pa_Z^{j_1}(X\pa_X)^{j_2}u|\lesssim \sum_{j_1'=0}^{j_1} \sum_{j_2'=1}^{j_2} |\pa_{Z}^{j_1-j_1'}g_{j_2,j_2'}\pa_Z^{j_1'}A^{j_2'}u|\lesssim \sum_{j_1'=0}^{j_1} \sum_{j_2'=1}^{j_2} (1+|Z|)^{-(j_1-j_1')}|\pa_Z^{j_1'}A^{j_2'}u|.
$$
This implies \fref{an:equivalence3} using \fref{an:interequiv}.\\

\noindent \textbf{Step 4} \emph{Proof of \fref{an:equivalence4}}. This is a direct consequence of \fref{an:equivalence5} and \fref{an:equivalence3}:
\bee
|Z|^{j_1}|X|^{j_2}|\pa_Z^{j_1}\pa_X^{j_2}u| & \lesssim &  \sum_{j_1'=0}^{j_1}\sum_{j_2'=1}^{j_2} |Z|^{j_1}(1+|Z|)^{-(j_1-j_1')} |\pa_Z^{j_1'}A^{j_2'}u| \lesssim \sum_{j_1'=0}^{j_1}\sum_{j_2'=1}^{j_2} |Z|^{j_1'} |\pa_Z^{j_1'}A^{j_2'}u|,\\
&\lesssim &\sum_{j_1'=0}^{j_1} \sum_{j_2'=1}^{j_2} |(Z\pa_Z)^{j_1'}A^{j_2'}u|.
\eee

\end{proof}


\end{document}